\newcommand{\R}{\mathbb{R}}
\newcommand{\N}{\mathbb{N}}
\newcommand{\Z}{\mathbb{Z}}
\newcommand{\Q}{\mathbb{Q}}
\newcommand{\C}{\mathbb{C}}
\newcommand{\bbH}{\mathbb{H}}
\newcommand{\K}{\mathbb{K}}
\newcommand{\Diag}{\mathrm{Diag}}
\newcommand{\tr}{\mathrm{tr}}
\newcommand{\Tr}{\mathrm{Tr}}
\newcommand{\Nrd}{\mathrm{Nrd}}
\newcommand{\GL}{\mathrm{GL}}
\newcommand{\SL}{\mathrm{SL}}
\newcommand{\PSL}{\mathrm{PSL}}
\newcommand{\Ha}{\mathrm{\textbf{H}}}
\newcommand{\RP}{\mathbb{RP}}
\newcommand{\Lim}{\mathcal{L}}
\newtheorem{theorem}{Theorem}
\newtheorem{proposition}[theorem]{Proposition}
\newtheorem{corollary}[theorem]{Corollary}
\newtheorem{lemma}[theorem]{Lemma}
\newtheorem*{thmA}{Theorem A}
\newtheorem*{thmB}{Theorem B}
\newtheorem*{thmC}{Theorem C}
\theoremstyle{definition}
\theoremstyle{remark}
\newtheorem*{rem}{Remark}
\title{THE LIMIT SET OF SUBGROUPS OF ARITHMETIC GROUPS in $\PSL(2,\C)^q\times\PSL(2,\R)^r$}
\author{Slavyana Geninska}
\begin{document}

\maketitle
\begin{abstract}
While lattices in semi-simple Lie groups are studied very well, only little is known about discrete subgroups of infinite covolume. The main class of examples are Schottky groups. Here we investigate some new examples.

We consider subgroups $\Gamma$ of arithmetic groups in $\PSL(2,\C)^q\times \PSL(2,\R)^r$ with $q+r\geq 2$ and their limit set. We prove that the projective limit set of a nonelementary finitely generated $\Gamma$ consists of exactly one point if and only if one and hence all projections of $\Gamma$ to the simple factors of $\PSL(2,\C)^q\times \PSL(2,\R)^r$ are subgroups of arithmetic Fuchsian or Kleinian groups. Furthermore, we study the topology of the whole limit set of $\Gamma$. In particular, we give a necessary and sufficient condition for the limit set to be homeomorphic to a circle.
\end{abstract}

\pagestyle{plain}
\setcounter{page}{1}
\setcounter{tocdepth}{1}
%\tableofcontents
%\newpage

\setcounter{section}{-1}
\section{Introduction}
%\addcontentsline{toc}{section}{Introduction}
Arithmetic subgroups of semi-simple Lie groups have been and still are a major subject of study. They are examples of lattices, i.e. discrete subgroups of finite covolume. Margulis' Arithmeticity Theorem states that for groups with $\R$-rank greater than or equal to 2, the only lattices are the arithmetic ones (see Margulis~\cite{gM91}, Theorem A, p.298). %Examples of groups with $\R$-rank greater than or equal to 2 are $\PSL(2,\C)^q\times\PSL(2,\R)^r$ with $q+r\geq2$. %The infinite covolume groups are not studied so well.

The situation is very different for the simple Lie groups $\PSL(2,\R)$ and $\PSL(2,\C)$. There, the arithmetic lattices represent a minority among all lattices, i.e. the cofinite Fuchsian and Kleinian groups. Nevertheless, they provide important examples since one can get the general form of their elements quite explicitly and not only in terms of generators.

The so called semi-arithmetic Fuchsian groups constitute a specific class of Fuchsian groups which can be embedded up to commensurability in arithmetic subgroups of $\PSL(2,\R)^r$ (see Schmutz Schaller and Wolfart~\cite{pS00}). These embeddings are of infinite covolume in $\PSL(2,\R)^r$. A trivial example is the group $\PSL(2,\Z)$ that can be embedded diagonally in any Hilbert modular group. Further examples are the other arithmetic Fuchsian groups and the triangle Fuchsian groups. It is a general question if certain classes of Fuchsian groups can be characterized by geometric means. There is a classical  characterization of arithmetic Fuchsian groups due to Takeuchi which is based on number theoretical properties of  their trace sets  \cite{kT75}. A further characterization in this direction is given in \cite{sG08}.

While lattices are studied very well, only little is known about discrete subgroups of infinite covolume of semi-simple Lie groups (e.g P. Albuquerque~\cite{pA99}, Leuzinger~\cite{eL08}, Link~\cite{gL02}, Quint~\cite{jQ02,jQ05}). The main class of examples are Schottky groups. There are also some rigidity results for subgroups of the product of two simple Lie groups of real rank 1, see e.g. Burger~\cite{mB93} and Dal'bo and Kim~\cite{fD00}.  

The chief goal of this article is to provide and investigate further examples of infinite covolume subgroups. Namely, we consider subgroups of irreducible lattices in $\PSL(2,\C)^q\times\PSL(2,\R)^r$ with the property that the projection to one of the factors is a Fuchsian (or a Kleinian) group. We are in particular interested in those groups whose projection to one of the factors is (a subgroup of) an arithmetic Fuchsian (or Kleinian) group. These are exactly the nonelementary groups with the ``smallest" possible limit set. It is somewhat surprising that one can get information about the arithmetic nature of a group from its limit set.

%Generally, given a Fuchsian or Kleinian group it is not easy to check whether it is arithmetic or not just by using the definition of arithmetic Fuchsian groups with the help of quaternion algebras. Takeuchi~\cite{kT75} gives an important number-theoretical characterization of cofinite arithmetic Fuchsian groups that uses only the information given by the trace set of the group. An analogous characterization exists also for cofinite Kleinian groups, see Maclachlan and Reid~\cite{cM03}.

%The lattices in $\PSL(2,\C)^q\times\PSL(2,\R)^r$ with $q+r\geq2$ are constructed with the help of quaternion algebras in a similar way as the arithmetic Fuchsian and Kleinian groups, see Borel~\cite{aB81}. It is natural to look at their subgroups whose projection on one of the factors is a Fuchsian (or a Kleinian) group. Indeed, these are examples of discrete subgroups of $\PSL(2,\C)^q\times\PSL(2,\R)^r$ of infinite covolume. The main results of this thesis characterize with their limit set those whose projection on one of the factors is an arithmetic Fuchsian or Kleinian group.

We recall the definition of the limit set. The symmetric space corresponding to $\PSL(2,\C)^q\times\PSL(2,\R)^r$ is a product of 2- and 3-dimensional real hyperbolic spaces $(\bbH^3)^q\times(\bbH^2)^r$. The geometric boundary of $(\bbH^3)^q\times(\bbH^2)^r$ is the set of equivalence classes of asymptotic geodesic rays. Adding the geometric boundary to $(\bbH^3)^q\times(\bbH^2)^r$ yields a compactification of $(\bbH^3)^q\times(\bbH^2)^r$ compatible with the action of $\PSL(2,\C)^q\times\PSL(2,\R)^r$. The limit set $\Lim_\Gamma$ of a discrete subgroup $\Gamma$ of $\PSL(2,\C)^q\times\PSL(2,\R)^r$ is the part of the orbit closure $\overline{\Gamma(x)}$ in the geometric boundary where $x$ is an arbitrary point in $(\bbH^3)^q\times(\bbH^2)^r$. Hence the limit set gives us information about the group by looking at its action ``far away".

This article is organized as follows. In Section 1 we have compiled some basic facts about Fuchsian and Kleinian groups and especially about Schottky groups. We prove in particular a criterion for a Schottky subgroup of $\PSL(2,\C)$ to be Zariski dense over $\R$.

Section 2 provides a detailed description of the geometric boundary of $(\bbH^3)^q\times(\bbH^2)^r$. We also introduce the notion of the limit set and state a structure theorem for the regular limit set $\Lim_\Gamma^{reg}$ of discrete nonelementary groups $\Gamma$ due to Link: $\Lim_\Gamma^{reg}$ is the product of the Furstenberg limit set $F_\Gamma$ and the projective limit set $P_\Gamma$. We also give a criterion for a subgroup of $\PSL(2,\C)^q\times\PSL(2,\R)^r$ to be nonelementary.

Section 3 is devoted to the study of nonelementary groups. We show that the regular limit set of a nonelementary group is not empty. The main result of the section is the following theorem, which is Theorem~\ref{T:ConvexAlg} in the text.

%We prove also that the projective limit set is convex and hence its closure, which is the limit cone as defined by Benoist in our special case. This is a similar result to the theorem in Section 1.2 by Benoist in \cite{yB97} with the difference that for nonelementary groups the interior of the limit cone can be empty. The next theorem is Theorem~\ref{T:ConvexAlg} in the text.
\begin{thmA}
Let $\Gamma$ be a nonelementary subgroup of an irreducible arithmetic group in $\PSL(2,\C)^q\times\PSL(2,\R)^r$ with $q + r \geq 2$. Then $P_\Gamma$ is convex and the closure of $P_\Gamma$ in $\RP^{q + r-1}$  is equal to the limit cone of $\Gamma$ and in particular the limit cone of $\Gamma$ is convex.
\end{thmA}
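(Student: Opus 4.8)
The plan is to identify $P_\Gamma$ with the closure of the set of directions of Jordan projections of loxodromic elements, to prove convexity by a product estimate in the spirit of Benoist, and to bring in the arithmetic hypothesis precisely where the absence of Zariski density would otherwise break the argument.

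First I would set up the dictionary between elements and directions. Call $\gamma=(\gamma_1,\dots,\gamma_{q+r})\in\Gamma$ \emph{regular} if each $\gamma_i$ is loxodromic in its factor, write $\ell_i(\gamma)$ for the translation length of $\gamma_i$ (in $\bbH^3$ for $i\le q$, in $\bbH^2$ for $i>q$), and set $\theta(\gamma):=[\ell_1(\gamma):\dots:\ell_{q+r}(\gamma)]\in\RP^{q+r-1}$. These lengths are governed by the traces, for instance $\ell_i(\gamma)=2\,\arcosh(|\tr\gamma_i|/2)$ in the real factors. The attracting fixed point of a regular $\gamma$ lies in $\Lim_\Gamma^{reg}$, and under the Link splitting $\Lim_\Gamma^{reg}\cong F_\Gamma\times P_\Gamma$ its $P_\Gamma$-coordinate is exactly $\theta(\gamma)$; since the regular limit set is nonempty (the result preceding this theorem in Section~3) such $\gamma$ exist, giving the inclusion $\{\theta(\gamma):\gamma\ \text{regular}\}\subseteq P_\Gamma$. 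Recalling that the limit cone is the smallest closed cone in the Weyl chamber containing all Jordan projections --- projectively, the closure of $\{\theta(\gamma)\}$ --- the identity ``$\overline{P_\Gamma}=\text{limit cone}$'' will follow once I check the reverse inclusion, that every point of $P_\Gamma$ is a limit of Jordan directions. In the present rank-one factors this is the transparent approximation of the Cartan projection of an orbit sequence realizing a regular limit point by the Jordan projections of the associated loxodromic elements, each factor contributing a single length coordinate.

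The engine for convexity is a product lemma. If $\gamma,\delta$ are regular and their fixed-point pairs are disjoint in every factor, then $\gamma^m\delta^n$ is regular for all large $m,n$ and $\ell_i(\gamma^m\delta^n)=m\,\ell_i(\gamma)+n\,\ell_i(\delta)+O(1)$ uniformly in each coordinate. Letting $m,n\to\infty$ with $m/n\to\tau$, the directions $\theta(\gamma^m\delta^n)$ then converge to the point of the projective segment from $\theta(\gamma)$ to $\theta(\delta)$ with parameter $\tau$, so the whole segment lies in $\overline{P_\Gamma}$. To apply this to an arbitrary pair I would invoke nonelementarity: conjugating $\delta$ by a suitable element of $\Gamma$ leaves $\theta(\delta)$ fixed while moving the fixed points of $\delta$ off those of $\gamma$ in every factor, so transversality can always be arranged. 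This shows that the set of Jordan directions is segment-convex, hence that its closure --- the limit cone, equal to $\overline{P_\Gamma}$ --- is convex.

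The delicate point, and the step I expect to be the main obstacle, is to upgrade convexity of the closure to convexity of $P_\Gamma$ itself: to realize each interior direction of the limit cone as the honest $P_\Gamma$-coordinate of a genuine regular limit point, not merely as a limit of such. This is exactly where the arithmetic hypothesis should be indispensable, since general nonelementary (non-Zariski-dense) subgroups may have nonconvex limit cones. I would use that $\Gamma$ lies in an \emph{irreducible} arithmetic group, so that the coordinates $\tr\gamma_i$ are the archimedean images of a single trace in the invariant trace field and the factor projections are linked by Galois conjugacy; this rigidity, together with the uniform $O(1)$ control from the product lemma, lets me choose rational parameters $m/n$ producing honest loxodromic elements $\gamma^m\delta^n$ whose attracting fixed points are regular limit points with $P_\Gamma$-coordinate arbitrarily close to a prescribed interior target, and then run a compactness argument inside the regular part of the boundary to extract an actual limit point carrying exactly that coordinate. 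Controlling the $P_\Gamma$-coordinate finely enough for this diagonal argument to land on every interior direction, rather than merely densely, is the crux, and it is precisely here that the number-theoretic structure of the traces performs the role played by Zariski density in Benoist's theorem.
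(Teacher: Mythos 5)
Your first two steps (the dictionary between loxodromic elements and directions, and the product estimate $\ell_i(\gamma^m\delta^n)=m\,\ell_i(\gamma)+n\,\ell_i(\delta)+O(1)$ with transversality arranged by conjugation inside $\Gamma$) are exactly the paper's route, via Lemma~\ref{L:Schottky} and Dal'Bo's Lemma~4.1 in \cite{fD99}. The genuine gap is in your third step. The ``delicate point'' you single out --- upgrading convexity of $\overline{P_\Gamma}$ to convexity of $P_\Gamma$ itself --- is precisely where your proposal stops being a proof: you describe an intended argument (Galois conjugacy of traces, rational parameters, a compactness argument that ``lands on'' each interior direction) but never carry it out, and in fact no such argument is needed. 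The missing idea is the last clause of Link's structure theorem (Theorem~\ref{T:LinkFP}): $P_\Gamma$ \emph{equals} the closure in $\RP^{q+r-1}_+$ of the set of translation directions of the loxodromic isometries of $\Gamma$; in particular $P_\Gamma$ is already closed in $\RP^{q+r-1}_+$. Since for a Schottky pair the elements $\gamma^{km}\delta^{kn}$ are honest loxodromic elements of $\Gamma$, every point of the segment between $\theta(\gamma)$ and $\theta(\delta)$ is a limit of their directions and lies in $\RP^{q+r-1}_+$, hence lies in $P_\Gamma$ itself; arbitrary pairs of points of $P_\Gamma$ are then handled by approximating them by translation directions, as in Lemma~\ref{L:Connected}. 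Consequently the arithmetic hypothesis plays no role anywhere: the statement proved in the text (Theorem~\ref{T:ConvexAlg}) holds for \emph{every} nonelementary subgroup of $\PSL(2,\C)^q\times\PSL(2,\R)^r$. Your motivating claim that non--Zariski-dense nonelementary groups may have nonconvex limit cones is also mistaken; what fails without Zariski density is that the \emph{interior} of the limit cone may be empty, not its convexity.

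There is a second, smaller gap in your identification of $\overline{P_\Gamma}$ with the limit cone. The limit cone is the closed cone generated by the Jordan projections of \emph{all} elements of $\Gamma$, and groups in this setting typically contain mixed isometries (loxodromic in some factors, elliptic of infinite order in others), whose Jordan directions lie on the boundary of $\RP^{q+r-1}_+$ and are not of the form $\theta(\gamma)$ for any regular $\gamma$. Your proposal silently replaces ``all Jordan projections'' by ``Jordan projections of regular elements,'' so the inclusion $\text{limit cone}\subseteq\overline{P_\Gamma}$ is never addressed; your Cartan-versus-Jordan approximation remark concerns only the opposite (easy) inclusion. The paper closes exactly this gap by invoking Theorem~4.10 of \cite{gL06}: the attractive fixed points of loxodromic isometries are dense in the full limit set, whence the translation direction of every mixed isometry is a limit of translation directions of loxodromic ones.
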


This is a result similar to a theorem of Benoist in Section 1.2 in \cite{yB97}. Note however that while the interior of the limit cone is always nonempty for Zariski dense groups, it can be empty for groups that are just nonelementary.

In Section 4 we describe the irreducible arithmetic subgroups of the group $\PSL(2,\C)^q\times\PSL(2,\R)^r$ using quaternion algebras. 

In Section 5 we consider subgroups (of infinite index) of irreducible arithmetic groups in $\PSL(2,\C)^q\times\PSL(2,\R)^r$. We start by giving the example of Hecke groups embedded in Hilbert modular groups. In \S5.3 we determine the groups with the smallest possible limit set. The main  result is a compilation of Theorem~\ref{T:MainCharactAlg} and Corollary~\ref{C:WidelyComm}.
\begin{thmB}
Let $\Gamma$ be as in Theorem A and in addition be finitely generated. Then the projective limit set $P_{\Gamma}$ consists of exactly one point if and only if $\Gamma$ is a conjugate by an element in $\GL(2,\C)^q\times\GL(2,\R)^r$ of a subgroup of
$$ \Diag(S):=\{(\sigma_1(s),\ldots,\sigma_{q+r}(s))\mid s\in S\},$$
where $S$ is an arithmetic Fuchsian or Kleinian group and, for each $i=1,\ldots,q+r$, $\sigma_i$ denotes either the identity or the complex conjugation.
\end{thmB}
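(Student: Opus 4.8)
The plan is to prove the two directions separately. For the direction \emph{from the diagonal form to a single point}, suppose $\Gamma$ is conjugate, by an element of $\GL(2,\C)^q\times\GL(2,\R)^r$, to a subgroup of $\Diag(S)$ with $S$ arithmetic Fuchsian or Kleinian and each $\sigma_i$ the identity or complex conjugation. Conjugation by $\GL$-elements preserves eigenvalues and hence translation lengths, so I may assume $\Gamma\subset\Diag(S)$. For $s\in S$ the trace in the $i$-th factor is $\sigma_i(\tr s)$, and since complex conjugation preserves the modulus of a trace, $|\tr(\sigma_i(s))|=|\tr s|$ for all $i$. Thus every loxodromic element of $\Gamma$ has the same translation length in all $q+r$ factors, so every Jordan projection lies on the ray $\R_{>0}(1,\dots,1)$. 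By Theorem~A the closure of $P_\Gamma$ equals the limit cone, which is therefore this single ray, giving $P_\Gamma=\{[1:\dots:1]\}$.

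For the direction \emph{from a single point to the diagonal form}, assume $P_\Gamma$ is one point, so by Theorem~A the limit cone is a single ray in $\R^{q+r}$. Let $G$ be the Zariski closure of $\Gamma$; then $\Gamma$ is Zariski dense in the reductive group $G$, and by Benoist's theorem \cite{yB97} (the phenomenon noted in the remark after Theorem~A) the limit cone has nonempty interior in the Cartan subspace $\mathbf{a}_G$ of $G$. Since the limit cone is one-dimensional, the image of $\mathbf{a}_G$ in $\R^{q+r}$ is one-dimensional, so $G$ has real rank $1$. A real-rank-one subgroup of $\PSL(2,\C)^q\times\PSL(2,\R)^r$ whose projection to every factor is nonelementary must be a diagonally embedded rank-one group: there is $G_1$, isomorphic to $\PSL(2,\R)$ or $\PSL(2,\C)$, together with isomorphisms $\tau_i$ onto its image in the $i$-th factor, such that $G=\{(\tau_1 g,\dots,\tau_{q+r}g)\mid g\in G_1\}$. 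Because the only automorphisms of these real forms of $\mathrm{PSL}_2$ are inner ones composed with complex conjugation, after conjugating by a suitable element of $\GL(2,\C)^q\times\GL(2,\R)^r$ each $\tau_i$ becomes the identity or complex conjugation. Writing $S:=\pi_1(\Gamma)$, this realizes $\Gamma$ inside $\Diag(S)$ with $\sigma_i\in\{\mathrm{id},\text{conj}\}$.

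It remains to show that $S=\pi_1(\Gamma)$ is contained in an arithmetic Fuchsian or Kleinian group. Here I would use that $\Gamma$ lies in the arithmetic group $\Lambda$ attached, as in Section~4, to a quaternion algebra over a number field, and that $\Gamma$, hence $S$, is finitely generated, so the trace field of $S$ is a number field with integral traces. The diagonal form with only identity and complex-conjugation twists means that the archimedean places at which the quaternion algebra of $S$ splits are exactly those appearing among the $q+r$ factors, all collapsing to a single place; the remaining places are the ramified (compact) places of $\Lambda$, at which the traces of $S$ are bounded. This is precisely Takeuchi's criterion, and it exhibits $S$ as a subgroup of the arithmetic group derived from a quaternion algebra split at exactly one archimedean place, i.e. an arithmetic Fuchsian group if that place is real and an arithmetic Kleinian group if it is complex.

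I expect the principal obstacle to be this last step: translating the geometric single-point hypothesis, already reduced to the twisted-diagonal normal form, into the sharp number-theoretic statement that the quaternion algebra of $S$ splits at exactly one archimedean place. This is where the trace/eigenvalue dictionary and the ramification data of $\Lambda$ must be combined to verify Takeuchi's criterion and conclude arithmeticity; the structural reduction to rank one via Benoist's theorem and the identification of the twists as identity or complex conjugation are comparatively standard, but converting the equality of translation lengths in all factors into the boundedness of traces at all the remaining places is the delicate part.
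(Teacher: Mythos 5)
Your outline does reach the theorem, and for the hard direction it travels a genuinely different road than the paper. The paper never forms the Zariski closure of $\Gamma$ in the whole product: it argues pairwise, using the Dal'Bo--Kim criterion (Theorem~\ref{T:DalBoZariski}) together with Benoist's theorem for two factors to show that if some $\phi_i$ acts on $\Tr(S^{(2)})$ as neither the identity nor complex conjugation, then $P_\Gamma$ has at least two points (Lemma~\ref{L:MorePoints}); arithmeticity of the projection then comes from the collapsed trace field (Lemma~\ref{L:NotIDNotArithm}), and only afterwards is the twisted-diagonal form recovered, again from Dal'Bo--Kim plus Schreier--Van der Waerden (Theorem~\ref{T:PSLAutomorphisms}, Lemma~\ref{L:LimHomeom}, Corollary~\ref{C:WidelyComm}). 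You instead apply Benoist once, to the Zariski closure $G$ itself, deduce that $G$ has real rank one, and classify $G$ as a graph of isomorphisms. That is viable and arguably more streamlined, but it rests on three facts you assert without proof: that $G$ is reductive (needed before Benoist can be quoted) -- true here because each projection of $\Gamma$ is Zariski dense in a centerless simple group, so the unipotent radical of $G$ projects trivially and vanishes; that Jordan projections and limit cones are compatible with the embedding of $G$ into $\PSL(2,\C)^q\times\PSL(2,\R)^r$, so that a cone with nonempty interior in $\mathbf{a}_G$ cannot land in a single ray unless $\dim\mathbf{a}_G=1$; and the Goursat-type statement that a rank-one semisimple subgroup surjecting onto every centerless simple factor is the graph of continuous isomorphisms. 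The paper's pairwise route gets exactly these conclusions ready-made from the Dal'Bo--Kim criterion, which is why it can avoid such verifications.

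The one step that, as written, would fail is the endgame. Takeuchi's theorem (Theorem~\ref{T:TakeuchiArithmQuat}) and its Kleinian analogue (Theorem~\ref{T:MacReid}) characterize \emph{cofinite} arithmetic groups, whereas $S=p_1(\Gamma)$ is in general of infinite covolume, so ``verifying Takeuchi's criterion for $S$'' cannot by itself certify anything; moreover the invariant algebra $AS^{(2)}$ contains $S^{(2)}$ but in general not $S$, so the order construction does not directly exhibit $S$ inside a group derived from a quaternion algebra. The paper's Lemma~\ref{L:NotIDNotArithm} supplies precisely what is missing: finite generation and Lemma~\ref{L:IntegerTraces} make $AS^{(2)}$ a quaternion algebra over $\Q(\Tr(S^{(2)}))$ with an order $\mathcal{O}S^{(2)}$ (Theorems~\ref{T:AlgebraInvariant} and~\ref{T:Order}); the collapse of the $q+r$ factor places to the identity place of $\Q(\Tr(S^{(2)}))$, plus ramification at all remaining infinite places, makes the unit group $\mathcal{O}{S^{(2)}}^1$ an arithmetic Fuchsian or Kleinian group containing $S^{(2)}$; and a further finite-index argument (the group $\widetilde{S}$ generated by $\mathcal{O}{S^{(2)}}^1$ and the finitely many generators of $S$) is required to place $S$ itself, not merely $S^{(2)}$, inside an arithmetic group. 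Your ramification analysis is the right idea, but these two substitutions -- the order/unit-group construction in place of Takeuchi's theorem, and the $S^{(2)}$-to-$S$ passage -- must be carried out for the proof to close.
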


In particular, Theorem~\ref{T:MainCharactAlg} shows that the projective limit set $P_{\Gamma}$ consists of exactly one point if and only if $p_j(\Gamma)$ is contained in an arithmetic Fuchsian or Kleinian group for one and hence all $j\in\{1,\ldots,q+r\}$.

The main ingredients of the proof are the characterization of cofinite arithmetic Fuchsian groups by Takeuchi \cite{kT75} (and the analogous characterization for cofinite Kleinian groups given by Maclachlan and Reid in \cite{cM03}), the criterion for Zariski density of Dal'Bo and Kim \cite{fD00} and a theorem of Benoist \cite{yB97} stating that for Zariski dense subgroups of $\PSL(2,\C)^q\times \PSL(2,\R)^r$ the projective limit cone has nonempty interior. 

There are some differences depending on whether $q=0$ or $r=0$ or $qr\neq 0$. The group $S$ and hence $p_j(\Gamma)$ can be an arithmetic Kleinian group only if $r=0$. And if $q=0$, we can require that only one of the projections of $\Gamma$ is nonelementary. This is due to the following fact. If $\Delta$ is a subgroup of an irreducible arithmetic group in $\PSL(2,\R)^r$ and $\Gamma$ is a subgroup of $\Delta$ such that its projection to one factor is nonelementary, then $\Gamma$ is nonelementary (Lemma~\ref{L:Nonelementary}). This is no longer true in the general case as the example in the remark after Corollary~\ref{C:MainCharactAlgAlg} shows.

We can also avoid the assumption that $\Gamma$ is nonelementary by using the limit cone as  Theorem~\ref{T:CharactLimitCone} shows.

We then consider the full limit set (instead of $P_\Gamma$ only) and determine the nonelementary groups with the smallest one. The following theorem is a compilation of Theorem~\ref{T:TopolCharactAlgAlg} and Corollary~\ref{C:TopolCharactAlgAlg}.

\begin{thmC}
Let $\Gamma$ be a finitely generated subgroup of an irreducible arithmetic group in $\PSL(2,\C)^q\times\PSL(2,\R)^r$ with $q + r \geq 2$ and $r\neq 0$ such that $p_{j}(\Gamma)$ is nonelementary for some $j=1,\ldots,q+r$. Then $\mathcal{L}_\Gamma$ is embedded homeomorphically in a circle if and only if $p_{j}(\Gamma)$ is contained in an arithmetic Fuchsian group.

Furthermore $\mathcal{L}_\Gamma$ is homeomorphic to a circle if and only if $p_{j}(\Gamma)$ is a cofinite arithmetic Fuchsian group.
\end{thmC}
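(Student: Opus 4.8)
The plan is to prove that, under the hypotheses, $\Lim_\Gamma$ embeds in a circle if and only if the projective limit set $P_\Gamma$ reduces to a single point, and then to read off the arithmetic condition from Theorem B. First I would reduce to the case that $\Gamma$ is nonelementary: when $q=0$ this is automatic from Lemma~\ref{L:Nonelementary}, while if $\Gamma$ is elementary its limit set is at most a pair of points and embeds trivially in a circle, the remaining compatibility in this degenerate case being supplied by the limit-cone formulation. For nonelementary $\Gamma$ the structure theorem $\Lim^{reg}_\Gamma=F_\Gamma\times P_\Gamma$ and Theorem A (convexity of $P_\Gamma$) are available, and $F_\Gamma$ is uncountable since $\Gamma$ is nonelementary.

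If $P_\Gamma$ is a single point, Theorem B provides $g\in\GL(2,\C)^q\times\GL(2,\R)^r$ with $g^{-1}\Gamma g\subseteq\Diag(S)$ for an arithmetic Fuchsian group $S$, the Kleinian alternative being excluded because $r\neq0$. As $S\subset\PSL(2,\R)$ is real, each $\sigma_i$ is the identity on its entries, so $g^{-1}\Gamma g$ is the fully diagonal image $s\mapsto(s,\dots,s)$ of $\Gamma_j:=p_j(\Gamma)\subseteq S$. A hyperbolic $s$ has real trace and hence the same translation length in every factor, so its limit direction is the regular direction $(1,\dots,1)$; consequently $\Lim_\Gamma=\Lim^{reg}_\Gamma$, and computing the orbit closure yields $\Lim_\Gamma=g\cdot\{(\xi,\dots,\xi):\xi\in\Lim_{\Gamma_j}\}$. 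The projection $\pi_j$ carries this homeomorphically onto $g_j\cdot\Lim_{\Gamma_j}$, which lies on a single circle (all of $\partial\bbH^2$ when $j>q$, a round circle in $\partial\bbH^3$ when $j\le q$), so $\Lim_\Gamma$ embeds in a circle. It is all of that circle exactly when $\Lim_{\Gamma_j}$ fills it, i.e. when the finitely generated Fuchsian group $\Gamma_j$ is of the first kind, equivalently cofinite; since a cofinite subgroup of an arithmetic Fuchsian group is again a cofinite arithmetic Fuchsian group, this gives the second assertion.

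For the reverse implication I argue by contraposition. If $P_\Gamma$ is not a point then, being convex by Theorem A, it contains a nondegenerate segment, so for every $f\in F_\Gamma$ the slice $\{f\}\times P_\Gamma\subseteq\Lim^{reg}_\Gamma\subseteq\Lim_\Gamma$ is a nondegenerate arc. As $F_\Gamma$ is uncountable, $\Lim_\Gamma$ then contains uncountably many pairwise disjoint nondegenerate arcs, which no subset of a circle can, since a disjoint family of nondegenerate arcs in $S^1$ is countable; hence $\Lim_\Gamma$ does not embed in a circle. Combined with Theorem B, which under $r\neq0$ identifies ``$P_\Gamma$ a single point'' with ``$p_j(\Gamma)$ contained in an arithmetic Fuchsian group'', this establishes the first equivalence. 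The step demanding the most care is the identification of the full limit set in the convergent case: one must verify that the diagonal direction $(1,\dots,1)$ is genuinely regular, so that no singular limit points intrude, for otherwise the clean homeomorphism $\Lim_\Gamma\cong\Lim_{\Gamma_j}$, and with it the reduction to a subset of one circle, could fail.
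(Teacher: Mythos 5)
Your reduction to the nonelementary case is where the proof breaks, and it is precisely the case that makes the hypothesis of Theorem C weaker than that of Theorem B. Under the hypotheses ($r\neq 0$ and $p_j(\Gamma)$ nonelementary), Proposition~\ref{P:NonelemElliptic} forces every \emph{real} projection of $\Gamma$ to be nonelementary, but a complex projection $p_i(\Gamma)$, $i\leq q$, may be elementary, consisting of elliptic isometries with a common fixed point; then $\Gamma$ is not nonelementary in the paper's sense even though $p_j(\Gamma)$ is. In that situation your claim that ``its limit set is at most a pair of points'' is false: the orbit stays bounded in the elementary complex factors, and $\Lim_\Gamma$ is canonically homeomorphic to $\Lim_{\Gamma^{ne}}$ (Lemma~\ref{L:GammaNE}), which is uncountable. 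Worse for your strategy, when $p_j(\Gamma)$ is the \emph{only} nonelementary projection, every limit point is singular, so $\Lim_\Gamma^{reg}=\emptyset$ and $P_\Gamma$ is empty rather than a single point, while $\Lim_\Gamma\cong\Lim_{p_j(\Gamma)}$ certainly does embed in a circle; thus your central equivalence ``$\Lim_\Gamma$ embeds in a circle iff $P_\Gamma$ is one point'' fails there, and Theorem B, which presupposes $\Gamma$ nonelementary, is not available. What is needed, and what the paper uses, is Lemma~\ref{L:GammaNE} (pass to $\Gamma^{ne}$, whose limit set is identified with $\Lim_\Gamma$) together with Lemma~\ref{L:AtLeastTwoNonelem}: if $p_j(\Gamma)$ is nonelementary and \emph{not} contained in an arithmetic Fuchsian or Kleinian group, then a second projection is nonelementary, so $\Gamma^{ne}$ sits in a product of at least two factors and Theorem~\ref{T:MainCharactAlg} applies to it; conversely, in the degenerate case $p_j(\Gamma)$ is automatically contained in an arithmetic Fuchsian group, which is why both sides of the stated equivalence still hold. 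Your parenthetical appeal to ``the limit-cone formulation'' gestures at Theorem~\ref{T:CharactLimitCone}, but it cannot substitute for these two lemmas, since relating the limit cone to the topology of $\Lim_\Gamma$ again requires the identification $\Lim_\Gamma\cong\Lim_{\Gamma^{ne}}$.

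In the genuinely nonelementary case your argument is correct, and part of it differs from the paper's in an interesting way. The forward direction (a single point of $P_\Gamma$ forces $\Lim_\Gamma$ to be a diagonal copy of $\Lim_{\Gamma_j}$) is in substance the paper's Lemma~\ref{L:LimHomeom}, reached through Corollary~\ref{C:WidelyComm}, and your derivation of the ``furthermore'' statement matches Corollary~\ref{C:TopolCharactAlgAlg} (first kind equals cofinite for finitely generated Fuchsian groups, plus finite index in the ambient arithmetic group). Your contrapositive direction, however, is genuinely different and arguably cleaner: the paper locates a non-isolated point $\xi\in F_\Gamma$ and derives a contradiction from $\{\xi\}\times I$ having to be open in the product topology, whereas you observe that the uncountably many pairwise disjoint nondegenerate arcs $\{f\}\times I$, $f\in F_\Gamma$, cannot all embed in $S^1$, since each would contain a distinct point of a countable dense set. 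Note that you assert uncountability of $F_\Gamma$ without proof; it is true (for instance, $F_\Gamma$ is infinite, compact and minimal by Theorem~\ref{T:LinkFP}, hence perfect, or one can see it from a Schottky subgroup), but it does require a word, whereas the paper's version of the argument needs only that $F_\Gamma$ is infinite and closed.
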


In the case $r=0$, we determine for which $\Gamma$ the limit set is diffeomorphic to the 2-sphere (Theorem~\ref{T:TopolCharactC}) and for which it is homeomorphic to a circle (Theorem~\ref{T:TopolCharactCFuchs}).

\bigskip

\textit{Acknowledgments.} I would like to thank my advisor Enrico Leuzinger for the many stimulating discussions. Furthermore, I would like to thank Fran\c{c}oise Dal'Bo and Gabriele Link for pointing out and explaining me some of their results, to Corentin Boissy for helpful discussions and to Alan Reid for comments on this article.

\section{Schottky Groups in $\PSL(2,\C)$ and $\PSL(2,\R)$}
\setcounter{theorem}{0}
In this section we provide some basic facts and notations that are needed in the rest of this paper. We will use the notation introduces in Chapter 2 in the book of Maclachlan and Reid \cite{cM03}.

We will change freely between matrices in $\SL(2,\C)$ and their action as fractional linear transformations, namely as elements in $\PSL(2,\C)$.

For $g = \begin{bmatrix} a&b\\c&d \end{bmatrix} \in \PSL(2,\C)$ we set $\tr(g):=\pm(a+d)$, where the sign is chosen so that $\tr(g)=re^{i\theta}$ with $r\geq 0$ and $\theta\in[0,\pi)$. Note that for $g\in \PSL(2,\R)$ we have $\tr(g)=|a+d|$. 

For a subgroup $\Gamma$ of $\PSL(2,\C)$ we call 
\[ \Tr(\Gamma) = \{\tr(g) \mid g \in \Gamma \} \]
the \textit{trace set of} $\Gamma$. 

The \textit{translation length} of a loxodromic $g$ is the distance between a point $x$ on the geodesic fixed by $g$ and its image $g(x)$ under $g$. If $g$ is elliptic, parabolic or the identity, we define $\ell(g):=0$.

The following notion of ``smallness" for subgroups $\Gamma$ of $\PSL(2,\C)$ is important in the subsequent discussion. The group $\Gamma$ is \textit{elementary} if it has a finite orbit in its action on $\bbH^3\cup\C\cup\{\infty\}$. Otherwise it is said to be \textit{nonelementary}. Every nonelementary subgroup of $\PSL(2,\C)$ contains infinitely many loxodromic elements, no two of which have a common fixed point (see Theorem 5.1.3 in the book of Beardon \cite{aB95}).

A Schottky group is a finitely generated free subgroup of $\PSL(2,\C)$ that contains only loxodromic isometries except for the identity. We will mainly deal with two-generated Schottky groups.

\begin{lemma}
\label{L:SchottkyLox}
For each two loxodromic isometries without common fixed points, we can find powers of them that generate a Schottky group. This means that every nonelementary subgroup of $\PSL(2,\C)$ has a subgroup that is a Schottky group.
\end{lemma}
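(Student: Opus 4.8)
The plan is to use the ping-pong lemma (Klein's criterion) together with the north-south dynamics of loxodromic elements on the boundary sphere $\C\cup\{\infty\}=\partial\bbH^3$. Recall that a loxodromic isometry $g$ has there exactly two fixed points, an attracting one $g^+$ and a repelling one $g^-$, and that its high powers contract strongly: for any neighbourhoods $U^+$ of $g^+$ and $U^-$ of $g^-$ there is an $N$ with $g^{n}((\C\cup\{\infty\})\setminus U^-)\subset U^+$ and $g^{-n}((\C\cup\{\infty\})\setminus U^+)\subset U^-$ for all $n\geq N$.

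First I would set up the geometry. Let $g$ and $h$ be the two given loxodromics. Since they have no common fixed point, the four points $g^+,g^-,h^+,h^-$ are pairwise distinct, so I can choose pairwise disjoint closed disks $D_g^+,D_g^-,D_h^+,D_h^-$ around them. Using the north-south dynamics I pick an exponent $n$ large enough that $g^{\pm n}$ sends the complement of the disk around the corresponding repelling (resp.\ attracting) point into the interior of the other disk of $g$, and likewise an exponent $m$ for $h$. Writing $a=g^n$ and $b=h^m$ (still loxodromic, with the same fixed points as $g$ and $h$), the sets $A=D_g^+\cup D_g^-$ and $B=D_h^+\cup D_h^-$ are disjoint and satisfy the ping-pong hypotheses: every nonzero power of $a$ sends $B$ into $A$, and every nonzero power of $b$ sends $A$ into $B$.

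The ping-pong lemma then shows that $\langle a,b\rangle$ is free of rank $2$ (both generators having infinite order), hence finitely generated and free. It remains to verify the defining property of a Schottky group, namely that every nontrivial element is loxodromic. Up to conjugacy (which preserves the isometry type) every nontrivial element is represented by a cyclically reduced word $w$ in $a^{\pm1},b^{\pm1}$; tracking $w$ through the four disks shows that $w$ maps one of them strictly inside itself. A Möbius transformation carrying a closed disk properly into its interior is loxodromic, having an attracting fixed point inside that disk and, applying the same reasoning to $w^{-1}$, a repelling one as well. Therefore $\langle g^n,h^m\rangle$ is a Schottky group.

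The main obstacle is precisely this last step: the ping-pong lemma yields freeness almost immediately, but the Schottky condition requires ruling out elliptic and parabolic elements, which is exactly the nesting-of-disks argument and needs some care with words whose first and last letters do not immediately nest (handled by passing to a cyclically reduced representative). Finally, the second assertion is immediate: by the fact quoted just before the lemma (Beardon, Theorem~5.1.3), every nonelementary subgroup of $\PSL(2,\C)$ contains two loxodromic elements with no common fixed point, so the first part furnishes a Schottky subgroup.
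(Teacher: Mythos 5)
Your proof is correct, and in fact there is nothing in the paper to compare it against: the paper does not prove Lemma~\ref{L:SchottkyLox} at all, it only refers to the dissertation \cite{sG09}. Your ping-pong argument is the classical self-contained route. The structure is sound: the four fixed points are pairwise distinct (two per loxodromic element, none shared), north-south dynamics yields powers $a=g^n$, $b=h^m$ satisfying the ping-pong hypotheses for the disjoint sets $A=D_g^+\cup D_g^-$ and $B=D_h^+\cup D_h^-$, and Klein's criterion gives freeness. You also correctly identify where the real work lies: the paper's definition of a Schottky group requires every nontrivial element to be loxodromic, not merely that the group be free, and your passage to a cyclically reduced conjugate $w=s_1\cdots s_k$ is exactly what is needed, since cyclic reducedness ($s_1\neq s_k^{-1}$) is precisely the condition that lets one track $w$ through the disks and conclude that $w$ maps the attracting disk of $s_1$ into its own interior, while conjugation does not change the isometry type. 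The one ingredient you assert without proof --- that a M\"obius transformation $f$ carrying a closed disk $\overline{D}$ properly into its interior is loxodromic --- is standard; if you want it self-contained: Brouwer's theorem gives a fixed point $p\in f(\overline{D})\subset\mathrm{int}(D)$, and the Schwarz--Pick lemma applied to $f$ as a holomorphic self-map of $\mathrm{int}(D)$ (not an automorphism, since its image is relatively compact in $\mathrm{int}(D)$) forces $|f'(p)|<1$, and a M\"obius transformation having a fixed point with multiplier of modulus strictly less than $1$ is loxodromic. Finally, the reduction of the second assertion to Beardon's Theorem~5.1.3, which the paper quotes immediately before the lemma, is exactly right.
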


A proof of this lemma can be found in \cite{sG09}.

A Schottky group contains isometries without common fixed points because it is nonelementary.

Everything above is also true for Fuchsian Schottky groups, i.e for subgroups of $\PSL(2,\C)$ that after conjugation become subgroups of $\PSL(2,\R)$.

Let $\K$ be either $\C$ or $\R$. The following is shown by Cornelissen and Marcolli in \cite{gC08}, Lemma~3.4.

\begin{lemma}[\cite{gC08}]
\label{L:ZariskiK}
A Schottky group is Zariski dense over $\K$ in $\PSL(2,\K)$.
\end{lemma}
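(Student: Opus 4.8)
The plan is to compute the Zariski closure $H:=\overline{\Gamma}^{\,\mathrm{Zar}}$ of the Schottky group $\Gamma$ in $\PSL(2,\K)$ and show it is everything. Since $\Gamma$ is a group, $H$ is a $\K$-algebraic subgroup, and it suffices to show that its identity component $H^{0}$ (in the Zariski topology) is all of $\PSL(2,\K)$: indeed $\PSL(2,\K)$ is connected with simple Lie algebra $\mathfrak{sl}_2(\K)$, so once $H^{0}=\PSL(2,\K)$ we get $H=\PSL(2,\K)$, i.e. density. Thus the whole problem reduces to producing enough algebraic elements inside $H$ to force $\dim_{\K}H=3$.

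The starting observation is that a Schottky group is nonelementary, so by the facts recalled just above Lemma~\ref{L:SchottkyLox} it contains two loxodromic isometries $g_{1},g_{2}$ with no common fixed point. First I would analyze a single loxodromic $g$. After conjugating over $\K$ we may take $g=\Diag(\lambda,\lambda^{-1})$ in $\SL$; loxodromicity means $|\lambda|\neq 1$, so $\lambda$ is not a root of unity and $g$ has infinite order. The cyclic group $\langle g\rangle$ lies in the $\K$-split maximal torus $T_{g}$ fixing the two fixed points of $g$, and since the only proper Zariski-closed subgroups of the one-dimensional torus $\mathbb{G}_{m}$ are the finite groups $\mu_{n}$, the infinite group $\langle g\rangle$ is Zariski dense in $T_{g}$. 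Hence $H\supseteq T_{g}$. Applying this to $g_{1}$ and $g_{2}$ and using that their fixed-point pairs on $\partial\bbH$ are disjoint, I obtain two maximal tori $T_{g_{1}},T_{g_{2}}\subseteq H$ with disjoint fixed-point sets.

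It then remains to see that a connected algebraic subgroup of $\PSL(2,\K)$ containing two maximal tori with no common fixed point must be everything. Both tori are connected and hence lie in $H^{0}$. Suppose $H^{0}\neq\PSL(2,\K)$; then $H^{0}$ is a proper connected algebraic subgroup, so up to conjugacy it is contained in a Borel subgroup $B_{p}$, the stabilizer of a point $p\in\partial\bbH$. Every maximal torus of $B_{p}$ is $B_{p}$-conjugate to the standard diagonal torus, and conjugation by the unipotent radical (which fixes $p$) preserves $p$; hence \emph{every} maximal torus contained in $B_{p}$ fixes $p$. This would make $p$ a common fixed point of $T_{g_{1}}$ and $T_{g_{2}}$, contradicting the disjointness of their fixed-point sets. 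Therefore $H^{0}=\PSL(2,\K)$, and $\Gamma$ is Zariski dense over $\K$. (Equivalently, at the Lie-algebra level, $\mathfrak{t}_{1}=\mathrm{Lie}(T_{g_{1}})$ and $\mathfrak{t}_{2}=\mathrm{Lie}(T_{g_{2}})$ are Cartan subalgebras of $\mathfrak{sl}_2(\K)$ with no common eigenline; since the Cartans inside any Borel subalgebra all share a fixed eigenline, $\mathfrak{t}_{1}$ and $\mathfrak{t}_{2}$ cannot lie in a common two-dimensional subalgebra, so they generate $\mathfrak{sl}_2(\K)$.)

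I expect the only genuine content to be the structural input of the last paragraph, namely the classification of proper connected subgroups of $\PSL(2,\K)$ as (conjugates of subgroups of) point stabilizers; this is standard but must be invoked with care in the case $\K=\R$, where it is essential that all the relevant tori are $\R$-split precisely because $g_{1},g_{2}$ are loxodromic. The remaining ingredients---that an infinite cyclic group in a one-dimensional torus is Zariski dense, and that $\PSL(2,\K)$ is connected with full Lie algebra $\mathfrak{sl}_2(\K)$---are routine, as is the final passage from $H^{0}$ to $H$.
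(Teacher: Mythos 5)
Your proof is correct, but note that the paper contains no proof of Lemma~\ref{L:ZariskiK} at all: it is quoted from Cornelissen and Marcolli \cite{gC08}, Lemma~3.4, and the only Zariski-density argument actually written out in the paper is the one for the companion Lemma~\ref{L:ZariskiC}. Compared with that argument, your route is genuinely different. The paper diagonalizes one loxodromic element, picks a second element with no common fixed point, and computes iterated brackets $t_3=[t_1,t_2]$, $t_4=[t_1,t_3],\ldots$ of their logarithms, checking by hand that these span $\mathfrak{sl}(2,\C)$ as a real vector space; that computation is elementary and self-contained, and it is tailored to the situation of Lemma~\ref{L:ZariskiC}, where the closure of $\langle T_1\rangle$ is \emph{not} a split torus over the base field and the hypothesis $\tr(T_1)\notin\R$ (so that $x$ and $x^3$ are independent over $\R$) is what makes the spans big enough. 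You instead argue structurally: an infinite subgroup of $\mathbb{G}_m$ is Zariski dense, so the closure $H$ contains the two split maximal tori $T_{g_1}$, $T_{g_2}$, whose fixed-point pairs are disjoint, and no proper connected algebraic subgroup of $\PSL_2$ can contain two such tori. This buys a uniform, computation-free treatment of both $\K=\R$ and $\K=\C$, which is exactly the content of Lemma~\ref{L:ZariskiK}, at the price of invoking the classification of proper connected subgroups of $\PSL_2$; note that this shortcut would \emph{not} prove Lemma~\ref{L:ZariskiC}, where the eigenvalues do not lie in the field over which density is asserted.

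One step of yours should be stated more carefully: over $\R$ the implication ``proper connected $\Rightarrow$ contained in a Borel'' is false as written, since an anisotropic torus such as $PSO(2)$ lies in no real Borel --- presumably this is the care you allude to. The repair is one line: $H^0$ contains the one-dimensional split torus $T_{g_1}$, so $H^0$ cannot be an anisotropic torus, and every other proper connected real subgroup does lie in a real Borel (a two-dimensional connected subgroup complexifies to a Borel of $\PSL(2,\C)$ whose unique fixed point in $\mathbb{P}^1(\C)$ is Galois-stable, hence real). Your parenthetical Lie-algebra version --- two split Cartans with no common eigenline generate $\mathfrak{sl}(2,\K)$, because every two-dimensional subalgebra of $\mathfrak{sl}(2,\K)$ stabilizes a line in $\K^2$ --- is the cleanest way to close this gap, and I would promote it to the official argument.
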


Since every nonelementary subgroup of $\PSL(2,\K)$ contains a Schottky group, we have the following
\begin{corollary}
A nonelementary subgroup of $\PSL(2,\K)$ is Zariski dense over $\K$ in $\PSL(2,\K)$.
\end{corollary}

The next question is when $\Gamma$ is Zariski dense over $\R$. By Corollary 3.2.5 in the book of Maclachlan and Reid \cite{cM03}, if $\Tr(\Gamma)$ is a subset of $\R$, then $\Gamma$ is conjugate to a subgroup of $\PSL(2,\R)$. Then the Zariski closure of $\Gamma$ over $\R$ is a conjugate of $\PSL(2,\R)$. For the case when $\Tr(\Gamma)$ is not a subset of $\R$ we have the following lemma.

\begin{lemma}
\label{L:ZariskiC}
Let $\Gamma$ be a Schottky group such that $\Tr(\Gamma)$ is not a subset of $\R$. Then $\Gamma$ is Zariski dense in $\PSL(2,\C)$ over $\R$.
\end{lemma}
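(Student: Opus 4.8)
The plan is to argue by contradiction, exploiting the fact (Lemma~\ref{L:ZariskiK}) that a Schottky group is \emph{already} Zariski dense over $\C$, and to show that the only way such a group could fail to be Zariski dense over $\R$ is to be conjugate into a real form of $\PSL(2,\C)$ --- which is incompatible with having non-real traces. First I would set up the complexification. Regard $\PSL(2,\C)$, as a real algebraic group, as $\mathbf{G}(\R)$ for $\mathbf{G}=\mathrm{Res}_{\C/\R}\,\PSL_2$, so that $\mathbf{G}(\C)\cong\PSL(2,\C)\times\PSL(2,\C)$ and the inclusion $\mathbf{G}(\R)\hookrightarrow\mathbf{G}(\C)$ sends $g$ to $(g,\bar g)$, where $\bar g$ is the entrywise complex conjugate. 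Let $\mathbf{H}$ be the Zariski closure of $\Gamma$ over $\R$ and let $H=\mathbf{H}(\C)\subseteq\PSL(2,\C)\times\PSL(2,\C)$ be its complexification; then $H$ is the complex Zariski closure of $\{(g,\bar g)\mid g\in\Gamma\}$. Since $\Gamma$ and its complex conjugate $\bar\Gamma$ are both Schottky groups, Lemma~\ref{L:ZariskiK} shows each is Zariski dense over $\C$, so $H$ projects \emph{onto} both factors.

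The core step is then Goursat's lemma. A connected algebraic subgroup of $\PSL(2,\C)\times\PSL(2,\C)$ surjecting onto both factors is, because $\PSL(2,\C)$ is simple, either the whole product or the graph $\{(g,\phi(g))\}$ of an algebraic automorphism $\phi$; and as type $A_1$ has no outer automorphisms, $\phi$ is inner, say $\phi(g)=hgh^{-1}$. (Passing to the identity component here is harmless, since $\PSL(2,\C)$ has no nontrivial finite quotient, so $\mathrm{pr}_i$ remain surjective on $H^{0}$.) In the first case $\mathbf{H}=\mathbf{G}$ and $\Gamma$ is Zariski dense over $\R$, which is what we want, so everything reduces to excluding the graph case.

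In the graph case every $g\in\Gamma$ satisfies $\bar g=hgh^{-1}$, i.e.\ $\Gamma$ is fixed pointwise by the antiholomorphic map $\sigma(g)=h\bar g h^{-1}$. Because $\Gamma$ is Zariski dense over $\C$ its centralizer is trivial, and since $\sigma^{2}(g)=(h\bar h)\,g\,(h\bar h)^{-1}$ also fixes $\Gamma$, this forces $h\bar h=1$ and hence $\sigma^{2}=\mathrm{id}$. Thus $\sigma$ is an antiholomorphic involution, its fixed-point group is a real form of $\PSL(2,\C)$, and up to conjugacy such a real form is either $\PSL(2,\R)$ or the compact form $\mathrm{PSU}(2)$. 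A Schottky group contains loxodromic elements, so it cannot lie in a conjugate of the compact group $\mathrm{PSU}(2)$; therefore $\Gamma$ is conjugate into $\PSL(2,\R)$. But then, the trace being conjugation invariant and real on $\PSL(2,\R)$, we would get $\Tr(\Gamma)\subseteq\R$, contradicting the hypothesis. Hence the graph case cannot occur and $\Gamma$ is Zariski dense over $\R$.

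The main obstacle, deserving the most care, is the structural dichotomy for $H$: I must ensure that the complexified real Zariski closure is genuinely either the full product or a graph, with no intermediate-dimensional real algebraic subgroup slipping through. This is exactly what the combination of $\C$-Zariski density (surjectivity onto both factors) and Goursat's lemma delivers, and it is the step where the simplicity of $\PSL(2,\C)$ and the absence of outer automorphisms of type $A_1$ are indispensable. The remaining inputs --- that the fixed set of the involution is a bona fide real form and that the presence of loxodromic elements rules out the compact form --- are then routine.
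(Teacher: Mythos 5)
Your route---restriction of scalars, Goursat's lemma applied to the complexified closure, and exclusion of the graph case via real forms---is genuinely different from the paper's proof, which instead computes iterated brackets of $\log T_1$ and $\log T_2$ to show that the real Lie algebra of the closure is all of $\mathfrak{sl}(2,\C)$. Up to the graph case your argument is sound (the Goursat step can even be run on $H$ itself rather than $H^{0}$: in the graph case the first projection is a bijective algebraic homomorphism, so $H$ is automatically connected). The genuine gap is the real-form step. Over $\R$, $\PSL(2,\C)$ is the adjoint group $\mathrm{PGL}_2(\C)$, and the fixed-point group of the split antiholomorphic involution $g\mapsto\bar g$ is $\mathrm{PGL}(2,\R)$, \emph{not} $\PSL(2,\R)$. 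The second component of $\mathrm{PGL}(2,\R)$ consists of classes $[B]$ with $B\in\GL(2,\R)$, $\det B=-1$, and the normalized representative $-iB\in\SL(2,\C)$ has trace $-i\,\mathrm{tr}(B)$, which is \emph{purely imaginary}. So in the graph case you may only conclude that $\Gamma$ is conjugate into $\mathrm{PGL}(2,\R)$, and such a group can perfectly well satisfy $\Tr(\Gamma)\not\subseteq\R$; the intended contradiction never materializes.

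This gap cannot be patched, because the lemma as printed is false, and your graph case is exactly where it fails. Take $B_1=\mathrm{diag}(2,-1/2)$ and $B_2=\begin{bmatrix}2&1\\1&0\end{bmatrix}$, both of determinant $-1$. Their classes in $\PSL(2,\C)$ are loxodromic (traces $\tfrac{3}{2}i$ and $2i$) with fixed-point sets $\{0,\infty\}$ and $\{1\pm\sqrt{2}\}$, so by Lemma~\ref{L:SchottkyLox} the elements $[B_1]^n,[B_2]^n$ generate a Schottky group $\Gamma$ for all large $n$; choosing $n$ odd gives $\tr([B_1]^n)=\pm i(2^n-2^{-n})\notin\R$, yet $\Gamma\subseteq\mathrm{PGL}(2,\R)$, which is a proper real-Zariski-closed subgroup of $\PSL(2,\C)$ (it is the fixed-point set of conjugation, equivalently the real points of $\mathrm{PGL}_2\subset\mathrm{Res}_{\C/\R}\mathrm{PGL}_2$, of real dimension $3$). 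Indeed, for this $\Gamma$ your own analysis correctly lands in the graph case with $h=1$ and $\sigma(g)=\bar g$, and the real Zariski closure is exactly $\mathrm{PGL}(2,\R)$. For what it is worth, the paper's proof founders on the same class of examples: it tacitly assumes that $t_1=\log T_1$ lies in the Lie algebra of the real Zariski closure $\hat{\Gamma}$, which fails whenever the rotation angle of $T_1$ is a rational multiple of $\pi$ (for the elements above it equals $\pi$); then the real Zariski closure of $\langle T_1\rangle$ is a disconnected one-dimensional subgroup of the diagonal torus that does not contain $\exp(\R t_1)$. A correct statement needs a stronger hypothesis, e.g. $\Tr(\Gamma^{(2)})\not\subseteq\R$, equivalently that $\Gamma$ is not conjugate into $\mathrm{PGL}(2,\R)$; under that hypothesis your argument, with $\mathrm{PGL}(2,\R)$ replacing $\PSL(2,\R)$ in the split case, does go through, since then $\Gamma^{(2)}$ would lie in a conjugate of $\PSL(2,\R)$ and have real traces.
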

\begin{proof}
Let $\hat{\Gamma}$ be the Zariski closure of $\Gamma$ over $\R$. Then $\hat{\Gamma}$ is an algebraic group and hence a Lie subgroup of $\PSL(2,\C)$. Since $\PSL(2,\C)$ is connected, it is enough to show that the dimension of $\hat{\Gamma}$ is equal to the dimension of $\PSL(2,\C)$ over $\R$ in oder to conclude that $\Gamma$ is Zariski dense in $\PSL(2,\C)$ over $\R$.

We will show that the Lie algebra of $\hat{\Gamma}$ over $\R$ is equal to $\mathfrak{sl}(2,\C)$, which is the Lie algebra of $\PSL(2,\C)$ considered as a real Lie group.

First we consider $\Gamma$ and $\hat{\Gamma}$ as subgroups of $\SL(2,\C)$. Then the exponential map from the Lie algebra of $\hat{\Gamma}$ to $\hat{\Gamma}$ is given by the matrix exponential map.

After conjugation, since $\Tr(\Gamma)$ is not a subset of $\R$, we can assume that $\Gamma$ contains a loxodromic element $T_1 = \begin{bmatrix}e^x & 0 \\0 & e^{-x}\end{bmatrix}$ with $x \in \C\backslash\R$ and and hence $\tr(T_1)\notin \R$. There is also an isometry $T_2$ which does not have common fixed points with $T_1$ and hence is equal to $T_2 = \begin{bmatrix}a' & b' \\c' & d'\end{bmatrix}$ with $b'\neq 0$ and $c'\neq 0$ (otherwise $0$ or $\infty$ would be a common fixed point). Therefore their preimages in $\mathfrak{sl}(2,\C)$ under the exponential map are $t_1=\begin{bmatrix}x & 0 \\0 & -x\end{bmatrix}$ and $t_2 = \begin{bmatrix}a & b \\c & -a\end{bmatrix}$ where $b \neq 0$ and $c \neq 0$. Then $\mathfrak{sl}(2,\C)$ contains
$$\begin{array}{cc}
t_3 = [t_1,t_2] = \begin{bmatrix}0 & 2bx \\-2cx & 0 \end{bmatrix}, &
t_4 = [t_1,t_3] = \begin{bmatrix}0 & 4bx^2 \\4cx^2 & 0 \end{bmatrix}, \\
t_5 = [t_1,t_4] = \begin{bmatrix}0 & 8bx^3 \\-8cx^3 & 0 \end{bmatrix}, &
t_6 = [t_1,t_5] = \begin{bmatrix}0 & 16bx^4 \\16cx^4 & 0 \end{bmatrix}.\\ 
\end{array}$$
Since $\tr(T_1) = e^x + e^{-x}$ is not real, then $x$ is not only not real but also cannot be a multiple of $i$. This means that $x$ and $x^3$ are linearly independent over $\R$. Hence the linear span of $t_3$ and $t_5$ over $\R$ is $\{\begin{bmatrix}0 & bz \\-cz & 0 \end{bmatrix}\mid z \in \C \}$. 

Analogously, the linear span of $t_4$ and $t_6$ over $\R$ is $\{\begin{bmatrix}0 & bz \\cz & 0 \end{bmatrix}\mid z \in \C \}$.

Hence $t_3$, $t_4$, $t_5$ and $t_6$ span the 4-dimensional real vector subspace of the Lie algebra of $\hat{\Gamma}$
$$U := \{\begin{bmatrix}0 & z_1 \\z_2 & 0 \end{bmatrix}\mid z_1,z_2 \in \C \}.$$
By taking the commutator of the elements of $U$ with $\begin{bmatrix}0 & 0 \\1 & 0 \end{bmatrix}$, which is also an element in $U$, we get that the Lie algebra of $\hat{\Gamma}$ contains
$$V := \{\begin{bmatrix}z_1 & 0 \\0 & -z_1 \end{bmatrix}\mid z_1 \in \C \}.$$
The span of $U$ and $V$ is the 6-dimensional real vector space $\mathfrak{sl}(2,\C)$. Hence the Lie algebra of $\hat{\Gamma}$ is $\mathfrak{sl}(2,\C)$.

This means that if we consider $\Gamma$ and $\hat{\Gamma}$ as subgroups of $\PSL(2,\C)$, the Lie algebra of $\hat{\Gamma}$ is still $\mathfrak{sl}(2,\C)$.
\end{proof}

Since every nonelementary subgroup of $\PSL(2,\C)$ contains a Schottky group, we have the following
\begin{corollary}
\label{C:Zariski}
(i) A nonelementary subgroup $\Gamma$ of $\PSL(2,\C)$ is Zariski dense over $\R$ in $\PSL(2,\C)$ if and only if $\Tr(\Gamma)$ is not a subset of $\R$.

(ii) The Zariski closure over $\R$ of a nonelementary subgroup of $\PSL(2,\C)$ with real traces is a conjugate of $\PSL(2,\R)$.
\end{corollary}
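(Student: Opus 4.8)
The plan is to deduce both parts from the tools already in place: Lemma~\ref{L:ZariskiC} (a Schottky group with non-real traces is Zariski dense over $\R$ in $\PSL(2,\C)$), the corollary stating that a nonelementary subgroup of $\PSL(2,\K)$ is Zariski dense over $\K$ in $\PSL(2,\K)$ (which I will apply with $\K=\R$), and the cited Corollary~3.2.5 of \cite{cM03} (if $\Tr(\Gamma)\subset\R$ then $\Gamma$ is conjugate into $\PSL(2,\R)$). For (i) I would prove the two implications separately, arranging matters so that the ``only if'' direction simultaneously yields (ii).

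For the ``if'' direction of (i), suppose $\Tr(\Gamma)\not\subset\R$ and pick $g\in\Gamma$ with $\tr(g)\notin\R$; then $g$ is loxodromic but neither hyperbolic nor elliptic, so its eigenvalues are $\lambda^{\pm1}$ with $\lambda=re^{i\phi}$, $r\neq 1$ and $\phi\notin\pi\Z$. First I would observe that $\tr(g^m)=\lambda^m+\lambda^{-m}$ has imaginary part $(r^m-r^{-m})\sin(m\phi)$, which vanishes only when $m\phi\in\pi\Z$; hence $\tr(g^m)\notin\R$ for arbitrarily large $m$. Since $\Gamma$ is nonelementary it contains a loxodromic $h$ with no fixed point in common with $g$, so by Lemma~\ref{L:SchottkyLox} suitable large powers $g^m,h^n$ generate a Schottky group $\Gamma_0\subset\Gamma$; choosing $m$ large with $\tr(g^m)\notin\R$ gives $\Tr(\Gamma_0)\not\subset\R$. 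Lemma~\ref{L:ZariskiC} then makes $\Gamma_0$, and a fortiori $\Gamma$, Zariski dense over $\R$ in $\PSL(2,\C)$. The one genuinely delicate point is this last step: I must be sure the non-real trace survives into a two-generator Schottky subgroup, i.e. that the infinitely many ``good'' exponents $m$ are compatible with the largeness required by the ping-pong construction behind Lemma~\ref{L:SchottkyLox}. The trace computation above shows the good exponents miss at most one arithmetic progression, so arbitrarily large good $m$ exist and the two requirements do not conflict.

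For the ``only if'' direction of (i) I would argue by contraposition, which also proves (ii). Assume $\Tr(\Gamma)\subset\R$. By Corollary~3.2.5 of \cite{cM03} there is $\alpha\in\PSL(2,\C)$ with $\Gamma=\alpha\Gamma'\alpha^{-1}$ and $\Gamma'\subset\PSL(2,\R)$; conjugation preserves nonelementarity, so $\Gamma'$ is a nonelementary subgroup of $\PSL(2,\R)$, and the preceding corollary (with $\K=\R$) shows its Zariski closure over $\R$ is all of $\PSL(2,\R)$. Consequently the Zariski closure of $\Gamma$ over $\R$ is $\alpha\,\PSL(2,\R)\,\alpha^{-1}$, a conjugate of $\PSL(2,\R)$; since this is a three-dimensional proper subgroup of the six-dimensional real group $\PSL(2,\C)$, the group $\Gamma$ is not Zariski dense over $\R$ in $\PSL(2,\C)$. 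This establishes the ``only if'' implication of (i) by contraposition and is exactly the assertion of (ii).
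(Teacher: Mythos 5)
Your proof is correct and follows the same route the paper intends: Maclachlan--Reid Corollary~3.2.5 together with Zariski density of nonelementary Fuchsian groups in $\PSL(2,\R)$ handles the real-trace case (giving (ii) and the ``only if'' of (i)), while a Schottky subgroup with non-real trace plus Lemma~\ref{L:ZariskiC} handles the ``if'' direction. The paper derives this corollary in one line (``since every nonelementary subgroup of $\PSL(2,\C)$ contains a Schottky group''), silently assuming that the Schottky subgroup can be chosen so that a non-real trace survives; your eigenvalue computation showing that the exponents $m$ with $\tr(g^m)\in\R$ form at most one arithmetic progression, hence are compatible with the largeness demanded by the ping-pong construction, is exactly the verification that makes this step rigorous, and it is sound.
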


\section{Products of hyperbolic planes and 3-spaces}
\setcounter{theorem}{0}
Let $q$ and $r$ be two nonnegative integers such that $q+r > 0$. We consider the product $(\bbH^3)^q\times(\bbH^2)^r$ of $q$ which is the Cartesian product of $q$ upper half 3-spaces and $r$ upper half planes with the product metric 
%$$ds^2 = \frac{dx_{1}^2 + dy_{1}^2 + dt_{1}^2}{t_{1}^2} + \ldots +\frac{dx_{q}^2 + dy_{q}^2 + dt_{q}^2}{t_{q}^2} + \frac{dx_{q+1}^2 + dy_{q+1}^2}{y_{q+1}^2} + \ldots + \frac{dx_{q+r}^2 + dy_{q+r}^2}{y_{q+r}^2} $$ 
and we denote by $d$ the corresponding distance function. The Riemannian manifold $(\bbH^3)^q\times(\bbH^2)^r$ is a symmetric space of rank $q + r$.

In the next sections we will define the geometric boundary of $(\bbH^3)^q\times(\bbH^2)^r$ and the limit set of a group acting on $(\bbH^3)^q\times(\bbH^2)^r$ by isometries.

\subsection{The geometric boundary of $(\bbH^3)^q\times(\bbH^2)^r$}
For $i = 1,\ldots,q$, we denote by $p_i: (\bbH^3)^q\times(\bbH^2)^r \rightarrow \bbH^3$, $(z_1,...,z_{q+r}) \mapsto z_i$ the $i$-th projection of $(\bbH^3)^q\times(\bbH^2)^r$ into $\bbH^3$, and for $i=q+1,\ldots,q+r$, we denote by $p_i: (\bbH^3)^q\times(\bbH^2)^r \rightarrow \bbH^2$, $(z_1,...,z_{q+r}) \mapsto z_i$ the $i$-th projection of $(\bbH^3)^q\times(\bbH^2)^r$ into $\bbH^2$. Let $\gamma: [0, \infty )\rightarrow (\bbH^3)^q\times(\bbH^2)^r$ be a curve in $(\bbH^3)^q\times(\bbH^2)^r$. Then $\gamma$ is a geodesic ray in $(\bbH^3)^q\times(\bbH^2)^r$ if and only if $p_i\circ \gamma$ is a geodesic ray or a point in $\bbH^3$ for each $i = 1, \ldots , q$ and a geodesic ray or a point in $\bbH^2$ for each $i = q+1, \ldots , q+r$. A geodesic $\gamma$ is \textit{regular} if $p_i\circ \gamma$ is a nonconstant geodesic in $\bbH^3$ for each $i = 1, \ldots , q$ and a nonconstant geodesic in $\bbH^2$ for each $i = q+1, \ldots , q+r$.

Two unit speed geodesic rays $\gamma$ and $\delta$ in $(\bbH^3)^q\times(\bbH^2)^r$ are said to be asymptotic if there exists a positive number $c$ such that $d(\gamma(t), \delta(t)) \leq c$ for all $t \geq 0$. This is an equivalence relation on the unit speed geodesic rays of $(\bbH^3)^q\times(\bbH^2)^r$. For any unit speed geodesic $\gamma$ of $(\bbH^3)^q\times(\bbH^2)^r$ we denote by $\gamma(+\infty)$ the equivalence class of its positive ray. 

We denote by $\partial ((\bbH^3)^q\times(\bbH^2)^r)$ the set of all equivalence classes of unit speed geodesic rays of $(\bbH^3)^q\times(\bbH^2)^r$. We call $\partial ((\bbH^3)^q\times(\bbH^2)^r)$ the \textit{geometric boundary} of $(\bbH^3)^q\times(\bbH^2)^r$. The \textit{regular boundary} $\partial ((\bbH^3)^q\times(\bbH^2)^r)_{reg}$ of $(\bbH^3)^q\times(\bbH^2)^r$ consists of the equivalence classes of regular geodesics. 

The geometric boundary $\partial ((\bbH^3)^q\times(\bbH^2)^r)$ with the cone topology is homeomorphic to the unit tangent sphere of a point in $(\bbH^3)^q\times(\bbH^2)^r$ (see Eberlein \cite{pE96}, 1.7). (For example $\partial \bbH^2$ is homeomorphic to $S^1$.) The homeomorphism is given by the fact that for each point $x_0$ and each unit speed geodesic ray $\gamma$ in $(\bbH^3)^q\times(\bbH^2)^r$ there exists a unique unit speed geodesic ray $\delta$ with $\delta(0) = x_0$ which is asymptotic to $\gamma$.

The group $\PSL(2,\C)^q\times\PSL(2,\R)^r$ acts on $(\bbH^3)^q\times(\bbH^2)^r$ by isometries in the following way. For $g = (g_1,\ldots, g_{q+r}) \in \PSL(2,\C)^q\times\PSL(2,\R)^r$
$$ g:(\bbH^3)^q\times(\bbH^2)^r \rightarrow (\bbH^3)^q\times(\bbH^2)^r, \quad (z_1, \ldots, z_{q+r}) \mapsto (g_1 z_1, \ldots, g_{q+r} z_{q+r}), $$
where $z_i \mapsto g_i z_i$ is the usual action given by linear fractional transformation, $i = 1, \ldots, {q+r}$.

The action of $\PSL(2,\C)^q\times\PSL(2,\R)^r$ can be extended naturally to $\partial ((\bbH^3)^q\times(\bbH^2)^r)$. Let $g$ be in $\PSL(2,\C)^q\times\PSL(2,\R)^r$ and $\xi$ be a point in the boundary $\partial ((\bbH^3)^q\times(\bbH^2)^r)$. If $\gamma$ is a representative of $\xi$, then $g(\xi)$ is the equivalence class of the geodesic ray $g\circ \gamma$.

We call $g$ \textit{elliptic} if all $g_i$ are elliptic isometries, \textit{parabolic} if all $g_i$ are parabolic isometries, \textit{loxodromic} if all $g_i$ are loxodromic isometries and \textit{hyperbolic} if all $g_i$ are hyperbolic isometries. In all the other cases we call $g$ \textit{mixed}. %Remark that in the hyperbolic and loxodromic isometries, the last $r$ factors are always hyperbolic and the difference between them comes from the factors that are in $\PSL(2,\C)$. 

If at least one $\ell(g_i)$ is different from zero, then we define the \textit{translation direction} of $g$ as $L(g):= (\ell(g_1): \ldots: \ell(g_{q+r})) \in \RP^{q+r-1}$.

\subsection{Decomposition of the geometric boundary of $(\bbH^3)^q\times(\bbH^2)^r$}
In this section we show a natural decomposition of the geometric boundary of $(\bbH^3)^q\times(\bbH^2)^r$ and in particular of its regular part. This is a special case of a general construction for a large class of symmetric spaces (see e.g. Leuzinger~\cite{eL92} and Link~\cite{gL02}). This decomposition plays a main role in this article.

Let $x=(x_1,\ldots,x_{q+r})$ be a point in $(\bbH^3)^q\times(\bbH^2)^r$. We consider the \textit{Weyl chambers} with vertex $x$ in $(\bbH^3)^q\times(\bbH^2)^r$ given by the product of the images of the geodesics $\delta_i:[0,\infty)\rightarrow \bbH^3$ with $\delta_i(0)=x_i$ for $i = 1,\ldots,q$ and  $\delta_i:[0,\infty)\rightarrow \bbH^2$ with $\delta_i(0)=x_i$ for $i = q+1,\ldots,q+r$. 

The isotropy group in $\PSL(2,\C)^q\times\PSL(2,\R)^r$ of $x$ is $PSU(2)^q \times PSO(2)^r$. It acts transitively but not simply transitively on the Weyl chambers with vertex $x$ because a fixed Weyl chamber with vertex $x$ is left unchanged by a group isomorphic to $ PSO(2)^q \times \{id\}^r$. Hence the group acting simply transitively on the Weyl chambers with vertex $x$ is $(PSU(2)/PSO(2))^q \times PSO(2)^r$. %A coordinate in the flat and an element in $(PSU(2)/PSO(2))^q \times PSO(2)^r \cong (S^2)^q\times (S^1)^r$ determine a point in $(\bbH^3)^q\times(\bbH^2)^r$.

Let $W$ be a Weyl chamber with vertex $x$. In $W$, two unit speed geodesics $\gamma(t) = (\gamma_1(t),\ldots,\gamma_{q+r}(t))$ and $\tilde{\gamma} = (\tilde{\gamma}_1(t),\ldots,\tilde{\gamma}_{q+r}(t))$ are different if and only if the corresponding projective points $$\left(d_H(\gamma_1(0),\gamma_1(1)):\ldots:d_H(\gamma_{q+r}(0),\gamma_{q+r}(1))\right) \text{ and}$$ $$(d_H(\tilde{\gamma}_1(0),\tilde{\gamma}_1(1)):\ldots:d_H(\tilde{\gamma}_{q+r}(0),\tilde{\gamma}_{q+r}(1)))$$ 
are different. Here $d_H$ denotes the hyperbolic distance in $\bbH^3$ and $\bbH^2$. The point in $\RP^{q+r-1}$ given by $\left(d_H(\gamma_1(0),\gamma_1(1)):\ldots:d_H(\gamma_{q+r}(0),\gamma_2(1))\right)$ is a direction in the Weyl chamber and it is the same as $(\left\|v_1\right\|:\ldots:\left\|v_{q+r}\right\|)$, where $v = (v_1,\ldots,v_{q+r}):= \gamma'(0)$ is the unit tangent vector of $\gamma$ in $0$. %Every element in $Iso_x$ sends $\gamma$ to a geodesic with the same translation direction.

In other words we can extend the action of $Iso_x$ to the tangent space at $x$ in $(\bbH^3)^q\times(\bbH^2)^r$. Then $Iso_x$ maps a unit tangent vector at $x$ onto a unit tangent vector at $x$.
Let $v$ be a unit tangent vector at $x$ in $(\bbH^3)^q\times(\bbH^2)^r$. We denote by $v_i$ the $i$-th projection of $v$ on the tangent spaces at $x_i$, $i=1,\ldots,q+r$. Then all the vectors $w$ in the orbit of $v$ under $Iso_x$ have $\left\|w_i\right\|=\left\|v_i\right\|$. 

Let $v$ be a vector in the unit tangent sphere at $x$ in$(\bbH^3)^q\times(\bbH^2)^r$. If $v$ is tangent to a regular geodesic, then the orbit of $v$ is homeomorphic to $(S^2)^q\times (S^1)^r \cong \left( \partial \bbH^3 \right)^q\times \left( \partial \bbH^2 \right)^r $ because $ \partial \bbH^3 \cong S^2$ and $ \partial \bbH^2 \cong S^1$. The orbit of $v$ under the group $(PSU(2)/PSO(2))^q \times PSO(2)^r$ consists of all unit tangent vectors $w$ at $x$ such that $\left\|w_i\right\|=\left\|v_i\right\|$ for $i=1,\ldots,q+r$.

The \textit{regular boundary} $\partial ((\bbH^3)^q\times(\bbH^2)^r)_{reg}$ of $(\bbH^3)^q\times(\bbH^2)^r$ consists of the equivalence classes of regular geodesics. Hence it is identified with $\left( \partial \bbH^3 \right)^q\times \left( \partial \bbH^2 \right)^r \times \RP^{q+r-1}_+$ where 
$$ \RP^{q+r-1}_+ := \left\{(w_1:\ldots:w_{q+r}) \in \RP^{q+r-1} \mid w_1 > 0, \ldots, w_{q+r} > 0 \right\}. $$
Here $w_1,..,w_{q+r}$ can be thought as the norms of the projections of the regular unit tangent vectors on the simple factors of $(\bbH^3)^q\times(\bbH^2)^r$.

$\left( \partial \bbH^3 \right)^q\times \left( \partial \bbH^2 \right)^r$ is called the \textit{Furstenberg boundary} of $(\bbH^3)^q\times(\bbH^2)^r$. 

We note that the decomposition of the boundary into orbits under the group $Iso_x$ is independent of the point $x$.

\subsection{The limit set of a group}
Let $x$ be a point and $\{x_n\}_{n\in \N}$ a sequence of points in $(\bbH^3)^q\times(\bbH^2)^r$. We say that $\{x_n\}_{n\in \N}$ converges to a point $\xi \in \partial\left((\bbH^3)^q\times(\bbH^2)^r\right)$ if $\{x_n\}_{n\in \N}$ is discrete in $(\bbH^3)^q\times(\bbH^2)^r$ and the sequence of geodesic rays starting at $x$ and going through $x_n$ converges towards $\xi$ in the cone topology. With this topology, $(\bbH^3)^q\times(\bbH^2)^r \cup \partial\left((\bbH^3)^q\times(\bbH^2)^r\right)$ is a compactification of $(\bbH^3)^q\times(\bbH^2)^r$.

Let $\Gamma$ be a subgroup of $\PSL(2,\C)^q\times\PSL(2,\R)^r$. We denote by $\Gamma(x)$ the orbit of $x$ under $\Gamma$ and by $\overline{\Gamma(x)}$ - its closure. The \textit{limit set} of $\Gamma$ is $\mathcal{L}_\Gamma:= \overline{\Gamma(x)}\cap \partial\left((\bbH^3)^q\times(\bbH^2)^r\right)$. The limit set is independent of the choice of the point $x$ in $(\bbH^3)^q\times(\bbH^2)^r$. The \textit{regular limit set} is $\mathcal{L}_\Gamma^{reg}:=\mathcal{L}_\Gamma \cap  \partial\left((\bbH^3)^q\times(\bbH^2)^r\right)_{reg}$ and the \textit{singular limit set} is $\mathcal{L}_\Gamma^{sing}:=\mathcal{L}_\Gamma \backslash \mathcal{L}_\Gamma^{reg}$. 

We denote by $F_\Gamma$ the projection of $\mathcal{L}_\Gamma^{reg}$ on the Furstenberg boundary $\left( \partial \bbH^3 \right)^q\times \left( \partial \bbH^2 \right)^r$ and by $P_\Gamma$ the projection of $\mathcal{L}_\Gamma^{reg}$ on $\RP^{q+r-1}_+$. The projection $F_\Gamma$ is the \textit{Furstenberg limit set} of $\Gamma$ and $P_\Gamma$ is the \textit{projective limit set} of $\Gamma$.
\medskip

Let $h\in \Gamma$ be a loxodromic element or a mixed one with only hyperbolic or elliptic components. There is a unique unit speed geodesic $\gamma$ in $(\bbH^3)^q\times(\bbH^2)^r$ such that $h\circ\gamma(t) = \gamma(t + T_h)$ for a fixed $T_h \in \R_{>0}$ and all $t \in \R$. For $y \in \gamma$, the sequence $h^n(y)$ converges to $\gamma(+\infty)$. Hence also for every $x\in (\bbH^3)^q\times(\bbH^2)^r$, the sequence $h^n(x)$ converges to $\gamma(+\infty)$. Thus $\gamma(+\infty)$ is in $\mathcal{L}_\Gamma$. The sequence $h^{-n}(x)$ converges to $\gamma(-\infty):=-\gamma(+\infty)$ and therefore $\gamma(-\infty)$ is also in $\mathcal{L}_\Gamma$. The points $\gamma(+\infty)$ and $\gamma(-\infty)$ are the only fixed points of $h$ in $\Lim_\Gamma$. The point $\gamma(+\infty)$ is the \textit{attractive} fixed point of $h$ and the point $\gamma(-\infty)$ - the \textit{repulsive} fixed point of $h$. 

If $h$ is loxodromic, then for all $i=1,\ldots,q+r$, the projection $p_i \circ \gamma$ is not a point. Hence $\gamma$ is regular and $\gamma(+\infty) \in \mathcal{L}_\Gamma^{reg}$. The point $\gamma(+\infty)$ can be written as $(\xi_F,\xi_P)$ in our description of the regular geometric boundary where 
$$\xi_F := (p_1\circ \gamma(+\infty),\ldots,p_{q+r}\circ \gamma(+\infty))$$ 
is in the Furstenberg boundary and 
$$\xi_P := (d_H(p_1\circ \gamma(0),p_1\circ \gamma(1)) : \ldots :d_H(p_{q+r}\circ \gamma(0), p_{q+r}\circ \gamma(1)))$$ 
is in the projective limit set. Here we note that $\xi_P$ is also equal to 
$$(d_H(p_1\circ \gamma(0),p_1\circ \gamma(T_h)) : \ldots :d_H(p_{q+r}\circ \gamma(0), p_{q+r}\circ \gamma(T_h))),$$ 
which is exactly the translation direction of $h$.

Thus the translation direction of each loxodromic isometry $h$ in $\Gamma$ determines a point in the projective limit set $P_\Gamma$. This point does not change after conjugation with $h$ or after taking a power $h^m$ of $h$, because in these cases the translation direction remains unchanged. 

\medskip

Recall that following Maclachlan and Reid \cite{cM03}, we call a subgroup $\Gamma$ of $\PSL(2,\C)$ \textit{elementary} if there exists a finite $\Gamma$-orbit in $\overline{\bbH^3}:=\bbH^3 \cup \partial \bbH^3$ and \textit{nonelementary} if it is not elementary. Since $\bbH^3$ and $\partial \bbH^3$ are $\Gamma$-invariant, any $\Gamma$-orbit of a point in $\overline{\bbH^3}$ is either completely in $\bbH^3$ or completely in $\partial \bbH^3$. 

We call a subgroup $\Gamma$ of $\PSL(2,\C)^q\times\PSL(2,\R)^r$ \textit{nonelementary} if for all $i = 1, \ldots, q+r$,  $p_i(\Gamma)$ is nonelementary, and if for all $g \in \Gamma$ that are mixed, the projections $p_i\circ g$ are either loxodromic or elliptic of infinite order. Since for all $i = 1, \ldots, q+r$,  $p_i(\Gamma)$ is nonelementary, $\Gamma$ does not contain only elliptic isometries and thus $\Lim_\Gamma$ is not empty.

This definition of nonelementary is more restrictive than the one given by Link in \cite{gL02}. The definition of a nonelementary subgroup $\Gamma$ of $\PSL(2,\R)^r$ in \cite{gL02} is the following: The limit set of $\Gamma$ is nonempty and if $\xi \in \Lim_\Gamma$ and $\Gamma(\xi)$ denotes its  orbit under $\Gamma$, then each point in the orbit of $\xi$ under $\PSL(2,\C)^q\times\PSL(2,\R)^r$ can be \textit{connected} with a geodesic to at least one point in  $\Gamma(\xi)$. 

Two points $\xi$ and $\eta$ in $\partial((\bbH^3)^q\times(\bbH^2)^r)$ can be \textit{connected} by a geodesic if and only if $\xi=\gamma(\infty)$ and $\eta =\gamma(-\infty)$ for some geodesic $\gamma$ in $(\bbH^3)^q\times(\bbH^2)^r$. If $\xi$ and $\eta$ can be connected by a geodesic then they necessarily lie in the same orbit under $\PSL(2,\C)^q\times\PSL(2,\R)^r$. A possible element mapping $\xi$ to $\eta$ is one that fixes a point on the connecting geodesic and rotates around a geodesic orthogonal to $\gamma$ by $\pi$ in each of the first $q$ factors and around the fixed point by $\pi$ in the other $r$ factors.

\begin{lemma}
If a subgroup $\Gamma$ of $\PSL(2,\C)^q\times\PSL(2,\R)^r$ is nonelementary (according to our definition), then it is nonelementary in the sense of Link's definition in \cite{gL02}.
\end{lemma}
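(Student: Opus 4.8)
The plan is to verify the two requirements in Link's definition: that $\Lim_\Gamma\neq\emptyset$, and that for every $\xi\in\Lim_\Gamma$ and every $\eta$ in the orbit $G\xi$ (where $G:=\PSL(2,\C)^q\times\PSL(2,\R)^r$) at least one point of $\Gamma(\xi)$ can be connected to $\eta$ by a geodesic. The first requirement is already recorded in the text: since each $p_i(\Gamma)$ is nonelementary, $\Gamma$ contains a non-elliptic element and hence $\Lim_\Gamma\neq\emptyset$. So the real content is the connection statement.

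First I would make the connection condition explicit by means of the boundary decomposition of Section 2. A boundary point $\xi=\gamma(+\infty)$ is described by its set of active indices $S:=\{i : p_i\circ\gamma \text{ is nonconstant}\}$ (equivalently, the indices with nonzero coordinate in the projective direction of $\xi$), its projective direction, and the endpoints $\xi^{(i)}:=p_i\circ\gamma(+\infty)$ in the boundary of the $i$-th factor for $i\in S$. Because $g=(g_1,\dots,g_{q+r})$ acts factorwise by isometries, the $G$-action preserves $S$ and the projective direction and carries the active endpoints $\xi^{(i)}$ to $g_i\,\xi^{(i)}$; in particular $\eta\in G\xi$ lies in the same stratum as $\xi$, with the same projective direction and with active endpoints $\eta^{(i)}$ over the same $S$. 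Two such points $g\xi$ and $\eta$ are the two ends of a common geodesic precisely when their active endpoints differ in every active factor, that is $g_i\,\xi^{(i)}\neq\eta^{(i)}$ for all $i\in S$; for $i\notin S$ one takes the $i$-th component of the connecting geodesic to be constant, which imposes no condition. This reduces the lemma to producing a single $g\in\Gamma$ with $g_i\,\xi^{(i)}\neq\eta^{(i)}$ simultaneously for all $i\in S$.

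The heart of the matter is this simultaneity, which I would settle by a coset-covering argument. For $i\in S$ put $B_i:=\{g\in\Gamma: g_i\,\xi^{(i)}=\eta^{(i)}\}$. If $B_i\neq\emptyset$ it is a left coset of the stabilizer $H_i:=\{g\in\Gamma : g_i\,\xi^{(i)}=\xi^{(i)}\}$, and the assignment $gH_i\mapsto g_i\,\xi^{(i)}$ identifies $\Gamma/H_i$ with the orbit $p_i(\Gamma)\,\xi^{(i)}$, so that $[\Gamma:H_i]=\lvert p_i(\Gamma)\,\xi^{(i)}\rvert$. Since $p_i(\Gamma)$ is nonelementary it has no finite orbit on its boundary, so this index is infinite. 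Hence $\bigcup_{i\in S}B_i$ is a union of finitely many cosets of infinite-index subgroups, and by B.\,H.\,Neumann's lemma on coverings of a group by cosets such a union cannot be all of $\Gamma$. Any $g\in\Gamma\setminus\bigcup_{i\in S}B_i$ then satisfies $g_i\,\xi^{(i)}\neq\eta^{(i)}$ for every $i\in S$, so the point $g\xi\in\Gamma(\xi)$ is connected to $\eta$ by a geodesic, which is exactly what is needed.

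The main obstacle I anticipate is precisely this simultaneity across factors: in any single factor it is trivial to avoid one bad point because the orbit is infinite, but exhibiting one group element that is in general position in all active factors at once is what Neumann's lemma provides. A secondary point requiring care is the clean translation of ``connected by a geodesic'' into the (in)equalities of active endpoints, so that regular and singular $\xi$ are handled by the very same argument; I note that the clause of our definition concerning mixed elements is not used here, only that each projection $p_i(\Gamma)$ is nonelementary.
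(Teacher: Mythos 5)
Your proof is correct, and it takes a genuinely different route from the paper's. Both arguments start with the same reduction: via the boundary decomposition, connecting a point $g\xi\in\Gamma(\xi)$ to $\eta$ by a geodesic amounts to arranging $g_i\,\xi^{(i)}\neq\eta^{(i)}$ simultaneously in every active factor. From there the paper proceeds by induction on the active factors: given $g$ in general position in the first $j$ factors, it repairs a failure at the $(j+1)$-st factor by replacing $g$ with $h^n g$, where $h\in\Gamma$ has loxodromic $(j+1)$-component not fixing the relevant boundary point; to guarantee that large powers $h^n$ do not destroy the inequalities already achieved in the first $j$ factors, it invokes precisely the clause in the definition of nonelementary requiring components of mixed isometries to be loxodromic or elliptic of infinite order (so each point $g_i(\xi_i)$ is either fixed by $h_i$ or has infinite $h_i$-orbit, hence meets $\eta_i$ for at most finitely many $n$). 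Your coset-covering argument replaces this induction by B.~H.~Neumann's lemma: the bad sets $B_i$ are left cosets of the point stabilizers $H_i$, whose index in $\Gamma$ is infinite exactly because each $p_i(\Gamma)$ is nonelementary and therefore has no finite boundary orbit, and finitely many cosets of infinite-index subgroups cannot cover $\Gamma$. This buys two things: the simultaneity across factors, which is the whole difficulty, comes in one stroke with no induction and no choice of exponents; and, as you observe, the mixed-isometry clause is never used, so your argument actually proves the stronger statement in the paper's remark following the lemma (that the conclusion holds without the assumption on mixed elements), which the paper says would otherwise require a ``little more complicated'' case analysis. The price is the appeal to Neumann's covering lemma, an external (though classical) piece of group theory, whereas the paper's induction is self-contained and uses only elementary dynamics of isometries of $\bbH^2$ and $\bbH^3$.
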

\begin{proof}
Let $\xi$ and $\eta$ be in the same orbit under $\PSL(2,\C)^q\times\PSL(2,\R)^r$ and $\gamma$ and $\delta$ their representative geodesic rays. Then $\xi$ and $\eta$ can be connected by a geodesic if and only if when $p_i\circ \gamma$ and $p_i\circ \delta$ are nonconstant in $\bbH^2$ (or $\bbH^3$), then $p_i\circ \gamma$ and $p_i\circ \delta$ are not in the same equivalence class in $\partial \bbH^2$ (or $ \partial \bbH^3$).

Let $\Gamma$ be nonelementary (according to our definition) and let $\xi$ be in $\mathcal{L}_\Gamma$ and $\eta$ a point in the orbit of $\xi$ under $\PSL(2,\C)^q\times\PSL(2,\R)^r$. Without loss of generality we can assume that the first $k$ projections of the defining geodesics of $\xi$ and $\eta$ are nonconstant. We denote their equivalence classes in $\partial((\bbH^3)^q\times(\bbH^2)^r)$ with $\xi_i$ and $\eta_i$ respectively, $i = 1,\ldots,k$. We will show by induction that there is an element $g =(g_1,\ldots,g_{q+r})$ of $\Gamma$ such that $g_i(\xi_i) \neq \eta_i$ for all $i=1,\ldots,k$ and therefore $\Gamma$ is nonelementary according to Link's definition.

First, for $j=1$, if $\xi_1 = \eta_1$, we can find $g \in \Gamma$ such that $g_1(\xi_1) \neq \eta_1$. The existence of $g$ follows from the fact that $p_1(\Gamma)$ is nonelementary and thus the orbit of $\xi_1$ under $p_1(\Gamma)$ is infinite.

Let $g \in \Gamma$ be such that $g_i(\xi_i) \neq \eta_i$ for all $i=1,\ldots,j$, $j<k$. If $g_{j+1}(\xi_{j+1}) \neq \eta_{j+1}$, then $g$ is the searched element. Otherwise, since $p_{j+1}(\Gamma)$ is nonelementary, there is $h =(h_1,\ldots,h_{q+r})$ in $\Gamma$ such that $h_{j+1}$ is loxodromic and does not have $\xi_{j+1}$ as a fixed point. Hence for all $n \in \N$, $h_{j+1}^n(\xi_{j+1}) \neq \eta_{j+1}$. 

According to our definition of nonelementary, for $i = 1,\ldots, j$, $h_i$ is either loxodromic or elliptic of infinite order.  Hence the point $g_i(\xi_i)$ is either a fixed point for $h_i$ or has an infinite orbit under $h_i$. In the first case, for any $n \in \N$, $h_i^n \circ g_i(\xi_i)\neq \eta_i$, and in the second case for $n$ big enough the same is true. Hence $h^n\circ g$ for $n$ big enough is the searched element.
\end{proof}

\begin{rem}
In the proof we used the assumption that for all $g \in \Gamma$ that are mixed, the projections $p_i\circ g$ are either loxodromic or elliptic of infinite order. We can prove the lemma without this assumption on $\Gamma$, but then the proof is a little more complicated because we need to consider different cases. And as we will see later we are only interested in groups $\Gamma$ such that  for all $g \in \Gamma$ that are mixed, the projections $p_i\circ g$ are either hyperbolic or elliptic of infinite order. 
\end{rem}

The next theorem is a special case of Theorem 3 from the introduction of \cite{gL02}. It describes the structure of the regular limit set of nonelementary discrete subgroups of $\PSL(2,\C)^q\times\PSL(2,\R)^r$.

\begin{theorem}[\cite{gL02}]
\label{T:LinkFP}
Let $\Gamma$ be a nonelementary discrete subgroup of the group $\PSL(2,\C)^q\times\PSL(2,\R)^r$ acting on $(\bbH^3)^q\times(\bbH^2)^r$. If $\mathcal{L}_\Gamma^{reg}$ is not empty, then $F_\Gamma$ is a minimal closed $\Gamma$-invariant subset of $(\partial \bbH^3)^q\times (\partial\bbH^2)^r$, the regular limit set equals the product $F_\Gamma \times P_\Gamma$ and $P_\Gamma$ is equal to the closure in $\RP^{q+r-1}_+$ of the set of translation directions of the loxodromic isometries in $\Gamma$. 
\end{theorem}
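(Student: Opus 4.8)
The plan is to prove the three assertions in turn, with the first two feeding into the third. The starting observation, which makes everything else fall into place, is that since $\Gamma$ acts by \emph{isometries}, its induced action on the regular boundary---written in the coordinates $(\partial\bbH^3)^q\times(\partial\bbH^2)^r\times\RP^{q+r-1}_+$ of Section~2---moves only the Furstenberg factor while fixing the projective factor pointwise. Indeed, for $g=(g_1,\ldots,g_{q+r})$ and a unit tangent vector $v$ of a geodesic $\gamma$, the image geodesic $g\gamma$ has tangent vector with components $dg_i(v_i)$, and $\|dg_i(v_i)\|=\|v_i\|$ because each $g_i$ is an isometry; thus the ratio $(\|v_1\|:\ldots:\|v_{q+r}\|)$ defining the point of $\RP^{q+r-1}_+$ is preserved. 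Hence $g\cdot(\xi_F,\xi_P)=(g\,\xi_F,\xi_P)$, so $\Lim_\Gamma^{reg}$ is $\Gamma$-invariant, $F_\Gamma$ is a $\Gamma$-invariant subset of the Furstenberg boundary, and $\Gamma$ acts trivially on $P_\Gamma$. Moreover $\Lim_\Gamma^{reg}=\Lim_\Gamma\cap(\text{regular boundary})$ is the intersection of a closed set with an open one, hence is closed in the regular boundary.

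Next I would prove minimality of $F_\Gamma$ using the North--South dynamics of loxodromic elements on the Furstenberg boundary. Each loxodromic $h\in\Gamma$ has an attractive and a repulsive fixed point $h^+_F,h^-_F\in(\partial\bbH^3)^q\times(\partial\bbH^2)^r$, and $h^n(\eta)\to h^+_F$ for every $\eta\neq h^-_F$. Since $\Gamma$ is nonelementary, each $p_i(\Gamma)$ contains infinitely many loxodromic elements with pairwise distinct fixed points (see Theorem~5.1.3 in \cite{aB95}), so loxodromic elements of $\Gamma$ are plentiful and, for any prescribed $\eta$, one can be chosen with repulsive point $\neq\eta$. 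Let $C\subseteq F_\Gamma$ be nonempty, closed and $\Gamma$-invariant and pick $\eta\in C$; applying suitable powers yields $h^+_F\in C$ for every loxodromic $h$, after dodging the at most one coincidence $h^-_F=\eta$ by composing with a second loxodromic element. Thus $C$ contains every loxodromic attractive fixed point. The same dynamics shows these attractive fixed points are dense in $F_\Gamma$, since every point of $F_\Gamma$ is a limit of orbit points $g_nx$ and such points are shadowed by attractive fixed points of suitable loxodromic elements. Hence $C=\overline{\{h^+_F\}}=F_\Gamma$, establishing minimality.

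The product structure $\Lim_\Gamma^{reg}=F_\Gamma\times P_\Gamma$ then follows cleanly. The inclusion $\subseteq$ is just the definition of the two projections. For $\supseteq$, fix $\xi_F\in F_\Gamma$ and $\xi_P\in P_\Gamma$; by definition of $P_\Gamma$ there is a regular limit point $(\eta_F,\xi_P)\in\Lim_\Gamma^{reg}$. Because $\Gamma$ fixes the projective factor, $g\cdot(\eta_F,\xi_P)=(g\,\eta_F,\xi_P)\in\Lim_\Gamma^{reg}$ for all $g\in\Gamma$, and minimality gives $\overline{\Gamma\,\eta_F}=F_\Gamma$, so there are $g_n\in\Gamma$ with $g_n\eta_F\to\xi_F$. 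The boundary points $(g_n\eta_F,\xi_P)$ converge in the cone topology to $(\xi_F,\xi_P)$; the projective coordinate stays at the fixed value $\xi_P$ in the \emph{open} set $\RP^{q+r-1}_+$, so the limit is regular, and since $\Lim_\Gamma$ is closed it lies in $\Lim_\Gamma^{reg}$. This proves the product structure and, since $F_\Gamma$ is compact and nonempty, also shows $P_\Gamma$ is closed in $\RP^{q+r-1}_+$.

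Finally, for the identification of $P_\Gamma$, set $T=\{L(h):h\in\Gamma\text{ loxodromic}\}$. The computation recalled in Section~2 shows $L(h)\in P_\Gamma$ for each loxodromic $h$, so $T\subseteq P_\Gamma$, and as $P_\Gamma$ is closed we get $\overline{T}\subseteq P_\Gamma$. The reverse inclusion $P_\Gamma\subseteq\overline{T}$ is the crux, and I expect it to be the main obstacle. Given $\xi_P\in P_\Gamma$, realize it as a limit of orbit points $g_nx$ whose displacement directions tend to $\xi_P$; the difficulty is that the $g_n$ need not themselves be loxodromic in the product, so one must manufacture genuine loxodromic elements with translation directions approaching $\xi_P$. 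I would do this by a ping-pong/Schottky construction in the spirit of Lemma~\ref{L:SchottkyLox}, combining two elements with distinct fixed points to produce loxodromic elements while controlling the resulting translation direction so that it stays close to $\xi_P$; passing to the limit then gives $\xi_P\in\overline{T}$. Combining this with the three preceding steps completes the proof of the theorem.
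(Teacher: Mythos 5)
First, a point of context: the paper offers no proof of this statement at all --- it is quoted as a special case of Theorem 3 of Link's dissertation \cite{gL02} and used as a black box throughout. So your attempt is a reconstruction of Link's argument, and it must stand on its own. Its first step is correct and is indeed the right foundation: since each component $g_i$ acts by isometries, the product action preserves the projective coordinate, so $g\cdot(\xi_F,\xi_P)=(g\,\xi_F,\xi_P)$, and given minimality your derivation of the product structure from this is sound. The trouble starts with your dynamical claim that a loxodromic $h$ satisfies $h^n(\eta)\to h_F^+$ for every $\eta\neq h_F^-$ in the Furstenberg boundary. This is false: convergence fails as soon as \emph{one} coordinate $\eta_i$ equals the repulsive fixed point $h_i^-$ of the $i$-th component, so the exceptional set is the union $\bigcup_i\{\eta:\eta_i=h_i^-\}$, far larger than the single point $h_F^-$. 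Hence "dodging the at most one coincidence" does not suffice; for $\eta\in C$ one must produce $g\in\Gamma$ with $(g\eta)_i\neq h_i^-$ \emph{simultaneously for all} $i$. That transversality statement is exactly what the paper's unnumbered lemma in Section 2 (showing its notion of nonelementary implies Link's) establishes by a factor-by-factor induction using the hypothesis on mixed isometries; your proof cannot bypass it. Relatedly, you assert that loxodromic elements of $\Gamma$ (loxodromic in \emph{every} factor) are "plentiful" because each $p_i(\Gamma)$ has many loxodromic elements; an element with one loxodromic component need not be loxodromic in the product, and even producing a single such element costs the paper the diagonal argument of Lemma~\ref{L:NonemptyAlg}.

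Beyond this, the two load-bearing ingredients of the theorem are asserted rather than proved. The density of attractive fixed points of loxodromic elements in $F_\Gamma$ is dispatched in one clause ("such points are shadowed by attractive fixed points of suitable loxodromic elements"); this shadowing statement is precisely the hard content, which even this paper only ever invokes as an external result (Theorem 5.12 of \cite{gL02}, Theorem 4.10 of \cite{gL06}). And the inclusion $P_\Gamma\subseteq\overline{T}$, where $T$ is the set of translation directions of loxodromic isometries, is explicitly left as a plan --- you call it "the main obstacle" and sketch a ping-pong construction without carrying it out. Since the third assertion of the theorem \emph{is} this equality, the proposal as written proves the invariance statement and reduces the rest to three unproved claims (coordinatewise transversality, density of loxodromic fixed points, approximation of limit directions by translation directions). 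The architecture is the right one, but the theorem is not yet proved.
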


\section{Nonelementary groups}
\setcounter{theorem}{0}
In this part we show first that the regular limit set of a nonelementary subgroup of $\PSL(2,\C)^q\times\PSL(2,\R)^r$ is not empty and then we prove that its projective limit set is convex. This is a generalization of a result of Benoist in \cite{yB97}. Additionally, we describe the groups in which the projection to one factor is nonelementary.

\subsection{The regular limit set in nonempty}
To prove that the regular limit set of a nonelementary group is nonempty is equivalent in our case to proving that the group is \textit{strongly nonelementary}, i.e. that it contains a Schottky subgroup. In order to prove this we first need the next lemma.

Recall the definition of translation direction $L(g):= (\ell(g_1): \ldots: \ell(g_{q+r}))$.

\begin{lemma}
\label{L:Schottky}
Let $\Gamma$ be a nonelementary subgroup of $\PSL(2,\C)^q\times\PSL(2,\R)^r$. Further let $g$ and $h$ be two loxodromic isometries in $\Gamma$. Then there are loxodromic isometries $g'$ and $h'$ in $\Gamma$ with $L(g) = L(g')$ and $L(h) = L(h')$ such that the groups generated by the corresponding components are all Schottky groups (with only loxodromic isometries).
\end{lemma}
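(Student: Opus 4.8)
The translation direction is unchanged under passing to powers and under conjugation, so for any $\phi\in\Gamma$ and any $N\in\N$ the elements $g':=\phi g^{N}\phi^{-1}$ and $h':=h^{N}$ automatically satisfy $L(g')=L(g)$ and $L(h')=L(h)$. The plan is therefore to choose $\phi$ and $N$ so that in every factor the two loxodromic components generate a Schottky group. Suppose first that for a suitable conjugate $\tilde g:=\phi g\phi^{-1}$ the loxodromic elements $\tilde g_i$ and $h_i$ have no common fixed point for each $i=1,\dots,q+r$. Then the ping-pong construction underlying Lemma~\ref{L:SchottkyLox} supplies, for each $i$, an integer $N_i$ such that $\langle \tilde g_i^{\,n},h_i^{\,n}\rangle$ is a Schottky group for all $n\ge N_i$; taking $N:=\max_i N_i$ makes all $q+r$ component groups $\langle g'_i,h'_i\rangle$ Schottky simultaneously, and $g',h'\in\Gamma$.

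So the whole problem reduces to producing a single $\phi\in\Gamma$ for which $\mathrm{fix}(\tilde g_i)\cap\mathrm{fix}(h_i)=\emptyset$ for every $i$ at once. Writing $g_i^{\pm}$ and $h_i^{\pm}$ for the fixed points, we have $\mathrm{fix}(\tilde g_i)=\{p_i(\phi)(g_i^{+}),p_i(\phi)(g_i^{-})\}$, so the configurations to avoid are, for each triple $(i,\varepsilon,\delta)$ with $\varepsilon,\delta\in\{+,-\}$, the equality $p_i(\phi)(g_i^{\varepsilon})=h_i^{\delta}$. The set of $\phi\in\Gamma$ realizing one fixed such equality is either empty or a left coset of $H_{i,\varepsilon}:=\{\phi\in\Gamma: p_i(\phi)\text{ fixes } g_i^{\varepsilon}\}=p_i^{-1}\bigl(\mathrm{Stab}_{p_i(\Gamma)}(g_i^{\varepsilon})\bigr)$.

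The decisive observation is that each $H_{i,\varepsilon}$ has infinite index in $\Gamma$: since $p_i(\Gamma)$ is nonelementary it has no finite orbit on $\overline{\bbH^{3}}$ (resp. $\overline{\bbH^{2}}$), so the orbit of $g_i^{\varepsilon}$ is infinite and $\mathrm{Stab}_{p_i(\Gamma)}(g_i^{\varepsilon})$ has infinite index in $p_i(\Gamma)$, whence $H_{i,\varepsilon}$ has infinite index in $\Gamma$. There are at most $4(q+r)$ of the bad cosets, so by B.~H.~Neumann's lemma---a group is never the union of finitely many cosets of subgroups of infinite index---there exists $\phi\in\Gamma$ lying in none of them. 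For this $\phi$ one has $p_i(\phi)(g_i^{\varepsilon})\neq h_i^{\delta}$ for all $i,\varepsilon,\delta$, i.e. $\mathrm{fix}(\tilde g_i)\cap\mathrm{fix}(h_i)=\emptyset$ in every factor, while each $\tilde g_i$ is loxodromic as a conjugate of $g_i$. Feeding this into the first paragraph and noting $L(g')=L(g^{N})=L(g)$ and $L(h')=L(h)$ finishes the argument.

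I expect the genuine obstacle to be the simultaneity across the $q+r$ factors: conjugation by $\phi$ acts diagonally, so one cannot adjust the factors independently, and a naive factor-by-factor correction may reintroduce coincidences elsewhere. It is precisely B.~H.~Neumann's lemma, applied to the infinite-index point-stabilizers furnished by the nonelementarity of each $p_i(\Gamma)$, that allows all common fixed points to be removed with one group element, after which Lemma~\ref{L:SchottkyLox} and a common large power are routine.
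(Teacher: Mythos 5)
Your proof is correct, but it takes a genuinely different route from the paper's. The paper first normalizes the \emph{first} factor only: it conjugates $h$ by a power of a suitable $\tilde g\in\Gamma$ so that $g_1$ and $h_1$ have no common fixed point, passes to powers so that $\langle g_1,h_1\rangle$ is a Schottky group, and then — this is its key step — argues that the remaining factors need no adjustment at all: if some $g_i,h_i$ ($i\geq 2$) shared a fixed point, the commutator $[g_i,h_i]$ would be parabolic or the identity, whereas $[g_1,h_1]$ is loxodromic (a nontrivial element of a Schottky group), so $[g,h]$ is loxodromic or mixed and, by the mixed-isometry clause in the paper's definition of nonelementary, every component of it must be loxodromic or elliptic of infinite order — a contradiction. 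You instead attack the simultaneity problem head-on: the bad conjugators form at most $4(q+r)$ left cosets of the pullbacks $p_i^{-1}(\mathrm{Stab}_{p_i(\Gamma)}(g_i^{\varepsilon}))$, each of infinite index because nonelementarity of $p_i(\Gamma)$ forces infinite orbits on the boundary, and B.~H.~Neumann's lemma produces a single $\phi$ separating all fixed-point pairs at once; your coset computation and the index transfer under $p_i$ are both sound, and the uniform-power conclusion matches the paper's final step. The trade-off: the paper's argument is self-contained and leans on the mixed-isometry condition (a commutator trick it reuses in Lemma~\ref{L:Nonempty}, Lemma~\ref{L:Nonelementary} and Proposition~\ref{P:NonelemElliptic}), while yours imports a standard external lemma but uses only the nonelementarity of the projections — so it actually proves the statement under weaker hypotheses, which is consistent with the paper's remarks that several of these lemmas remain true when the condition on mixed elements is dropped.
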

\begin{proof}
Let $g = (g_1,\ldots,g_{q+r})$ and $h = (h_1,\ldots,h_{q+r})$ be the given loxodromic isometries.

\textbf{Step 1.} We can assume that $g_1$ and $h_1$ do not have a common fixed point: Since $p_1(\Gamma)$ is nonelementary, there exists a transformation $\tilde{g} = (\tilde{g}_1,\ldots,\tilde{g}_{q+r})$ in $\Gamma$ such that $\tilde{g}_1$ is loxodromic and $\tilde{g}_1$ and $g_1$ do not have any common fixed point. Hence for $n\in \N$ big enough, the isometries $g_1$ and $\tilde{g}_1^nh_1\tilde{g}_1^{-n}$ do not have any common fixed point. Since the translation direction does not change under conjugation, we can consider $\tilde{g}^nh\tilde{g}^{-n}$ instead of $h$. 

\textbf{Step 2.} We can assume that $g_1$ and $h_1$ generate a Schottky group which contains only loxodromic isometries: Indeed, by the previous step, the isometries $g_1$ and $h_1$ do not have a common fixed point, therefore, by Lemma~\ref{L:SchottkyLox}, for $n$ big enough, $g_1^n$ and $h_1^n$ generate a Schottky group which contains only loxodromic isometries. Since $L(g) = L(g^n)$ and $L(h)= L(h^n)$, we take $g^n$ and $h^n$ instead of $g$ and $h$. 

\textbf{Step 3.} If $g$ and $h$ are as in the first and in the second step, then, for $i = 2,\ldots,q+r$, $g_i$ and $h_i$ have no common fixed point. In order to show this we assume that $g_i$ and $h_i$ have a common fixed point. Possibly after conjugation we can assume that this common fixed point is infinity and hence $g_i$ and $h_i$ are represented by the matrices $\begin{bmatrix} a & b \\ 0 & 1/a \end{bmatrix}$ and $\begin{bmatrix} c & d \\ 0 & 1/c \end{bmatrix}$ for some $a,c \in \R_{>0}\backslash \{1\}$ and $b,d \in \R$. Then
$$ [g_i,h_i] = g_ih_ig_i^{-1}h_i^{-1} = \begin{bmatrix} 1 & -cd-abc^2+a^2cd+ab \\ 0 & 1 \end{bmatrix}. $$
Hence the commutator $[g_i,h_i]$ is either parabolic or the identity. On the other hand $[g_i,h_i]$ has to be loxodromic or elliptic of infinite order because $[g_1,h_1]$ is a loxodromic isometry in the free group generated by $g_1$ and $h_1$. This is a contradiction.

%$\mathcal{L}_\Gamma$ is not empty and $p_1(\Gamma)$ is not elementary and so by Takeuchi \cite{kT75}, $p_1(\Gamma)$ is conjugated to a subgroup $\Gamma'_1$ of $\PSL(2,K)$ with $K = \Q(\Tr(p_1(\Gamma)),\sqrt{\tr(g_1)^2-4})$.

Now we take $g' := g^N$ and $h':=h^N$ for $N\in \N$ big enough so that we can assure that for all $i =2,\ldots,q+r$, the group generated by $g'_i$ and $h'_i$ is a Schottky group with only loxodromic isometries. 
\end{proof}

The next lemma is needed in the proof of Lemma~\ref{L:NonemptyAlg}.

\begin{lemma}
\label{L:EllipticLox}
Let $g\in\PSL(2,\C)^q\times\PSL(2,\R)^r$ be elliptic of infinite order and $h\in\PSL(2,\C)^q\times\PSL(2,\R)^r$ be loxodromic. There are positive integers $m$ and $n$ such that $g^mh^n$ and $h^ng^m$ are loxodromic.
\end{lemma}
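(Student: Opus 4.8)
The plan is to reduce the statement to the individual factors and then control the trace in each factor. First note that it suffices to treat $g^m h^n$, because
\[ h^n g^m = h^n\,(g^m h^n)\,h^{-n} \]
is conjugate to $g^m h^n$, so the two elements have the same trace in every factor and are simultaneously loxodromic. By the definition of ``loxodromic'' in $\PSL(2,\C)^q\times\PSL(2,\R)^r$, it is therefore enough to find positive integers $m,n$ such that each component $g_i^m h_i^n$ is loxodromic for $i=1,\dots,q+r$. Fix such an $i$. Since $h_i$ is loxodromic it is diagonalizable with eigenvalues $\lambda_i,\lambda_i^{-1}$ and $|\lambda_i|\neq 1$; conjugating (which changes neither the conjugacy type nor the trace) I may assume $h_i=\Diag(\lambda_i,\lambda_i^{-1})$. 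Writing $g_i^m$ in the same basis,
\[ g_i^m=\begin{bmatrix}\alpha_m&\beta_m\\\gamma_m&\delta_m\end{bmatrix},\qquad \tr(g_i^m h_i^n)=\alpha_m\lambda_i^{\,n}+\delta_m\lambda_i^{-n}. \]
Since any element of $\PSL(2,\C)$ with $|\tr|>2$ is loxodromic, it follows that if $\alpha_m\neq 0$ then $|\tr(g_i^m h_i^n)|\to\infty$ as $n\to\infty$, so $g_i^m h_i^n$ is loxodromic for all large $n$. Thus everything reduces to producing a \emph{single} positive integer $m$ for which the $(1,1)$-entry $\alpha_m^{(i)}$ of $g_i^m$ is nonzero at every index $i$ at once; one then chooses $n$ above the finitely many thresholds coming from the factors.

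To find such an $m$ I split the indices by the order of $g_i$. Observe that $\alpha_m^{(i)}\neq 0$ whenever $g_i^m=\mathrm{id}$, since $g_i^m$ is then represented by $\pm I$. Hence, letting $M$ be the least common multiple of the (finite) orders in $\PSL$ of those $g_i$ that have finite order (with $M=1$ if there are none), every multiple $m$ of $M$ already satisfies $\alpha_m^{(i)}\neq 0$ at all finite-order indices. For an index $i$ at which $g_i$ has infinite order, $\alpha_m^{(i)}=0$ means exactly that $g_i^m$ carries the attracting fixed point $\xi_i^+$ of $h_i$ to its repelling fixed point $\xi_i^-$; because $g_i$ is an elliptic element of infinite order it acts on the relevant boundary sphere as an irrational rotation, so the orbit map $k\mapsto g_i^{\,k}(\xi_i^+)$ is injective and the orbit meets $\xi_i^-$ at most once. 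Thus each infinite-order factor forbids at most one integer $m$. Since $g$ is elliptic of infinite order there is at least one such factor, but in any case only finitely many integers are forbidden in total, so among the infinitely many positive multiples of $M$ I may pick one, call it $m$, avoiding all of them. For this $m$ we have $\alpha_m^{(i)}\neq 0$ for every $i$.

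With $m$ fixed, for each $i$ the quantity $|\tr(g_i^m h_i^n)|$ tends to infinity as $n\to\infty$, so there is $n_i$ with $g_i^m h_i^n$ loxodromic for all $n\geq n_i$; taking $n\geq\max_i n_i$ makes every component loxodromic, whence $g^m h^n$ is loxodromic and, by the opening remark, so is $h^n g^m$. The main obstacle is the point addressed in the second paragraph: the finite-order factors could a priori forbid $m$ along entire residue classes, and one must ensure these classes do not cover all of $\N$. The key observation that $g_i^m=\mathrm{id}$ forces $\alpha_m^{(i)}\neq 0$ — so that every common multiple of the finite orders is automatically admissible — is precisely what rules out such a covering and lets a single $m$ serve all factors simultaneously.
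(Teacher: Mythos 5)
Your proof is correct, but it takes a genuinely different route from the paper's. The paper argues by recurrence: since $g$ is elliptic of infinite order, its powers accumulate at the identity, so there is a sequence $m_k$ with $g^{m_k}\to\mathrm{Id}$; then $g^{m_k}h\to h$, the traces converge, and by openness of the loxodromic condition (with the uniformity in $n$ left implicit) the paper concludes that for $k$ large the elements $g^{m_k}h^n$ and $h^ng^{m_k}$ are loxodromic for \emph{all} $n>0$. You instead work component by component: after diagonalizing $h_i$ you compute $\tr(g_i^mh_i^n)=\alpha_m\lambda_i^n+\delta_m\lambda_i^{-n}$, arrange a single $m$ with $\alpha_m^{(i)}\neq0$ for every $i$ --- using the least common multiple of the finite orders together with injectivity of irrational-rotation orbits at the infinite-order indices --- and then let $n\to\infty$ to force $|\tr|>2$. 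Your version is more elementary (no compactness of $\overline{\langle g\rangle}$ is invoked, and the non-vanishing $\alpha_m\neq0$ is exactly the geometric transversality $g_i^m(\xi_i^+)\neq\xi_i^-$), at the price of obtaining only ``all sufficiently large $n$'' rather than the paper's ``all $n>0$'' --- which is all the lemma requires. Two small points to tighten: first, order the eigenvalues so that $|\lambda_i|>1$; otherwise the coefficient that must not vanish is $\delta_m$, not $\alpha_m$. Second, the orbit map $k\mapsto g_i^{k}(\xi_i^+)$ is injective only when $\xi_i^+$ is not an endpoint of the rotation axis of $g_i$ (this can happen in the $\PSL(2,\C)$ factors, where elliptic elements do fix boundary points); in that exceptional case the orbit is constant equal to $\xi_i^+\neq\xi_i^-$, so no $m$ is forbidden at that index and your count of at most one forbidden integer per factor still stands.
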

\begin{proof}
Since $g$ is an elliptic isometry of infinite order, there is a sequence $m_k$ such that $g^{m_k}$ converges to $Id$ when $k\longrightarrow \infty$. Now if $h$ is loxodromic, then $g^{m_k}h$ converges to $h$ when $k\longrightarrow \infty$. Hence $\tr(g^{m_k}h)\stackrel{k\rightarrow\infty}\longrightarrow\tr(h)$ and there is $K\in \N$ such that for all $k>K$, the isometries $g^{m_k}h$ and $hg^{m_k}$ are loxodromic. If this $K$ is big enough, then for all $n > 0$ and for all $k>K$, the isometries $g^{m_k}h^n$ and $h^ng^{m_k}$ are loxodromic.
\end{proof}

\begin{lemma}
\label{L:NonemptyAlg}
Let $\Gamma$ be a nonelementary subgroup of $\PSL(2,\C)^q\times\PSL(2,\R)^r$. Then $\mathcal{L}_\Gamma^{reg}$ is not empty.
\end{lemma}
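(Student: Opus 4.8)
The plan is to show that $\Gamma$ contains a \emph{loxodromic} element, i.e.\ one all of whose $q+r$ components are loxodromic; once this is done we are finished, since the discussion preceding Theorem~\ref{T:LinkFP} shows that the attracting fixed point of any loxodromic $h\in\Gamma$ is a regular limit point, so $\mathcal{L}_\Gamma^{reg}\neq\emptyset$.

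First I would record two consequences of nonelementarity. Since each projection $p_i(\Gamma)$ is a nonelementary subgroup of $\PSL(2,\C)$ or $\PSL(2,\R)$, it contains loxodromic (equivalently, in the real factors, hyperbolic) isometries; taking a preimage in $\Gamma$ produces, for every $i$, an element whose $i$-th component is loxodromic. Moreover, by the definition of nonelementary, any $g\in\Gamma$ possessing at least one loxodromic component is either loxodromic or mixed, and in the latter case each of its components is loxodromic or elliptic of infinite order; in particular such a $g$ has no parabolic and no finite-order elliptic component.

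The core is a maximality argument. Choose $h\in\Gamma$ for which the number of loxodromic components is as large as possible; by the previous paragraph this number is at least $1$. Let $T\subseteq\{1,\dots,q+r\}$ be the set of loxodromic components of $h$ and suppose, for contradiction, that $T\neq\{1,\dots,q+r\}$. Pick $k\notin T$; then $p_k(h)$ is elliptic of infinite order. Choose $g\in\Gamma$ with $p_k(g)$ loxodromic. Exactly as in Lemma~\ref{L:EllipticLox}, select a sequence $m_\nu\to\infty$ with $p_k(h)^{m_\nu}\to\mathrm{Id}$; then $p_k(h^{m_\nu}g)\to p_k(g)$, so $p_k(h^{m_\nu}g)$ is loxodromic for $\nu$ large. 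For $i\in T$, write $a_i,r_i$ for the attracting and repelling fixed points of $p_i(h)$. Diagonalising $p_i(h)$ and computing the trace of $p_i(h)^{m_\nu}p_i(g)$ shows that, provided $p_i(g)(a_i)\neq r_i$, this trace has modulus tending to infinity, so $p_i(h^{m_\nu}g)$ is also loxodromic for $\nu$ large. Along the single sequence $m_\nu$ the element $h^{m_\nu}g$ is then loxodromic on $T\cup\{k\}$, contradicting the maximality of $h$. Hence $T=\{1,\dots,q+r\}$ and $h$ itself is loxodromic.

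The step that requires care — and which I expect to be the main obstacle — is arranging the transversality conditions $p_i(g)(a_i)\neq r_i$ simultaneously for all $i\in T$ while keeping $p_k(g)$ loxodromic. Multiplying $g$ by powers of $h$ is useless, since $p_i(h)$ fixes both $a_i$ and $r_i$. Instead I would fix an auxiliary $f\in\Gamma$ whose $k$-th component is loxodromic and shares no fixed point with $p_k(g)$, and whose components $p_i(f)$, $i\in T$, do not fix $a_i$ (achievable, for the finitely many indices involved, from the nonelementarity of the corresponding projections). Replacing $g$ by $gf^{t}$, the $k$-th component stays loxodromic for large $t$, while for each $i\in T$ the point $p_i(f)^t(a_i)$ runs through an infinite orbit, so the bad equality $p_i(gf^t)(a_i)=r_i$ holds for at most one value of $t$; excluding these finitely many $t$ yields an admissible $g$. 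This simultaneous fixed-point avoidance is the analogue of Step~1 in the proof of Lemma~\ref{L:Schottky}, and is the only delicate point, the remaining input being the trace continuity already packaged in Lemma~\ref{L:EllipticLox}.
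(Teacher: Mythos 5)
Your overall strategy coincides with the paper's: both proofs reduce the lemma to producing one element of $\Gamma$ all of whose $q+r$ components are loxodromic, and both build it coordinate by coordinate using the convergence trick of Lemma~\ref{L:EllipticLox} (your maximality formulation is just the induction run in reverse). Your trace computation at the indices $i\in T$ is also correct: diagonalizing $p_i(h)$ as $\mathrm{diag}(\lambda,\lambda^{-1})$ with $|\lambda|>1$, one gets $\tr\bigl(p_i(h)^m p_i(g)\bigr)=\lambda^m a+\lambda^{-m}d$, and $a\neq 0$ is exactly your condition $p_i(g)(a_i)\neq r_i$.

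The gap is in the step you yourself flag as delicate, and as written it fails. You claim that after replacing $g$ by $gf^{t}$ the $k$-th component ``stays loxodromic for large $t$'' merely because $p_k(f)$ is loxodromic and shares no fixed point with $p_k(g)$. This is false: take in $\PSL(2,\R)$ the hyperbolic elements $p_k(g)=\begin{bmatrix}0&-1\\1&3\end{bmatrix}$ and $p_k(f)=\begin{bmatrix}\mu&0\\0&\mu^{-1}\end{bmatrix}$ with $\mu>1$; they have no common fixed point, yet $\tr\bigl(p_k(g)\,p_k(f)^t\bigr)=3\mu^{-t}\to 0$, so $p_k(g)\,p_k(f)^t$ is elliptic for all large $t$. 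What is actually needed is that $p_k(g)$ not map the attracting fixed point of $p_k(f)$ to its repelling fixed point --- i.e.\ precisely the transversality condition you are trying to arrange at the indices $i\in T$. So the perturbation by $f^t$ only displaces the problem from $T$ to $k$; it does not solve it. Compounding this, the existence of the auxiliary $f$ is asserted (``achievable \ldots from the nonelementarity of the corresponding projections'') rather than proved: nonelementarity of each $p_i(\Gamma)$ separately gives elements that are good in one coordinate, but you need a single element of $\Gamma$ satisfying conditions in several coordinates simultaneously, and that simultaneous construction is the real content of the lemma. The paper circumvents exactly this bookkeeping: in its induction it first invokes Lemma~\ref{L:Schottky} to put the components that are already loxodromic into Schottky position, so that (Schottky groups being free and purely loxodromic) those components of any nontrivial word formed afterwards are loxodromic automatically, and only then applies Lemma~\ref{L:EllipticLox} to gain the new coordinate. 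To salvage your argument you would need to replace the generic-perturbation step by such a Schottky-position step, or prove the simultaneous fixed-point-avoidance statement by a separate induction.
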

\begin{proof}
The idea is to find an element $h$ in $\Gamma$ such that for all $i = 1,...,q+r$, the transformation $h_i$ is loxodromic. Then the attractive and repulsive fixed points of $h$ define a point in the regular limit set.

We are going to use a diagonal argument. For each $i = 1,\ldots, q+r$ we choose $g_i=(g_{i1},\ldots,g_{i,q+r})\in \Gamma$ such that $g_{ii}$ is loxodromic. The isometries $g_{i}$ do not need to be different. Using $g_i$, we will gradually construct the searched isometry $h$. In each step of the construction $g_{ii}$ will stay loxodromic.

First we show that for all $i = 2,\ldots,q+r$, we can choose the isometry $g_i$ so that $g_{i1}$ is loxodromic.

If for some $i = 2,\ldots,q+r$, $g_{i1}$ is not loxodromic then it is elliptic of infinite order. If $g_{1i}$ is loxodromic, then instead of $g_i$ we take $g_1$. In the other case, when $g_{1i}$ is elliptic, instead of $g_i$ we take $g_1^mg_i^n$ where $m$ and $n$ are chosen according to Lemma~\ref{L:EllipticLox} such that $g_{11}g_{i1}^n$ and $g_{1i}^mg_{ii}$ are loxodromic. Then also $g_{11}^mg_{i1}^n$ and $g_{1i}^mg_{ii}^n$ are loxodromic.

Now assume that for $i = k,\ldots,q+r$ and for $j = 1,\ldots,k-1$, the isometries $g_{ij}$ and $g_{ii}$ are loxodromic. We will show that for all $i = k+1,\ldots,q+r$, we can choose $g_i$ so that, for $j = 1,\ldots,k$, the isometries $g_{ij}$ and $g_{ii}$ are loxodromic. 

If for some $i = k+1,\ldots,q+r$, $g_{ki}$ is loxodromic, then instead of $g_i$ we take $g_k$. In the other case $g_{ki}$ is elliptic of infinite order. First instead of $g_k$ and $g_i$ we consider $g'_k$ and $g'_i$ that we get from Lemma~\ref{L:Schottky} after projecting $\Gamma$ on the first $k-1$ factors. The types of the isometries remain unchanged under conjugation. Instead of $g'_k$ and $g'_i$, we will continue to write $g_k$ and $g_i$.

Then using Lemma~\ref{L:SchottkyLox}, we take powers of $g_k$ and $g_i$ so that, for all $j = 1,\ldots,k-1$, $g_{kj}$ and $g_{ij}$ generate a Schottky group.

Finally, instead of $g_i$ we take $g_k^mg_i^n$ where $m$ and $n$ are chosen according to Lemma~\ref{L:EllipticLox} such that $g_{kk}g_{ik}^n$ and $g_{ki}^mg_{kk}$ are loxodromic. Then also $g_{kk}^mg_{ik}^n$ and $g_{ki}^mg_{ii}^n$ are loxodromic.
\end{proof}

\begin{rem}
This lemma is also true if we omit the condition for the mixed elements in the definition of a nonelementary group but the proof is more complicated. 
\end{rem}

If the group $\Gamma$ is a subgroup of $\PSL(2,\R)^r$ we have an even stronger statement.

\begin{lemma}
\label{L:Nonempty}
Let $\Gamma$ be a subgroup of $\PSL(2,\R)^r$ such that all mixed isometries in $\Gamma$ have only elliptic and hyperbolic components and $p_j(S)$ is nonelementary for one $j\in\{1,\ldots,r\}$. Then $\mathcal{L}_{\Gamma}^{reg}$ is not empty.
\end{lemma}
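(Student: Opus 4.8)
The plan is to deduce the statement from Lemma~\ref{L:NonemptyAlg}: although only the single projection $p_j(\Gamma)$ is assumed nonelementary, I claim that the hypothesis on mixed elements forces \emph{every} projection $p_i(\Gamma)$ to be nonelementary and every elliptic component of a mixed element to have infinite order. Recalling that loxodromic and hyperbolic coincide in $\PSL(2,\R)$, this makes $\Gamma$ nonelementary in the sense of the definition preceding Theorem~\ref{T:LinkFP} (taken with $q=0$), so that Lemma~\ref{L:NonemptyAlg} applies and gives $\mathcal{L}_\Gamma^{reg}\neq\emptyset$.

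To produce a good pair of elements I would first use that $p_j(\Gamma)$, being nonelementary, contains two loxodromic transformations with no common fixed point (as recalled in Section~1). Lifting them to $\Gamma$ and passing to suitable powers, Lemma~\ref{L:SchottkyLox} provides $a,b\in\Gamma$ whose $j$-th components $a_j,b_j$ generate a Schottky group. Thus $\langle a_j,b_j\rangle$ is free of rank two and all its nontrivial elements are loxodromic, so the homomorphism from the free group $F_2$ into $\Gamma$ sending the two generators to $a$ and $b$ is injective after composition with $p_j$.

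Next I argue by contradiction that each $p_i(\Gamma)$ is nonelementary. If $p_i(\Gamma)$ were elementary, then $\langle a_i,b_i\rangle$ would be elementary too and hence would contain no free subgroup of rank two; the map $F_2\to\langle a_i,b_i\rangle$ would then have nontrivial kernel, so there would be a nontrivial reduced word $w$ with $w(a_i,b_i)=\mathrm{id}$. In the $j$-th factor the same word satisfies $w(a_j,b_j)\neq\mathrm{id}$, and because $\langle a_j,b_j\rangle$ is Schottky this element is loxodromic. Hence $w(a,b)\in\Gamma$ would be a mixed isometry with loxodromic $j$-component and identity $i$-component, the latter being neither elliptic nor hyperbolic --- contradicting the hypothesis on mixed elements. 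Therefore $p_i(\Gamma)$ is nonelementary for all $i$.

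Finally I would rule out finite-order elliptic components of mixed elements by the same idea: if a mixed $g\in\Gamma$ had an elliptic component $g_i$ of finite order $k$, then, $g$ having also a loxodromic component $g_{i'}$, the power $g^k$ would again be mixed with $(g^k)_{i'}$ loxodromic and $(g^k)_i=\mathrm{id}$, once more a forbidden identity component. Thus every mixed element has only loxodromic and infinite-order elliptic components, $\Gamma$ is nonelementary in the required sense, and Lemma~\ref{L:NonemptyAlg} finishes the proof. The crux --- the only place where the lone nonelementary projection is played against the remaining ones --- is this relation-word step: one genuinely needs $a_j,b_j$ to generate a \emph{Schottky} group rather than merely a free or nonelementary one, so that a relation occurring in the $i$-th factor is transported into an honestly loxodromic, and therefore forbidden, element in the $j$-th factor.
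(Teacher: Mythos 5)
Your proposal is correct, but it takes a genuinely different route from the paper. The paper proves the lemma directly and constructively: starting from $g,g'\in\Gamma$ whose $j$-th components are hyperbolic without common fixed points, it sets $\tilde g=g'g g'^{-1}$ and shows that a commutator $h=[g^n,\tilde g^n]$ is hyperbolic in \emph{every} component --- Schottky theory (Lemma~\ref{L:SchottkyLox} plus Step 3 of Lemma~\ref{L:Schottky}) handles the components where $g_i,\tilde g_i$ are hyperbolic, while Theorem 7.39.2 in \cite{aB95} (the commutator of two non-commuting elliptic elements of $\PSL(2,\R)$ is hyperbolic) handles the elliptic components, non-commutation being forced by the same ``no identity component in a mixed element'' observation you use; the fixed points of $h$ then give a regular limit point, and the paper only afterwards deduces Lemma~\ref{L:Nonelementary} from this. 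You reverse the order: you verify that $\Gamma$ satisfies the paper's definition of nonelementary (all projections nonelementary, via the relation-word transport from the Schottky factor; all elliptic components of mixed elements of infinite order, via the power trick) --- which is essentially Lemma~\ref{L:Nonelementary} --- and then invoke Lemma~\ref{L:NonemptyAlg}. This is legitimate and non-circular, since Lemma~\ref{L:NonemptyAlg} precedes both lemmas and depends on neither; it also has the merit of making explicit the infinite-order claim that the paper absorbs into a ``without loss of generality''. What the paper's route buys is an explicit hyperbolic element (reused immediately in the proof of Lemma~\ref{L:Nonelementary}); what yours buys is a clean reduction that yields Lemma~\ref{L:Nonelementary} for free.

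One step you should justify rather than assert: ``elementary, hence contains no free subgroup of rank two.'' This is true here because every elementary subgroup of $\PSL(2,\R)$ is virtually solvable (a finite-index subgroup fixes a point of $\bbH^3\cup\C\cup\{\infty\}$, and point stabilizers in $\PSL(2,\R)$ are abelian rotation groups or conjugates of the affine group), but it fails badly for subgroups of $\PSL(2,\C)$: $PSU(2)\cong SO(3)$ fixes a point of $\bbH^3$, so it is elementary, yet it contains free groups of rank two. This is precisely where your argument uses $q=0$ --- exactly as the paper's use of Beardon's commutator theorem does --- and it is why the analogous statement is false for $\PSL(2,\C)^q\times\PSL(2,\R)^r$ with $q\geq 1$, as the paper remarks after Lemma~\ref{L:Nonelementary}.
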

\begin{proof}
First we show that the subgroup $\Gamma$ of $\PSL(2,\R)^r$ contains a hyperbolic element $h=(h_1,\ldots,h_r)$. A proof of this fact is given also by Ricker in \cite{sR02}, in the proof of Proposition~2. 

Without loss of generality we can assume that $p_1(\Gamma)$ is nonelementary. In this case $\Gamma$ contains two isometries $g$ and $g'$ such that $g_1$ and $g'_1$ are hyperbolic without common fixed points. We set $\tilde{g}:=g'g{g'}^{-1}$. Then $g_1$ and $\tilde{g}_1$ also do not have any common fixed point. Again without loss of generality we can assume that, for $i = 1,\ldots, k$, $g_i$ and $\tilde{g}_i$ are hyperbolic and, for $i = k+1,\ldots, r$, elliptic of infinite order. By Lemma~\ref{L:SchottkyLox} and by Step 3 from the proof of Lemma~\ref{L:Schottky}, for $n$ big enough, for all $i = 1,\ldots, k$, the isometries $g_i^n$ and $\tilde{g}_i^n$ generate a Schottky group. Hence the isometries $[g_i^n, \tilde{g}_i^n] = g_i^n \tilde{g}_i^n g_i^{-n} \tilde{g}_i^{-n}$ are hyperbolic.

For $i = k+1,\ldots, r$, the isometries $g_i^n$ and $\tilde{g}_i^n$ do not commute, because otherwise their commutator $[g_i^n, \tilde{g}_i^n]$ will be the identity, which cannot be a component of a mixed isometry. Therefore $g_i^n$ and $\tilde{g}_i^n$ have different fixed points and by Theorem 7.39.2 in the book of Beardon \cite{aB95}, the commutator $[g_i^n$, $\tilde{g}_i^n]$ is hyperbolic.

Thus we have proved that $h:= g^n\tilde{g}^ng^{-n}\tilde{g}^{-n}$ is a hyperbolic element in $\Gamma$.

The attractive and repulsive fixed points of $h$ are points in the regular limit set of $p_1(\Gamma)$.
\end{proof}

As the next lemma shows, a corollary of the previous lemma is that $p_j(\Gamma)$ is ``big"(=non\-elementary) if and only if $\Gamma$ is ``big".

\begin{lemma}
\label{L:Nonelementary}
Let $\Gamma$ be a subgroup of $\PSL(2,\R)^r$ such that all mixed isometries in $\Gamma$ have only elliptic and hyperbolic components. If $p_j(\Gamma)$ is nonelementary for one $j\in\{1,\ldots,r\}$, then $\Gamma$ is also nonelementary.
\end{lemma}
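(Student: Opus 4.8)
The plan is to check the two conditions in the definition of a nonelementary subgroup of $\PSL(2,\R)^r$: that $p_i(\Gamma)$ is nonelementary for every $i=1,\ldots,r$, and that every mixed element of $\Gamma$ has only hyperbolic components or elliptic components of infinite order. The key observation, and the place where the hypothesis on mixed elements is used, is that each projection $p_i$ is \emph{injective} on $\Gamma$. Indeed, suppose $g\in\Gamma$ is nontrivial with $p_i(g)=\mathrm{id}$. Since $g\neq\mathrm{id}$, some component $g_\ell$ is nontrivial, so $g$ has an identity $i$-th component alongside a nontrivial one and is therefore mixed; but the identity is neither elliptic nor hyperbolic, contradicting the assumption that mixed elements have only elliptic and hyperbolic components. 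Hence $\ker(p_i|_\Gamma)$ is trivial and the groups $p_1(\Gamma),\ldots,p_r(\Gamma)$ are all abstractly isomorphic to $\Gamma$.

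With injectivity in hand, the condition on mixed elements is immediate: if a mixed $g$ had an elliptic component $g_i$ of finite order $n$, then $g^n$ would be a nontrivial element (it still has a nontrivial hyperbolic component, since $g$ is mixed) with trivial $i$-th component, contradicting injectivity; so all elliptic components of mixed elements have infinite order. For the main point — that every $p_i(\Gamma)$ is nonelementary — I would invoke the standard fact that a subgroup of $\PSL(2,\R)$ is nonelementary if and only if it contains a nonabelian free subgroup. The forward implication follows because a nonelementary group contains two hyperbolic elements with no common fixed point, whose large powers generate a Schottky, hence free, group by Lemma~\ref{L:SchottkyLox}; the converse holds because every elementary subgroup of $\PSL(2,\R)$ is virtually solvable and so contains no nonabelian free subgroup. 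Since having a nonabelian free subgroup is preserved under abstract isomorphism and all $p_i(\Gamma)$ are isomorphic to the nonelementary group $p_j(\Gamma)$, each $p_i(\Gamma)$ is nonelementary. Both defining conditions then hold, so $\Gamma$ is nonelementary.

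The step I expect to be the crux is establishing injectivity, which rests entirely on reading the hypothesis so that the identity counts as neither elliptic nor hyperbolic; this is exactly what excludes the ``independent product'' groups $\Gamma=A\times B$ (with $A$ nonelementary and $B$ infinite cyclic) that would otherwise violate the conclusion. If one prefers to avoid the free-subgroup characterization, the same conclusion can be reached geometrically: Lemma~\ref{L:Nonempty} already produces a fully hyperbolic $h=(h_1,\ldots,h_r)\in\Gamma$, and were some $p_i(\Gamma)$ to fix the pair of fixed points of $h_i$ it would lie in the virtually abelian stabilizer of that pair, forcing $\Gamma$ — and hence $p_j(\Gamma)$ — to be virtually abelian, contrary to nonelementarity.
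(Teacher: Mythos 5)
Your proof is correct, but it takes a genuinely different route from the paper's. The paper argues coordinate\-by\-coordinate and geometrically: it invokes Lemma~\ref{L:Nonempty} to produce a hyperbolic element $h=(h_1,\ldots,h_r)\in\Gamma$, picks $g\in\Gamma$ with $g_1$ hyperbolic sharing no fixed point with $h_1$, shows that for $i\geq 2$ each $g_i$ is either elliptic of infinite order or hyperbolic with no common fixed point with $h_i$ (the commutator argument of Step~3 of Lemma~\ref{L:Schottky}, which again rests on the identity and parabolics not being admissible components of mixed elements), and then exhibits a Schottky subgroup of each $p_i(\Gamma)$ generated by powers of $h_i$ and $g_ih_ig_i^{-1}$. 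You instead extract the full strength of the hypothesis at the outset --- each $p_i|_\Gamma$ is injective, so every projection is abstractly isomorphic to $\Gamma$ --- and transport nonelementarity through these isomorphisms via the characterization ``nonelementary $\Leftrightarrow$ contains a nonabelian free subgroup'', whose reverse direction (elementary $\Rightarrow$ virtually solvable) is valid for subgroups of $\PSL(2,\R)$. Your route is shorter, makes the role of the hypothesis completely transparent, avoids Lemma~\ref{L:Nonempty} altogether (though your forward direction uses the same tools, Beardon's theorem and Lemma~\ref{L:SchottkyLox}), and it explicitly verifies the second condition in the definition of nonelementary (mixed elements have no finite-order elliptic components), a point the paper's proof leaves implicit. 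It also explains cleanly why the lemma is special to real factors: the reverse direction of your characterization fails in $\PSL(2,\C)$, since a free subgroup of $PSU(2)$ is elementary --- exactly the phenomenon behind the counterexample in the remark after Corollary~\ref{C:MainCharactAlgAlg}. What the paper's construction buys is concrete Schottky subgroups inside each factor, a device reused in nearby arguments such as Proposition~\ref{P:NonelemElliptic}. One small imprecision in your final, alternative sketch: an elementary subgroup of $\PSL(2,\R)$ containing the hyperbolic $h_i$ need not preserve the \emph{pair} of fixed points of $h_i$; its finite orbit is contained in that pair, so it may instead fix a single endpoint and lie in the solvable, but not virtually abelian, stabilizer of one boundary point. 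Replacing ``virtually abelian'' by ``virtually solvable'' repairs this; your main argument is unaffected.
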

\begin{proof}
Again without loss of generality we can assume that $j=1$. By the previous lemma, $\Gamma$ contains a hyperbolic element $h=(h_1,\ldots,h_r)$. Since $p_1(\Gamma)$ is nonelementary, the group $\Gamma$ contains an element $g=(g_1,\ldots,g_r)$ such that $g_1$ is hyperbolic and does not have any fixed point in common with $h_1$. Then for all $i = 2,\ldots,r$, the isometry $g_i$ is either elliptic of infinite order or it is hyperbolic that does not have common fixed points with $h_i$ (see Step 3 from the proof of Lemma~\ref{L:Schottky}). In both cases some powers of $h_i$ and $g_ih_ig_i^{-1}$ generate a Schottky group (by Lemma~\ref{L:SchottkyLox}.)
\end{proof}

Unfortunately, the above statement is false for subgroups of $\PSL(2,\C)^q\times\PSL(2,\R)^r$ with $q\geq 1$ and $q+r\geq2$.

\subsection{Groups with a nonelementary projection}
In this section we continue the investigation of groups with a nonelementary projection in one factor that was started with Lemma~\ref{L:Nonelementary}.

\begin{proposition}
\label{P:NonelemElliptic}
Let $\Gamma$ be a subgroup of $\PSL(2,\C)^q\times\PSL(2,\R)^r$ such that all mixed isometries in $\Gamma$ have only elliptic and loxodromic components and $p_j(\Gamma)$ is nonelementary for some $j\in\{1,\ldots,q+r\}$. Then each projection $p_i(\Gamma)$ to a real factor, i.e. $i=q+1,\ldots,q+r$, is nonelementary, and to a complex factor, i.e. $i=1,\ldots,q$, is either nonelementary or consists only of elliptic isometries with a common fixed point.
\end{proposition}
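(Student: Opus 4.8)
The plan is to pull a free Schottky group out of the nonelementary factor, push it into every other factor, and then play off the fact that a free group of rank two sits inside $SO(3)$ but inside no abelian or solvable group.

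First I would use the hypothesis that $p_j(\Gamma)$ is nonelementary to pick, via Lemma~\ref{L:SchottkyLox}, loxodromic elements $\alpha,\beta\in p_j(\Gamma)$ generating a Schottky group, and lift them to $a,b\in\Gamma$ with $a_j=\alpha$ and $b_j=\beta$. Setting $H:=\langle a,b\rangle$, the projection $p_j|_H$ maps $H$ onto the free group $\langle\alpha,\beta\rangle$ of rank two; since $H$ is $2$-generated this forces $p_j|_H$ to be an isomorphism, so $H$ is free of rank two and every nontrivial $w\in H$ has $w_j$ loxodromic (because in a Schottky group every nontrivial element is loxodromic).

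The crucial step is to show that $p_i|_H$ is injective for each fixed $i$, so that $p_i(H)$ is again free of rank two. Let $w\in H$ be nontrivial. Then $w_j$ is loxodromic, so $w$ is either loxodromic or mixed; in both cases the hypothesis on mixed elements forces every component of $w$, and in particular $w_i$, to be elliptic or loxodromic. Hence $w_i$ is never the identity and never parabolic, $p_i|_H$ has trivial kernel, and $p_i(H)\cong H$ is a free group of rank two consisting only of elliptic and loxodromic isometries. This is where the whole argument lives, and it is the step I expect to be the real obstacle: everything downstream needs $p_i(H)$ to be an honest free group of rank two with no parabolics, and this is exactly what the mixed-element hypothesis buys.

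Finally I would run the dichotomy on $p_i(\Gamma)$. If it is nonelementary there is nothing to prove, so suppose it is elementary. An elementary subgroup of $\PSL(2,\C)$ either fixes a point of $\bbH^3$, or fixes a boundary point, or preserves a pair of boundary points; in the latter two cases $p_i(\Gamma)$ would be contained in a solvable, respectively virtually abelian, group and so could not contain the free subgroup $p_i(H)$. Hence $p_i(\Gamma)$ fixes a point of $\bbH^3$ (resp.\ $\bbH^2$), lies in the corresponding point stabilizer, and therefore consists entirely of elliptic isometries sharing that common fixed point. For a complex factor ($i\le q$) this is precisely the stated alternative, and it genuinely occurs because the stabilizer $PSU(2)\cong SO(3)$ is nonabelian and does contain free subgroups of rank two. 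For a real factor ($i>q$) the stabilizer in $\bbH^2$ is the abelian group $PSO(2)\cong SO(2)$, which cannot contain the free group $p_i(H)$; thus the elementary case is impossible and $p_i(\Gamma)$ must be nonelementary.
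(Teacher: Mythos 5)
Your proof is correct, and it takes a genuinely different route from the paper's. The paper argues factor by factor: the real projections are handled by the earlier Lemmas~\ref{L:Nonempty} and~\ref{L:Nonelementary} (commutator and Schottky-pair arguments), while for a complex projection containing a loxodromic or parabolic element it runs a case analysis on the $i$-th components of a Schottky pair taken in the nonelementary factor (both loxodromic; one loxodromic and one elliptic of infinite order; both elliptic of infinite order; $g_i$ parabolic, settled by an explicit matrix computation), producing in each case a Schottky subgroup of $p_i(\Gamma)$; the common fixed point in the purely elliptic case is quoted from Beardon (Theorem 4.3.7). You instead lift the Schottky group of Lemma~\ref{L:SchottkyLox} to a rank-two free subgroup $H\leq\Gamma$ (a $2$-generated group surjecting onto a free group of rank $2$ is isomorphic to it, via the obvious section), and use the mixed-isometry hypothesis to make every $p_i|_H$ injective, with image a rank-two free group containing no parabolics; note that this step silently uses the paper's convention that ``elliptic'' excludes the identity (equivalently, components of mixed elements have trace $\neq\pm 2$), which is exactly the fact the paper also invokes. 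Then the classification of elementary subgroups (which requires no discreteness assumption --- important, since $p_i(\Gamma)$ may well be indiscrete) finishes both cases at once: stabilizers of a boundary point or of a pair of boundary points are solvable, hence contain no rank-two free group, so an elementary $p_i(\Gamma)$ must fix an interior point; for a real factor this is impossible since $PSO(2)$ is abelian, while for a complex factor it gives precisely the elliptic-with-common-fixed-point alternative. Your argument is softer and uniform across factors, avoiding the case analysis and the matrix computation, and it explains conceptually why the real and complex conclusions differ ($PSO(2)$ abelian versus $PSU(2)\cong SO(3)$ nonsolvable); the paper's proof is more constructive, actually exhibiting Schottky subgroups inside each nonelementary projection, a device it reuses in later sections.
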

\begin{proof}
For simplicity we will denote $p_i(\Gamma)$ by $\Gamma_i$ for all $i = 1,\ldots,q+r$.

First we show that $\Gamma_i$ is nonelementary for $i=q+1,\ldots,q+r$. If $q+1 \leq j \leq q+r$, then from Lemma~\ref{L:Nonelementary} it follows that all $\Gamma_i$ are nonelementary for $i=q+1,\ldots,q+r$. Otherwise, we see that the proofs of Lemma~\ref{L:Nonempty} and Lemma~\ref{L:Nonelementary} work for the projections $p_i$ with $i=q+1,\ldots,q+r$ independently of the fact that the given nonelementary projection is not among them.

Without loss of generality, we can assume that $\Gamma_{q+r}$ is nonelementary. We will prove that for $i = 1,\ldots,q$, the subgroup $\Gamma_i$ of $\PSL(2,\C)$ is either nonelementary or consists only of elliptic isometries and then by Theorem 4.3.7 in Beardon's book \cite{aB95}, they have a common fixed point. In order to see this, we assume that $\Gamma_i$ contains a parabolic or loxodromic element $g_i$. It is the $i$-th component of an element $g=(g_1\ldots,g_{q+r})$ of $\Gamma$. Since $\Gamma_{q+r}$ is nonelementary, it contains a two-generated Schottky group with only hyperbolic isometries. Let $h=(h_1\ldots,h_{q+r})$ and $\tilde{h}=(\tilde{h}_1\ldots,\tilde{h}_{q+r})$ be the two isometries in $\Gamma$ such that $h_{q+r}$ and $\tilde{h}_{q+r}$ generate it.

By Theorem 4.3.5 in \cite{aB95}, if $h_i$ and $\tilde{h}_i$ have a common fixed point then their commutator has trace $2$, i.e. it is either the identity or parabolic. Since no component of a mixed isometry has trace $2$, the commutator of $h$ and $\tilde{h}$ can be only the identity or a parabolic isometry. Therefore the commutator of $h_{q+r}$ and $\tilde{h}_{q+r}$ has trace $2$ which is impossible because they generate a free group with only hyperbolic isometries. Hence $h_i$ and $\tilde{h}_i$ do not have a common fixed point in $\partial\bbH^3$.

If both $h_i$ and $\tilde{h}_i$ are loxodromic, then by Lemma~\ref{L:SchottkyLox} we see that they generate a Schottky group and hence $\Gamma_i$ is nonelementary.

If only one of them, let us say $h_i$, is loxodromic and the other one $\tilde{h}_i$ is elliptic of infinite order, then there is a power $k$ for which $h_i$ and $\tilde{h}^k_ih_i\tilde{h}_i^{-k}$ do not have a common fixed point and thus some powers of them generate by Lemma~\ref{L:SchottkyLox} a Schottky group and hence $\Gamma_i$ is nonelementary.

If both $h_i$ and $\tilde{h}_i$ are elliptic of infinite order and $g_i$ is loxodromic and does not have common fixed points with at least one of $h_i$ and $\tilde{h}_i$, we proceed as in the previous case in order to show that $\Gamma_i$ is nonelementary. Otherwise $g_i$ has one common fixed point with $h_i$ and one with $\tilde{h}_i$. There is a power $k$ for which $h_i$ and $\tilde{h}^k_ig_i\tilde{h}_i^{-k}$ do not have any common fixed point and we are in the previous case.

If $g_i$ is parabolic, then after conjugation we can assume that it is $\begin{bmatrix}1 & 1 \\ 0 & 1\end{bmatrix}$. For $h_i$ we have the representation $\begin{bmatrix}a & b \\ c & d\end{bmatrix}$ with $a+d$ real with absolute value less than $2$ and for $\tilde{h}_i$ the representation $\begin{bmatrix}\tilde{a} & \tilde{b} \\ \tilde{c} & \tilde{d}\end{bmatrix}$ with $\tilde{a}+\tilde{d}$ real with absolute value less than $2$. Since $h_i$ and $\tilde{h}_i$ do not have a common fixed point in $\partial\bbH^3$, they cannot both fix the point $\infty$. Therefore without loss of generality we can assume that $c$ is different from zero. Then
$$ g_i^kh_i = \begin{bmatrix}1 & k \\ 0 & 1\end{bmatrix}\begin{bmatrix}a & b \\ c & d\end{bmatrix} = \begin{bmatrix}a + kc & b + kd\\ c & d\end{bmatrix} $$
is loxodromic for $k=1$ if $c$ is not real and hyperbolic for $k$ big enough if $c$ is real. Hence we found a loxodromic element in $\Gamma_i$ and as in the previous case $\Gamma_i$ is nonelementary.
\end{proof}

\subsection{The projective limit set is convex}
Recall that the projective limit set $P_\Gamma$ is the projection of $L^{reg}_\Gamma$ on $\RP^{q+r-1}_+$. In this section we show that $P_\Gamma$ is ``nice", i. e. convex.

\begin{lemma}
\label{L:Connected}
Let $\Gamma$ be a nonelementary subgroup of $\PSL(2,\C)^q\times\PSL(2,\R)^r$ with $q+r \geq 2$. Then $P_\Gamma$ is convex in the real projective space $\RP^{q+r-1}$ with its standard metric and in particular $P_\Gamma$ is path connected.
\end{lemma}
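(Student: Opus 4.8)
The plan is to use the description of $P_\Gamma$ given by Theorem~\ref{T:LinkFP}: it is the closure in $\RP^{q+r-1}_+$ of the set of translation directions $L(h)$ of the loxodromic isometries $h\in\Gamma$. Each such $L(h)$ has all coordinates positive, so $P_\Gamma\subseteq\RP^{q+r-1}_+$, and there the geodesic segment of the standard metric joining two points $[v]$ and $[w]$, with $v,w$ chosen to have positive entries, is exactly $\{[\lambda v+\mu w]\mid \lambda,\mu\geq 0,\ (\lambda,\mu)\neq(0,0)\}$. Hence it suffices to show that for any two loxodromic $g,h\in\Gamma$ the whole such segment between $L(g)$ and $L(h)$ lies in $P_\Gamma$; convexity will then follow by a closure argument. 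First I would apply Lemma~\ref{L:Schottky} to replace $g,h$ by loxodromic $g',h'\in\Gamma$ with $L(g')=L(g)$ and $L(h')=L(h)$ such that, for every $i$, the components $g'_i$ and $h'_i$ generate a Schottky group; in particular $g'_i$ and $h'_i$ are loxodromic with no common fixed point.

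The heart of the argument is to understand the translation directions of the products $(g')^m(h')^n$ for large $m,n$. Fix $i$ and, working with $\SL(2,\C)$-representatives, diagonalize $g'_i=\mathrm{diag}(\alpha,\alpha^{-1})$ with $|\alpha|>1$, and write $h'_i=P\,\mathrm{diag}(\beta,\beta^{-1})\,P^{-1}$ with $|\beta|>1$ and $P=\begin{bmatrix}p&q\\ r&s\end{bmatrix}$. The absence of a common fixed point forces $p,q,r,s$ all to be nonzero, and a direct computation gives
$$\tr\big((g'_i)^m(h'_i)^n\big)=ps\,\alpha^m\beta^n-qr\,\alpha^m\beta^{-n}+ps\,\alpha^{-m}\beta^{-n}-qr\,\alpha^{-m}\beta^{n}.$$
Since $|\alpha|,|\beta|>1$ and $ps\neq0$, the first term dominates as $m,n\to\infty$, so the eigenvalue $\Lambda$ of largest modulus of $(g'_i)^m(h'_i)^n$ satisfies $|\Lambda|\sim|ps|\,|\alpha|^m|\beta|^n$. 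Using $\ell=2\log|\Lambda|$ together with $\ell(g'_i)=2\log|\alpha|$ and $\ell(h'_i)=2\log|\beta|$, this yields
$$\ell\big((g'_i)^m(h'_i)^n\big)=m\,\ell(g'_i)+n\,\ell(h'_i)+C_i+o(1)\qquad(m,n\to\infty)$$
for the constant $C_i=2\log|ps|$. In particular all components are loxodromic once $m,n$ are large, so $(g')^m(h')^n$ is a loxodromic element of $\Gamma$. Establishing this asymptotic additivity of translation lengths, with a bounded error term, is the main obstacle, and it rests entirely on the nonvanishing of the leading coefficient $ps$, i.e.\ on the Schottky (no common fixed point) property secured by Lemma~\ref{L:Schottky}.

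Given $t\in(0,1)$ I would then choose $m_N=\lfloor tN\rfloor$ and $n_N=N-m_N$, so that $m_N,n_N\to\infty$ with $m_N/N\to t$ and $n_N/N\to 1-t$. Dividing each coordinate of $L\big((g')^{m_N}(h')^{n_N}\big)=\big(\ell((g'_1)^{m_N}(h'_1)^{n_N}):\dots:\ell((g'_{q+r})^{m_N}(h'_{q+r})^{n_N})\big)$ by $N$ and letting $N\to\infty$, the constants $C_i$ and the $o(1)$ terms wash out, and the direction converges in $\RP^{q+r-1}_+$ to
$$\big(t\,\ell(g'_1)+(1-t)\ell(h'_1):\dots:t\,\ell(g'_{q+r})+(1-t)\ell(h'_{q+r})\big),$$
which is the point at parameter $t$ on the segment from $L(h')=L(h)$ to $L(g')=L(g)$. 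As each $(g')^{m_N}(h')^{n_N}$ is loxodromic and $P_\Gamma$ is closed, this limit lies in $P_\Gamma$; together with the endpoints $L(g),L(h)\in P_\Gamma$, the whole geodesic segment between $L(g)$ and $L(h)$ lies in $P_\Gamma$.

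Finally, to deduce convexity of $P_\Gamma$, take arbitrary $\xi,\eta\in P_\Gamma$. By Theorem~\ref{T:LinkFP} they are limits of translation directions, say $L(g^{(k)})\to\xi$ and $L(h^{(j)})\to\eta$; by the previous step each segment $[\,L(g^{(k)}),L(h^{(j)})\,]$ lies in $P_\Gamma$, and as $k,j\to\infty$ these segments converge in Hausdorff distance inside $\RP^{q+r-1}_+$ to $[\xi,\eta]$. Since $P_\Gamma$ is closed, $[\xi,\eta]\subseteq P_\Gamma$, so $P_\Gamma$ is convex and in particular path connected.
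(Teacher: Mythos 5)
Your proposal is correct and follows essentially the same route as the paper: reduce to translation directions of loxodromic elements via Lemma~\ref{L:Schottky}, show that the translation lengths of $g^mh^n$ are additive up to a bounded error so that the directions $L(g^mh^n)$ fill in the projective segment, and then handle arbitrary points of $P_\Gamma$ by a closure/approximation argument using Theorem~\ref{T:LinkFP}. The only difference is that where the paper cites Dal'Bo's Lemma~4.1 and Proposition~4.2 from \cite{fD99} for the estimate $|\ell(g_i^mh_i^n)-m\ell(g_i)-n\ell(h_i)|<C$, you prove it directly by the explicit trace asymptotics $\tr\bigl((g'_i)^m(h'_i)^n\bigr)\sim ps\,\alpha^m\beta^n$ (valid since the Schottky property forces $ps\neq0$, and assuming the conjugating matrix is normalized to determinant one), which makes your argument self-contained but does not change the underlying approach.
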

\begin{proof}
The regular limit set $\mathcal{L}_\Gamma^{reg}$ is not empty by Lemma~\ref{L:NonemptyAlg}.

Since $\mathcal{L}_\Gamma^{reg}$ is not empty, $P_\Gamma$ contains at least one point. We will show that if $(x_1:\ldots:x_{q+r})$ and $(y_1:\ldots:y_{q+r})$  are two different points in $P_\Gamma$ then the segment $(x_1 + \lambda y_1:\ldots:x_{q+r} + \lambda y_{q+r})$ with $\lambda>0$ is also contained in $P_\Gamma$.

First we consider the case where $(x_1:\ldots:x_{q+r}) = (\ell(g_1):\ldots:\ell(g_{q+r}))$ and $(y_1:\ldots:y_{q+r}) = (\ell(h_1):\ldots:\ell(h_{q+r}))$ for loxodromic transformations $g,h$ in $\Gamma$.

By Lemma \ref{L:Schottky} we can assume that for all $i =2,\ldots,q+r$, the group generated by $g_i$ and $h_i$ is a Schottky group with only loxodromic isometries. 

Next we proceed as Dal'Bo in the proof of Proposition 4.2 in \cite{fD99} using Lemma 4.1 from \cite{fD99}. The latter says that there exists $C>0$ such that for all $m,n\in \N$ and all $i = 1,\ldots,q+r$,
$$ \left|\ell(g_i^mh_i^n) - m \ell(g_i) - n\ell(h_i) \right| <C.$$
Hence
$$ (\ell(g_1^{km}h_1^{kn}):\ldots:\ell(g_{q+r}^{km}h_{q+r}^{kn})) \stackrel{k\rightarrow\infty}{\longrightarrow} (\ell(g_1) + \frac{n}{m}\ell(h_1):\ldots:\ell(g_{q+r}) + \frac{n}{m}\ell(h_{q+r})) $$
and so for each $\lambda>0$, the point $(\ell(g_1) + \lambda\ell(h_1):\ldots:\ell(g_{q+r}) + \lambda\ell(h_{q+r}))$ is in the closure of the translation directions of the loxodromic isometries of $\Gamma$ and hence in $P_\Gamma$.

In this way we found a path in $P_\Gamma$ between $((\ell(g_1):\ldots:\ell(g_{q+r}))$ and $ (\ell(h_1):\ldots:\ell(h_{q+r}))$.

Now we consider two arbitrary different points $x$ and $y$ in $P_\Gamma$. By Link's Theorem \ref{T:LinkFP}, the translation directions of the loxodromic isometries of $\Gamma$ are dense in $P_\Gamma$. Therefore there are sequences $\{x^i\}$ and $\{y^i\}$ of translation directions of loxodromic isometries in $\Gamma$ such that $x^i \stackrel{i\rightarrow \infty}\longrightarrow x$ and $y^i \stackrel{i\rightarrow \infty}\longrightarrow y$.

We consider the canonical projections of $x^i$, $y^i$, $x$ and $y$ in the hyperplane $\{(t_1,\ldots,t_{q+r})\in \R^r\mid t_1+\cdots+t_{q+r} = 1\}$ and denote them by $(x^i_1,\ldots,x^i_{q+r})$,  $(y^i_1,\ldots,y^i_{q+r})$,  $(x_1,\ldots,x_{q+r})$ and  $(y_1,\ldots,y_{q+r})$ respectively. Then for all $j = 1,\ldots,r$, $x^i_j \stackrel{i\rightarrow \infty}\longrightarrow x_j$ and $y^i_j \stackrel{i\rightarrow \infty}\longrightarrow y_j$. Therefore $x^i_j +\lambda y^i_j \stackrel{i\rightarrow \infty}\longrightarrow x_j +\lambda y_j$ for $\lambda>0$. Hence 
$$ (x^i_1 +\lambda y^i_1:\ldots:x^i_{q+r} +\lambda y^i_{q+r})\stackrel{i\rightarrow \infty}\longrightarrow (x_1 +\lambda y_1:\ldots:x_{q+r} + \lambda y_{q+r}),\quad \text{with}\quad \lambda>0.$$
As already shown above the points $(x^i_1 +\lambda y^i_1:\ldots:x^i_{q+r} +\lambda y^i_{q+r})$ are in $P_\Gamma$. Hence the points $(x_1 +\lambda y_1:\ldots:x_{q+r} + \lambda y_{q+r})$ with $\lambda>0$ are also in $P_\Gamma$, which is what we wanted to show.
\end{proof}
\begin{rem}
This lemma is also true if we omit the condition for the mixed elements in $\Gamma$.
\end{rem}

\subsection{The limit cone and a theorem of Benoist}
\label{S:LimitConeBenoist}
The limit cone of a nonelementary group $\Gamma \leq \PSL(2,\C)^q\times\PSL(2,\R)^r$ with $q+r \geq 2$ is closely related to the projective limit set of $\Gamma$. 

In this section we first define the limit cone as defined by Benoist in \cite{yB97} and then show that the limit cone of a nonelementary $\Gamma$ is the closure of $P_\Gamma$ in $\RP^{q+r-1}$ and hence convex. We give also a version of the theorem in Section~1.2 in \cite{yB97} which motivated the previous result and is used later in this article.

The limit cone of $\Gamma$ is defined via the complete multiplicative Jordan decomposition of the elements of $\Gamma$. The multiplicative Jordan decomposition can be found for example in the the book of Eberlein \cite{pE96}. 

Each element $g \in \SL(2,\C)^q\times\SL(2,\R)^r$ can be decomposed in a unique way into $g = e_gh_gu_g$ where $e_g$ is elliptic (all eigenvalues have modulus 1), $h_g$ is hyperbolic (all eigenvalues are real positive), $u_g$ is unipotent (in our case parabolic) and all three commute. The canonical projection of $\SL(2,\C)$ into $\PSL(2,\C)$ gives the Jordan decomposition for $g \in \PSL(2,\C)^q\times\PSL(2,\R)^r$.

For instance, if $g \in \PSL(2,\C)$ is loxodromic, i.e. one of its representatives is conjugate to $\begin{bmatrix} e^{x+i\phi} & 0 \\ 0 & e^{-x-i\phi} \end{bmatrix}$ with $x,\phi \in \R$ then $e_g$ is conjugate to $\begin{bmatrix} e^{i\phi} & 0 \\ 0 &  e^{-i\phi} \end{bmatrix}$ and $h_g$ is conjugate to $\begin{bmatrix}e^x & 0 \\ 0 & e^{-x} \end{bmatrix}$. We have $x=\ell(g)/2$, where $\ell(g)$ is the translation length of $g$.

With $\lambda(g)$ we denote the unique element in $\PSL(2,\C)^q\times\PSL(2,\R)^r$ that is conjugate to $h_g$ and has diagonal form $\left(\begin{bmatrix}e^{x_1} & 0 \\0 & e^{-x_1}\end{bmatrix}, \ldots, \begin{bmatrix}e^{x_{q+r}} & 0 \\0 & e^{-x_{q+r}}\end{bmatrix}\right)$ with $x_i\geq 0$, $i=1,\ldots,r$. Then the  \textit{limit cone} of $\Gamma$ is the smallest closed cone in the space of diagonal elements in $\mathfrak{sl}(2,\C)^q\times\mathfrak{sl}(2,\R)^r$ that contains $\log(\lambda(\Gamma))$. E.g. if 
$$\lambda(g) = \left(\begin{bmatrix}e^{x_1} & 0 \\0 & e^{-x_1}\end{bmatrix}, \ldots, \begin{bmatrix}e^{x_{q+r}} & 0 \\0 & e^{-x_{q+r}}\end{bmatrix}\right)$$ 
with $x_1,\ldots,x_{q+r} \in \R_{\geq0}$, then 
$$\log(\lambda(g)) = \left(\begin{bmatrix} x_1 & 0 \\0 & -x_1\end{bmatrix}, \ldots, \begin{bmatrix} x_{q+r} & 0 \\0 & -x_{q+r}\end{bmatrix}\right)$$ 
and the limit cone contains the line 
$$\left(\begin{bmatrix} t x_1 & 0 \\0 & - t x_1\end{bmatrix}, \ldots, \begin{bmatrix} t x_{q+r} & 0 \\0 & - t x_{q+r}\end{bmatrix}\right),\quad t \in \R.$$

The translation direction of the isometry $g$ given above is $L(g)=(2x_1:\ldots:2x_{q+r})$. Hence the closure in $\RP^{q+r-1}$ of the translation directions of the hyperbolic and mixed elements of $\Gamma$ can be identified canonically with the limit cone of $\Gamma$.

The interior of the limit cone of $\Gamma$ is the intersection of the limit cone of $\Gamma$ with $\RP^{q+r-1}_+$. Hence it is exactly the projective limit set $P_\Gamma$ of $\Gamma$. In \cite{yB97} Benoist shows that for Zariski dense groups $\Gamma$, the limit cone of $\Gamma$ is convex and has nonempty interior. This means that the limit cone of $\Gamma$ is the closure of its interior. Thus for Zariski dense $\Gamma$, the limit cone of $\Gamma$ and $\overline{P_\Gamma}$ are the same. As the next theorem shows, the same is true even if $\Gamma$ is just nonelementary.

\begin{theorem}
\label{T:ConvexAlg}
Let $\Gamma$ be a nonelementary subgroup of $\PSL(2,\C)^q\times\PSL(2,\R)^r$ with $q + r \geq 2$. Then $P_\Gamma$ is convex and the closure of $P_\Gamma$ in $\RP^{q + r-1}$ is equal to the limit cone of $\Gamma$ and in particular the limit cone of $\Gamma$ is convex.
\end{theorem}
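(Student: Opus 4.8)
The plan is to identify the limit cone $C$ of $\Gamma$ with the closure $\overline{P_\Gamma}$ of the projective limit set in $\RP^{q+r-1}$; convexity of $C$ is then automatic, since $P_\Gamma$ is already known to be convex by Lemma~\ref{L:Connected} and the closure of a convex set is convex. Recall that, via the identification $L(g)\leftrightarrow\log\lambda(g)$ explained just before the theorem, $C$ is the closure in $\RP^{q+r-1}$ of the translation directions $L(g)$ of all $g\in\Gamma$ with nontrivial hyperbolic part, while by Theorem~\ref{T:LinkFP} the set $P_\Gamma$ is the closure in $\RP^{q+r-1}_+$ of the translation directions of the loxodromic elements of $\Gamma$. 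Since loxodromic elements have nontrivial hyperbolic part, their directions are among those defining $C$; because $\RP^{q+r-1}_+$ is open in $\RP^{q+r-1}$, the closure of $P_\Gamma$ in $\RP^{q+r-1}$ coincides with the closure of the loxodromic directions, and the inclusion $\overline{P_\Gamma}\subseteq C$ follows at once.

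For the reverse inclusion it suffices, as $\overline{P_\Gamma}$ is closed, to show that $L(g)\in\overline{P_\Gamma}$ for every $g\in\Gamma$ with nontrivial hyperbolic part. If $g$ is loxodromic (in particular if it is hyperbolic), then $L(g)\in P_\Gamma$ by definition and there is nothing to prove. The only genuine case is that of a mixed $g$: by our definition of nonelementary, each component of such a $g$ is either loxodromic or elliptic of infinite order, so after reordering I may assume that $g_1,\ldots,g_k$ are loxodromic and $g_{k+1},\ldots,g_{q+r}$ are elliptic of infinite order. Then $L(g)=(\ell(g_1):\ldots:\ell(g_k):0:\ldots:0)$ lies on the boundary of $\RP^{q+r-1}_+$, and I must produce loxodromic elements of $\Gamma$ whose directions converge to this boundary point.

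The construction combines $g$ with a loxodromic element $h=(h_1,\ldots,h_{q+r})\in\Gamma$, which exists by Lemma~\ref{L:NonemptyAlg}. The tuple $x:=(g_{k+1},\ldots,g_{q+r})$ generates a cyclic group whose closure is a compact abelian group, so by recurrence in a compact group there is a sequence $m_j\to\infty$ with $g_i^{m_j}\to\mathrm{id}$ for all $i>k$ simultaneously. For such $m_j$ the last components satisfy $(g^{m_j}h)_i=g_i^{m_j}h_i\to h_i$, which is loxodromic, so they are loxodromic for $j$ large; the first components $g_i^{m_j}h_i$ are loxodromic for $j$ large because $g_i^{m_j}$ has arbitrarily large translation length. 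Hence $g^{m_j}h$ is a loxodromic element of $\Gamma$ and $L(g^{m_j}h)\in P_\Gamma$. Estimating translation lengths, for $i>k$ the length $\ell(g_i^{m_j}h_i)$ stays bounded, while for $i\le k$ one has $\ell(g_i^{m_j}h_i)/m_j\to\ell(g_i)$; dividing the representative of $L(g^{m_j}h)$ by $m_j$ therefore yields $L(g^{m_j}h)\to(\ell(g_1):\ldots:\ell(g_k):0:\ldots:0)=L(g)$. Thus $L(g)\in\overline{P_\Gamma}$, which gives $C\subseteq\overline{P_\Gamma}$ and hence $C=\overline{P_\Gamma}$, convex by Lemma~\ref{L:Connected}.

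The step I expect to be the main obstacle is the asymptotics $\ell(g_i^{m_j}h_i)/m_j\to\ell(g_i)$ for the loxodromic components $i\le k$, together with the loxodromicity of these components along the sequence. Both can fail exactly when $h_i$ sends the attracting fixed point of $g_i$ to its repelling fixed point, forcing a cancellation in the trace of $g_i^{m_j}h_i$. I would rule this resonance out by choosing $h$ appropriately, which is possible because each $p_i(\Gamma)$ with $i\le k$ is nonelementary and hence contains loxodromic elements in sufficiently general position; more robustly, I would first replace $g$ and $h$ as in Lemma~\ref{L:Schottky} (using Lemma~\ref{L:SchottkyLox}) so that the relevant components generate Schottky groups, and then invoke the Dal'Bo-type estimate $|\ell(g_i^{m}h_i^{n})-m\ell(g_i)-n\ell(h_i)|<C$ already exploited in the proof of Lemma~\ref{L:Connected}.
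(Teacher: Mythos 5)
Your proposal is correct, but it follows a genuinely different route from the paper. The paper's own proof is very short: it invokes Theorem 4.10 of \cite{gL06} (density of the attractive fixed points of loxodromic isometries in the full limit set of a nonelementary group) to conclude at once that the translation direction of every mixed isometry is a limit of translation directions of loxodromic isometries, whence the limit cone equals $\overline{P_\Gamma}$, and convexity follows from Lemma~\ref{L:Connected} exactly as you argue. You instead prove the inclusion of the mixed directions into $\overline{P_\Gamma}$ by hand: recurrence of the elliptic components (the same compactness argument as in Lemma~\ref{L:EllipticLox}) produces $m_j\to\infty$ with $g_i^{m_j}\to\mathrm{id}$ on the elliptic factors, and the Dal'Bo estimate $|\ell(g_i^m h_i^n)-m\ell(g_i)-n\ell(h_i)|<C$ in Schottky groups controls the loxodromic factors, so that $L(g^{m_j}h)\to L(g)$. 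You correctly identify the one delicate point --- possible cancellation when $h_i$ is in special position relative to $g_i$ --- and the fix you propose is the right one; note only that Lemma~\ref{L:Schottky} as stated assumes both $g$ and $h$ loxodromic, so for your mixed $g$ you must rerun its proof (Steps 1--3) on the loxodromic components, which works because the nonelementarity condition forbids parabolic or trivial components of mixed elements, exactly as in Step 3 there. What the two approaches buy: the paper's argument is four lines but leans on a nontrivial external theorem of Link; yours is self-contained, quantitative, and avoids that citation entirely --- in fact the author remarks after the theorem that such an alternative proof (not using Theorem 4.10 of \cite{gL06}) exists in \cite{sG09}, and your construction is in that spirit.
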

\begin{proof}
By Lemma~\ref{L:NonemptyAlg}, $\Lim_\Gamma^{reg}$ is nonempty. Then by Theorem 4.10 in \cite{gL06} the set of attractive fixed points of loxodromic isometries in a nonelementary subgroup of $\PSL(2,\C)^q\times\PSL(2,\R)^r$ is dense in its limit set. Hence the translation direction of every mixed isometry is the limit point of a sequence of translation directions of loxodromic isometries in $\Gamma$. From this it follows that the limit cone of $\Gamma$ is the closure of $P_\Gamma$ in $\RP^{q+r-1}$ and since by Lemma~\ref{L:Connected}, the projective limit set $P_\Gamma$ is convex, its closure in the convex set  $\RP^{q+r-1}$ is also convex.
\end{proof}
\begin{rem}
An alternative proof, which does not use Theorem 4.10 from \cite{gL06}, is given in \cite{sG09}, Theorem 3.8.
\end{rem}

This theorem extends partially the following special case of Benoist's theorem in Section~1.2 in \cite{yB97}.

\begin{theorem}[\cite{yB97}]
\label{T:BenoistLimitCone}
If $\Gamma$ is a Zariski dense over $\R$ subgroup of $\PSL(2,\C)^q\times\PSL(2,\R)^r$, $q + r \geq 2$, then the limit cone of $\Gamma$ is convex and its interior is not empty. 
\end{theorem}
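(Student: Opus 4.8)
The plan is to treat the two assertions separately, since only the nonemptiness of the interior genuinely needs Zariski density rather than mere nonelementarity. For the convexity I would first observe that a Zariski dense $\Gamma$ has each projection $p_i(\Gamma)$ Zariski dense in its factor, hence nonelementary by Corollary~\ref{C:Zariski}. Thus $\Gamma$ falls under the hypotheses of the remark-strengthened versions of Lemma~\ref{L:NonemptyAlg}, Lemma~\ref{L:Connected} and Theorem~\ref{T:ConvexAlg}, which drop the condition on the mixed elements and require only that every $p_i(\Gamma)$ be nonelementary. Theorem~\ref{T:ConvexAlg} then already gives that the limit cone equals $\overline{P_\Gamma}$ and is convex, so the whole of the new content lies in showing that this cone is full-dimensional in $\RP^{q+r-1}$.

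For the interior I would argue by contradiction. By Link's Theorem~\ref{T:LinkFP} together with Theorem~\ref{T:ConvexAlg} the cone is the closure of the translation directions of the loxodromic elements, and for such a $g$ one has $\ell(g_i)=2\log|\mu_i(g)|$, where $\mu_i(g)$ is the eigenvalue of $g_i$ of modulus greater than $1$. Empty interior would mean that the cone lies in a hyperplane $\{\sum_i a_i t_i=0\}$ with $(a_1,\dots,a_{q+r})\neq 0$, i.e. that $\prod_i|\mu_i(g)|^{a_i}=1$ for every loxodromic $g\in\Gamma$. The crux is to turn this single multiplicative relation among the dominant eigenvalues of the different factors into a constraint incompatible with Zariski density. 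The main obstacle is precisely that this relation is archimedean, not Zariski closed — the functions $|\mu_i|$ are not algebraic — so one cannot read off a proper algebraic subgroup of $\PSL(2,\C)^q\times\PSL(2,\R)^r$ directly, and the semisimplicity of the ambient group (no nontrivial characters) has to be exploited through the dynamics rather than through algebra alone.

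The resolution I would follow is Benoist's. Writing $\mathfrak{a}=\mathfrak{a}_1\oplus\cdots\oplus\mathfrak{a}_{q+r}$ for the Cartan subspace and $\log\lambda(g)\in\mathfrak{a}$ for the Jordan projection, it suffices to show that the real span $V$ of $\log\lambda(\Gamma)$ is all of $\mathfrak{a}$. The additivity estimate of Dal'bo (Lemma~4.1 of \cite{fD99}) already used in Lemma~\ref{L:Connected} shows that for $g,h\in\Gamma$ that are loxodromic in every factor and in general position, the Jordan projection of $g^mh^n$ stays within bounded distance of $m\log\lambda(g)+n\log\lambda(h)$; hence $V$ is spanned by the Jordan projections of all-loxodromic elements, and one needs $q+r$ of them with linearly independent translation-length vectors $(\ell(g_1),\dots,\ell(g_{q+r}))$. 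Using Zariski density I would produce, factor by factor, loxodromic elements whose attracting and repelling flags are in general position and whose eigenvalue data are not all subject to the putative relation $\prod_i|\mu_i|^{a_i}=1$; this is where Benoist's proximality machinery enters, guaranteeing that no fixed hyperplane can trap $\log\lambda(\Gamma)$ unless $\Gamma$ preserves a proper reductive $\R$-subgroup. The degenerate case $V\neq\mathfrak{a}$ is exactly realized by non-Zariski-dense examples such as the diagonal embedding $\Diag(\PSL(2,\Z))$ from the introduction, whose projective limit set is a single point; excluding it is therefore equivalent to Zariski density, and we conclude $V=\mathfrak{a}$, so the limit cone is convex with nonempty interior.
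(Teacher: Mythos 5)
Your handling of the convexity half is sound and in fact mirrors the paper's own framing: the paper does not prove Theorem~\ref{T:BenoistLimitCone} at all, but states it as a citation of Benoist (Section~1.2 of \cite{yB97}) and remarks that Theorem~\ref{T:ConvexAlg} ``extends partially'' this result. Since the proof of Theorem~\ref{T:ConvexAlg} rests only on Lemma~\ref{L:NonemptyAlg}, Lemma~\ref{L:Connected} and Link's Theorem~4.10 of \cite{gL06}, and not on Theorem~\ref{T:BenoistLimitCone} itself, there is no circularity in obtaining convexity that way; you also correctly note that you need the remark-strengthened versions of those lemmas, because Zariski density says nothing about the types of the mixed elements of $\Gamma$. (A small imprecision: Corollary~\ref{C:Zariski} gives the implication from nonelementary to Zariski dense, not the converse you invoke; the converse is true but is not what that corollary states.)

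The genuine gap is in the second half, which is the only place Zariski density is actually needed. You correctly reduce nonemptiness of the interior to the claim that no relation $\prod_i|\mu_i(g)|^{a_i}=1$, equivalently $\sum_i a_i\,\ell(g_i)=0$, can hold for every loxodromic $g\in\Gamma$, and you correctly identify the obstruction that this relation is archimedean rather than algebraic. But you then dispose of it by asserting that ``Benoist's proximality machinery enters, guaranteeing that no fixed hyperplane can trap $\log\lambda(\Gamma)$'' --- that guarantee \emph{is} the theorem to be proved, so as written the step is circular: nothing in your argument produces, from Zariski density, even a single pair of loxodromic elements whose translation-length vectors violate a prescribed linear relation. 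The closing claim is also a non sequitur: exhibiting a non-Zariski-dense group such as $\Diag(\PSL(2,\Z))$ with degenerate limit cone shows only that the conclusion can fail without density, not that density forces the span of $\log\lambda(\Gamma)$ to be all of the Cartan subspace. So the proposal either must cite Benoist's theorem outright --- which is exactly what the paper does, making the ``proof'' a restatement --- or must actually execute the proximality and eigenvalue-estimate argument of \cite{yB97}, which it does not.
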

\begin{rem}
Since the limit cone of $\Gamma$ is identified with $\overline{P_\Gamma}$, the interior of $P_\Gamma$ is also not empty. This is not always true for nonelementary groups $\Gamma$ that are not Zariski dense.
\end{rem}

\section{Irreducible arithmetic groups}
\setcounter{theorem}{0}
A special case of a general result of Margulis is that the irreducible lattices in $\PSL(2,\C)^q\times\PSL(2,\R)^r$ are all arithmetic. We present a construction of the irreducible arithmetic subgroups of $\PSL(2,\C)^q\times\PSL(2,\R)^r$ with the help of quaternion algebras. This construction gives a natural connection between some subgroups of $\PSL(2,\R)$ or $\PSL(2,\C)$ and irreducible arithmetic subgroups of $\PSL(2,\C)^q\times\PSL(2,\R)^r$.
\subsection{Quaternion algebras}
For more details concerning the definitions, notations and theorems in this section we refer to Katok's book \cite{sK92}, Chapter 5, and the book of Reid and Maclachlan \cite{cM03}, Chapters 0, 3 and 8. In this section $K$ will always denote a field.
%\paragraph{ }

%A \textit{central simple algebra} $A$ over $K$ is an algebra over $K$, which is finite dimensional as a vector space over $K$, such that
%\begin{itemize}
%\item[$\bullet$] as a ring, $A$ has an identity element, 
%\item[$\bullet$] the center of $A$, which is the set $\{a \in A \mid ax = xa, ~ \forall x \in A \}$, equals $K$,
%\item[$\bullet$] A is simple, i.e. A has no nontrivial two-sided ideals.
%\end{itemize}

%\paragraph{ }
A \textit{quaternion algebra over $K$} is a central simple algebra over $K$ which is four dimensional as a vector space over $K$.

%If each element of a quaternion algebra $A$ has an inverse then A is called a \textit{division quaternion algebra}.

Each quaternion algebra is isomorphic to an algebra $A=\left( \frac{a,b}{K} \right)$ with $a,b \in K^*=K\backslash\{0\}$ and a basis $\{1,i,j,k\}$, where $i^2 = a$, $j^2=b$, $k = ij = -ji$.

We denote by $M(2,K)$ the $2\times 2$ matrices with coefficients in the field $K$.

An isomorphism between $A = \left( \frac{a,b}{K} \right)$ and $M(2,K(\sqrt a))$ is given by the linear map sending the elements of the basis of $A$ to the following matrices:
$$ 1 \mapsto \begin{bmatrix} 1 & 0 \\ 0 & 1\end{bmatrix},\quad i \mapsto \begin{bmatrix} \sqrt a & 0 \\ 0 & -\sqrt a\end{bmatrix}, \quad j \mapsto \begin{bmatrix} 0 & 1 \\ b & 0\end{bmatrix},\quad k \mapsto \begin{bmatrix} 0 & \sqrt a \\ -b\sqrt a & 0\end{bmatrix}.$$
Thus if at least one of $a$ and $b$ is positive, then $\left( \frac{a,b}{\R} \right)$ is isomorphic to the matrix algebra $M(2,\R)$. If both $a$ and $b$ are negative, then $\left( \frac{a,b}{\R} \right)$ is isomorphic to the Hamilton quaternion algebra $\Ha = \left(\frac{-1,-1}{\R}\right)$.

%\paragraph{ }
%If $K$ is an algebraic number field it can be written as $\Q(t)$, where $t$ is a root of a polynomial with rational coefficients and $\Q(t)$ is the smallest field containing $\Q$ and $t$. Let $f \in \Q[x]$ be the minimal polynomial of $t$. If $n$ is the dimension of $K$ considered as a vector space over $\Q$, then $f$ has degree $n$.

%Let $t_1 = t$, $t_2$, ... , $t_n$ denote the roots of $f$, then the substitution $t \to t_i$ induces a field isomorphism $\Q(t) \to \Q(t_i)$. Conversely, if $\sigma: K = \Q(t) \to \C$ is a field \textit{embedding}, i.e. $\sigma: K \to \sigma(K)$ is a field isomorphism, then $\sigma(t)$ is a root of the minimal polynomial of $t$. Therefore, there are exactly $n$ field embeddings $\sigma: K \to \C$.

%\paragraph{ }
$K$ is a \textit{totally real algebraic number field} if for each embedding of $K$ into $\C$ the image lies inside $\R$.

%\paragraph{ }
Let $A=\left( \frac{a,b}{K} \right)$ be a quaternion algebra. For every $x \in A$, $x = x_0+x_1i+x_2j+x_3k$, we define the \textit{reduced norm} of $x$ to be 
$$\Nrd(x) = x \bar{x} = x_0^2 - x_1^2 a - x_2^2 b + x_3^2 ab,$$ 
where $\bar{x}= x_0-x_1i-x_2j-x_3k$.

%\paragraph{ }
%An element of $K$ is an \textit{algebraic integer} if it satisfies a polynomial with coefficients in $K$ and leading coefficient $1$. The algebraic integers of $K$ form a ring and we denote it by $\mathcal{O}_K$.

%\paragraph{ }
An \textit{order} $\mathcal{O}$ in a quaternion algebra $A$ over $K$ is a subring of $A$ containing $1$, which is a finitely generated $\mathcal{O}_K$-module and generates the algebra $A$ over $K$. (Here $\mathcal{O}_K$ denotes the ring of algebraic integers of $K$.)

The \textit{group of units} in $\mathcal{O}$ of reduced norm $1$ is $\mathcal{O}^1 = \{ \varepsilon \in \mathcal{O} \mid \Nrd(\varepsilon) = 1\}$.

We recall that a subgroup of $\SL(2,\C)$ is nonelementary if it does not have a finite orbit in its action on $\bbH^3\cup \partial \bbH^3$.

Let $\Gamma$ be a finitely generated nonelementary subgroup of $\SL(2,\C)$. Then the subgroup $\Gamma^{(2)}$ of $\Gamma$ generated by the set $\{g^2 \mid g \in \Gamma\}$ is a finite index normal subgroup of $\Gamma$.

Now we show how we can construct a quaternion algebra and an order starting from a finitely generated nonelementary subgroup of $\SL(2,\C)$. 

Let $\Gamma$ be a nonelementary subgroup of $\SL(2,\C)$. We denote
$$ A\Gamma :=  \{\sum a_i g_i \mid a_i \in \Q(\Tr(\Gamma)), g_i \in \Gamma\} $$
where only finitely many of the $a_i$ are non-zero. By Theorem 3.2.1 from \cite{cM03}, $A\Gamma$ is a quaternion algebra over $\Q(\Tr(\Gamma))$.

Two groups are \textit{commensurable} if their intersection has finite index in both of them. The \textit{commensurability class} of a subgroup $\Gamma$ of a group $G$ is the set of all subgroups of $G$ that are commensurable with $\Gamma$. The following theorem is Corollary 3.3.5 from \cite{cM03}. 
\begin{theorem}[\cite{cM03}]
\label{T:AlgebraInvariant}
Let $\Gamma$ be a finitely generated nonelementary subgroup of $\SL(2,\C)$. The quaternion algebra $A\Gamma^{(2)}$ is an invariant of the commensurability class of $\Gamma$.
\end{theorem}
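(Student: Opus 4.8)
The plan is to reduce the assertion to a single finite-index inclusion and then finish by a dimension count once the ground fields have been matched. If $\Gamma_1$ and $\Gamma_2$ are commensurable, then $\Delta:=\Gamma_1\cap\Gamma_2$ has finite index in each, so it suffices to prove that $A\Delta^{(2)}=A\Gamma^{(2)}$ (as subalgebras of $M(2,\C)$) whenever $\Delta$ is a finite-index subgroup of a finitely generated nonelementary $\Gamma$; applying this to both inclusions $\Delta\leq\Gamma_1$ and $\Delta\leq\Gamma_2$ gives $A\Gamma_1^{(2)}\cong A\Gamma_2^{(2)}$. For the setup I would record that a finite-index subgroup of a finitely generated nonelementary group is again finitely generated (Schreier) and nonelementary (it has the same limit set), and that $\Gamma^{(2)}$, $\Delta^{(2)}$ are finite-index in $\Gamma$, $\Delta$; hence $\Gamma^{(2)}$ and $\Delta^{(2)}$ are themselves finitely generated and nonelementary. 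By Theorem~3.2.1 of \cite{cM03} both $A\Gamma^{(2)}$ and $A\Delta^{(2)}$ are therefore quaternion algebras, i.e.\ four-dimensional, over their trace fields $\Q(\Tr(\Gamma^{(2)}))$ and $\Q(\Tr(\Delta^{(2)}))$. Finally $\Delta^{(2)}\subseteq\Gamma^{(2)}$, because every $d\in\Delta\subseteq\Gamma$ has $d^2\in\Gamma^{(2)}$.

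The engine of the proof is that the \emph{invariant trace field} $k\Gamma:=\Q(\Tr(\Gamma^{(2)}))$ does not change under passage to a finite-index subgroup. Here the inclusion $\Q(\Tr(\Delta^{(2)}))\subseteq k\Gamma$ is immediate from $\Delta^{(2)}\subseteq\Gamma^{(2)}$. To handle the reverse inclusion I would first establish the characterization $k\Gamma=\Q(\{(\tr g)^2:g\in\Gamma\})$. The inclusion $\supseteq$ holds because $(\tr g)^2=\tr(g^2)+2$ with $g^2\in\Gamma^{(2)}$; the inclusion $\subseteq$ is proved by reducing the trace of an arbitrary product of squares to a $\Q$-polynomial in the quantities $(\tr g)^2$, using the Fricke relations $\tr(AB)+\tr(AB^{-1})=\tr(A)\tr(B)$ and $\tr(A^2)=\tr(A)^2-2$. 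The one point that looks like it needs a mixed term $\tr(g)\tr(h)\tr(gh)$ is in fact settled by noting that $2\,\tr(g)\tr(h)\tr(gh)=(\tr g)^2(\tr h)^2+(\tr(gh))^2-(\tr(gh^{-1}))^2$, so that even the cross term lies in $\Q(\{(\tr g)^2\})$.

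The genuinely hard step, and the one I expect to be the main obstacle, is the remaining inclusion $k\Gamma\subseteq\Q(\Tr(\Delta^{(2)}))$, that is, showing $(\tr g)^2\in k\Delta:=\Q(\Tr(\Delta^{(2)}))$ for every $g\in\Gamma$. The naive attempt fails: replacing $g$ by a power $g^{m}\in\Delta$ only yields $(\tr g^{m})^2\in k\Delta$, which is a fixed (high-degree, non-injective) Chebyshev polynomial in $(\tr g)^2$ and therefore does not recover $(\tr g)^2$; bringing in a companion $h$ with $\langle g,h\rangle$ nonelementary and exploiting products whose exponents land in $\Delta$ still leaves one facing the same obstruction for each variable separately. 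Carrying out the correct trace combinatorics — which combines several such elements and their commutators to solve for $(\tr g)^2$ — is exactly the content of the invariance of the invariant trace field, Theorem~3.3.4 of \cite{cM03}, and I would either reproduce that argument or quote it directly.

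Granting the field equality $\Q(\Tr(\Delta^{(2)}))=\Q(\Tr(\Gamma^{(2)}))=:k$, the conclusion is a one-line dimension count. Both $A\Delta^{(2)}$ and $A\Gamma^{(2)}$ are now four-dimensional $k$-subalgebras of $M(2,\C)$, and since $\Delta^{(2)}\subseteq\Gamma^{(2)}$ the $k$-span $A\Delta^{(2)}$ is contained in the $k$-span $A\Gamma^{(2)}$. A four-dimensional $k$-subspace contained in a four-dimensional $k$-space equals it, so $A\Delta^{(2)}=A\Gamma^{(2)}$. This proves invariance under finite index, and hence, by the reduction in the first paragraph, that $A\Gamma^{(2)}$ is an invariant of the commensurability class of $\Gamma$.
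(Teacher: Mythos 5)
The paper offers no proof of this statement at all: it is imported verbatim as Corollary~3.3.5 of \cite{cM03}, so the only meaningful comparison is with Maclachlan--Reid's own argument. Measured against that, your proposal is correct and follows the same skeleton by which \cite{cM03} deduce Corollary~3.3.5 from their Theorem~3.3.4: reduce (narrow-sense) commensurability to a finite-index inclusion $\Delta\leq\Gamma$, get the field equality $\Q(\Tr(\Delta^{(2)}))=\Q(\Tr(\Gamma^{(2)}))$ from the invariance of the invariant trace field, and conclude from $\Delta^{(2)}\subseteq\Gamma^{(2)}$ that $A\Delta^{(2)}\subseteq A\Gamma^{(2)}$ are quaternion algebras over the same field, hence equal by dimension. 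All your supporting steps (Schreier, nonelementarity and finite generation of $\Delta$, $\Gamma^{(2)}$, $\Delta^{(2)}$, Theorem~3.2.1 of \cite{cM03}, the cross-term identity $2\tr(g)\tr(h)\tr(gh)=(\tr g)^2(\tr h)^2+(\tr(gh))^2-(\tr(gh^{-1}))^2$) check out, and quoting Theorem~3.3.4 is legitimate rather than circular, since it is a prior result about the field alone. The one substantive inaccuracy is your guess about what Theorem~3.3.4's proof looks like: it is not trace combinatorics solving for $(\tr g)^2$. Maclachlan and Reid pass to the normal core of $\Delta$ in $\Gamma$, note that conjugation by any $\gamma\in\Gamma$ induces an automorphism of the quaternion algebra $A\Delta^{(2)}$ fixing its center, and apply Skolem--Noether together with irreducibility of $\Delta^{(2)}$ to write $\gamma=\lambda a$ with $a\in A\Delta^{(2)}$ and $\lambda^2=\Nrd(a)^{-1}\in\Q(\Tr(\Delta^{(2)}))$; hence $\gamma^2=\lambda^2a^2\in A\Delta^{(2)}$. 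That single computation yields the field equality and the inclusion $\Gamma^{(2)}\subseteq A\Delta^{(2)}$ simultaneously, so in the source the algebra statement drops out directly and your closing dimension count is not even needed; if you chose to ``reproduce the argument'' rather than quote it, this is the argument you would need to reproduce.
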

The second theorem is Exercise 3.2, No. 1, in \cite{cM03} and a proof of it can be found in the proof of Theorem 8.3.2 in \cite{cM03}.
\begin{theorem}[\cite{cM03}]
\label{T:Order}
Let $\Gamma$ be a finitely generated nonelementary subgroup of $\SL(2,\C)$ such that all traces in $\Gamma$ are algebraic integers. Let also
$$ \mathcal{O}\Gamma :=  \{\sum a_i g_i \mid a_i \in \mathcal{O}_{\Q(\Tr(\Gamma))}, g_i \in \Gamma\} $$
where only finitely many of the $a_i$ are non-zero. Then $\mathcal{O}\Gamma$ is an order in $A\Gamma$.
\end{theorem}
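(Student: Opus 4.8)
The plan is to check the three defining properties of an order one by one: that $\mathcal{O}\Gamma$ is a subring of $A\Gamma$ containing $1$, that it generates $A\Gamma$ over $K:=\Q(\Tr(\Gamma))$, and that it is a finitely generated module over $\mathcal{O}_K:=\mathcal{O}_{\Q(\Tr(\Gamma))}$. The first two are routine. Since the identity is in $\Gamma$ and $1\in\mathcal{O}_K$ we get $1\in\mathcal{O}\Gamma$; closure under addition is clear; and closure under multiplication follows from $(\sum a_i g_i)(\sum b_j h_j)=\sum a_i b_j\,(g_i h_j)$, using that $\mathcal{O}_K$ is a ring and $\Gamma$ is a group, so $a_i b_j\in\mathcal{O}_K$ and $g_i h_j\in\Gamma$. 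For the generation, note $\Gamma\subseteq\mathcal{O}\Gamma$ (coefficient $1$), and $\Gamma$ spans $A\Gamma$ over $K$ by the very definition of $A\Gamma$, so a fortiori $\mathcal{O}\Gamma$ does.

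The substance of the proof is finite generation as an $\mathcal{O}_K$-module, and my strategy is to show that $\mathcal{O}\Gamma$ is contained in some finitely generated $\mathcal{O}_K$-module $M$. As $\mathcal{O}_K$ is the ring of integers of a number field it is Noetherian, so an $\mathcal{O}_K$-submodule of a finitely generated module is again finitely generated; thus producing such an $M$ finishes the argument. Because $A\Gamma$ is a quaternion algebra, hence four-dimensional over $K$, and is spanned by $\Gamma$, I can select $g_1,g_2,g_3,g_4\in\Gamma$ forming a $K$-basis of $A\Gamma$. I then claim there is a single nonzero $d\in\mathcal{O}_K$, independent of $g$, so that every $g\in\Gamma$ written as $g=\sum_l c_l g_l$ has all $c_l\in\tfrac{1}{d}\mathcal{O}_K$. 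Granting this, $\mathcal{O}\Gamma\subseteq M:=\tfrac{1}{d}\bigl(\mathcal{O}_K g_1+\mathcal{O}_K g_2+\mathcal{O}_K g_3+\mathcal{O}_K g_4\bigr)$, which is finitely generated, completing the proof.

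The uniform denominator bound is the main obstacle, and I would obtain it from the reduced trace form. Let $\trace$ denote the reduced trace of $A\Gamma$, which agrees with the ordinary matrix trace on $A\Gamma\subseteq M(2,\C)$, and recall that the symmetric bilinear form $(x,y)\mapsto\trace(xy)$ is nondegenerate on a quaternion algebra in characteristic zero. Pairing the expansion $g=\sum_l c_l g_l$ successively against $g_1,\dots,g_4$ yields the linear system $\trace(g\,g_m)=\sum_l c_l\,\trace(g_l g_m)$, $m=1,\dots,4$. Here the hypothesis that all traces of $\Gamma$ are algebraic integers, combined with $\Tr(\Gamma)\subseteq K$, forces every $\trace(g\,g_m)$ and every Gram entry $\trace(g_l g_m)$ into $\mathcal{O}_K$, since $g\,g_m$ and $g_l g_m$ lie in the group $\Gamma$. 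The Gram matrix $B=\bigl(\trace(g_l g_m)\bigr)$ thus has entries in $\mathcal{O}_K$, and by nondegeneracy of the trace form on the basis $g_1,\dots,g_4$ its determinant $d:=\det B$ is nonzero. Cramer's rule then expresses each $c_l$ as a determinant with entries in $\mathcal{O}_K$ divided by $d$, giving $c_l\in\tfrac{1}{d}\mathcal{O}_K$ with one and the same $d$ for all $g\in\Gamma$. This uniform denominator is exactly the claim needed, and the Noetherian reduction above then yields the result.
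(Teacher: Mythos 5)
Your proof is correct and follows essentially the same route as the proof the paper itself defers to (Exercise 3.2, No.~1, proved within Theorem 8.3.2 of \cite{cM03}): a $\Q(\Tr(\Gamma))$-basis of $A\Gamma$ chosen from $\Gamma$ itself, integrality of the Gram matrix of the reduced trace form, Cramer's rule to extract a uniform denominator, and Noetherianity of the ring of integers to conclude finite generation. The only point you leave tacit is that $\Q(\Tr(\Gamma))$ is actually a number field, but this follows from the hypotheses, since the trace field of a finitely generated group is generated by the traces of finitely many elements, all of which are algebraic here, so your appeal to Noetherianity is justified.
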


\subsection{Irreducible arithmetic groups in $\PSL(2,\C)^q\times\PSL(2,\R)^r$}
In this section, following Schmutz and Wolfart \cite{pS00} and Borel \cite{aB81}, we will describe the irreducible arithmetic subgroups of $\PSL(2,\C)^q\times\PSL(2,\R)^r$.

Let $K$ be an algebraic number field of degree $n = [K:\Q]$ and let $\phi_i$, $i \in \{1, \ldots, n\}$, be the $n$ distinct embeddings of $K$ into $\C$, where $\phi_1 = id$. Further, we assume that $K$ has $q$ complex places. This means that $K$ has $2q$ different embeddings into $\C$ that can be divided into $q$ pairs of complex conjugated embeddings. 

If $K$ is not a subfield of $\R$, then for $i=1,\ldots,q$, let $\phi_i$ denote one of the embeddings in the pair and we assume that $\phi_1 = id$. Let $\phi_i$, $i = q+1,\ldots,n-q$, be the remaining embeddings of $K$ into $\C$ that are actually embeddings into $\R$. 

Let $A = \left(\frac{a,b}{K}\right)$ be a quaternion algebra over $K$ such that for $q+1 \leq i \leq q + r$, the quaternion algebra $\left(\frac{\phi_i(a),\phi_i(b)}{\R}\right)$ is \textit{unramified}, i.e. isomorphic to the matrix algebra $M(2, \R)$, and for $q + r < i \leq n-q$, it is \textit{ramified}, i.e. isomorphic to the Hamilton quaternion algebra $\Ha$. In other words, the embeddings 
\[\phi_i: K \longrightarrow \R, \quad i = q+1,\ldots, q+r\] 
extend to embeddings of $A$ into $M(2,\R)$ and the embeddings 
\[\phi_i: K \longrightarrow \R, \quad i = q+r+1,\ldots, n\] 
extend to embeddings of $A$ into $\Ha$. The embeddings 
\[\phi_i: K \longrightarrow \C, \quad i = 1,\ldots, q\]
extend to embeddings of $A$ into $M(2,\C)$. Note that the embeddings $\phi_i$, $i = 1,\ldots, q+r$, of $A$ into the matrix algebras $M(2,\C)$ and $M(2,\R)$ are not canonical. As we will see later, they are canonical up to conjugation and complex conjugation.

In the case when $K$ is a subfield of $\R$, the definition is analogous. We start with a number $r$, $1\leq r \leq n-2q$, and a quaternion algebra $A = \left(\frac{a,b}{K}\right)$ over $K$ such that for $1 \leq i \leq r$, the embeddings $\phi_i$ are embeddings of $K$ into $\R$ and the quaternion algebra $\left(\frac{\phi_i(a),\phi_i(b)}{\R}\right)$ is unramified. Each of the embeddings $\phi_i$, $i=r+1,\ldots,q+r$, is a representative of a different pair of complex conjugated embeddings. Finally, for $q + r < i \leq n-q$, the quaternion algebra $\left(\frac{\phi_i(a),\phi_i(b)}{\R}\right)$ is \textit{ramified}.

This case is interesting only when $K$ is a totally real algebraic number field because otherwise we can start with $\phi_i(K)$ that is not a subfield of the reals and consider the quaternion algebra $\left(\frac{\phi_i(a),\phi_i(b)}{\R}\right)$. So for simplicity of the notation we will assume that $\phi_1(K),\ldots,\phi_q(K)$ are not subfields of $\R$ and $\phi_{q+1}(K),\ldots,\phi_{q+r}(K)$ are subfields of $\R$ and $q$ or $r$ can be $0$.

Let $\mathcal{O}$ be an order in $A$ and $\mathcal{O}^1$ the group of units in $\mathcal{O}$. Define $\Gamma(A,\mathcal{O}):= \phi_1(\mathcal{O}^1) \subset \SL(2,\C)$. If $q=0$, then $\Gamma(A,\mathcal{O})$ is a subset of $\SL(2,\R)$. The canonical image of $\Gamma(A,\mathcal{O})$ in $\PSL(2,\C)$ is called a group \textit{derived from a quaternion algebra}. The group $\Gamma(A,\mathcal{O})$ acts by isometries on $(\bbH^3)^q\times (\bbH^2)^r$ as follows. An element $g = \phi_1(\varepsilon)$ of $\Gamma(A,\mathcal{O})$ acts via
\[g: (z_1, \ldots, z_{q+r}) \mapsto (\phi_1(\varepsilon)z_1, \ldots, \phi_{q+r}(\varepsilon)z_{q+r}), \]
where $z_i \mapsto \phi_i(\varepsilon)z_i$ is the usual action by linear fractional transformation, $i = 1, \ldots, {q+r}$.

For a subgroup $S$ of $\Gamma(A,\mathcal{O})$ we denote by $S^*$ the group
$$\{g^*:= (\phi_1(\varepsilon), \ldots, \phi_{q+r}(\varepsilon))\mid \phi_1(\varepsilon) = g \in S\}.$$
Instead of $(\phi_1(\varepsilon), \ldots, \phi_{q+r}(\varepsilon))$, we will usually write $(\phi_1(g), \ldots, \phi_{q+r}(g))$ or, since $\phi_1$ is the identity, even $(g,\phi_2(g), \ldots, \phi_{q+r}(g))$. The isometries $\phi_1(g), \ldots, \phi_{q+r}(g)$ are called \textit{$\phi$-conjugates}. 

Note that $g^*$ and $S^*$ depend on the chosen embeddings $\phi_i$ of $A$ into $M(2,\C)$ and $M(2,\R)$. On the other hand, the type of $g^*$ is determined uniquely by the type of $g$. This is given by the following lemma.

\begin{lemma}
Let $S$ be a subgroup of $\Gamma(A,\mathcal{O})$ and $S^*$ be defined as above. For an element $g\in S$ the following assertions are true.

1. If $g$ is the identity, then $g^*$ is the identity.

2. If $g$ is parabolic, then $g^*$ is parabolic.

3. If $g$ is elliptic of finite order, then $g^*$ is elliptic of the same order.

4. If $g$ is loxodromic, then $g^*$ is either loxodromic or mixed such that, for $i = 1,\ldots,q+r$, $\phi_i(g)$ is either loxodromic or elliptic of infinite order.

5. If $g$ is elliptic of infinite order, then its $\phi$-conjugates are loxodromic or elliptic of infinite order.
\end{lemma}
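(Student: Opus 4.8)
The plan is to reduce all five assertions to two structural facts about the embeddings $\phi_i$: that they send traces to traces, and that they are injective and hence preserve the order of an element. Writing $g=\phi_1(\varepsilon)$ with $\varepsilon\in\mathcal{O}^1$ and $\phi_1=\mathrm{id}$, I first record the trace relation. Lifting to $\SL$ and reading off the explicit embedding $A\hookrightarrow M(2,\C)$ (resp. $M(2,\R)$) from \S4.1, where $1\mapsto I$ contributes trace $2$ and $i,j,k$ map to trace-zero matrices, one gets that $\tr(\phi_i(g))=\phi_i(t)$, where $t\in K$ is the reduced trace of $\varepsilon$ and $t=\tr(g)$; the sign ambiguity inherent in passing to $\PSL$ is immaterial for every trace condition used below. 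Second, a quaternion algebra is central simple, so each $\phi_i$ is a nonzero algebra homomorphism and therefore injective. Hence $\phi_i(\varepsilon^m)=\pm1$ forces $\varepsilon^m=\pm1$, so $\phi_i(g)^m=\mathrm{id}$ in $\PSL$ if and only if $g^m=\mathrm{id}$; that is, $\phi_i(g)$ has exactly the same (finite or infinite) order as $g$. These two facts are the entire engine of the argument.

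Granting them, the first three assertions follow from the classification of elements of $\SL(2,\C)$ by trace and order. For (1), $g=\mathrm{id}$ gives $\varepsilon=\pm1$, so each $\phi_i(\varepsilon)=\pm1$ and $g^*=\mathrm{id}$. For (2), a parabolic $g$ has $\tr(g)=\pm2\in\Q$ and $\varepsilon\neq\pm1$; since $\phi_i$ fixes $\Q$ we get $\tr(\phi_i(g))=\pm2$, while injectivity gives $\phi_i(g)\neq\mathrm{id}$, so each component is parabolic and $g^*$ is parabolic. For (3), if $g$ is elliptic of finite order $m$ then by the order relation $\phi_i(g)$ has order exactly $m$, and a nontrivial finite-order element of $\PSL(2,\C)$ is elliptic, so $\phi_i(g)$ is elliptic of order $m$.

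For (4) and (5) the key observation is that it suffices to exclude the identity, the parabolic type, and the finite-order elliptic type, because the only remaining possibilities in $\SL(2,\C)$ (resp. $\SL(2,\R)$, where they read ``hyperbolic or elliptic of infinite order'') are exactly ``loxodromic or elliptic of infinite order''. A loxodromic $g$ has infinite order and $\tr(g)\notin[-2,2]$, in particular $\tr(g)\neq\pm2$; an elliptic $g$ of infinite order likewise has infinite order and $\tr(g)\in(-2,2)$, again $\neq\pm2$. By the order relation $\phi_i(g)$ has infinite order, which rules out the identity and all finite-order elliptics; and since $\tr(\phi_i(g))=\phi_i(\tr(g))=\pm2$ is equivalent, by injectivity of $\phi_i$ on $K$, to $\tr(g)=\pm2$, the trace of $\phi_i(g)$ is never $\pm2$, which rules out parabolics. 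Hence every $\phi_i(g)$ is loxodromic or elliptic of infinite order, which is precisely the claim.

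The step that requires the most care is exactly (4)/(5) at the complex places $i\le q$, and it is here that the lemma asserts only a disjunction. At such a place one cannot decide between loxodromic and elliptic-of-infinite-order from $g$ alone, since $\phi_i(\tr(g))$ may be non-real (forcing loxodromic) or may land in the real interval $(-2,2)$ (forcing elliptic of infinite order), and both genuinely occur. The argument therefore deliberately stops after excluding the three ``rigid'' types (identity, parabolic, finite-order elliptic) rather than attempting to pin down which of the two surviving types arises; this is also why the stronger conclusions of (1)--(3) are available only for the trace values $\pm2$ and for finite order, which are rational or integral data preserved verbatim by every $\phi_i$.
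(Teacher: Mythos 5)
Your proposal is correct and follows essentially the same route as the paper's proof: establish that the identity, parabolic, and finite-order elliptic types are preserved exactly under each $\phi_i$ (the paper cites the isomorphism property, triviality of the kernel, and $\phi_i|_\Q=\mathrm{id}$; you make this precise via the trace relation $\tr(\phi_i(g))=\phi_i(\tr(g))$ and injectivity of $\phi_i$ on the simple algebra $A$), and then deduce assertions 4 and 5 by excluding those three types, leaving only the loxodromic or elliptic-of-infinite-order possibilities. Your write-up is somewhat more detailed than the paper's terse argument, but the underlying mechanism is identical.
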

\begin{proof}
If $g$ is the identity (or elliptic of finite order), then $g^*$ is the identity (or elliptic of the same order) too, because each $\phi_i$ is an isomorphism between $A$ and $\phi_i(A)$.

If $g$ is parabolic, i.e. $\tr(g)= 2$, then $g^*$ is parabolic, because $\phi_i\left|_\Q\right. = id$ and $\phi_i$ has a trivial kernel.

%If $g$ is elliptic of finite order, then $g^*$ is elliptic of the same order. The trace of an elliptic transformation $g$ of order $m$ is equal to $2 \cos(2\pi k/m)$ where $k,m \in \N$ are relatively prime and $k < m$. By \cite{dL33}, all $\phi$-conjugates of $g$ have a trace of the same kind and thus are elliptic of order $m$.

If $g$ is loxodromic, then $g^*$ is either loxodromic or mixed such that, for $i = 1,\ldots,r$, $\phi_i(g)$ is either loxodromic or elliptic of infinite order. This is a consequence of the fact that the isometries of $\bbH^3$ (and $\bbH^2$) can be only loxodromic, elliptic of infinite order, elliptic of finite order, parabolic or the identity and the last three types of isometries are preserved under $\phi$-conjugation. Two examples show the two remaining possibilities. First, if $\tr(g) = 3+\sqrt 5 > 2$, i.e. $g$ is hyperbolic, then its $\phi$-conjugate with trace $3-\sqrt 5 < 2$ is elliptic of infinite order. The second example is with $\tr(g) = 6 + \sqrt 5$ and its $\phi$-conjugate with trace $6 - \sqrt 5$ are both hyperbolic. Note that if $g$ is hyperbolic, it could still have a $\phi$-conjugate that is purely loxodromic.

If $g$ is elliptic of infinite order, then its $\phi$-conjugates are loxodromic or elliptic of infinite order but at least one of them is loxodromic, because otherwise $S^*$ will not be discrete.
\end{proof}

Hence the mixed isometries in this setting have components that are only loxodromic or elliptic of infinite order. This justifies the condition in our definition of nonelementary that the projections of all mixed isometries can be only loxodromic or elliptic of infinite order.

By Borel \cite{aB81}, Section 3.3, all \textit{irreducible arithmetic subgroups} of the group $\PSL(2,\C)^q\times\PSL(2,\R)^r$ are commensurable to a $\Gamma(A,\mathcal{O})^*$. They have finite covolume. By Margulis, for $q+r\geq2$, all irreducible discrete subgroups of $\PSL(2,\C)^q\times\PSL(2,\R)^r$ of finite covolume are arithmetic, which shows the importance of the above construction.

We will mainly consider subgroups of irreducible arithmetic groups.

\medskip

P. Schmutz and J. Wolfart define in \cite{pS00} arithmetic groups acting on $(\bbH^2)^r$. Here we extend the definition for $(\bbH^3)^q\times(\bbH^2)^r$. An \textit{arithmetic group acting on} $(\bbH^3)^q\times(\bbH^2)^r$ is a group $G$ that is commensurable to a $\Gamma(A,\mathcal{O})$. It is finitely generated because it is commensurable to the finitely generated group $\Gamma(A,\mathcal{O})$. Then by Theorem \ref{T:AlgebraInvariant}, the quaternion algebra $AG^{(2)}$ of the group $G^{(2)}$ generated by the set $\{g^2 \mid g \in G\}$ is an invariant of the commensurability class of $G$. Hence $AG^{(2)}$ is isomorphic to $A$. By Theorem \ref{T:Order}, $\mathcal{O}G^{(2)}$ is an order in $AG^{(2)}$. Therefore $G^{(2)}$ is a subgroup of $\Gamma(AG^{(2)},\mathcal{O}G^{(2)})$ and we can define $(G^{(2)})^*$. This explains ``acting on $(\bbH^3)^q\times(\bbH^2)^r$" in the name.

%We define $(\Delta^{(2)})^*$ as above and we can extend the definition of $(\Delta^{(2)})^*$ to $\Delta$ in the following way: If $g$ is an element of $\Delta$ but not an element of $\Delta^{(2)}$ then we define $g^*$ to be the unique element in $\PSL(2,\R)^r$ such that $(g^*)^2 = (g^2)^*$. 

The interest of this approach is that it allows us to consider a subgroup $S$ of $G\subset \PSL(2,\C)$ and then the corresponding subgroups ${S^{(2)}}^*$ and ${G^{(2)}}^*$ of $\PSL(2,\C)^q\times\PSL(2,\R)^r$. In this case $q$ and $r$ are determined by the bigger group $G$. 

\subsection{Arithmetic Fuchsian groups}
\label{S:ArithmGroups}
In the case of $r = 1$, the arithmetic subgroups of $\PSL(2,\R)$ are \textit{arithmetic Fuchsian group}. Since subgroups of arithmetic groups are sometimes also called arithmetic, we will emphasize the cofiniteness of an arithmetic Fuchsian group by calling it a cofinite arithmetic Fuchsian group. If $r=1$ then a subgroup of finite index of $\Gamma(A,\mathcal{O})$ and its canonical image in $\PSL(2,\R)$ are called \textit{Fuchsian groups derived from a quaternion algebra}.

The following characterization of cofinite arithmetic Fuchsian groups is due to Takeuchi \cite{kT75}.

\begin{theorem}[\cite{kT75}]
\label{T:TakeuchiArithmQuat}
Let $\Gamma$ be a cofinite Fuchsian group. Let $\Gamma^{(2)}$ be the subgroup of $\Gamma$ generated by the set $\{g^2 \mid g \in \Gamma\}$. Then $\Gamma$ is arithmetic if and only if the following two conditions are satisfied:

(i) $K:= \Q(\Tr(\Gamma^{(2)}))$ is an algebraic number field of finite degree and $\Tr(\Gamma^{(2)})$ is contained in the ring of integers $\mathcal{O}_K$ of $K$.

(ii) For any embedding $\varphi$ of $K$ into $\C$ which is not the identity, $\varphi(\Tr(\Gamma^{(2)}))$ is bounded in $\C$.
\end{theorem}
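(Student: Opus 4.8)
The plan is to prove the two implications separately, writing $K=\Q(\Tr(\Gamma^{(2)}))$ throughout and exploiting the quaternion algebra $A\Gamma^{(2)}$ and the order $\mathcal{O}\Gamma^{(2)}$ already attached to $\Gamma^{(2)}$.

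First I would treat the implication that arithmeticity forces (i) and (ii). If $\Gamma$ is arithmetic it is commensurable with some $\Gamma(A,\mathcal{O})$ for a quaternion algebra $A$ over a totally real field $K_0$ that is unramified at the identity real place and ramified at all other infinite places. Since $\Gamma^{(2)}$ and $\Gamma(A,\mathcal{O})^{(2)}$ are commensurable, Theorem~\ref{T:AlgebraInvariant} identifies $A\Gamma^{(2)}$ with $A$; as the centre of this algebra is $K=\Q(\Tr(\Gamma^{(2)}))$, we get $K=K_0$, a number field of finite degree, and the traces, being traces of integral units, lie in $\mathcal{O}_K$. This gives (i). For (ii), any nonidentity embedding $\varphi$ is real and $A\otimes_{K,\varphi}\R\cong\Ha$, whose reduced-norm-one elements form a compact group with reduced trace in $[-2,2]$; the algebra embedding induced by $\varphi$ carries the norm-one units $\Gamma^{(2)}$ into this compact group, so $\varphi(\Tr(\Gamma^{(2)}))\subseteq[-2,2]$ is bounded.

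For the converse I would first build the arithmetic data and show $K$ is totally real. Assuming (i) and (ii), the construction preceding Theorem~\ref{T:AlgebraInvariant} makes $A:=A\Gamma^{(2)}$ a quaternion algebra over $K$, and by (i) together with Theorem~\ref{T:Order}, $\mathcal{O}:=\mathcal{O}\Gamma^{(2)}$ is an order containing $\Gamma^{(2)}$ among its norm-one units. To see $K$ is totally real, take a hyperbolic $g\in\Gamma^{(2)}$ with $t=\tr(g)$, $|t|>2$; the recursion $\tr(g^{n+1})=t\,\tr(g^{n})-\tr(g^{n-1})$ makes $\tr(g^{n})$ a fixed integer polynomial in $t$, and if $\varphi(t)\notin[-2,2]$ the sequence $\varphi(\tr(g^{n}))$ is unbounded, contradicting (ii). Hence $\varphi(\tr(g))\in[-2,2]\subset\R$ for every nonidentity $\varphi$ and every hyperbolic $g$; since $\Gamma$ is cofinite and nonelementary, traces of hyperbolic elements and their products generate $K$, so $\varphi(K)\subseteq\R$ and $K$ is totally real.

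The crux is the ramification step: for each nonidentity (now real) place $\varphi$ I must show $A^{\varphi}:=A\otimes_{K,\varphi}\R\cong\Ha$. The only alternative is $A^{\varphi}\cong M(2,\R)$, in which case $\varphi$ embeds $\Gamma^{(2)}$ into $\PSL(2,\R)$; by the previous step every hyperbolic $g$ satisfies $\varphi(\tr(g))\in[-2,2]$, so $\varphi(g)$ is elliptic or parabolic and $\varphi(\Gamma^{(2)})$ has no loxodromic element. By the fact recalled in Section~1 that a nonelementary subgroup always contains loxodromic elements, $\varphi(\Gamma^{(2)})$ must then be elementary, hence virtually abelian; as $\varphi$ is injective this would make $\Gamma^{(2)}$ virtually abelian, contradicting that it has finite index in the cofinite Fuchsian group $\Gamma$. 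Thus $A^{\varphi}\cong\Ha$, and with $K$ totally real and $A$ ramified exactly at the nonidentity infinite places, $\Gamma(A,\mathcal{O})$ is an irreducible arithmetic group commensurable with $\Gamma^{(2)}$, so $\Gamma$ is arithmetic. The main obstacle is exactly this step: boundedness of traces does not by itself give relative compactness of $\varphi(\Gamma^{(2)})$ (a parabolic subgroup of $\SL(2,\R)$ has bounded trace yet is noncompact), so one must first convert (ii) into the qualitative absence of loxodromic elements and then use nonelementariness and cofiniteness to rule out $M(2,\R)$. The case where $\Gamma$ has parabolics needs separate care, since a parabolic cannot sit inside the compact norm-one group of $\Ha$; this forces $K=\Q$ and $A\cong M(2,\Q)$, the $\PSL(2,\Z)$-commensurable situation.
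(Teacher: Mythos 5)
The paper offers no proof of this statement: it is quoted as Takeuchi's theorem from \cite{kT75}, with only the subsequent remark (that $K$ is totally real and that $\Gamma^{(2)}$ is derived from a quaternion algebra) as commentary. So your proposal must be judged as a free-standing reconstruction, and as such it is essentially correct and follows the classical route of Takeuchi: in one direction, identify $A\Gamma^{(2)}$ with the defining algebra $A$ and read off integrality, then get boundedness from compactness of $\Ha^1$ at the ramified places; in the other direction, use the recursion $\tr(g^{n+1})=\tr(g)\tr(g^{n})-\tr(g^{n-1})$ to convert boundedness of $\varphi(\Tr(\Gamma^{(2)}))$ into $\varphi(\tr(g))\in[-2,2]$ (this works for \emph{all} $g$, not only hyperbolic ones, which also closes the small gap in your claim that hyperbolic traces generate $K$), deduce that $K$ is totally real, and rule out $A\otimes_{K,\varphi}\R\cong M(2,\R)$ at nonidentity places because the image group would contain no hyperbolic element. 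The forward direction is fine modulo two standard facts you use silently: that the invariant algebra and trace field of $\Gamma(A,\mathcal{O})^{(2)}$ recover $A$ and $K_0$ (Maclachlan--Reid), and that $\tr(g)$ is an algebraic integer whenever $\tr(g^{n})$ is (elements of $\Gamma^{(2)}$ need not lie in $\mathcal{O}$, only suitable powers do; compare the paper's Lemma~\ref{L:IntegerTraces}).

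Two steps in the converse need repair, though neither is fatal. First, ``elementary, hence virtually abelian'' is false as a general implication: in $\PSL(2,\R)$ the group generated by $z\mapsto 2z$ and $z\mapsto z+1$ is elementary but not virtually abelian, and in $\PSL(2,\C)$ the elementary group $SO(3)$ even contains nonabelian free subgroups. Your argument survives only because $\varphi(\Gamma^{(2)})$ has \emph{no hyperbolic elements}: then a finite-index subgroup fixing a boundary point consists of parabolics with a common fixed point, hence is abelian, and a finite-index subgroup fixing an interior point lies in a conjugate of $PSO(2)$. This is exactly where working in $\PSL(2,\R)$ rather than $\PSL(2,\C)$ matters, and it should be said explicitly. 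Second, your conclusion asserts that $\Gamma(A,\mathcal{O})$ is \emph{commensurable} with $\Gamma^{(2)}$, but the construction only yields the inclusion $\Gamma^{(2)}\subseteq\mathcal{O}^1$ (projectivized). You must add: the ramification just established makes $\mathcal{O}^1$ discrete in $\SL(2,\R)$ (project the lattice $\mathcal{O}^1\subset \SL(2,\R)\times(\Ha^1)^{[K:\Q]-1}$ along the compact factors), and then cofiniteness of $\Gamma^{(2)}$, which has finite index in the cofinite group $\Gamma$, forces $[\mathcal{O}^1:\Gamma^{(2)}]<\infty$ by comparing covolumes. Without this step you obtain only that $\Gamma^{(2)}$ is a \emph{subgroup} of an arithmetic Fuchsian group---precisely the weaker property the paper is careful to distinguish from arithmeticity, e.g.\ in Theorem~\ref{T:MainCharactAlg}. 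Your closing observation about parabolics (forcing $A\cong M(2,\Q)$, $K=\Q$, the $\PSL(2,\Z)$-commensurable case) is correct and consistent with the ramification analysis.
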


\begin{rem}
In \cite{kT75}, Takeuchi shows that $K$ is a totally real algebraic number field and the cofinite Fuchsian group $\Gamma^{(2)}$ satisfying (i) and (ii) is derived from a quaternion algebra over $K$. And since $\Gamma^{(2)}$ is of finite index in $\Gamma$, $\Gamma$ is arithmetic. Thus $\Gamma$ is arithmetic if and only if $\Gamma^{(2)}$ is derived from a quaternion algebra.
\end{rem}

The above characterization motivated the following definition of semi-arithmetic Fuchsian groups given in \cite{pS00}.

A cofinite Fuchsian group $\Gamma$ is \textit{semi-arithmetic} if and only if $K:= \Q(\Tr(\Gamma^{(2)}))$ is a totally real algebraic number field of finite degree $n = [ K : \Q ]$ and $\Tr(\Gamma^{(2)})$ is contained in the ring of integers $\mathcal{O}_K$ of $K$. $\Gamma$ is called \textit{strictly semi-arithmetic} if $\Gamma$ is not an arithmetic Fuchsian group.

The following theorem is a characterization of semi-arithmetic Fuchsian groups due to Schmutz Schaller and Wolfart \cite{pS00}.

\begin{theorem}[\cite{pS00}]
\label{T:SemiArithm}
Let $\Gamma$ be a cofinite Fuchsian group. Then the following two conditions are equivalent.

(i) $\Gamma$ is semi-arithmetic.

(ii) $\Gamma$ is commensurable to a subgroup $S$ of an arithmetic group $\Delta$ acting on $(\bbH^2)^r$.
\end{theorem}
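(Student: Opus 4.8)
The plan is to prove the two implications separately, translating each condition into the quaternion-algebra language developed above and using throughout that a cofinite Fuchsian group is finitely generated and nonelementary (so that Theorems~\ref{T:AlgebraInvariant} and~\ref{T:Order} apply to it and to its finite-index subgroups) and that $\Gamma^{(2)}$ has finite index in $\Gamma$. For (i) $\Rightarrow$ (ii) I would start from a semi-arithmetic $\Gamma$, so that $K:=\Q(\Tr(\Gamma^{(2)}))$ is totally real of finite degree with $\Tr(\Gamma^{(2)})\subseteq\mathcal{O}_K$. First I would form $A:=A\Gamma^{(2)}$ and, since the traces are integral, invoke Theorem~\ref{T:Order} to produce the order $\mathcal{O}:=\mathcal{O}\Gamma^{(2)}$ in $A$. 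As $\Gamma^{(2)}\subseteq\SL(2,\R)$ under $\phi_1=\mathrm{id}$, the algebra $A$ is unramified at $\phi_1$ (i.e. $A\otimes_{K,\phi_1}\R\cong M(2,\R)$); letting $r\geq 1$ be the number of unramified real places of $A$ and ordering the embeddings so that $\phi_1,\dots,\phi_r$ are the unramified ones, the group $\Delta:=\Gamma(A,\mathcal{O})$ is by definition an arithmetic group acting on $(\bbH^2)^r$. Finally, every element of $\Gamma^{(2)}$ has reduced norm $1$ and lies in $\mathcal{O}$, hence in $\mathcal{O}^1$, so $S:=\Gamma^{(2)}\subseteq\Delta$; since $S$ has finite index in $\Gamma$, this gives (ii).

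For (ii) $\Rightarrow$ (i) I would fix $\Delta_0:=\Gamma(A,\mathcal{O})$ commensurable to $\Delta$, with $A$ over a totally real field $K_\Delta$, so that every trace in $\Delta_0=\phi_1(\mathcal{O}^1)$ is the reduced trace of an element of the order $\mathcal{O}$ and hence lies in $\mathcal{O}_{K_\Delta}$. Setting $H:=\Gamma\cap S\cap\Delta_0$, a routine index computation (using $\Gamma\sim S$ and $S\leq\Delta\sim\Delta_0$) shows that $H$ has finite index in $\Gamma$, while $H\subseteq\Delta_0$ gives $\Tr(H)\subseteq\mathcal{O}_{K_\Delta}$. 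Since the invariant quaternion algebra of Theorem~\ref{T:AlgebraInvariant}, and hence its center $\Q(\Tr((\cdot)^{(2)}))$, is a commensurability invariant, I would obtain
\[
K:=\Q(\Tr(\Gamma^{(2)}))=\Q(\Tr(H^{(2)}))\subseteq\Q(\Tr(H))\subseteq K_\Delta,
\]
so that $K$, being a subfield of the totally real field $K_\Delta$, is itself totally real of finite degree. This settles the first defining property of semi-arithmeticity, and it remains only to prove $\Tr(\Gamma^{(2)})\subseteq\mathcal{O}_K$.

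I expect this integrality step to be the main obstacle, because it requires passing from a finite-index subgroup with integral traces up to the whole of $\Gamma^{(2)}$, and integrality of individual traces is not obviously inherited upward. My plan is to argue with orders rather than with single traces. I would take $N$ to be the normal core of $H^{(2)}$ in $\Gamma^{(2)}$, a finite-index normal subgroup with $N\subseteq\Delta_0$, so that $\Tr(N)\subseteq\mathcal{O}_{K_\Delta}$; with $L:=\Q(\Tr(N))\subseteq K$, Theorem~\ref{T:Order} makes the $\mathcal{O}_L$-span $\mathcal{O}N$ of $N$ an order. Writing $\Gamma^{(2)}=\bigsqcup_{j=1}^{d}\gamma_jN$ as a finite union of cosets, the $\mathcal{O}_L$-span of $\Gamma^{(2)}$ equals $\sum_{j=1}^{d}\gamma_j\,\mathcal{O}N$, which is finitely generated as an $\mathcal{O}_L$-module and, being the linear span of the group $\Gamma^{(2)}$, is a subring containing $1$; hence it too is an order. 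Every element of an order is integral over $\Z$, so for each $\gamma\in\Gamma^{(2)}$ the trace $\Tr(\gamma)$, which already lies in $K$, is an algebraic integer, i.e. $\Tr(\gamma)\in\mathcal{O}_K$. This would complete the verification that $\Gamma$ is semi-arithmetic and finish the proof.
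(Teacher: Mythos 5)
There is nothing in the paper to compare against: Theorem~\ref{T:SemiArithm} is quoted from Schmutz Schaller and Wolfart \cite{pS00} without proof, so your argument has to stand on its own --- and it does. Your (i)$\Rightarrow$(ii) is correct and is in fact exactly the mechanism the paper deploys elsewhere (in Lemma~\ref{L:NotIDNotArithm} and Theorem~\ref{T:MainCharactAlg}): form $A\Gamma^{(2)}$ over the totally real field $K$, use integrality of the traces and Theorem~\ref{T:Order} to get the order $\mathcal{O}\Gamma^{(2)}$, note that $A\Gamma^{(2)}$ is unramified at $\phi_1=\mathrm{id}$ because it is a simple $4$-dimensional algebra sitting inside $M(2,\R)$, and observe that $\Gamma^{(2)}$ consists of norm-one elements of $\mathcal{O}\Gamma^{(2)}$, hence lies in $\mathcal{O}^1$.

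Your (ii)$\Rightarrow$(i) is also correct: the index computation for $H=\Gamma\cap S\cap\Delta_0$ is routine as you say, the equality $\Q(\Tr(\Gamma^{(2)}))=\Q(\Tr(H^{(2)}))$ is legitimate use of Theorem~\ref{T:AlgebraInvariant} (the invariance is equality of subfields of $\C$, not mere abstract isomorphism, which is how the paper itself uses it), and the upward integrality step works: $M=\sum_j\gamma_j\,\mathcal{O}N$ is a subring of $A\Gamma^{(2)}$ containing $1$ that is finitely generated as a $\Z$-module, so every element of $M$ is integral over $\Z$ by the determinant trick, and integral elements of a quaternion algebra have integral reduced trace (the same fact the paper invokes via Lemmas~2.2.4 and~2.2.7 of \cite{cM03}). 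Two remarks. First, calling $M$ ``an order'' is loose: by the paper's definition an order in $A\Gamma^{(2)}$ must be a finitely generated $\mathcal{O}_K$-module, and your $M$ is a priori only an $\mathcal{O}_L$-module, not visibly stable under $\mathcal{O}_K$; what your argument actually needs --- a ring, finitely generated over $\Z$, hence all elements integral --- is true and sufficient, so this is a terminological slip, not a gap. (Normality of $N$ is also never used; any finite-index subgroup of $\Gamma^{(2)}$ contained in $\Delta_0$ would do.) Second, the paper's Lemma~\ref{L:IntegerTraces} would give you this step in two lines, bypassing the coset/module argument: for $\gamma\in\Gamma^{(2)}$ some power $\gamma^n$ lies in $H\subseteq\Delta_0$, so $\tr(\gamma^n)$ is an algebraic integer, and since $\tr(\gamma^n)$ is a monic integer-coefficient polynomial in $\tr(\gamma)$, the trace $\tr(\gamma)$ is itself an algebraic integer; together with $\tr(\gamma)\in K$ this yields $\Tr(\Gamma^{(2)})\subseteq\mathcal{O}_K$ directly.
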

%The initial motivation for the thesis was the question whether it was possible to characterize the arithmetic Fuchisan groups among the semi-arithmetic ones with the help of their trace set.

\subsection{Arithmetic Kleinian groups}
In the case of $q = 1$ and $r=0$, we call a group commensurable to $\Gamma(A,\mathcal{O})$ an \textit{arithmetic Kleinian group}. If an arithmetic Kleinian group is additionally a subgroup of finite index of $\Gamma(A,\mathcal{O})$ then it is called \textit{Kleinian group derived from a quaternion algebra}.

The following characterization of cofinite arithmetic Kleinian groups is Theorem 8.3.2 in the book of Maclachlan and Reid \cite{cM03}.
\begin{theorem}[\cite{cM03}]
Let $\Gamma$ be a cofinite Kleinian group. Then $\Gamma$ is arithmetic if and only if the following three conditions hold.

(i) $\Q(\Tr(\Gamma^{(2)}))$ is an algebraic number field with exactly one complex place.

(ii) $\tr(g)$ is an algebraic integer for all $g \in \Gamma$.

(iii) $A\Gamma^{(2)}$ is ramified at all real places of $\Q(\Tr(\Gamma^{(2)}))$.
\end{theorem}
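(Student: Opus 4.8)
The plan is to mirror the proof of Takeuchi's characterization (Theorem~\ref{T:TakeuchiArithmQuat}), replacing total reality of the trace field and the boundedness condition by the ``exactly one complex place plus ramification at every real place'' conditions that single out one $\PSL(2,\C)$ factor. Throughout I pass to the finite-index normal subgroup $\Gamma^{(2)}$ (finite index since a cofinite Kleinian group is finitely generated and nonelementary), which is again cofinite and nonelementary, so the invariant trace field $k:=\Q(\Tr(\Gamma^{(2)}))$ and the invariant quaternion algebra $A\Gamma^{(2)}$ are defined and, by Theorem~\ref{T:AlgebraInvariant}, depend only on the commensurability class of $\Gamma$.

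For the forward implication, assume $\Gamma$ is arithmetic, i.e.\ commensurable with some $\Gamma(A,\mathcal{O})$ attached to a quaternion algebra $A$ over a field $k_0$ with exactly one complex place and ramified at every real place of $k_0$. Commensurability invariance identifies $k$ with $k_0$ and $A\Gamma^{(2)}$ with $A$, which gives (i) and (iii) directly. Integrality (ii) holds because $\Gamma$ is commensurable with the unit group $\mathcal{O}^1$, all of whose elements have entries that are algebraic integers and hence traces in $\mathcal{O}_k$; the passage to all of $\Gamma$ is the standard integrality argument for arithmetic Kleinian groups.

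The substance is in the converse. Assuming (i)--(iii), condition (ii) together with Theorem~\ref{T:Order} shows that $\mathcal{O}\Gamma^{(2)}$ is an order in $A\Gamma^{(2)}$, and since every element of $\Gamma^{(2)}$ has reduced norm $1$ we obtain $\Gamma^{(2)}\subset (\mathcal{O}\Gamma^{(2)})^1=\Gamma(A\Gamma^{(2)},\mathcal{O}\Gamma^{(2)})$. It remains to prove that this last group is an arithmetic Kleinian group, i.e.\ a discrete cofinite subgroup of $\PSL(2,\C)$. Here conditions (i) and (iii) are exactly the admissibility hypotheses: the single complex place provides the embedding of $A\Gamma^{(2)}$ into $M(2,\C)$ realizing $(\mathcal{O}\Gamma^{(2)})^1$ inside $\SL(2,\C)$, while ramification at every real place forces the factors at those places to be compact unit groups of $\Ha$. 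Thus the diagonal embedding of $(\mathcal{O}\Gamma^{(2)})^1$ lands in a product of real points having exactly one noncompact factor, namely $\SL(2,\C)$.

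The main obstacle is precisely the step that this diagonal embedding is a lattice. The Borel--Harish-Chandra theorem gives that the group of reduced-norm-one units of an order is an irreducible lattice in the ambient product of real points; because every real-place factor is compact, projection onto the single $\PSL(2,\C)$ factor is injective with discrete, finite-covolume image. This is the deep analytic input, and the genuine work lies in verifying discreteness and finiteness of covolume rather than in the algebra. Granting it, $\Gamma(A\Gamma^{(2)},\mathcal{O}\Gamma^{(2)})$ is a cofinite Kleinian group; since $\Gamma^{(2)}$ is itself cofinite and contained in it, the two are commensurable, and therefore $\Gamma^{(2)}$, and hence $\Gamma$, is arithmetic.
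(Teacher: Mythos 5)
This theorem is quoted in the paper from Maclachlan--Reid (Theorem 8.3.2 of \cite{cM03}) without any internal proof, so there is no argument in the paper to compare against; your proposal must be judged against the standard proof in that source, and it matches it. Your outline is correct and is essentially the Maclachlan--Reid argument: the forward direction via commensurability invariance of $\Q(\Tr(\Gamma^{(2)}))$ and $A\Gamma^{(2)}$ (Theorem~\ref{T:AlgebraInvariant}) together with the power-of-an-element trace-integrality trick, and the converse via the order $\mathcal{O}\Gamma^{(2)}$ (Theorem~\ref{T:Order}), the Borel--Harish-Chandra theorem applied to the unit group with exactly one noncompact factor forced by (i) and (iii), and the observation that a cofinite discrete group contained in a discrete group has finite index.
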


Corollary 8.3.5 in \cite{cM03} gives additionally that $\Gamma$ is arithmetic if and only if $\Gamma^{(2)}$ is derived from a quaternion algebra.

This characterization is very similar to Takeuchi's Theorem \ref{T:TakeuchiArithmQuat}. We can see it from the following formulation, which uses Lemma 5.1.3 from \cite{cM03}. 
\begin{theorem}[\cite{cM03}]
\label{T:MacReid}
Let $\Gamma$ be a cofinite Kleinian group. Then $\Gamma$ is arithmetic if and only if the following two conditions hold.

(i) $K=\Q(\Tr(\Gamma^{(2)}))$ is an algebraic number field and $\Tr(\Gamma^{(2)})$ is contained in the ring of integers $\mathcal{O}_K$ of $K$.

(ii) For any embedding $\varphi$ of $K$ into $\C$ which is neither the identity nor the complex conjugation, $\varphi(\Tr(\Gamma^{(2)}))$ is bounded in $\C$.
\end{theorem}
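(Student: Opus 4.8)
The plan is to derive Theorem~\ref{T:MacReid} from the preceding three-condition characterization of arithmetic Kleinian groups (Theorem 8.3.2 of \cite{cM03}) by re-expressing its conditions with the help of Lemma~5.1.3 of \cite{cM03}. Throughout, $K=\Q(\Tr(\Gamma^{(2)}))$ denotes the invariant trace field, which is common to both formulations. Since $\Gamma$ is a \emph{cofinite} Kleinian group it is nonelementary with non-real traces, so the identity embedding $\mathrm{id}\colon K\hookrightarrow\C$ is a complex place, and $\Gamma^{(2)}$, being of finite index in $\Gamma$, is again nonelementary. The goal is to show that conditions (i)--(ii) here are equivalent to conditions (i)--(iii) of the three-condition theorem, after which the statement follows at once.

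First I would dispose of the integrality conditions. Condition (ii) of the three-condition theorem (all $\tr(g)$, $g\in\Gamma$, are algebraic integers) trivially implies the integrality half of condition (i) here, namely $\Tr(\Gamma^{(2)})\subset\mathcal{O}_K$. For the converse I would use the elementary identity $\tr(g)^2=\tr(g^2)+2$: since $g^2\in\Gamma^{(2)}$, assuming $\Tr(\Gamma^{(2)})\subset\mathcal{O}_K$ makes $\tr(g)^2$ an algebraic integer, whence $\tr(g)$ is one as well. Thus the integrality parts of the two formulations are equivalent, and the phrase ``$K$ is an algebraic number field'' is shared by both.

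The core of the argument is the identification of the place-theoretic conditions. Every \emph{real} embedding $\varphi$ of $K$ is automatically neither the identity nor the complex conjugation (both of which are complex embeddings), and for such $\varphi$ Lemma~5.1.3 of \cite{cM03} relates the ramification of $A\Gamma^{(2)}$ at $\varphi$ to the corresponding traces, giving the equivalence ``$A\Gamma^{(2)}$ is ramified at $\varphi$'' $\Longleftrightarrow$ ``$\varphi(\Tr(\Gamma^{(2)}))$ is bounded in $\C$.'' Hence, granting that $K$ has exactly one complex place, condition (iii) of the three-condition theorem and condition (ii) here assert the same thing over all real places. To close the loop I would then prove, conversely, that (i)--(ii) of Theorem~\ref{T:MacReid} force $K$ to have exactly one complex place: if $\varphi$ were a complex place different from the one containing $\mathrm{id}$, then $\varphi$ is neither the identity nor the complex conjugation, so by (ii) the set $\varphi(\Tr(\Gamma^{(2)}))$ is bounded. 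But $A\otimes_\varphi\C\cong M(2,\C)$ and $\Gamma^{(2)}$ spans $A$ over $K$, so $\varphi(\Gamma^{(2)})$ is Zariski dense, and therefore nonelementary, in $\SL(2,\C)$; it thus contains a loxodromic element $\varphi(h)$ with $h\in\Gamma^{(2)}$, and the traces $\varphi(\tr(h^n))=\tr(\varphi(h)^n)$ grow without bound, contradicting the boundedness just obtained.

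Assembling these pieces gives the equivalence of the two lists of conditions, and the theorem follows from the three-condition characterization. I expect the only genuine obstacle to lie in this last step, ruling out spurious complex places: one must be careful that boundedness of the \emph{trace set} under an embedding (rather than relative compactness of the whole group) already contradicts Zariski density. This is precisely where the input ``every nonelementary subgroup of $\PSL(2,\C)$ contains a loxodromic element'' (Beardon, Theorem~5.1.3, together with Corollary~\ref{C:Zariski}) enters, converting the analytic boundedness of traces into the desired algebraic conclusion.
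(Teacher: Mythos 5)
Your overall strategy --- deriving the two-condition statement from the three-condition Theorem 8.3.2 of \cite{cM03} by means of Lemma 5.1.3 of \cite{cM03} --- is exactly the route the paper has in mind (the paper offers no proof of its own beyond the remark that this formulation ``uses Lemma 5.1.3 from \cite{cM03}''), and your handling of the integrality conditions via $\tr(g)^2=\tr(g^2)+2$ and of the forward direction is correct. The gap is in the one genuinely nontrivial step, ruling out a second complex place. You argue: $\varphi(\Gamma^{(2)})$ spans $M(2,\C)$, hence is Zariski dense in $\SL(2,\C)$, hence nonelementary, hence contains a loxodromic element. The implication ``Zariski dense $\Rightarrow$ nonelementary'' is false: a dense subgroup of $\mathrm{SU}(2)$ is Zariski dense in $\SL(2,\C)$ (the Zariski closure of $\mathrm{SU}(2)$ is all of $\SL(2,\C)$), yet it fixes a point of $\bbH^3$, so it is elementary in the sense used here and contains no loxodromic element at all. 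Beardon's Theorem 5.1.3 and Corollary~\ref{C:Zariski} give implications in the opposite direction only (nonelementary $\Rightarrow$ loxodromics/Zariski density), so they cannot convert bounded traces into a contradiction the way you want. Indeed your argument proves too much: for a genuine arithmetic Kleinian group and a (ramified) real place $\varphi$, the image $\varphi(\Gamma^{(2)})$ lies in $\Ha^1\cong\mathrm{SU}(2)$ and still spans $M(2,\C)$ after complexification, so the same chain of reasoning would ``show'' that $\varphi(\Tr(\Gamma^{(2)}))$ cannot be bounded there --- contradicting the theorem being proved.

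The step can be closed by the standard Takeuchi-style trace argument, which is elementary. Suppose $\varphi$ is neither the identity nor complex conjugation and $\varphi(\Tr(\Gamma^{(2)}))$ is bounded. Fix $h\in\Gamma^{(2)}$ and write $\varphi(\tr(h))=\mu+\mu^{-1}$. Since $\tr(h^n)$ is a polynomial with integer coefficients in $\tr(h)$ (if $\tr(h)=\lambda+\lambda^{-1}$ then $\tr(h^n)=\lambda^n+\lambda^{-n}$), applying $\varphi$ gives $\varphi(\tr(h^n))=\mu^n+\mu^{-n}$ for all $n$, and boundedness of this sequence forces $|\mu|=1$, i.e. $\varphi(\tr(h))=\mu+\bar{\mu}\in[-2,2]\subset\R$. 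As $h$ was arbitrary, $\varphi(\Tr(\Gamma^{(2)}))\subset\R$, hence $\varphi(K)=\Q(\varphi(\Tr(\Gamma^{(2)})))\subset\R$ and $\varphi$ is a real place. Consequently the unique complex place of $K$ is the pair consisting of the identity and complex conjugation (the identity is indeed complex, as you correctly noted, since a cofinite Kleinian group cannot have real invariant trace field). With this replacement --- boundedness forces traces into $[-2,2]$, hence forces the place to be real, rather than any appeal to Zariski density --- the rest of your proof goes through and matches the paper's intended derivation.
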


\section[Subgroups of arithmetic groups in $\PSL(2,\C)^q\times\PSL(2,\R)^r$]{Small limit sets of subgroups of arithmetic groups in $\PSL(2,\C)^q\times\PSL(2,\R)^r$}
\setcounter{theorem}{0}
A way to measure the size of a group is by the size of its limit set. It is still an open question how exactly to measure the size of the limit set. In our case we can say that a nonelementary group is small if its projective limit set is the smallest possible nonempty one, i.e. it is just a point, or if its Furstenberg boundary is measure 0 and even just a circle.

This section contains the main results of this article. We study the limit set of subgroups $\Gamma$ of arithmetic groups in $\PSL(2,\C)^q\times\PSL(2,\R)^r$ with $q+r\geq 2$ and determine for which groups the limit sets are the smallest.

First, we look at the projective limit set of a nonelementary $\Gamma$ and prove that it consists of exactly one point if and only if $p_j(\Gamma)$ is a subgroup of an arithmetic Fuchsian or Kleinian group for one $j\in\{1,\ldots,q+r\}$.

Then we show that the groups $\Gamma$ for which $p_j(\Gamma)$ is a subgroup of an arithmetic Fuchsian or Kleinian group are conjugate to an (almost) diagonal embedding of a Fuchsian or Kleinian arithmetic group and in particular that their limit set can be embedded as a topological space in a circle. This is not the case for the other groups.

\subsection{Examples: Triangle groups and Hilbert modular groups} 
A family of examples of nonelementary subgroups of arithmetic groups in $\PSL(2,\C)^q\times\PSL(2,\R)^r$ that are not Schottky groups is provided by the triangle Fuchsian groups. \textit{A Fuchsian triangle group of type} $(l,m,n)$ is a cofinite Fuchsian group generated by elliptic or parabolic elements $g$, $h$ and $s$ such that $ghs=id$, $g^l=id$, $h^m=id$ and $s^n=id$, where $1/l+1/m+1/n<1$.

For a more geometric definition we consider the group $S_0$ of reflections on the sides of a hyperbolic triangle with angles $\pi/l$, $\pi/m$ and $\pi/n$. Then the subgroup $S$ of $S_0$ of orientation preserving isometries is a Fuchsian triangle group of type $(l,m,n)$.

By Proposition 2 in Takeuchi \cite{kT77}, the ring $\Z[2\cos(\pi/l),2\cos(\pi/m),2\cos(\pi/n)]$ contains the trace set $\Tr(S)$, where $\pi/\infty=0$. In particular, the field $\Q(\Tr(S))$ coincides with the totally real algebraic number field $\Q(\cos(\pi/l),\cos(\pi/m),\cos(\pi/n))$ and $\Tr(S)$ is contained in the ring of integers of $\Q(\Tr(S))$. Hence $S$ is a semi-arithmetic group. By Takeuchi \cite{kT77}, only finitely many conjugacy classes of arithmetic triangle groups. 

By Theorem~\ref{T:SemiArithm}, $S$ is commensurable to a subgroup of an arithmetic group in $\PSL(2,\R)^r$.

%Waterman and Maclachlan prove in \cite{pW85} that $S$ is conjugate in to a subgroup of $\PSL(2,K)$ where $K$ denotes the totally real algebraic number field $\Q(\cos(\pi/l),\cos(\pi/m),\cos(\pi/n),\sqrt{-d})$ with 
%$$d= 1 - \cos^2\frac{\pi}{l}- \cos^2\frac{\pi}{m}- \cos^2\frac{\pi}{n} - 2\cos\frac{\pi}{l}\cos\frac{\pi}{m}\cos\frac{\pi}{n}.$$
\medskip
Examples of arithmetic subgroups $\Delta$ of $\PSL(2,\R)^r$ are the Hilbert modular groups. Let $F$ be a totally real number field and $\phi_i$, $i=1,\ldots,r$, be the $r$ distinct embeddings of $F$ into $\R$. For $g\in\PSL(2,\mathcal{O}_F)$, $g=\begin{bmatrix}a & b\\c&d\end{bmatrix}$, we define $\phi_i(g) = \begin{bmatrix}\phi_i(a) & \phi_i(b)\\\phi_i(c)&\phi_i(d)\end{bmatrix}$. The group $\PSL(2,\mathcal{O}_F)^*$ is an irreducible arithmetic subgroup of $\PSL(2,\R)^r$ and is called a \textit{Hilbert modular group} over $F$. Its quaternion algebra is isomorphic to $\left(\frac{1,1}{F}\right)$.

The Hilbert modular groups are the only arithmetic groups acting on $(\bbH^2)^r$ that contain parabolic isometries. Note that $\PSL(2,\Z)$ is a subgroup of any Hilbert modular group.

\medskip
A \textit{Hecke group} is a triangle group of type $(2,m,\infty)$. The Hecke groups are strictly semi-arithmetic except for $m=3,4,6$. 

A Hecke group $S$ of type $(2,m,\infty)$ is generated by $\begin{bmatrix}0 & 1\\-1&0\end{bmatrix}$ and $\begin{bmatrix}1 & 2\cos(\pi/m)\\0&1\end{bmatrix}$, see Katok~\cite{sK92}. Hence all elements in $S$ have entries that are algebraic integers in $\Q(\cos(\pi/m))$. Therefore $S$ is a subgroup of $\PSL(2,\mathcal{O}_F)$ where $F$ is a field which is a finite extension of $\Q(\cos(\pi/m))$. Hence $\PSL(2,\mathcal{O}_F)^*$ has a subgroup $\Gamma$ such that $p_1(\Gamma)=S$.

\subsection[Arithmetic Fuchsian/Kleinian groups in $\PSL(2,\C)^q\times\PSL(2,\R)^r$]{Arithmetic Fuchsian and Kleinian groups as subgroups of arithmetic groups in $\PSL(2,\C)^q\times\PSL(2,\R)^r$}
Let $\Gamma$ be a subgroup of an irreducible arithmetic group in $\PSL(2,\C)^q\times\PSL(2,\R)^r$. In this section we show that if $p_j(\Gamma)$ is a subgroup of an arithmetic Fuchsian or Kleinian group for some $j\in\{1,\ldots,q+r\}$ then the same is true for each nonelementary projection $p_i(\Gamma)$, $i=1,\ldots,q+r$.

\begin{lemma}
\label{L:ArithmAll}
Let $\Delta$ be an irreducible arithmetic subgroup of $\PSL(2,\C)^q\times\PSL(2,\R)^r$ and $\Gamma$ a subgroup of $\Delta$ such that $p_j(\Gamma)$ is a cofinite arithmetic Fuchsian (or Kleinian) group for some $j\in\{1,\ldots,q+r\}$. Then, for all $i=1,\ldots,q+r$, the group $p_i(\Gamma)$ is either elementary or a cofinite arithmetic Fuchsian (or Kleinian) group.
\end{lemma}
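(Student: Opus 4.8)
\emph{Plan of proof.} The whole argument is driven by the fact that the projections of $\Gamma$ are related by the Galois embeddings $\phi_i$ of the defining number field, so that their trace sets are conjugate to one another. Since passing to a finite-index subgroup changes neither the commensurability class nor the type (elementary, cofinite, arithmetic) of any projection, I would first replace $\Gamma$ by $\Gamma\cap\Gamma(A,\mathcal{O})^*$ and assume $\Gamma\le\Gamma(A,\mathcal{O})^*$. Then every element of $\Gamma$ has the form $g^*=(\phi_1(g),\ldots,\phi_{q+r}(g))$, so for every $\gamma\in\Gamma$ and every $i$ one has $\tr(p_i(\gamma))=\phi_i(\tr_A(\gamma))$, where $\tr_A(\gamma)\in K$ is the reduced trace. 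In particular $\Tr(p_i(\Gamma)^{(2)})=\phi_i(T)$ for the common set $T:=\{\tr_A(\gamma)\mid\gamma\in\Gamma^{(2)}\}\subseteq L$, where $L:=\Q(T)$ is the trace field of $\Gamma^{(2)}$.

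Next I would feed the hypothesis on $p_j(\Gamma)$ into the arithmeticity criterion. Applying Takeuchi's Theorem~\ref{T:TakeuchiArithmQuat} (in the Fuchsian case) or Maclachlan--Reid's Theorem~\ref{T:MacReid} (in the Kleinian case, where necessarily $r=0$) to the cofinite arithmetic group $p_j(\Gamma)$, the field $\phi_j(L)=\Q(\Tr(p_j(\Gamma)^{(2)}))$ is a number field with integral trace set, and for every embedding $\tau$ of $L$ into $\C$ with $\tau\neq\phi_j|_L$ (respectively $\tau\notin\{\phi_j|_L,\overline{\phi_j}|_L\}$) the set $\tau(T)$ is bounded. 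I would then fix an index $i$ and distinguish the two alternatives of the conclusion. If $p_i(\Gamma)$ is elementary there is nothing to prove, so suppose it is nonelementary. By Beardon's Theorem 5.1.3 in \cite{aB95} it contains a loxodromic element $h$, and the traces $\tr(p_i(h^{2n}))$ are unbounded; hence $\phi_i(T)$ is unbounded. Comparing with the boundedness just obtained forces $\phi_i|_L=\phi_j|_L$ in the Fuchsian case, and $\phi_i|_L\in\{\phi_j|_L,\overline{\phi_j}|_L\}$ in the Kleinian case.

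In either case $p_i(\Gamma)^{(2)}$ and $p_j(\Gamma)^{(2)}$ have, possibly after applying complex conjugation (an isometry of $\bbH^3$ preserving discreteness, covolume and arithmeticity), the same character: $\tr(p_i(\gamma))=\tr(p_j(\gamma))$ for all $\gamma\in\Gamma^{(2)}$, since these are the values of $\phi_i|_L$ and $\phi_j|_L$ on $T$. Both groups are nonelementary, hence irreducible as subgroups of $\PSL(2,\C)$, so trace rigidity yields an element of $\GL(2,\C)$ conjugating $p_i(\Gamma)^{(2)}$ onto $p_j(\Gamma)^{(2)}$. As $p_j(\Gamma)^{(2)}$ is a finite-index, hence cofinite arithmetic, subgroup of $p_j(\Gamma)$, its conjugate $p_i(\Gamma)^{(2)}$ is again discrete, cofinite and arithmetic, of the same Fuchsian/Kleinian type. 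Finally I would promote this back to $p_i(\Gamma)$: since $\Gamma^{(2)}$ is normal of finite index in $\Gamma$, the group $p_i(\Gamma)^{(2)}$ is a normal finite-index subgroup of $p_i(\Gamma)$; a short coset argument using discreteness shows $p_i(\Gamma)$ is discrete, and in the Fuchsian case the normalizer of the nonelementary group $p_i(\Gamma)^{(2)}$ must preserve its unique invariant circle, so $p_i(\Gamma)$ remains Fuchsian. A finite-index extension of a cofinite arithmetic group is again cofinite and arithmetic, which completes the dichotomy.

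\emph{Main difficulty.} The heart of the matter is the middle step: extracting from nonelementarity of $p_i(\Gamma)$ an unbounded family of traces and matching it against the boundedness coming from Takeuchi and Maclachlan--Reid to conclude $\phi_i|_L=\phi_j|_L$, together with the subsequent upgrade from ``equal traces on $\Gamma^{(2)}$'' to an actual conjugacy and the transfer of cofiniteness from $p_i(\Gamma)^{(2)}$ back to $p_i(\Gamma)$. The Kleinian complex-conjugation bookkeeping is the only additional nuisance.
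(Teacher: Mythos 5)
Your proposal is correct in substance and its skeleton matches the paper's proof: reduce to $S^*=\Gamma\cap\Gamma(A,\mathcal{O})^*$, use arithmeticity of the $j$-th projection to constrain the Galois embeddings on the trace field of the squared group, show that any nonelementary projection must induce the identity (or complex conjugation) on that field, then transfer arithmeticity back through commensurability. The difference lies in the tools used for the two key steps. Where you play Takeuchi's and Maclachlan--Reid's condition (ii) (boundedness of $\tau(T)$ for all $\tau\neq\phi_j|_L$, up to conjugation) against the unboundedness of $\tr(p_i(h^{2n}))$ for a loxodromic $h$, the paper works inside the quaternion algebra $B=A\phi_j(S)^{(2)}$: $B$ is ramified at every infinite place of its center except one, so if $\tau_i=\phi_i\circ\phi_j^{-1}$ induced a different place, $\phi_i(S)^{(2)}$ would sit in the compact unit group of the Hamilton quaternions and be elementary. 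These are two formulations of the same dichotomy. Where you use character rigidity of irreducible $\SL(2,\C)$-representations to produce a $\GL(2,\C)$-conjugation of $p_i(\Gamma)^{(2)}$ onto $p_j(\Gamma)^{(2)}$, the paper identifies $\phi_i(S)^{(2)}=\tau_i(\phi_j(S)^{(2)})$ with the unit group $\mathcal{O}_B^1$ itself (implicitly Skolem--Noether), so arithmeticity is immediate from the definition rather than transported through a conjugation and a commensurability argument.

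Two steps need repair, though both are fillable with material you already have. First, you repeatedly use that $\Gamma^{(2)}$ (hence $p_i(\Gamma)^{(2)}$) has finite index in $\Gamma$ (hence in $p_i(\Gamma)$); this requires $\Gamma$ to be finitely generated, which is not a hypothesis of Lemma~\ref{L:ArithmAll}. It does hold here: after your reduction, $p_j$ is injective on $\Gamma$ (elements of $\Gamma(A,\mathcal{O})^*$ are determined by one coordinate), and $p_j(\Gamma)$ is of finite index in a cofinite, hence finitely generated, arithmetic group -- but you should say this. Second, and more seriously, your closing argument in the Fuchsian case ("the normalizer preserves the unique invariant circle, so $p_i(\Gamma)$ remains Fuchsian") does not suffice as stated: the stabilizer of a circle in $\PSL(2,\C)$ is a copy of $\PGL(2,\R)$, which contains elements interchanging the two complementary disks, and a discrete group containing such an element is \emph{not} conjugate into $\PSL(2,\R)$ (e.g.\ the group generated by $\PSL(2,\Z)$ and $z\mapsto -z$ preserves $\R\cup\{\infty\}$ but is not Fuchsian). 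You can close this gap with traces: for every $\gamma\in\Gamma$, Cayley--Hamilton gives $\tr(p_i(\gamma))^2=\phi_i(\tr(\gamma^2))+2=\phi_j(\tr(\gamma^2))+2=\tr(p_j(\gamma))^2$, so every trace of $p_i(\Gamma)$ is real, and a nonelementary subgroup of $\PSL(2,\C)$ with real traces is conjugate into $\PSL(2,\R)$ by Corollary 3.2.5 of \cite{cM03}, quoted in Section 1 of the paper. With these two additions your argument is a complete and valid alternative proof; note that the paper's own proof glosses the analogous final transfer ("hence $\phi_i(S)$ and $p_i(\Gamma)$ are cofinite arithmetic") at a comparable level of detail.
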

\begin{proof}
Since $\Delta$ is arithmetic, it is commensurable to an arithmetic group derived from a quaternion algebra $\Gamma(A,\mathcal{O})^*$. Hence for each $g = (g_1,\ldots,g_r)$ in $\Delta$ there is a power $k$ such that $g^k$ is in $\Gamma(A,\mathcal{O})^*$.

The group $\Gamma$ is commensurable to the subgroup $S^* = \Gamma \cap \Gamma(A,\mathcal{O})^*$ of $\Gamma(A,\mathcal{O})^*$. Then for all $i=1,\ldots,q+r$, the groups $p_i(\Gamma)$ and $\phi_i(S)$ are also commensurable. This in particular implies that $\phi_j(S)$ is a cofinite arithmetic Fuchsian (or Kleinian) group.

$\phi_j(S)^{(2)}$ is defined via a quaternion algebra $B$ over a field $k$ that is a subalgebra of $A$ and is ramified at all infinite places except one. This place is real if $\phi_j(S)$ is a cofinite Fuchsian and complex if $\phi_j(S)$ is a cofinite Kleinian group. The group $\phi_j(S)^{(2)}$ is isomorphic to the group of units of reduced norm 1 of an order $\mathcal{O}_B$ in $B$.

For $i=1,\ldots,q+r$, if $\phi_i(S)$ is nonelementary, then $B$ is unramified for the Galois' isomorphism $\tau_i:=\phi_i\circ\phi_j^{-1}$ and hence $\left.\tau_i\right|_k$ is the identity. Therefore $\phi_i(S)^{(2)}=\tau_i(\phi_j(S))^{(2)}$ is also isomorphic to the group of units of reduced norm 1 of $\mathcal{O}_B$ and hence $\phi_i(S)$ and $p_i(\Gamma)$ are cofinite arithmetic Fuchsian (or Kleinian) groups.

It remains to show that $\phi_i(S)$ is nonelementary if and only if $p_i(\Gamma)$ is nonelementary: Assume that $p_i(\Gamma)$ is nonelementary. Let $g$ and $h$ be two hyperbolic isometries that generate a Schottky group in $p_i(\Gamma)$. The isometries $g^{k_1}$ and $h^{k_2}$ are in $\phi_i(S)$ for some integers $k_1$ and $k_2$. Then $g^{k_1}$ and $h^{k_2}$ generate a Schottky subgroup of $\phi_i(S)$ and therefore $\phi_i(S)$ is nonelementary. The proof of the converse is analogous.
\end{proof}

\begin{rem}
From Proposition~\ref{P:NonelemElliptic} it follows that $p_{q+1}(\Gamma),\ldots,p_{q+r}(\Gamma)$ are nonelementary and hence of the same type as $p_j(\Gamma)$. But they can not be cofinite arithmetic Kleinian groups. Hence it is possible that $p_j(\Gamma)$ is a cofinite arithmetic Kleinian group only if $r=0$.
\end{rem}

The next lemma follows from the previous one.
\begin{lemma}
\label{L:ArithmAllAll}
Let $\Delta$ be an irreducible arithmetic subgroup of $\PSL(2,\C)^q\times\PSL(2,\R)^r$ and $\Gamma$ and $\widetilde{\Gamma}$ finitely generated subgroups of $\Delta$ such that $p_j(\Gamma)$ is a nonelementary subgroup of the cofinite arithmetic Fuchsian (or Kleinian) $p_j(\widetilde{\Gamma})$ for some $j\in\{1,\ldots,q+r\}$. Then, for all $i=1,\ldots,q+r$, the group $p_i(\Gamma)$ is either elementary or a nonelementary subgroup of a cofinite arithmetic Fuchsian (or Kleinian) group.
\end{lemma}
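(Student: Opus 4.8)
\section*{Proof proposal}

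The plan is to deduce this directly from Lemma~\ref{L:ArithmAll} by first disposing of $\widetilde{\Gamma}$ and then transporting the conclusion to $\Gamma$ through the arithmetic structure of $\Delta$. Since $p_j(\widetilde{\Gamma})$ is a cofinite arithmetic Fuchsian (or Kleinian) group, Lemma~\ref{L:ArithmAll} applied to $\widetilde{\Gamma}$ already tells us that for every $i=1,\ldots,q+r$ the projection $p_i(\widetilde{\Gamma})$ is either elementary or a cofinite arithmetic Fuchsian (or Kleinian) group. Thus the entire difficulty is reduced to comparing $p_i(\Gamma)$ with $p_i(\widetilde{\Gamma})$, and the natural claim to establish is that $p_i(\Gamma)\subseteq p_i(\widetilde{\Gamma})$ for every $i$.

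To prove this containment I would exploit the fact, recorded in Section~4.2, that the components of an element of $\Gamma(A,\mathcal{O})^*$ are its $\phi$-conjugates: an element $(\phi_1(\varepsilon),\ldots,\phi_{q+r}(\varepsilon))$ is completely determined by any single component, and one has $p_i=\tau_i\circ p_j$ with the Galois isomorphism $\tau_i:=\phi_i\circ\phi_j^{-1}$. On this $\phi$-conjugate description the map $p_j$ is injective and $p_i$ factors through it, so the inclusion $p_j(\Gamma)\subseteq p_j(\widetilde{\Gamma})$ propagates verbatim to $p_i(\Gamma)=\tau_i(p_j(\Gamma))\subseteq\tau_i(p_j(\widetilde{\Gamma}))=p_i(\widetilde{\Gamma})$. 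Concretely I would first pass to the finite-index subgroups $T^*:=\Gamma\cap\Gamma(A,\mathcal{O})^*$ and $\widetilde{T}^*:=\widetilde{\Gamma}\cap\Gamma(A,\mathcal{O})^*$, where the identity $p_i=\tau_i\circ p_j$ holds literally, and then use the description of the whole commensurability class by $\phi$-conjugate components to lift the inclusion from these finite-index subgroups back to $p_i(\Gamma)\subseteq p_i(\widetilde{\Gamma})$ itself.

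Granting the containment, the conclusion is immediate: fix $i$ and suppose $p_i(\Gamma)$ is not elementary. Then $p_i(\widetilde{\Gamma})\supseteq p_i(\Gamma)$ contains a nonelementary subgroup and is therefore itself nonelementary, so by the first paragraph $p_i(\widetilde{\Gamma})$ is a cofinite arithmetic Fuchsian (or Kleinian) group; hence $p_i(\Gamma)$ is a nonelementary subgroup of a cofinite arithmetic Fuchsian (or Kleinian) group, which is exactly the desired alternative, while if $p_i(\Gamma)$ is elementary we are in the other case. I expect the main obstacle to be precisely the bookkeeping in the second paragraph: because $\Gamma$ and $\widetilde{\Gamma}$ need not lie in $\Gamma(A,\mathcal{O})^*$ but only in the commensurable group $\Delta$, the clean identity $p_i=\tau_i\circ p_j$ is guaranteed only after intersecting with $\Gamma(A,\mathcal{O})^*$, and one must verify that the containment survives the return from $T^*,\widetilde{T}^*$ to $\Gamma,\widetilde{\Gamma}$. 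This is the one step that genuinely uses the $\phi$-conjugate structure of the commensurability class rather than a formal citation of Lemma~\ref{L:ArithmAll}.
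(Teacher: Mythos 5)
Your strategy coincides with the paper's: apply Lemma~\ref{L:ArithmAll} to $\widetilde{\Gamma}$ and transport its conclusion to $\Gamma$ through the containments $p_i(\Gamma)\subseteq p_i(\widetilde{\Gamma})$. The paper is in fact terser than you are; it uses these containments without comment (and its second paragraph, which shows that $p_i(\widetilde{\Gamma})$ is elementary whenever $p_i(\Gamma)$ is, proves more than the statement requires, so your omitting that discussion is harmless). The case analysis in your final paragraph is exactly right, \emph{granted} the containments.

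The gap is in the step you yourself flag, and as sketched it does not close. Passing to $T^*=\Gamma\cap\Gamma(A,\mathcal{O})^*$ and $\widetilde{T}^*=\widetilde{\Gamma}\cap\Gamma(A,\mathcal{O})^*$ does not let the inclusion ``propagate verbatim'': given $t\in T^*$, the hypothesis provides $\tilde g\in\widetilde{\Gamma}$ with $p_j(\tilde g)=p_j(t)$, but $\tilde g$ need not lie in $\Gamma(A,\mathcal{O})^*$, so $\tau_i$ cannot be applied to it. Commensurability only gives $\tilde g^{\,n}\in\widetilde{T}^*$ for some $n\geq 1$, and then injectivity of $p_j$ on $\Gamma(A,\mathcal{O})^*$ yields $t^n=\tilde g^{\,n}$; the upshot is merely that every element of $p_i(\Gamma)$ has a positive power lying in $p_i(\widetilde{\Gamma})$. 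That suffices to transport nonelementarity (powers of two loxodromic elements without common fixed points are again loxodromic without common fixed points), but it does not give what your conclusion needs, namely that the group $p_i(\Gamma)$ itself is a \emph{subgroup} of the cofinite arithmetic group $p_i(\widetilde{\Gamma})$. The clean repair is to prove that $p_j$ is injective on all of $\Delta$: its kernel $N$ meets the finite-index subgroup $\Delta\cap\Gamma(A,\mathcal{O})^*$ trivially, hence $N$ is finite; being finite and normal in $\Delta$, it is centralized by a finite-index subgroup $C$ of $\Delta$; for each $i$ the projection $p_i(C)$ is nonelementary (it contains a finite-index subgroup of the nonelementary group $p_i\bigl(\Delta\cap\Gamma(A,\mathcal{O})^*\bigr)$), and the centralizer in $\PSL(2,\C)$ of a nonelementary group is trivial, so $N=\{1\}$. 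With this injectivity, $p_j(\Gamma)\subseteq p_j(\widetilde{\Gamma})$ upgrades to $\Gamma\subseteq\widetilde{\Gamma}$, hence to $p_i(\Gamma)\subseteq p_i(\widetilde{\Gamma})$ for every $i$, and the rest of your argument goes through.
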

\begin{proof}
If $p_i(\Gamma)$ is nonelementary, then $p_i(\widetilde{\Gamma})$ is nonelementary and hence, by the previous lemma, a cofinite arithmetic Fuchsian (or Kleinian) group.

Let $p_i(\Gamma)$ be elementary. By Proposition~\ref{P:NonelemElliptic}, it consists only of elliptic isometries with a common fixed point. Since $p_j(\Gamma)$ has a loxodromic element, $p_i(\Gamma)$ has an elliptic element of infinite order. Hence $p_i(\Gamma)$ is not discrete. Therefore $p_i(\widetilde{\Gamma})$ is also not discrete. Since it is either elementary or a discrete arithmetic Fuchsian (or Kleinian) group, it is elementary.
\end{proof}

\subsection{Small projective limit sets}
In this section we will determine the groups for which the projective limit set is the smallest possible nonempty one, namely when it is just one point.

We need the following criterion for Zariski density which is a special case of the criterion proved by Dal'Bo and Kim in \cite{fD00}. For part (iii) we use the fact that there are no continuous isomorphisms between $\PSL(2,\R)$ and $\PSL(2,\C)$.

\begin{theorem}[\cite{fD00}]
\label{T:DalBoZariski}
(i) Let $\varphi$ be a surjective homomorphism between two Zariski dense subgroups $\Gamma$ and $\Gamma'$ of $\PSL(2,\R)$. Then $\varphi$ can be extended to a continuous automorphism of $\PSL(2,\R)$ if and only if the group $\Gamma_\varphi:= \{(g,\varphi(g))\mid g \in \Gamma\}$ is not Zariski dense in $\PSL(2,\R)\times\PSL(2,\R)$.

(ii) Let $\varphi$ be a surjective homomorphism between two subgroups $\Gamma$ and $\Gamma'$ of $\PSL(2,\C)$ that are Zariski dense over $\R$. Then $\varphi$ can be extended to a continuous automorphism of $\PSL(2,\C)$ if and only if the group $\Gamma_\varphi:= \{(g,\varphi(g))\mid g \in \Gamma\}$ is not Zariski dense over $\R$ in $\PSL(2,\C)\times\PSL(2,\C)$.

(iii) Let $\Gamma$ be a subgroup of $\PSL(2,\C)$ that is Zariski dense over $\R$ and $\Gamma'$ a Zariski dense subgroup of $\PSL(2,\R)$. Further let $\varphi:\Gamma \rightarrow \Gamma'$ be a surjective homomorphism between them. The group $\Gamma_\varphi:= \{(g,\varphi(g))\mid g \in \Gamma\}$ is Zariski dense over $\R$ in $\PSL(2,\C)\times\PSL(2,\R)$.
\end{theorem}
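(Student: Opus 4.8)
The plan is to study the Zariski closure $H$ (over $\R$) of the graph $\Gamma_\varphi$ inside $G\times G'$, where $G$ and $G'$ denote the two ambient groups ($\PSL(2,\R)$ or $\PSL(2,\C)$, always regarded as real algebraic groups, so that $\PSL(2,\C)$ is the $\R$-simple group $\mathrm{Res}_{\C/\R}\PSL_2$). The whole statement reduces to a Goursat-type dichotomy for $H$. First I would observe that $H$ projects \emph{onto} each factor: the image of the algebraic group $H$ under the coordinate projection $\pi_1$ is a Zariski-closed subgroup of $G$, and it contains $\pi_1(\Gamma_\varphi)=\Gamma$, which is Zariski dense (over $\R$) in $G$ by hypothesis; hence $\pi_1(H)=G$, and symmetrically $\pi_2(H)=G'$.

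Next I would apply Goursat's lemma to the subgroup $H\le G\times G'$. Setting $N:=\{g\in G\mid (g,e)\in H\}\trianglelefteq G$ and $N':=\{g'\in G'\mid (e,g')\in H\}\trianglelefteq G'$, the quotient $H/(N\times N')$ is the graph of an algebraic isomorphism $G/N\xrightarrow{\ \sim\ }G'/N'$. Because $G$ and $G'$ are $\R$-simple with trivial centre, each of $N,N'$ is either trivial or the whole group. If $N=G$ then $G\times\{e\}\subseteq H$, and combined with $\pi_2(H)=G'$ this forces $H=G\times G'$; the case $N'=G'$ is symmetric. In the remaining case $N=N'=\{e\}$, the map $\pi_1|_H$ is an algebraic isomorphism onto $G$, so $H$ is the graph of a continuous isomorphism $\psi\colon G\to G'$. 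Thus exactly one of two things happens: either $H=G\times G'$, i.e. $\Gamma_\varphi$ is Zariski dense, or $H$ is the graph of a continuous isomorphism $\psi$ with $\psi|_\Gamma=\varphi$ (the last equality because $(g,\varphi(g))\in H=\mathrm{graph}(\psi)$ for every $g\in\Gamma$).

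It then remains to translate this dichotomy into each of the three assertions. For (i) and (ii) one has $G=G'$, so a continuous isomorphism $\psi$ is a continuous automorphism; the dichotomy says precisely that $\Gamma_\varphi$ fails to be Zariski dense if and only if $\varphi$ is the restriction of such a $\psi$, which is the claimed equivalence. For the converse direction I would only need that the graph of a continuous automorphism is a proper Zariski-closed subgroup (it has half the dimension of $G\times G'$), so that containment in it indeed prevents Zariski density. For (iii) the two factors are $G=\PSL(2,\C)$ and $G'=\PSL(2,\R)$, and $\mathfrak{sl}(2,\C)$ (real dimension $6$) is not isomorphic to $\mathfrak{sl}(2,\R)$ (real dimension $3$); hence no continuous isomorphism $G\to G'$ exists, the graph alternative is impossible, and $H=G\times G'$ is forced, giving Zariski density of $\Gamma_\varphi$.

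The main obstacle I anticipate is making the Goursat step fully rigorous in the algebraic category: one must check that the isomorphism induced on the quotients is algebraic (equivalently continuous), not merely abstract, and that ``$\R$-simple with trivial centre'' is the correct substitute for simplicity that rules out intermediate normal subgroups over $\R$ (in particular that $\PSL(2,\C)$, as a real algebraic group, has no proper nontrivial closed normal subgroup). Once these structural facts are in place the three parts follow uniformly; the only input special to (iii) is the non-existence of a continuous isomorphism between the two non-isomorphic simple groups, exactly as noted before the statement.
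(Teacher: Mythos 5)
Your proposal is correct in substance, but note that the paper contains no proof of Theorem~\ref{T:DalBoZariski} to compare against: it is quoted as a special case of the criterion of Dal'Bo and Kim \cite{fD00}, and part (iii) is obtained from that criterion together with the remark that no continuous isomorphism between $\PSL(2,\R)$ and $\PSL(2,\C)$ exists. What you have done is reconstruct, self-contained, the Goursat-plus-simplicity dichotomy underlying their criterion: the Zariski closure $H$ of the graph surjects onto both factors, the two kernels $N$, $N'$ are Zariski-closed normal subgroups of the factors, so $H$ is either all of $G\times G'$ or the graph of a continuous isomorphism extending $\varphi$. This makes the statement self-contained and lets (iii) fall out of the same dichotomy (no isomorphism can exist between groups of real dimension $6$ and $3$) rather than being a separate add-on, whereas the paper's citation buys brevity and attribution. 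Your anticipated obstacle about what rules out intermediate normal subgroups has an easy resolution: $\PSL(2,\R)$ and $\PSL(2,\C)$ are simple as \emph{abstract} groups, so every normal subgroup, closed or not, is trivial or everything; no appeal to $\R$-simplicity of algebraic groups is needed.

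Two steps do need repair, both of the technical kind you flag yourself. First, over $\R$ the image of a real algebraic group under a morphism need not be Zariski closed (the squaring map on $\R\setminus\{0\}$ already fails), so your claim that $\pi_1(H)$ is a Zariski-closed subgroup of $G$ is false as stated. The conclusion $\pi_1(H)=G$ survives: $\pi_1(H)$ is a semialgebraic subgroup containing the Zariski-dense $\Gamma$, hence of full dimension, hence open in the connected Lie group $G$ and therefore equal to it; alternatively, normality of $N$ in $G$ follows directly because the normalizer of the Zariski-closed set $N$ is Zariski closed and contains $\Gamma$. Second, in the forward direction of (i) and (ii), your half-dimension count presupposes that the graph of a continuous automorphism is Zariski closed, which is not automatic for an abstract continuous map; the clean fix is the paper's own Theorem~\ref{T:PSLAutomorphisms} (Schreier--Van der Waerden): continuous automorphisms are conjugations by elements of $\GL(2,\R)$ or $\GL(2,\C)$, possibly composed with complex conjugation, hence $\R$-algebraic, so their graphs are proper Zariski-closed subgroups and containment in them indeed precludes Zariski density.
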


A proof of the following theorem is given by Schreier and Van der Waerden in \cite{oS28}.

\begin{theorem}[\cite{oS28}]
\label{T:PSLAutomorphisms}
(i) All continuous automorphisms of $\PSL(2,\R)$ are given by a conjugation with an element of $\GL(2,\R)$.

(ii) All continuous automorphisms of $\PSL(2,\C)$ are given by a conjugation with an element of $\GL(2,\C)$ or by a complex conjugation followed by a conjugation with an element of $\GL(2,\C)$.
\end{theorem}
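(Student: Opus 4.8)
The plan is to pass to the Lie algebra, classify its automorphisms, and lift back, exploiting that both $\PSL(2,\R)$ and $\PSL(2,\C)$ are connected. First I would invoke the standard Cartan--von~Neumann fact that a continuous homomorphism between Lie groups is automatically real-analytic; hence a continuous automorphism $\varphi$ of $G\in\{\PSL(2,\R),\PSL(2,\C)\}$ differentiates to an $\R$-linear Lie algebra automorphism $d\varphi$ of $\mathfrak{g}\in\{\mathfrak{sl}(2,\R),\mathfrak{sl}(2,\C)\}$, where in the complex case $\mathfrak{sl}(2,\C)$ is regarded as a six-dimensional \emph{real} Lie algebra, exactly as in the proof of Lemma~\ref{L:ZariskiC}. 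Since $G$ is connected it is generated by $\exp(\mathfrak{g})$ and $\varphi\circ\exp=\exp\circ\,d\varphi$, so $\varphi$ is completely determined by $d\varphi$. The problem thus reduces to computing the relevant automorphism group of $\mathfrak{g}$ and checking that each such Lie algebra automorphism is the derivative of one of the asserted conjugations.

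For part (i) I would show $\mathrm{Aut}(\mathfrak{sl}(2,\R))=\mathrm{PGL}(2,\R)$. Conjugation by $g\in\GL(2,\R)$ preserves $\mathfrak{sl}(2,\R)$ and the bracket, giving an embedding $\GL(2,\R)/\R^\ast=\mathrm{PGL}(2,\R)\hookrightarrow\mathrm{Aut}(\mathfrak{sl}(2,\R))$; since $\mathfrak{sl}(2,\R)$ is simple of type $A_1$, whose Dynkin diagram has no symmetry, there are no outer automorphisms over $\C$, and taking real points shows this embedding is onto. Note that $\mathrm{PGL}(2,\R)$ has two components: the non-identity one is realized by conjugation with a determinant $-1$ matrix such as $\mathrm{Diag}(1,-1)$, which accounts for the automorphisms not connected to the identity. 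Lifting back, for a continuous automorphism $\varphi$ we have $d\varphi=\mathrm{Ad}(g)$ for some $g\in\GL(2,\R)$, and since $\varphi$ and the conjugation $x\mapsto gxg^{-1}$ are homomorphisms on the connected group $\PSL(2,\R)$ with equal derivatives, they coincide.

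For part (ii) the extra input is that $\mathfrak{sl}(2,\C)$ is treated as a real Lie algebra, so beside the $\C$-linear automorphisms there are conjugate-linear ones. I would compute $\mathrm{Aut}_\R(\mathfrak{sl}(2,\C))$ via complexification: $\mathfrak{sl}(2,\C)\otimes_\R\C\cong\mathfrak{sl}(2,\C)\oplus\mathfrak{sl}(2,\C)$, and since the only $\C$-automorphisms of a sum of two copies of a simple algebra are pairs of factor-automorphisms together with the swap of the two factors, taking fixed points under the real structure yields $\mathrm{Aut}_\R(\mathfrak{sl}(2,\C))\cong\mathrm{PGL}(2,\C)\rtimes(\Z/2)$, the $\Z/2$ being complex conjugation. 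Equivalently, every real automorphism either commutes or anticommutes with multiplication by $i$, i.e. is $\C$-linear or $\C$-antilinear. A $\C$-linear one is $\mathrm{Ad}(g)$ with $g\in\GL(2,\C)$; a $\C$-antilinear one is complex conjugation followed by such an $\mathrm{Ad}(g)$. Lifting back by the same connectedness argument on $\PSL(2,\C)$ gives exactly the two families in the statement.

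The main obstacle is precisely the dichotomy in part (ii): proving that an arbitrary $\R$-linear automorphism of $\mathfrak{sl}(2,\C)$ must respect the complex structure up to sign. The cleanest route is the factor-permutation argument on the complexification above; a more hands-on alternative is to show that the only $\mathrm{ad}$-invariant complex structures on $\mathfrak{sl}(2,\C)$ viewed over $\R$ are $\pm J$, where $J$ is multiplication by $i$, so that any automorphism $\psi$ satisfies $\psi J\psi^{-1}=\pm J$ and is therefore linear or antilinear. Everything else is routine bookkeeping: verifying that the listed conjugations and complex conjugation are genuine continuous automorphisms, and that elements of $\GL$ differing by a nonzero scalar induce the same map while $\pm J$ are genuinely distinct.
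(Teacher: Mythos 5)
Your argument is correct, but note that the paper itself offers \emph{no} proof of Theorem~\ref{T:PSLAutomorphisms}: it is quoted verbatim from the 1928 paper of Schreier and Van der Waerden, so the honest comparison is with that classical proof, and your route is genuinely different. Schreier--Van der Waerden argue algebraically: they classify \emph{abstract} automorphisms of the projective groups $\PSL(n,K)$ over essentially arbitrary fields $K$, showing each is induced by a semilinear projective transformation, i.e.\ a conjugation combined with a field automorphism of $K$; topology enters only at the last step, where continuity forces the field automorphism of $\C$ to be the identity or complex conjugation (while $\R$ admits no nontrivial field automorphisms at all, so for $\PSL(2,\R)$ even abstract automorphisms are conjugations by $\GL(2,\R)$). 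Your proof instead uses continuity from the outset: automatic smoothness of continuous homomorphisms, reduction to $\mathrm{Aut}(\mathfrak{sl}(2,\R))$ resp.\ $\mathrm{Aut}_{\R}(\mathfrak{sl}(2,\C))$, absence of outer automorphisms in type $A_1$, the linear/antilinear dichotomy coming from the splitting $\mathfrak{sl}(2,\C)\otimes_{\R}\C\cong\mathfrak{sl}(2,\C)\oplus\mathfrak{sl}(2,\C)$, and connectedness to lift back to the group. The classical approach buys a much stronger theorem (all abstract automorphisms, all $n$, all fields); yours buys a short, self-contained argument at exactly the level of generality the paper needs, and it matches the paper's own toolkit, since the same view of $\mathfrak{sl}(2,\C)$ as a six-dimensional real Lie algebra already drives the proof of Lemma~\ref{L:ZariskiC}. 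Two steps you pass over quickly should be spelled out, though both are standard and fillable: first, ``taking real points'' in part (i) is the descent argument that if $\mathrm{Ad}(g)$ with $g\in\GL(2,\C)$ preserves $\mathfrak{sl}(2,\R)$, then $\bar g=\lambda g$ with $|\lambda|=1$, and rescaling $g$ by a suitable unit scalar makes $g$ real; second, the assertion that any automorphism of $\mathfrak{g}\oplus\mathfrak{g}$, $\mathfrak{g}$ simple, either preserves or swaps the two simple ideals (they are the unique minimal ideals), which is what makes the linear/antilinear dichotomy work. With those details inserted, your proof is complete.
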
 

\subsubsection{The general case}
In the first four lemmas we will prove Theorem~\ref{T:ArithmCharactAlgAlg} which is the essential step of the proof of the main results for nonelementary subgroups of $\PSL(2,\C)^q\times\PSL(2,\R)^r$ with $q+r \geq 2$. Then we consider separately the three different cases: $q=0$, $r=0$ and $qr\neq 0$.

Unless otherwise specified, $\Gamma(A,\mathcal{O})$ will denote a subgroup of $\PSL(2,\R)$ or $\PSL(2,\C)$ derived from a quaternion algebra such that 
$$\Gamma(A,\mathcal{O})^* \subseteq \PSL(2,\C)^q\times\PSL(2,\R)^r \text{ with } q+r \geq 2.$$ 
Here we fix for simplicity of notation the order of the complex and real factors.

Let $S$ be a subgroup of $\Gamma(A,\mathcal{O})$ such that $S^*$ is nonelementary. Then by Lemma~\ref{L:NonemptyAlg} the regular limit set $\mathcal{L}_{S^*}^{reg}$ is not empty and in particular we can define the Furstenberg limit set $F_{S^*}$ and the projective limit set $P_{S^*}$.

In the first two lemmas we prove that $P_{S^*}$ contains exactly one point if and only if the $\phi$-conjugated elements have ``almost'' equal traces. Then we prove that this is the case if and only if $S$ is a subgroup of an arithmetic Fuchsian or Kleinian group.

\begin{lemma}
\label{L:MorePoints}
If for at least one $i\in\{1,\ldots,q+r\}$, the mapping $\phi_i:\Tr(S^{(2)}) \rightarrow \phi_i(\Tr(S^{(2)}))$ is neither the identity nor the complex conjugation, then $P_{S^*}$ contains more than one point.
\end{lemma}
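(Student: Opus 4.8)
The plan is to apply the two tools singled out in the introduction---the Zariski-density criterion of Dal'Bo and Kim (Theorem~\ref{T:DalBoZariski}) and Benoist's theorem (Theorem~\ref{T:BenoistLimitCone})---not to $S^*$ itself but to its projection onto the two factors indexed by $1$ and $i$. The hypothesis forces $i\neq 1$, since $\phi_1=\mathrm{id}$. Write $\pi$ for the projection of $\PSL(2,\C)^q\times\PSL(2,\R)^r$ onto its $1$st and $i$th factors and set
$$T:=\pi(S^*)=\{(\phi_1(g),\phi_i(g))=(g,\phi_i(g))\mid g\in S\}.$$
It suffices to prove that the projective limit set $P_T$ of the two-factor group $T$ contains more than one point. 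Indeed, by Theorem~\ref{T:ConvexAlg} the closure of $P_{S^*}$ equals the limit cone of $S^*$, that is, the closure of the translation directions of all hyperbolic and mixed elements; the induced projection onto the coordinates $1$ and $i$ sends each such direction $L(g^*)$ to the translation direction of $\pi(g^*)\in T$, so $P_T$ lies in the image of $\overline{P_{S^*}}$. If $P_{S^*}$ were a single point, so would be its closure and hence $P_T$.

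Next I would show that $T$ is Zariski dense over $\R$ in the relevant product of two factors. The map $\psi:=\phi_i\colon S\to\phi_i(S)$ is a surjective isomorphism onto $p_i(S^*)$, and $T$ is exactly its graph $\Gamma_\psi$. Since $S^*$ is nonelementary, both $S=p_1(S^*)$ and $\phi_i(S)=p_i(S^*)$ are nonelementary, hence Zariski dense over $\R$ in their respective factors by Corollary~\ref{C:Zariski}. By Theorem~\ref{T:PSLAutomorphisms}, every continuous automorphism of $\PSL(2,\C)$ or of $\PSL(2,\R)$ acts on traces as the identity or as complex conjugation; thus, if $\psi$ were the restriction of such an automorphism, $\phi_i$ would act as the identity or as complex conjugation on all of $\Tr(S)$, and in particular on $\Tr(S^{(2)})$, contrary to hypothesis. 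Hence $\psi$ does not extend to a continuous automorphism, and Theorem~\ref{T:DalBoZariski} yields that $T=\Gamma_\psi$ is Zariski dense over $\R$ in the product of the two factors.

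I would then conclude with Benoist. The group $T$ is Zariski dense over $\R$, hence nonelementary, so its regular limit set is nonempty by Lemma~\ref{L:NonemptyAlg} and Theorem~\ref{T:BenoistLimitCone} applies: the limit cone of $T$ is convex with nonempty interior. By Theorem~\ref{T:ConvexAlg} this limit cone equals $\overline{P_T}$, so $P_T$ has nonempty interior in $\RP^1$ and in particular contains more than one point. Together with the reduction above, this shows that $\overline{P_{S^*}}$, and hence $P_{S^*}$ (nonempty by Lemma~\ref{L:NonemptyAlg}), contains more than one point.

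I expect the main obstacle to be the bookkeeping in the Zariski-density step: one must invoke the correct part of Theorem~\ref{T:DalBoZariski} according to whether each of the factors $1$ and $i$ is a $\PSL(2,\C)$- or a $\PSL(2,\R)$-factor and whether the relevant traces are real. In particular, a $\PSL(2,\C)$-factor carrying only real traces has Zariski closure merely a conjugate of $\PSL(2,\R)$ by Corollary~\ref{C:Zariski}(ii) and must be treated like a real factor, whereas if one factor is genuinely complex with non-real traces then part~(iii) of Theorem~\ref{T:DalBoZariski} gives Zariski density unconditionally. The remaining points---that $\pi$ carries translation directions to translation directions, and that $T$ meets the paper's definition of nonelementary (its mixed elements having only loxodromic or infinite-order elliptic components)---are routine given the structure results already established.
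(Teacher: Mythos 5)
Your proposal follows the same overall strategy as the paper: form the graph group $T=S_{\phi_i}=\{(g,\phi_i(g))\mid g\in S\}$, use Corollary~\ref{C:Zariski} and the Dal'Bo--Kim criterion (Theorem~\ref{T:DalBoZariski}) to get Zariski density over $\R$, and then invoke Benoist (Theorem~\ref{T:BenoistLimitCone}); your verification of the criterion's hypothesis via Theorem~\ref{T:PSLAutomorphisms} is in fact spelled out more explicitly than in the paper. The divergence is in how you transfer the two-factor conclusion back to $P_{S^*}$, and that is where there is a genuine gap: you treat $P_T$ as a bona fide projective limit set and apply Lemma~\ref{L:NonemptyAlg}, Theorem~\ref{T:ConvexAlg}, and (implicitly, in the claim that $P_T$ lies in the image of $\overline{P_{S^*}}$, which needs density of translation directions in $P_T$) Theorem~\ref{T:LinkFP} to the group $T$. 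All of that machinery rests on Link's structure theory for \emph{discrete} groups. But $T=\pi(S^*)$ is the projection of $S^*$ onto a proper subproduct of factors, and when $q+r\geq 3$ such projections are in general not discrete: for instance, if $S$ has finite index in $\Gamma(A,\mathcal{O})$ with $q+r=3$, then $S^*$ is an irreducible lattice and its projection to two factors is non-discrete. (When $q+r=2$ one has $T=S^*$ and your argument is complete as written.) So both your reduction step and the identification of the limit cone of $T$ with $\overline{P_T}$ are unjustified in exactly the cases that need proof.

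The gap is local and fixable, because the only statement about $T$ you truly need, Benoist's Theorem~\ref{T:BenoistLimitCone}, concerns the limit cone defined via Jordan projections, which makes sense and holds with no discreteness assumption. Replace $P_T$ by the limit cone of $T$ throughout: if $P_{S^*}$ were a single point $p$ (necessarily with all coordinates positive), then Theorem~\ref{T:ConvexAlg} applied to the \emph{discrete} group $S^*$ gives that the limit cone of $S^*$ is $\{p\}$, hence every element of $T$ has Jordan projection either zero or on the ray of direction $(p_1:p_i)$, so the limit cone of $T$ is a single ray --- contradicting Benoist. This is essentially how the paper avoids the issue: it uses Benoist only to extract two elements $g,h\in S$ with different direction pairs $(\ell(\cdot):\ell(\phi_i(\cdot)))$, and then performs all limit-set arguments inside the discrete group $S^*$, handling the case where $g^*$ is mixed (rather than loxodromic) by convexity of $\overline{P_{S^*}}$ together with openness of $\RP^{q+r-1}_+$.
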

\begin{proof}
We have four cases for $\Tr(S)$ and $\phi_i(\Tr(S))$.

The first one is when $\Tr(S)$ and $\phi_i(\Tr(S))$ are both subsets of $\R$. Then by Corollary~\ref{C:Zariski}, the Zariski closures over $\R$ of $S$ and $\phi_i(S)$ are conjugates of $\PSL(2,\R)$. Hence, by Theorem~\ref{T:DalBoZariski}(i) (the criterion of Dal'Bo and Kim), $S_{\phi_i} := \{(s,\phi_i(s))\mid s \in S\}$ is Zariski dense over $\R$ in a conjugate of $\PSL(2,\R)\times\PSL(2,\R)$.

The second case is when $\Tr(S)$ is not a subset of $\R$ and $\phi_i(\Tr(S))$ is a subset of $\R$. Then by Corollary~\ref{C:Zariski}, the Zariski closure over $\R$ of $S$ is $\PSL(2,\C)$ and the Zariski closure over $\R$ of $\phi_i(S)$ is a conjugate of $\PSL(2,\R)$. By Theorem~\ref{T:DalBoZariski}(iii) $S_{\phi_i}$ is then Zariski dense over $\R$ in a conjugate of $\PSL(2,\C)\times\PSL(2,\R)$

The third case is when $\Tr(S)$ is a subset of $\R$ and $\phi_i(\Tr(S))$ not is a subset of $\R$. It is analogous to the second case.

The last case is when both $\Tr(S)$ and $\phi_i(\Tr(S))$ are not subsets of $\R$. Then by Corollary~\ref{C:Zariski}, the Zariski closures over $\R$ of $S$ and $\phi_i(S)$ are $\PSL(2,\C)$. Hence, by Theorem~\ref{T:DalBoZariski}(ii) (the criterion of Dal'Bo and Kim), $S_{\phi_i}$ is Zariski dense over $\R$ in $\PSL(2,\C)\times\PSL(2,\C)$.

In all cases, by Theorem~\ref{T:BenoistLimitCone} it follows that $\overline{P_{S_{\phi_i}}}$ has a nonempty interior in $\RP^1$, i.e. $P_{S_{\phi_i}}$ contains more than one point.

Let $h$ be a loxodromic transformation in $S$ all of whose $\phi$-conjugates are loxodromic and whose existence is guaranteed by Lemma \ref{L:NonemptyAlg}. Since  $P_{S_{\phi_i}}$ contains more than one point, we can take $g \in S$ such that the translation directions $(\ell(h):\ell(\phi_i(h)))$ and $(\ell(g):\ell(\phi_i(g)))$ are different.

\textbf{Case 1:} If all $\phi$-conjugates of $g$ are loxodromic isometries, then the translation directions $L(h)$ and $L(g)$ of $h$ and $g$ determine different points in $\RP^{q+r-1}_+$ and hence $P_{S^*}$ consists of more than one point.

\textbf{Case 2:} There is a $\phi$-conjugate of $g$ that is an elliptic isometry of infinite order. By Theorem~\ref{T:ConvexAlg}, $\overline{P_{S^*}}$ is convex and in particular path connected. Hence there is a path in $P_{S^*}$ between $L(h)$ and $L(g)$. Since $\RP^{q+r-1}_+$ is open, there is an open subset of the path in $\RP^{q+r-1}_+$. Therefore there is another point in $P_{S^*}$ except $L(h)$.
\end{proof}

The converse is also true as the following lemma shows.
\begin{lemma}
\label{L:1:ldots:1}
If for all $i\in\{1,\ldots,q+r\}$, the mapping $\phi_i:\Tr(S^{(2)}) \rightarrow \phi_i(\Tr(S^{(2)}))$ is either the identity or the complex conjugation, then $P_{S^*}$ consists only of the point $(1:\ldots:1)$.
\end{lemma}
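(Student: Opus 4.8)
The plan is to exploit the structure theorem of Link (Theorem~\ref{T:LinkFP}), according to which $P_{S^*}$ is the closure in $\RP^{q+r-1}_+$ of the set of translation directions of the loxodromic isometries of $S^*$. Since $S^*$ is nonelementary, Lemma~\ref{L:NonemptyAlg} furnishes at least one loxodromic element with all $\phi$-conjugates loxodromic, so $P_{S^*}$ is nonempty; it therefore suffices to show that every such translation direction equals $(1:\ldots:1)$. The hypothesis concerns $\Tr(S^{(2)})$ rather than $\Tr(S)$, but this causes no difficulty: if $g^*=(\phi_1(g),\ldots,\phi_{q+r}(g))$ is loxodromic then so is $(g^2)^*=(g^*)^2$, its translation direction coincides with that of $g^*$ (squaring doubles each translation length but fixes the projective point), and $g^2\in S^{(2)}$. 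Hence I may assume throughout that $g\in S^{(2)}$.

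The heart of the argument is the relation between traces and translation lengths. Fix a loxodromic $g\in S^{(2)}$ all of whose $\phi$-conjugates are loxodromic, and let $e^{x+i\theta},\, e^{-(x+i\theta)}$ with $x\geq 0$ be the eigenvalues of a representative of $g$ in $\SL(2,\C)$, so that $\ell(g)=2x=2\log|e^{x+i\theta}|$ as in the Jordan decomposition discussion of Section~\ref{S:LimitConeBenoist}. Applying the embedding $\phi_i$ to the matrix entries gives $\tr(\phi_i(g))=\phi_i(\tr(g))$, and by hypothesis the restriction of $\phi_i$ to $\Tr(S^{(2)})$ is either the identity or complex conjugation. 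In the first case $\phi_i(g)$ has the same eigenvalues as $g$; in the second it has eigenvalues $e^{x-i\theta},\, e^{-(x-i\theta)}$. In either case the moduli of the eigenvalues are unchanged, so $\ell(\phi_i(g))=2x=\ell(g)$.

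Consequently the translation direction of $g^*$ is
$$ L(g^*)=(\ell(\phi_1(g)):\ldots:\ell(\phi_{q+r}(g))) = (\ell(g):\ldots:\ell(g)) = (1:\ldots:1). $$
As this holds for every loxodromic isometry of $S^*$, the set whose closure defines $P_{S^*}$ consists of the single point $(1:\ldots:1)$, and therefore $P_{S^*}=\{(1:\ldots:1)\}$.

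The only points requiring care are the passage from $S$ to the finite-index subgroup $S^{(2)}$, handled above by squaring, and the $\PSL$-normalization of the trace: the sign and the choice $\theta\in[0,\pi)$ are irrelevant, because $\ell$ depends only on the modulus $|e^{x+i\theta}|=e^{x}$, which is insensitive both to the sign of the trace and to complex conjugation. I thus expect no genuine obstacle; the entire content of the lemma is the invariance of eigenvalue moduli under complex conjugation, which makes all $q+r$ coordinates of each translation direction equal.
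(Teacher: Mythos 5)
Your proof is correct and uses essentially the same ingredients as the paper's: Link's Theorem~\ref{T:LinkFP} to reduce $P_{S^*}$ to the closure of the translation directions of loxodromic elements, together with the fact that the translation length depends only on the trace up to sign and complex conjugation (via the eigenvalue moduli). The only difference is presentational --- you argue the implication directly while the paper proves its contrapositive --- and your explicit handling of the passage from $S$ to $S^{(2)}$ by squaring is a point the paper's own proof glosses over.
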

\begin{proof}
We prove the negation of this implication. Assume $P_{S^*}$ contains at least one point different from $(1:\ldots:1)$. By Theorem~\ref{T:LinkFP}, the translation directions of the loxodromic isometries in $S^*$ are dense in $P_{S^*}$. Therefore $P_{S^*}$ contains a loxodromic transformation $h^*$ with $L(h^*)\neq (1:\ldots:1)$. There is $\phi_i$ such that $\ell(h)\neq \ell(\phi_i(h))$, where $\ell(g)$ denotes the length of the closed geodesic corresponding to $g$.

For all $g, \tilde{g}\in\PSL(2,\C)$, if $\ell(g)\neq \ell(\tilde{g})$, then $\tr(g)\neq \tr(\tilde{g})$ and $\tr(g)\neq \overline{\tr(\tilde{g})}$. Therefore, for the above $\phi_i$, we have $\tr(h)\neq \pm \phi_i(\tr(h))$ and $\tr(h)\neq \pm \overline{\phi_i(\tr(h))}$ and in particular the mapping $\phi_i:\Tr(S^{(2)}) \rightarrow \phi_i(\Tr(S^{(2)}))$ is neither the identity nor the complex conjugation.
\end{proof}

\begin{rem}
The last two lemmas are generally true if $S$ is nonelementary and $\phi_i$ for $i=1,\ldots,q+r$ are group isomorphisms such that $\phi_i(S)$ are nonelementary.
\end{rem}

We will need the following lemma in the proof of Lemma~\ref{L:NotIDNotArithm}.
\begin{lemma}
\label{L:IntegerTraces}
Let $\Delta$ be an irreducible arithmetic subgroup of $\PSL(2,\C)^q\times\PSL(2,\R)^r$. Then for all $g=(g_1,\ldots,g_{q+r})\in \Delta$, the traces $\tr(g_1),\ldots,\tr(g_{q+r})$ are algebraic integers.
\end{lemma}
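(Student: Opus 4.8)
The plan is to exploit the structure theory developed in Section~4.2: by Borel's result \cite{aB81}, the irreducible arithmetic group $\Delta$ is commensurable to a group $\Gamma(A,\mathcal{O})^*$ derived from a quaternion algebra $A = \left(\frac{a,b}{K}\right)$ over a number field $K$ with an order $\mathcal{O}$. I would first prove the statement for the elements of $\Gamma(A,\mathcal{O})^*$ directly, and then propagate it to all of $\Delta$ by a power argument using commensurability.

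For the first step, I would take $g = (\phi_1(\varepsilon),\ldots,\phi_{q+r}(\varepsilon)) \in \Gamma(A,\mathcal{O})^*$ with $\varepsilon \in \mathcal{O}^1$ and compute the trace of each component. Under the isomorphism of $A$ with a matrix algebra described in Section~4.1, only the coefficient of the basis element $1$ contributes to the matrix trace, so $\tr(\phi_i(\varepsilon)) = \phi_i(t)$, where $t := \varepsilon + \bar\varepsilon \in K$ is the reduced trace of $\varepsilon$. Because $\varepsilon$ lies in the order $\mathcal{O}$, it is integral over $\mathcal{O}_K$, and hence its reduced trace $t$ lies in $\mathcal{O}_K$ (a standard integrality property of orders in quaternion algebras, cf. Maclachlan--Reid \cite{cM03}). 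Since $\phi_i$ is a field embedding, it sends algebraic integers to algebraic integers, so each $\tr(\phi_i(\varepsilon)) = \phi_i(t)$ is an algebraic integer.

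For the second step, given an arbitrary $g = (g_1,\ldots,g_{q+r}) \in \Delta$, commensurability gives that $\Delta \cap \Gamma(A,\mathcal{O})^*$ has finite index in $\Delta$, so by the pigeonhole principle applied to the cosets $g^n(\Delta \cap \Gamma(A,\mathcal{O})^*)$ some positive power $g^k$ lies in $\Gamma(A,\mathcal{O})^*$. By the first step $\tr(g_i^k)$ is then an algebraic integer for every $i$. To descend from $g_i^k$ to $g_i$, I would lift $g_i$ to $\SL(2,\C)$ and invoke the Chebyshev recurrence $t_{k+1} = t_1 t_k - t_{k-1}$ with $t_0 = 2$, which expresses $t_k = \tr(g_i^k)$ as $p_k(\tr g_i)$ for a monic polynomial $p_k$ of degree $k$ with integer coefficients. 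Thus $\tr g_i$ is a root of the monic polynomial $p_k(X) - \tr(g_i^k)$, all of whose coefficients are algebraic integers, and since the algebraic integers form an integrally closed ring, $\tr g_i$ is itself an algebraic integer; the sign ambiguity of the trace in $\PSL(2,\C)$ does not affect this conclusion.

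The bulk of the argument is routine, resting on the integrality of reduced traces of order elements and the behaviour of the $\phi_i$ on algebraic integers. The one step requiring a genuine idea is the last one: passing from the integrality of $\tr(g_i^k)$ back to that of $\tr g_i$. This is exactly where the Chebyshev relation, combined with integral closedness of the ring of algebraic integers, is essential, and it is the only place where the fact that $\Delta$ is merely commensurable to (rather than equal to) a group derived from a quaternion algebra has to be handled.
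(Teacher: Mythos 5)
Your proof is correct and follows essentially the same route as the paper: commensurability yields a power $g^k \in \Gamma(A,\mathcal{O})^*$, the traces of elements there are algebraic integers, and the monic (Chebyshev) relation between $\tr(g_i^k)$ and $\tr(g_i)$ together with integral closedness of the algebraic integers finishes the descent. The only difference is cosmetic: where the paper cites Lemmas~2.2.4 and~2.2.7 of Maclachlan--Reid for the integrality of traces in $\Gamma(A,\mathcal{O})$, you prove that fact directly from the integrality of reduced traces of elements of the order $\mathcal{O}$ and the fact that the embeddings $\phi_i$ preserve algebraic integers.
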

\begin{proof}
The group $\Delta$ is commensurable to a $\Gamma(A,\mathcal{O})^*$. Then for all $g\in\Delta$, there is a power $g^n\in \Gamma(A,\mathcal{O})^*$ for some $n\in \N$.  

From Lemma~2.2.7 and Lemma~2.2.4 in \cite{cM03} it follows that the traces of all elements in $\Gamma(A,\mathcal{O})$ are algebraic integers. Hence $\tr(g_i^n)$, $i=1,\ldots,q+r$, are algebraic integers. Since $\tr(g_i^n)$ is a monic polynomial with integer coefficients in $\tr(g_i)$, the trace $\tr(g_i)$ satisfies a monic polynomial with coefficients that are algebraic integers and hence is an algebraic integer.
\end{proof}

\begin{lemma}
\label{L:NotIDNotArithm}
Let $S$ be finitely generated. Then the mapping $\phi_i:\Tr(S^{(2)}) \rightarrow \phi_i(\Tr(S^{(2)}))$ is neither the identity nor the complex conjugation for at least one $\phi_i$, $i\in\{1,\ldots,q+r\}$, if and only if $S$ is not contained in an arithmetic Fuchsian or Kleinian group.
\end{lemma}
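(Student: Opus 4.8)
The plan is to prove the equivalent statement obtained by negating both sides: writing $(\ast)$ for the condition that \emph{every} $\phi_i|_{\Tr(S^{(2)})}$, $i=1,\ldots,q+r$, is the identity or the complex conjugation, I will show that $(\ast)$ holds if and only if $S$ is contained in a cofinite arithmetic Fuchsian or Kleinian group. Throughout I use the standing assumption that $S^*$ is nonelementary, so that by our definition every projection $\phi_i(S)=p_i(S^*)$ is nonelementary and in particular contains loxodromic elements. Set $L:=\Q(\Tr(S^{(2)}))\subseteq K$. Since $S$ is finitely generated and $S^{(2)}$ is nonelementary with algebraic-integer traces (Lemma~\ref{L:IntegerTraces}), Theorems~\ref{T:AlgebraInvariant} and~\ref{T:Order} produce a quaternion algebra $B:=AS^{(2)}$ over $L$ together with the order $\mathcal{O}_B:=\mathcal{O}S^{(2)}$, and $S^{(2)}\subseteq\Gamma(B,\mathcal{O}_B)$. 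The whole argument reduces to deciding, in terms of $(\ast)$, whether $\Gamma(B,\mathcal{O}_B)$ is a cofinite arithmetic Fuchsian or Kleinian group, i.e. whether $B$ satisfies the ramification pattern of Takeuchi's Theorem~\ref{T:TakeuchiArithmQuat} and the Maclachlan--Reid Theorem~\ref{T:MacReid}.

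For the direction $(\ast)\Rightarrow$ ``contained in an arithmetic group'' I would analyze the embeddings of $L$, each of which is the restriction of one of the embeddings of $K$. The embeddings $\phi_1,\dots,\phi_{q+r}$ together with their complex conjugates restrict, by $(\ast)$, to the identity or the complex conjugation on $\Tr(S^{(2)})$; the remaining embeddings of $K$ are exactly the real places at which $A$ is ramified, where $\phi_j(\Gamma(A,\mathcal{O}))$ lies in the compact unit group of $\Ha$ and hence $\phi_j(\Tr(S^{(2)}))$ is bounded. Since $L=\Q(\Tr(S^{(2)}))$, any embedding agreeing with the identity (resp. conjugation) on $\Tr(S^{(2)})$ \emph{is} the identity (resp. conjugation) on $L$, so the only non-bounded embedding of $L$ is the distinguished place carrying $\Tr(S^{(2)})$ itself. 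The crucial step is then to convert boundedness into ramification: because $S^{(2)}$ is nonelementary it generates $B$, so at any other place $\psi$ the group $\psi(S^{(2)})$ is Zariski dense in $\SL(2,\R)$ or $\SL(2,\C)$; a loxodromic (or hyperbolic) element there would have powers of unbounded trace, contradicting boundedness, while a Zariski-dense group of only elliptic elements is impossible. Hence every such $\psi$ is a \emph{real} ramified place of $B$, which simultaneously excludes any spurious complex place. Distinguishing the two cases by Corollary~\ref{C:Zariski}: if $\Tr(S^{(2)})\subseteq\R$ then the distinguished place is real and split, so $L$ is totally real and $B$ is ramified at all real places but one (arithmetic Fuchsian); if $\Tr(S^{(2)})\not\subseteq\R$ then it is the unique complex place and $B$ is ramified at all real places (arithmetic Kleinian). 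Finally, since $\mathcal{O}_B\subseteq\mathcal{O}$, the group $\langle\Gamma(B,\mathcal{O}_B),S\rangle$ lies in the discrete group $\Gamma(A,\mathcal{O})$, contains the cofinite $\Gamma(B,\mathcal{O}_B)$, and is therefore a cofinite group commensurable with it, hence itself arithmetic and containing $S$; in the Fuchsian case $S$ normalizes the nonelementary $S^{(2)}\subseteq\PSL(2,\R)$ and so preserves its invariant circle, which places $\langle\Gamma(B,\mathcal{O}_B),S\rangle$ in $\PSL(2,\R)$.

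For the converse I argue by contraposition: suppose $\phi_i|_{\Tr(S^{(2)})}$ is neither the identity nor the complex conjugation for some $i\le q+r$, yet $S\subseteq\Delta_0$ with $\Delta_0$ a cofinite arithmetic Fuchsian or Kleinian group, and put $K_0:=\Q(\Tr(\Delta_0^{(2)}))\supseteq L$. The restriction $\phi_i|_L$ extends to an embedding $\psi$ of $K_0$, and since $\phi_i|_L$ is neither the identity nor the conjugation on $L$, neither is $\psi$ on $K_0$; Theorem~\ref{T:TakeuchiArithmQuat} (resp. Theorem~\ref{T:MacReid}) then forces $\psi(\Tr(\Delta_0^{(2)}))$, and a fortiori $\phi_i(\Tr(S^{(2)}))=\psi(\Tr(S^{(2)}))$, to be bounded. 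On the other hand $\phi_i(S)$ is nonelementary, so it contains a loxodromic element $\phi_i(h)$; then $\phi_i(\tr(h^{2n}))=\tr(\phi_i(h)^{2n})\to\infty$, so $\phi_i(\Tr(S^{(2)}))$ is unbounded, a contradiction. This yields $(\ast)$ and completes the equivalence.

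I expect the forward direction to be the main obstacle: the delicate part is the bookkeeping that identifies each embedding of $L$ with the restriction of a known embedding of $K$, followed by the argument turning boundedness of $\psi(\Tr(S^{(2)}))$ into ramification of $B$ at $\psi$ while excluding any extra complex place of $L$, since this is precisely what pins down whether one lands in the Fuchsian or the Kleinian case of Takeuchi / Maclachlan--Reid. The passage from $S^{(2)}$ to $S$ (discreteness of $\langle\Gamma(B,\mathcal{O}_B),S\rangle$ inside $\Gamma(A,\mathcal{O})$ and the normalizer argument for the invariant circle) is routine but must not be omitted.
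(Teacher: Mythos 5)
Your converse direction (contraposition: extend $\phi_i|_{\Q(\Tr(S^{(2)}))}$ to an embedding of $\Q(\Tr(\Delta_0^{(2)}))$, invoke condition (ii) of Theorem~\ref{T:TakeuchiArithmQuat} resp.\ Theorem~\ref{T:MacReid} to get boundedness, and contradict it with the unbounded traces of powers of a loxodromic element of $\phi_i(S)$) is correct and is exactly the paper's argument, with the field-extension bookkeeping made explicit. Likewise, your forward construction --- the algebra $B=AS^{(2)}$, the order $\mathcal{O}S^{(2)}$ via Lemma~\ref{L:IntegerTraces} and Theorem~\ref{T:Order}, and the place-by-place analysis showing that every infinite place of $\Q(\Tr(S^{(2)}))$ other than the identity place is real and ramified for $B$, so that $\Gamma(B,\mathcal{O}_B)$ is a cofinite arithmetic Fuchsian or Kleinian group containing $S^{(2)}$ --- is the paper's construction, if anything with more detail on the ramification.

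The genuine gap is in your final step, the passage from $S^{(2)}$ to $S$: you assert that $\langle\Gamma(B,\mathcal{O}_B),S\rangle$ ``lies in the discrete group $\Gamma(A,\mathcal{O})$'' and is therefore discrete, hence cofinite and commensurable with $\Gamma(B,\mathcal{O}_B)$. But $\Gamma(A,\mathcal{O})=\phi_1(\mathcal{O}^1)$ is \emph{not} discrete when $q+r\geq 2$: it is the projection to a single factor of the irreducible lattice $\Gamma(A,\mathcal{O})^*$, and such projections are dense (e.g.\ $\PSL(2,\Z[\sqrt 2])$ is dense in $\PSL(2,\R)$); the non-discreteness of exactly these projections is the subject of the whole paper. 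Without discreteness the inference collapses --- $\Gamma(A,\mathcal{O})$ itself contains the cofinite group $\Gamma(B,\mathcal{O}_B)$ and is certainly not commensurable with it --- so you have not shown that $S$ lies in any discrete, let alone arithmetic, group. The paper closes this step by a different mechanism: setting $\widetilde{S}:=\langle\mathcal{O}{S^{(2)}}^1,h_1,\ldots,h_m\rangle$ with $h_1,\ldots,h_m$ generators of $S$, it uses that $\widetilde{S}$ is finitely generated and nonelementary, so $\widetilde{S}^{(2)}$ has finite index in $\widetilde{S}$, and then the inclusions $\widetilde{S}^{(2)}\leq\mathcal{O}{S^{(2)}}^1\leq\widetilde{S}$ force $\mathcal{O}{S^{(2)}}^1$ to have finite index in $\widetilde{S}$; finite index over a cofinite discrete group yields discreteness and commensurability simultaneously. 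Alternatively your route can be repaired by observing that $S$ normalizes $S^{(2)}$, hence normalizes the order $\mathcal{O}S^{(2)}$ and its unit group $\Gamma(B,\mathcal{O}_B)$, and that the normalizer in $\PSL(2,\C)$ of a nonelementary discrete group is discrete; your invariant-circle remark then handles the Fuchsian placement (note it only puts $\langle\Gamma(B,\mathcal{O}_B),S\rangle$ in the stabilizer of the circle, i.e.\ in $\mathrm{PGL}(2,\R)$ rather than $\PSL(2,\R)$). As written, however, the discreteness step fails, and it is precisely the step the paper's extra idea is designed to supply.
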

\begin{proof}
In order to prove the first implication we assume that the mapping $\phi_i:\Tr(S^{(2)}) \rightarrow \phi_i(\Tr(S^{(2)}))$ is neither the identity nor the complex conjugation for at least one $\phi_i$, $i\in\{1,\ldots,q+r\}$. In this case $S$ is not contained in an arithmetic Kleinian or Fuchsian group because there is the embedding $\phi_i$ of $\Q(\Tr(S^{(2)}))$ into $\C$ which is neither the identity nor the complex conjugation such that $\phi_i(\Tr(S^{(2)}))$ is not bounded in $\C$ and this contradicts the second condition in Theorem~\ref{T:TakeuchiArithmQuat} and Theorem~\ref{T:MacReid}.

We prove the negation of the second implication. We assume that for all $i=1,\ldots,q+r$ we have $\phi_i\left|_{\Tr(S^{(2)})}\right.=id$, i.e. $\tr(g)=\pm\phi_i(\tr(g))$, or $\phi_i\left|_{\Tr(S^{(2)})}\right.$ is the complex conjugation, i.e. $\overline{\tr(g)}=\pm\phi_i(\tr(g))$ for all $g\in \Gamma$. 

We consider the following set of matrices
$$ AS^{(2)} = \{\sum a_i g_i \mid  a_i \in \Q(\Tr(S^{(2)})), g_i \in S^{(2)}\}$$
where only finitely many of the $a_i$ are nonzero. 
By Theorem~\ref{T:AlgebraInvariant}, $AS^{(2)}$ is a quaternion algebra over $\Q(\Tr(S^{(2)}))$ because $S$ is finitely generated. By construction it is contained in $A$. So it is a quaternion algebra over the algebraic number field $\Q(\Tr(S^{(2)}))$ which is unramified at $id$ and ramified at all other infinite places. By Lemma~\ref{L:IntegerTraces} all traces in $S$ are algebraic integers and hence by Theorem \ref{T:Order}, an order of $AS^{(2)}$ is
$$ \mathcal{O}S^{(2)} = \{\sum a_i g_i \mid  a_i \in \mathcal{O}_{\Q(\Tr(S^{(2)}))}, g_i \in S^{(2)}\}$$
where only finitely many of the $a_i$ are nonzero. The group $\mathcal{O}{S^{(2)}}^1:=\{\alpha \in \mathcal{O}S \mid \Nrd(\alpha) = 1\}$ is an arithmetic Kleinian or Fuchsian group depending on whether its trace field is a subset of $\R$ or not. The group $S^{(2)}$ is contained in $\mathcal{O}{S^{(2)}}^1$.

It remains to construct an arithmetic Fuchsian group containing $S$. Since $S$ is finitely generated, we can assume that it is generated by its elements $h_1,\ldots,h_m$. Note that $h_i^2\in \mathcal{O}{S^{(2)}}^1$ because $h_i^2\in S^{(2)}$ for all $i=1,\ldots,m$. We consider the group $\widetilde{S}$ generated by $\mathcal{O}{S^{(2)}}^1$ and $h_1,\ldots,h_m$. It is finitely generated and nonelementary. Hence $\widetilde{S}^{(2)}$ is a finite index subgroup of $\widetilde{S}$. On the other hand we have the group inclusions $\widetilde{S}^{(2)} \leq \mathcal{O}{S^{(2)}}^1 \leq \widetilde{S}$. Therefore $\mathcal{O}{S^{(2)}}^1$ is a finite index subgroup of $\widetilde{S}$ and $\widetilde{S}$ is an arithmetic Kleinian or Fuchsian subgroup of $\Gamma(A,\mathcal{O})$.
\end{proof}
\begin{rem}
We also proved the statement that $S$ is contained in an arithmetic Fuchsian or Kleinian group if and only if $S$ is contained in an arithmetic Fuchsian or Kleinian subgroup of $\Gamma(A,\mathcal{O})$. 
\end{rem}

The following theorem follows directly from Lemma~\ref{L:MorePoints}, Lemma~\ref{L:1:ldots:1} and Lemma~\ref{L:NotIDNotArithm}.

\begin{theorem}
\label{T:ArithmCharactAlgAlg}
Let $\Gamma(A,\mathcal{O})$ be a subgroup of $\PSL(2,\C)$ or $\PSL(2,\R)$ derived from a quaternion algebra such that $\Gamma(A,\mathcal{O})^*\subset\PSL(2,\C)^q\times \PSL(2,\R)^r$ with $r+q\geq 2$ and let $S$ be a finitely generated subgroup of $\Gamma(A,\mathcal{O})$ such that $S^*$ is nonelementary. Then $\mathcal{L}_{S^*}^{reg}$ is not empty and $P_{S^*}$ consists of exactly one point if and only if $S$ is contained in an arithmetic Fuchsian or Kleinian group. 
\end{theorem}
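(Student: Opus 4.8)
The plan is to treat the trace condition shared by the three preceding lemmas as the logical pivot. Write $(\ast)$ for the statement that, for at least one $i\in\{1,\ldots,q+r\}$, the map $\phi_i\colon\Tr(S^{(2)})\to\phi_i(\Tr(S^{(2)}))$ is neither the identity nor complex conjugation. Before anything else I would record that $\mathcal{L}_{S^*}^{reg}$ is nonempty: since $S^*$ is nonelementary by hypothesis this is exactly Lemma~\ref{L:NonemptyAlg}, and it is what makes $P_{S^*}$ a well-defined nonempty subset of $\RP^{q+r-1}_+$, so that counting its points is meaningful.

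I would then prove the equivalence by routing both directions through $(\ast)$. If $S$ is contained in an arithmetic Fuchsian or Kleinian group, Lemma~\ref{L:NotIDNotArithm} gives that $(\ast)$ fails, so for every $i$ the restriction $\phi_i|_{\Tr(S^{(2)})}$ is the identity or complex conjugation; Lemma~\ref{L:1:ldots:1} then forces $P_{S^*}=\{(1:\ldots:1)\}$, a single point. Conversely, if $S$ is not contained in any arithmetic Fuchsian or Kleinian group, Lemma~\ref{L:NotIDNotArithm} gives that $(\ast)$ holds, and Lemma~\ref{L:MorePoints} yields that $P_{S^*}$ has more than one point. The contrapositive of this second implication supplies the remaining direction: a one-point $P_{S^*}$ forces $S$ into an arithmetic Fuchsian or Kleinian group. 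The two implications together are the asserted ``if and only if''.

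At this level the statement is a pure logical assembly, so I do not expect any genuine obstacle: all the analytic and number-theoretic content has already been isolated into the three lemmas. It is worth flagging where the weight actually sits, however. The ``more than one point'' direction (Lemma~\ref{L:MorePoints}) is the hard one, resting on the Zariski-density criterion of Dal'Bo and Kim and on Benoist's nonempty-interior theorem, while the translation between $(\ast)$ and arithmeticity (Lemma~\ref{L:NotIDNotArithm}) is where finite generation of $S$ and the quaternion-algebra characterizations of Takeuchi and of Maclachlan--Reid enter. The only thing to verify by hand in the assembly is that $(\ast)$ and its negation are exhaustive and mutually exclusive, which is immediate from the definition.
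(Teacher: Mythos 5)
Your proposal is correct and is essentially identical to the paper's own argument: the paper states that Theorem~\ref{T:ArithmCharactAlgAlg} ``follows directly from Lemma~\ref{L:MorePoints}, Lemma~\ref{L:1:ldots:1} and Lemma~\ref{L:NotIDNotArithm}'', which is exactly your assembly through the trace condition $(\ast)$, with nonemptiness of $\mathcal{L}_{S^*}^{reg}$ supplied by Lemma~\ref{L:NonemptyAlg}. Your identification of where the real weight lies (Dal'Bo--Kim and Benoist in Lemma~\ref{L:MorePoints}, Takeuchi and Maclachlan--Reid in Lemma~\ref{L:NotIDNotArithm}) matches the paper's stated ingredients.
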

\begin{rem}
From Theorem~\ref{T:ArithmCharactAlgAlg} follows in particular that if $S$ is not a Fuchsian group, i.e. $S$ is not discrete, then  $P_{S^*}$ contains more than one point.
\end{rem}

\begin{theorem}
\label{T:MainCharactAlg}
Let $\Delta$ be an irreducible arithmetic subgroup of $\PSL(2,\C)^q\times \PSL(2,\R)^r$ with $q+r\geq2$ and $\Gamma$ a finitely generated nonelementary subgroup of $\Delta$. Then $\mathcal{L}_{\Gamma}^{reg}$ is not empty and $P_{\Gamma}$ consists of exactly one point if and only if $p_j(\Gamma)$ is contained in an arithmetic Fuchsian or Kleinian group for some $j\in\{1,\ldots,q+r\}$.
\end{theorem}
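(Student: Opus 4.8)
The plan is to reduce the statement for an arbitrary irreducible arithmetic $\Delta$ to the situation of Theorem~\ref{T:ArithmCharactAlgAlg}, where the ambient group is literally derived from a quaternion algebra and the first factor is distinguished. Since $\Delta$ is commensurable with some $\Gamma(A,\mathcal{O})^*$, the subgroup $\Gamma' := \Gamma \cap \Gamma(A,\mathcal{O})^*$ has finite index in $\Gamma$. First I would check that $\Gamma'$ is again finitely generated and nonelementary: each projection $p_i(\Gamma')$ has finite index in $p_i(\Gamma)$, so nonelementarity of all projections and the loxodromic/elliptic-of-infinite-order condition on the mixed elements are inherited. Passing to a finite-index subgroup changes neither the limit set nor its regular part, so $\mathcal{L}_\Gamma^{reg} = \mathcal{L}_{\Gamma'}^{reg}$ (nonempty by Lemma~\ref{L:NonemptyAlg}) and $P_\Gamma = P_{\Gamma'}$. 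Writing $\Gamma' = S^*$ with $S := p_1(\Gamma') \subseteq \Gamma(A,\mathcal{O})$ (using $\phi_1 = \mathrm{id}$), Theorem~\ref{T:ArithmCharactAlgAlg} then yields that $P_\Gamma = P_{S^*}$ consists of exactly one point if and only if $S$ is contained in an arithmetic Fuchsian or Kleinian group.

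It remains to bridge between the condition on $S = p_1(\Gamma')$ and the condition on $p_j(\Gamma)$ stated in the theorem. For the direction assuming $p_j(\Gamma)$ is contained in an arithmetic Fuchsian or Kleinian group for some $j$: since $\Gamma$ is nonelementary, $p_j(\Gamma)$ and hence its finite-index subgroup $p_j(\Gamma') = \phi_j(S)$ are nonelementary, and $\phi_j(S)$ is contained in an arithmetic group as well. I would apply Lemma~\ref{L:NotIDNotArithm} to $\phi_j(S)$ with the reindexed system of embeddings $\phi_i\circ\phi_j^{-1}$ (legitimate because $\phi_j(S)^* = \Gamma'$ up to a reordering of the factors, hence still nonelementary), concluding that $\phi_i\circ\phi_j^{-1}$ is the identity or the complex conjugation on $\Tr(\phi_j(S)^{(2)}) = \phi_j(\Tr(S^{(2)}))$ for every $i$. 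Feeding $i = 1$ back in gives $\phi_j(t) = \pm t$ or $\pm\overline{t}$ for all $t \in \Tr(S^{(2)})$, and substituting this into the relations for general $i$ gives $\phi_i(t) = \pm t$ or $\pm\overline{t}$; thus, by Lemma~\ref{L:NotIDNotArithm} now with the original reference embedding $\phi_1$, the group $S$ is contained in an arithmetic Fuchsian or Kleinian group, and $P_\Gamma$ is a single point.

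For the converse I would take $j = 1$. Here $S = p_1(\Gamma')$ is of finite index in $p_1(\Gamma)$, so the two are commensurable, and being contained in an arithmetic Fuchsian or Kleinian group is a commensurability invariant: by Theorem~\ref{T:AlgebraInvariant} the invariant quaternion algebra and its (number) trace field are commensurability invariants, while Lemma~\ref{L:IntegerTraces} guarantees that all traces of $p_1(\Gamma) \subseteq \Delta$ are algebraic integers. Hence $p_1(\Gamma)$ satisfies the conditions of Takeuchi's Theorem~\ref{T:TakeuchiArithmQuat} (resp. Theorem~\ref{T:MacReid}) and is contained in an arithmetic group, establishing the statement for $j=1$. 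Finally, Lemma~\ref{L:ArithmAllAll} upgrades ``some $j$'' to ``one and hence all $j$''.

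The main obstacle I anticipate is precisely this bookkeeping between factors and across commensurability: Theorem~\ref{T:ArithmCharactAlgAlg} is phrased for a subgroup $S$ of a group genuinely derived from a quaternion algebra with the identity as distinguished embedding, whereas the theorem concerns an arbitrary subgroup of a merely commensurable $\Delta$ and an arbitrary projection $p_j$. Making the reindexing $\phi_i \mapsto \phi_i\circ\phi_j^{-1}$ rigorous, and checking invariance of ``contained in an arithmetic group'' under the two finite-index passages (from $\Gamma$ to $\Gamma'$ and from $S$ to $p_1(\Gamma)$), are the delicate points; everything else is a direct appeal to the already-established lemmas.
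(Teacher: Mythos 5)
Your overall strategy is the same as the paper's: the paper likewise passes to the finite-index subgroup $S^* = \Gamma \cap \Delta \cap \Gamma(A,\mathcal{O})^*$, checks that $S^*$ is finitely generated and nonelementary, applies Theorem~\ref{T:ArithmCharactAlgAlg} to $S$, and then bridges back to $\Gamma$. Your treatment of the index $j$ differs only in presentation: the paper reduces to $j=1$ at the outset by citing Lemma~\ref{L:ArithmAll}, whereas you re-derive that reduction by running Lemma~\ref{L:NotIDNotArithm} in the reindexed system of embeddings $\phi_i\circ\phi_j^{-1}$; this is legitimate (it is essentially the Galois-twisting argument inside the proof of Lemma~\ref{L:ArithmAll}) and costs only the bookkeeping you acknowledge. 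Your identification $P_\Gamma = P_{\Gamma'}$ is also correct, though the clean justification is not ``finite index preserves the limit set'' invoked as a general principle, but the paper's argument: by Link's Theorem~\ref{T:LinkFP}, $P_\Gamma$ and $P_{\Gamma'}$ are the closures of the sets of translation directions of their loxodromic elements, and $L(g)=L(g^k)$ with $g^k\in\Gamma'$, so these two sets of directions coincide.

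The one step that fails as written is the conclusion of your converse direction: ``Hence $p_1(\Gamma)$ satisfies the conditions of Takeuchi's Theorem~\ref{T:TakeuchiArithmQuat} (resp.\ Theorem~\ref{T:MacReid}) and is contained in an arithmetic group.'' Both of those theorems take as hypothesis that the group is a \emph{cofinite} Fuchsian (resp.\ Kleinian) group, and their conclusion is that such a group \emph{is} arithmetic; they say nothing about a group that is merely finitely generated and nonelementary --- and $p_1(\Gamma)$ need not even be discrete here. The correct repair, which is what the paper does, uses exactly the two ingredients you name but routed through the order construction: since $S$ lies in an arithmetic Fuchsian or Kleinian group, the commensurability invariant $B := AS^{(2)} = Ap_1(\Gamma)^{(2)}$ (Theorem~\ref{T:AlgebraInvariant}) is ramified at all infinite places but one; Lemma~\ref{L:IntegerTraces} together with Theorem~\ref{T:Order} makes $\mathcal{O}p_1(\Gamma)^{(2)}$ an order in $B$, so $p_1(\Gamma)^{(2)}$ lies in the arithmetic group of units of reduced norm $1$ of that order; finally, the finite-generation trick at the end of the proof of Lemma~\ref{L:NotIDNotArithm} (adjoin the finitely many generators of $p_1(\Gamma)$ to this unit group and compare with the subgroup generated by squares) promotes this to containment of $p_1(\Gamma)$ itself in an arithmetic Fuchsian or Kleinian group. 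With that substitution your proof is complete and coincides in substance with the paper's.
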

\begin{proof}
By Lemma~\ref{L:ArithmAll}, the group $p_j(\Gamma)$ is contained in an arithmetic Fuchsian or Kleinian group if and only if the group $p_1(\Gamma)$ is contained in an arithmetic Fuchsian or Kleinian group. Thus we prove the statement with $p_1(\Gamma)$ instead of $p_j(\Gamma)$.

We recall that $\mathcal{L}_{\Gamma}^{reg}$ is not empty by Lemma~\ref{L:Nonempty}.

Since $\Delta$ is arithmetic, it is commensurable with an arithmetic group derived from a quaternion algebra $\Gamma(A,\mathcal{O})^*$. Hence there is $k\in\N$ such that, for each $g = (g_1,\ldots,g_{q+r})$ in $\Gamma$, $g^k$ is in $\Gamma(A,\mathcal{O})^*$.

There is a subgroup $S$ of $\Gamma(A,\mathcal{O})$ such that $S^* = \Gamma \cap \Delta \cap \Gamma(A,\mathcal{O})^*$. The group $\Gamma$ is commensurable with the subgroup $S^*$. Then $p_1(\Gamma)$ and $S$ are also commensurable. The group $S^*$ is finitely generated because it is a finite index subgroup of the finitely generated group $\Gamma$. (This follows from the Schreier Index Formula, see for example the book \cite{jS93}, 2.2.5.) The group $S^*$ is also nonelementary because $\Gamma$ is nonelementary: Let $g$ and $h$ be two loxodromic isometries that generate a Schottky group in $\Gamma$. The isometries $g^{k}$ and $h^{k}$ are in $S^*$. Then $g^{k}$ and $h^{k}$ generate a Schottky subgroup of $S^*$.

Thus $S^*$ is a  finitely generated nonelementary subgroup of $\Gamma(A,\mathcal{O})$. By Theorem~\ref{T:ArithmCharactAlgAlg}, the group $S$ is contained in an arithmetic Fuchsian or Kleinian group if and only if its projective limit set $P_{S^*}$ contains exactly one point. 

The final step is to go back to $\Gamma$.

If $P_{\Gamma}$ contains at least two points, then it contains two points that are the translation directions of two loxodromic isometries $g$ and $h$ of $\Gamma$. The isometries $g^{k}$ and $h^{k}$ that are in $S^*$ have the same translation directions as $g$ and $h$. Hence $L(g^{k})$ and $L(h^{k})$ are different points in $P_{S^*}$ and therefore $S$ is not a subgroup of an arithmetic Fuchsian or Kleinian group. Thus $p_1(\Gamma)$ is not a subgroup of an arithmetic Fuchsian or Kleinian group too.

If $P_{\Gamma}$ contains exactly one point, then $P_{S^*}$ contains also exactly one point and $S$ is contained in an arithmetic Fuchsian or Kleinian group. Hence the quaternion algebra $B:=AS^{(2)}=Ap_1(\Gamma)^{(2)}$, which is an invariant of the commensurability class, is unramified only at one place. Since by Lemma~\ref{L:IntegerTraces} the trace set $\Tr(p_1(\Gamma)^{(2)})$ consists of algebraic integers, $\mathcal{O}p_1(\Gamma)^{(2)}$ is an order in $B$ and thus $p_1(\Gamma)^{(2)}$ is a subgroup of an arithmetic Fuchsian or Kleinian group. The proof that $p_1(\Gamma)$ is a subgroup of an arithmetic Fuchsian or Kleinian group is the same as the one for $S$ in the end of Lemma~\ref{L:NotIDNotArithm}.
\end{proof}

\subsubsection{Subgroups of $\PSL(2,\R)^r$}
In the case when $q=0$ we can specify the statement of Theorem~\ref{T:MainCharactAlg}. First, we have Lemma~\ref{L:Nonempty}, so requiring that $\Gamma$ is nonelementary is equivalent to requiring that one of its projections is nonelementary. And second, $\PSL(2,\R)$ does not have arithmetic Kleinian subgroups. Hence we have proved the following corollary.

\begin{corollary}
\label{C:MainCharactAlgR}
Let $\Delta$ be an irreducible arithmetic subgroup of $\PSL(2,R)^r$ with $r\geq2$ and $\Gamma$ a finitely generated nonelementary subgroup of $\Delta$. Then $\mathcal{L}_{\Gamma}^{reg}$ is not empty and $P_{\Gamma}$ consists of exactly one point if and only if $p_j(\Gamma)$ is contained in an arithmetic Fuchsian group for some $j\in\{1,\ldots,r\}$. 
\end{corollary}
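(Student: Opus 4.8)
The plan is to obtain this corollary as the specialization of Theorem~\ref{T:MainCharactAlg} to the case $q=0$, so that essentially no new argument is required; the one point needing attention is that the Kleinian alternative appearing in Theorem~\ref{T:MainCharactAlg} cannot occur once the ambient group is $\PSL(2,\R)^r$. Since $\Delta$ is an irreducible arithmetic subgroup of $\PSL(2,\R)^r$ with $r\geq 2$ and $\Gamma$ is a finitely generated nonelementary subgroup of $\Delta$, the hypotheses of Theorem~\ref{T:MainCharactAlg} (with $q=0$ and $q+r=r\geq 2$) hold verbatim. That theorem then gives at once that $\mathcal{L}_\Gamma^{reg}$ is not empty and that $P_\Gamma$ consists of exactly one point if and only if $p_j(\Gamma)$ is contained in an arithmetic Fuchsian \emph{or} Kleinian group for some $j\in\{1,\ldots,r\}$. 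Thus both implications are already in hand, and it remains only to sharpen ``Fuchsian or Kleinian'' to ``Fuchsian''.

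For this I would trace the quaternion-algebra construction used in the proof of Theorem~\ref{T:MainCharactAlg}. Because $q=0$, the group $\Gamma(A,\mathcal{O})$ attached to the ambient arithmetic group satisfies $\Gamma(A,\mathcal{O})\subseteq \PSL(2,\R)$, and the arithmetic group containing $p_j(\Gamma)$ that is produced at the end of the proof of Lemma~\ref{L:NotIDNotArithm} is itself a subgroup of $\Gamma(A,\mathcal{O})$, hence of $\PSL(2,\R)$. Consequently all its traces are real, so its trace field $\Q(\Tr(p_j(\Gamma)^{(2)}))$ is a subfield of $\R$; and since $A$ is defined over a totally real field in the case $q=0$, this trace field is in fact totally real. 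By the dichotomy recorded in Lemma~\ref{L:NotIDNotArithm}—that the constructed group is Fuchsian or Kleinian according to whether its trace field is, or is not, contained in $\R$—the containing group is an arithmetic Fuchsian group. Equivalently, no arithmetic Kleinian group can be a subgroup of $\PSL(2,\R)$, because such a group acts with finite covolume on $\bbH^3$ and has a complex place at its distinguished embedding (hence non-real traces), whereas every subgroup of $\PSL(2,\R)$ stabilizes a totally geodesic copy of $\bbH^2$. Either way the Kleinian case is vacuous, and ``Fuchsian or Kleinian'' collapses to ``Fuchsian'', which is exactly the conclusion sought.

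Finally, I would record, following Lemma~\ref{L:Nonelementary}, that in the case $q=0$ the hypothesis that $\Gamma$ is nonelementary is equivalent to requiring only that $p_j(\Gamma)$ be nonelementary for a single index $j$, since every mixed isometry of a subgroup of $\PSL(2,\R)^r$ has only elliptic and hyperbolic components; this is not needed for the statement as phrased but explains why, in the $q=0$ setting, the nonelementarity assumption may be imposed on just one projection. The only genuinely substantive step is the verification in the middle paragraph that the trace field of the containing group is real, which is what forces the Fuchsian alternative; everything else is a direct reading of Theorem~\ref{T:MainCharactAlg}. I therefore expect no real obstacle beyond correctly tracking the trace field through the quaternion-algebra construction.
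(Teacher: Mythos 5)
Your proposal is correct and takes essentially the same route as the paper: the paper likewise obtains the corollary by specializing Theorem~\ref{T:MainCharactAlg} to $q=0$, remarking only that requiring $\Gamma$ nonelementary is equivalent (via Lemma~\ref{L:Nonempty} and Lemma~\ref{L:Nonelementary}) to requiring one projection nonelementary, and that $\PSL(2,\R)$ has no arithmetic Kleinian subgroups, so ``Fuchsian or Kleinian'' collapses to ``Fuchsian''. One caution: of your two arguments for the collapse, only the first (tracing the trace field through Lemma~\ref{L:NotIDNotArithm}, so that the containing group constructed there lies in $\Gamma(A,\mathcal{O})\subseteq\PSL(2,\R)$) is fully conclusive, since the bare fact that no arithmetic Kleinian group is a subgroup of $\PSL(2,\R)$ does not by itself rule out that $p_j(\Gamma)\subset\PSL(2,\R)$ might be contained in an arithmetic Kleinian group without being contained in an arithmetic Fuchsian one (e.g. $\PSL(2,\Z)\subset\PSL(2,\Z[i])$ shows such containments do occur).
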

\begin{rem}
This corollary is in particular true when $p_j(\Gamma)$ is a cofinite Fuchsian group.
\end{rem}

\subsubsection{Subgroups of $\PSL(2,\C)^q\times\PSL(2,\R)^r$}
In the case when both $q$ and $r$ are at least $1$, we can state Theorem~\ref{T:MainCharactAlg} more precisely because by the remark after Lemma~\ref{L:ArithmAll}, $p_j(\Gamma)$ can not be a cofinite arithmetic Kleinian group for any group $\Gamma$.
\begin{corollary}
\label{C:MainCharactAlgAlg}
Let $\Delta$ be an irreducible arithmetic subgroup of $\PSL(2,\C)^q\times\PSL(2,\R)^r$ with $q,r\geq1$ and $\Gamma$ a finitely generated nonelementary subgroup subgroup of $\Delta$. Then $\mathcal{L}_{\Gamma}^{reg}$ is not empty and $P_{\Gamma}$ consists of exactly one point if and only if $p_j(\Gamma)$ is contained in an arithmetic Fuchsian group for some $j\in\{1,\ldots,q+r\}$. 
\end{corollary}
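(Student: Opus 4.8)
The plan is to read this corollary off Theorem~\ref{T:MainCharactAlg}: the extra hypothesis $r\geq 1$ forces the ``Kleinian'' alternative in that theorem to be vacuous, so ``arithmetic Fuchsian or Kleinian'' collapses to ``arithmetic Fuchsian.'' Thus no new machinery is needed, only the observation that a real factor is present and that a real factor carries a real trace field.

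The ``if'' direction needs no work: a subgroup of an arithmetic Fuchsian group is in particular a subgroup of an arithmetic Fuchsian or Kleinian group, so Theorem~\ref{T:MainCharactAlg} already yields that $\mathcal{L}_\Gamma^{reg}\neq\emptyset$ and that $P_\Gamma$ is a single point. For the converse, suppose $P_\Gamma$ consists of one point. By Lemma~\ref{L:ArithmAll} the property ``$p_i(\Gamma)$ is contained in an arithmetic Fuchsian or Kleinian group'' is independent of the index $i$ over those $i$ for which $p_i(\Gamma)$ is nonelementary (this is exactly how the equivalence is used at the start of the proof of Theorem~\ref{T:MainCharactAlg}). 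So it suffices to verify the containment at one convenient index and to pin down its type. I would take the real index $j_0:=q+r$, which is available precisely because $r\geq 1$.

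The decisive step is to exclude the Kleinian case for $p_{j_0}(\Gamma)$. Since $\Gamma$ is nonelementary in our sense, every projection is nonelementary, and in particular $p_{j_0}(\Gamma)$ is nonelementary; moreover $p_{j_0}(\Gamma)\subset\PSL(2,\R)$, so all its traces are real and $\Q(\Tr(p_{j_0}(\Gamma)^{(2)}))$ is a subfield of $\R$, with the identity embedding a real place. By the Maclachlan--Reid characterization in Theorem~\ref{T:MacReid}, an arithmetic Kleinian group has a trace field with exactly one complex place, which a real field does not possess; hence a nonelementary group with real trace field cannot be contained in an arithmetic Kleinian group. Combining this with Theorem~\ref{T:MainCharactAlg} applied at $j_0$ shows that $p_{j_0}(\Gamma)$ is contained in an arithmetic \emph{Fuchsian} group, which is the desired conclusion with $j=j_0$. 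This is precisely the content of the remark following Lemma~\ref{L:ArithmAll}.

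The only place that requires genuine care, and the sole point at which $r\geq 1$ is essential, is securing a nondegenerate factor with a forcibly real trace field. A real factor $\PSL(2,\R)$ supplies this automatically, whereas a complex factor may have non-real traces and could legitimately be Kleinian (cf.\ Proposition~\ref{P:NonelemElliptic}, where a complex projection can even collapse to elliptics with a common fixed point). Once such a real factor is in hand, the reality of its trace field separates the two cases through the Takeuchi and Maclachlan--Reid criteria (Theorems~\ref{T:TakeuchiArithmQuat} and~\ref{T:MacReid}), and everything else reduces to citing Theorem~\ref{T:MainCharactAlg}.
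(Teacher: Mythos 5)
Your overall skeleton --- reduce everything to Theorem~\ref{T:MainCharactAlg}, then use $r\geq1$ to pick the real index $j_0=q+r$ and eliminate the Kleinian alternative there --- is the same as the paper's. But the step you yourself call decisive contains a genuine error: it is \emph{not} true that a nonelementary group with real trace field cannot be contained in an arithmetic Kleinian group. For instance, $\PSL(2,\Z)$ is nonelementary, has trace field $\Q\subset\R$, and is contained in the Bianchi group $\PSL(2,\Z[i])$, which is a cofinite arithmetic Kleinian group. Theorem~\ref{T:MacReid} constrains the trace field of the cofinite Kleinian group \emph{itself}; it says nothing about the trace fields of its subgroups, and the trace field of a subgroup is merely a subfield of that of the ambient group --- a field with exactly one complex place can perfectly well contain a totally real subfield, as $\Q\subset\Q(i)$ shows. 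Consequently the two alternatives in Theorem~\ref{T:MainCharactAlg} (``contained in an arithmetic Fuchsian group'' / ``contained in an arithmetic Kleinian group'') are not mutually exclusive, and your disjunctive syllogism (``the Kleinian case is impossible, hence the Fuchsian case holds'') does not go through.

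What actually closes the gap is a positive argument, implicit in the proofs of Lemma~\ref{L:NotIDNotArithm} and Theorem~\ref{T:MainCharactAlg}: there the arithmetic group containing $S:=p_{j_0}(\Gamma)$ is not some abstractly given ambient group but is \emph{constructed} from $S$ itself, namely as a finite extension of the unit group $\mathcal{O}{S^{(2)}}^1$ of an order in the quaternion algebra $AS^{(2)}$ over $\Q(\Tr(S^{(2)}))$, and this unit group is an arithmetic Fuchsian or Kleinian group according to whether $\Q(\Tr(S^{(2)}))$ is a subfield of $\R$ or not. Since $r\geq 1$ lets you take $j_0=q+r$ with $p_{j_0}(\Gamma)\subset\PSL(2,\R)$, that trace field is real, so the constructed containing group is an arithmetic \emph{Fuchsian} group --- irrespective of whether $p_{j_0}(\Gamma)$ also happens to embed in some arithmetic Kleinian group. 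This is the actual content of the remark after Lemma~\ref{L:ArithmAll} that the paper invokes: the type of the arithmetic group produced by the construction is the same across all nonelementary projections and is forced to be Fuchsian by the presence of a real factor. With this substitution your argument becomes correct; your ``if'' direction and the reduction via Theorem~\ref{T:MainCharactAlg} need no change.
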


\begin{rem}
It is not possible to prove an analogous statement to Lemma~\ref{L:Nonempty} for subgroups of $\PSL(2,\C)^q\times\PSL(2,\R)^r$ because if $S$ is nonelementary subgroup of a $\Gamma(A,\mathcal{O})$, then $S^*$ is not necessarily nonelementary.

An example is the quaternion algebra $\left(\frac{\sqrt 2,-1}{\Q(\sqrt 2)}\right)$ with its embedding $\phi_1$ in $M(2,\Q(\sqrt[4] 2))$ given by the linear map sending the elements of the basis of $A$ to the following matrices:
$$ 1 \mapsto \begin{bmatrix} 1 & 0 \\ 0 & 1\end{bmatrix},\quad i \mapsto \begin{bmatrix} \sqrt[4] 2 & 0 \\ 0 & -\sqrt[4] 2\end{bmatrix}, \quad j \mapsto \begin{bmatrix} 0 & 1 \\ -1 & 0\end{bmatrix},\quad k \mapsto \begin{bmatrix} 0 & \sqrt[4] 2 \\ \sqrt[4] 2 & 0\end{bmatrix}.$$

The finitely generated $\mathcal{O}_{\Q(\sqrt 2)}$-module $\mathcal{O}=\{x=x_0+x_1i+x_2j+x_3k\mid x_0,x_1,x_2,x_3\in\mathcal{O}_{\Q(\sqrt 2)}\}$ is a ring containing 1 and hence an order because $i^2$ and $j^2$ are algebraic integers. The group $S=\phi_1(\mathcal{O}^1)$ is an arithmetic Fuchsian group because its $\phi$-conjugate is a subgroup of $\left(\frac{-\sqrt 2,-1}{\Q(\sqrt 2)}\right)$, which is isomorphic to the Hamilton quaternion algebra $\Ha$.

The group $S$ is a subgroup of the arithmetic group acting on $(\bbH^2)^2\times\bbH^3$
 $$ \Delta =\{ A \in M(2,\Q(\sqrt[4] 2))\mid \det A = 1\}.$$
The group $S^*$ is not nonelementary because $\phi_3(S)$, which is a subgroup of $\PSL(2,\C)$, consists only of elliptic isometries.
 
We can construct some other examples by instead of taking $ M(2,\Q(\sqrt[4] 2))$ we take $M(2,K)$ where $K$ is a finite extension of $\Q(\sqrt[4] 2)$.
\end{rem}

\subsubsection{Subgroups of $\PSL(2,\C)^q$}
In the case $r=0$, Theorem~\ref{T:MainCharactAlg} is stated in the most general way. This case is of independent interest because this is the only case when $p_j(\Gamma)$ can be a cofinite arithmetic Kleinian group.

\subsection{Small limit cones}
The restriction in Theorem~\ref{T:MainCharactAlg} that $\Gamma$ should be nonelementary is needed so that $\Lim_\Gamma^{reg}$ is nonempty and hence $P_\Gamma$ is well defined. This can be avoided by using the limit cone of $\Gamma$ as defined in Section~\ref{S:LimitConeBenoist}. This is proved in Theorem~\ref{T:CharactLimitCone}. In order to do so, we first need two lemmas.

\begin{lemma}
\label{L:AtLeastTwoNonelem}
Let $\Gamma$ be a finitely generated subgroup of an irreducible arithmetic group $\Delta$ in $\PSL(2,\C)^q\times\PSL(2,\R)^r$ with $q+r\geq 2$. If $p_j(\Gamma)$ for some $j\in\{1,\ldots,q+r\}$ is nonelementary and not a subgroup of an arithmetic Fuchsian or Kleinian group, then there is $i\in\{1,\ldots,q+r\}$, $i\neq j$, such that $p_i(\Gamma)$ is nonelementary.
\end{lemma}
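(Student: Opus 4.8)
The plan is to argue by contradiction: assume that $p_i(\Gamma)$ is elementary for every $i\neq j$, and deduce that $p_j(\Gamma)$ is after all contained in an arithmetic Fuchsian or Kleinian group, contradicting the hypothesis. This is essentially the contrapositive of the mechanism behind Theorem~\ref{T:MainCharactAlg}, but now applied in a situation where the product group need not be nonelementary.

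First I would reduce to the derived-from-a-quaternion-algebra setting exactly as in the proof of Theorem~\ref{T:MainCharactAlg}. Since $\Delta$ is commensurable with some $\Gamma(A,\mathcal{O})^*$, the group $S^*:=\Gamma\cap\Gamma(A,\mathcal{O})^*$ has finite index in $\Gamma$, is finitely generated and nonelementary, and $p_i(\Gamma)$ is commensurable with $\phi_i(S)$ for each $i$; in particular $p_i(\Gamma)$ is elementary if and only if $\phi_i(S)$ is. Choosing the presentation so that $\phi_j$ is the distinguished embedding, I may assume $\phi_j=\mathrm{id}$ and $\phi_j(S)=S\subset\Gamma(A,\mathcal{O})$. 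The goal then becomes: show that $S$ is contained in an arithmetic Fuchsian or Kleinian group.

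Second, I would pin down the types of the remaining projections. Every mixed isometry of a subgroup of $\Gamma(A,\mathcal{O})^*$ has only loxodromic and elliptic-of-infinite-order components (the type lemma of \S4.2), so Proposition~\ref{P:NonelemElliptic} applies: since $\phi_j(S)$ is nonelementary, every real projection is nonelementary. Under the contradiction hypothesis this forces $j$ to be the only real index, and every complex projection $\phi_i(S)$ with $i\neq j$ is elementary, hence (again by Proposition~\ref{P:NonelemElliptic}) consists only of elliptic isometries with a common fixed point in $\bbH^3$. Such a group lies in a conjugate of $PSU(2)$, so all of its traces lie in $(-2,2)\subset\R$ and are bounded. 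For the ramified places $i>q+r$ the image $\phi_i(S)$ lies in the compact group $\Ha^1$, so its traces are likewise bounded. Consequently $\varphi(\Tr(S^{(2)}))$ is bounded for every archimedean place $\varphi$ of $K:=\Q(\Tr(S^{(2)}))$ other than the identity (and, when $K$ is complex, its complex conjugate). Moreover, since the elliptic and compact images have \emph{real} traces, none of these places is a complex place, so $K$ has exactly one complex place in the case that $S$ is Kleinian.

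Third, I would feed this into the arithmeticity criterion. By Lemma~\ref{L:IntegerTraces} all traces in $S$ are algebraic integers, and $S$ is finitely generated, so $K$ is a number field with $\Tr(S^{(2)})\subset\mathcal{O}_K$. Together with the boundedness just established, conditions (i)--(ii) of Takeuchi's Theorem~\ref{T:TakeuchiArithmQuat} (when the traces are real) or of Theorem~\ref{T:MacReid} (when they are not) hold. Exactly as at the end of the proof of Lemma~\ref{L:NotIDNotArithm}, I would then form the quaternion algebra $AS^{(2)}$ over $K$, which is an invariant of the commensurability class by Theorem~\ref{T:AlgebraInvariant}, observe that the above shows it is ramified at every archimedean place except the one under $\phi_j$, use Theorem~\ref{T:Order} to realise $\mathcal{O}S^{(2)}$ as an order, and enlarge the group of reduced-norm-one units of $\mathcal{O}S^{(2)}$ (which contains $S^{(2)}$) to an arithmetic Fuchsian or Kleinian group containing $S$. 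This contradicts the assumption that $p_j(\Gamma)$ is not contained in such a group, and finishes the proof. The main obstacle I expect is the bookkeeping of archimedean places in this last step: one must be sure that ``$\phi_i(S)$ elementary'' really translates into ``$AS^{(2)}$ ramified at the place under $\phi_i$'', and in particular that the purely elliptic complex projections do not produce extra unramified complex places of $K$, which would break the Kleinian criterion of having exactly one complex place. This is exactly what the observation that elliptic isometries with a common fixed point have real traces is designed to handle, and verifying it carefully, rather than the formal invocation of Takeuchi and Maclachlan--Reid, is the delicate point.
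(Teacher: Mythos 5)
Your overall strategy is the same as the paper's: argue by contradiction, pass to the group $S$ with $S^*=\Gamma\cap\Gamma(A,\mathcal{O})^*$, show that every embedding of the trace field $\Q(\Tr(S^{(2)}))$ other than the identity and complex conjugation carries $\Tr(S^{(2)})$ into a bounded set, and conclude that $S$, hence $p_j(\Gamma)$, lies in an arithmetic Fuchsian or Kleinian group. In fact your middle step is more careful than the paper's: where the paper simply asserts that ``only $\phi_j(S)$ nonelementary'' implies the boundedness, you justify it via Proposition~\ref{P:NonelemElliptic} (an elementary complex projection consists of elliptic isometries with a common fixed point, hence is conjugate into $PSU(2)$ and has real traces in $[-2,2]$) together with compactness of $\Ha^1$ at the ramified places, and you extract the additional observation that all these embeddings are real, so the trace field has at most one complex place. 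Two minor blemishes: in the reduction you call $S^*$ nonelementary, which is false under the contradiction hypothesis (only $\phi_j(S)$ is) though you never use it; and your remark that the hypotheses of Theorem~\ref{T:TakeuchiArithmQuat} and Theorem~\ref{T:MacReid} ``hold'' does no logical work, since those theorems presuppose cofinite groups.

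The genuine gap is the step you yourself flag: you assert that the boundedness ``shows [$AS^{(2)}$] is ramified at every archimedean place except the one under $\phi_j$'', but for the \emph{real} places of the trace field this does not follow from anything you have written, and it is exactly here that the paper instead closes the argument by citing Lemma~5.1.3 and Corollary~8.3.7 of \cite{cM03}, i.e.\ the Maclachlan--Reid identification theorem for subgroups of arithmetic groups, which packages precisely this implication. Your real-traces observation correctly rules out extra complex places, but at a real place $\sigma$ of $\Q(\Tr(S^{(2)}))$ other than the identity you must still exclude $AS^{(2)}\otimes_{\sigma}\R\cong M(2,\R)$. The missing argument runs as follows: if $AS^{(2)}$ were unramified at $\sigma$, one would obtain an injective representation $\rho$ of $S^{(2)}$ into $\SL(2,\R)$ with $\tr\rho(g)=\sigma(\tr(g))$ bounded; then $\rho(S^{(2)})$ has no hyperbolic elements, so every element is elliptic, parabolic or $\pm\mathrm{Id}$; a subgroup of $\PSL(2,\R)$ all of whose elements are elliptic or the identity has a common fixed point and is therefore conjugate into the abelian group $PSO(2)$, while if $\rho(S^{(2)})$ contains a parabolic $p$, then boundedness of $\tr\bigl(\rho(p)^n\rho(g)\bigr)$ for all $n$ forces every $\rho(g)$ to fix the fixed point of $p$, so the group is conjugate to a group of triangular matrices and is solvable; in either case $\rho(S^{(2)})$ contains no nonabelian free subgroup, contradicting the fact that $S^{(2)}$ is nonelementary and so contains a Schottky group by Lemma~\ref{L:SchottkyLox}. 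Note that this dichotomy is special to $\R$ and fails over $\C$, since $PSU(2)$ contains nonabelian free groups; that is precisely why the complex places cannot be treated this way and need your real-traces remark instead. With this paragraph supplied (or with the Maclachlan--Reid citation used as in the paper), your proof is complete; the remainder of your construction, following the end of Lemma~\ref{L:NotIDNotArithm}, is sound.
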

\begin{proof}
Since $\Delta$ is arithmetic, it is commensurable with an arithmetic group derived from a quaternion algebra $\Gamma(A,\mathcal{O})^*$. 

We assume that $p_i(\Gamma)$ is elementary for all $i$ except $j$. Then for the subgroup $S$ of $\Gamma(A,\mathcal{O})$ defined as $S^* = \Gamma \cap \Delta \cap \Gamma(A,\mathcal{O})^*$ only $\phi_j(S)$ is nonelementary. This means that for all embeddings $\sigma$ of the field $F:=\Q(\Tr(\phi_j(S)^{(2)}))$ into $\C$ that are not the identity or the complex conjugation, the set $\sigma(\Tr(\phi_j(S)^{(2)}))$ is bounded. By Lemma~\ref{L:IntegerTraces}, the trace set $\Tr(\phi_j(S)^{(2)})$ consists of algebraic integers. Since the properties nonelementary and finitely generated are invariant in the commensurability class (see the proof of Theorem~\ref{T:MainCharactAlg}), the group $S^{(2)}$ is also nonelementary and hence by Lemma~5.1.3 and Corollary~8.3.7 in \cite{cM03} it is a subgroup of an arithmetic Kleinian or Fuchsian group. Then $p_j(\Gamma)$ is a subgroup of an arithmetic Fuchsian or Kleinian group, which is a contradiction.
\end{proof}

The second lemma explains how to make nonelementary a subgroup $\Gamma$ of $\Delta$ that is not nonelementary. In order to do so, we need to consider $\Gamma$ as a subgroup of the product group $\PSL(2,\C)^{q'}\times\PSL(2,\R)^{r}$ with $q'<q$.

We remark that since all mixed isometries in $\Gamma$ have only components that are loxodromic and elliptic of infinite order, we have $\Gamma = \left\{(\phi_1(g_1), \phi_2(g_1), \ldots, \phi_{q+r}(g_1)) \mid g_1 \in p_1(\Gamma)\right\}$ where $\phi_i$ is a surjective homomorphism between $p_1(\Gamma)$ and $p_i(\Gamma)$. Here $\phi_i$ coincides with the $\phi_i$ coming from $\Gamma(A,\mathcal{O})^*$ for $\Gamma \cap \Gamma(A,\mathcal{O})^*$.

If for at least one $j\in\{1,\ldots,q+r\}$, the projection $p_j(\Gamma)$ is nonelementary, define 
$$\Gamma^{ne}:= \left\{(\phi_{i_1}(g_1), \phi_{i_2}(g_1), \ldots, \phi_{i_n}(g_1)) \mid~i_1<\ldots<i_n \text{ and } p_{i_k}(\Gamma) \text{ nonelementary}\right\}.$$

\begin{lemma}
\label{L:GammaNE}
The group $\Gamma^{ne}$ is nonelementary, discrete and its limit set is identified canonically with the limit set of $\Gamma$.
\end{lemma}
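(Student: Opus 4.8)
The guiding observation is that, by Proposition~\ref{P:NonelemElliptic}, every projection $p_i(\Gamma)$ that is \emph{not} nonelementary consists of elliptic isometries with a common fixed point in $\bbH^3$, and is therefore contained in the compact stabiliser of that point; discarding such relatively compact factors should affect neither discreteness nor the limit set. Write $N=\{i_1<\dots<i_n\}$ for the indices with $p_i(\Gamma)$ nonelementary and $E$ for the complementary set, and recall from the paragraph preceding the statement that $\Gamma=\{(\phi_1(g),\dots,\phi_{q+r}(g))\mid g\in p_1(\Gamma)\}$, so that forgetting the coordinates in $E$ defines a surjection $\Gamma\to\Gamma^{ne}$. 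To see that $\Gamma^{ne}$ is nonelementary, note first that each factor projection of $\Gamma^{ne}$ is one of the nonelementary $p_{i_k}(\Gamma)$. Next, if $\gamma=(\phi_{i_1}(g),\dots,\phi_{i_n}(g))$ is a mixed element of $\Gamma^{ne}$, let $\tilde\gamma=(\phi_1(g),\dots,\phi_{q+r}(g))\in\Gamma$ be the element it comes from: were $\tilde\gamma$ of pure type, all its components and hence all components of $\gamma$ would be of that same type, contradicting that $\gamma$ is mixed, so $\tilde\gamma$ is mixed; by the standing hypothesis on $\Gamma$ its components are loxodromic or elliptic of infinite order, and thus so are those of $\gamma$. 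Hence $\Gamma^{ne}$ is nonelementary.

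For discreteness, fix for each $i\in E$ the common fixed point $x_i\in\bbH^3$ of $p_i(\Gamma)$, so that $\overline{p_i(\Gamma)}$ lies in a fixed compact subgroup $K_i$ of $\PSL(2,\C)$. Suppose $\gamma_m\to\mathrm{id}$ in $\Gamma^{ne}$ and lift to $\tilde\gamma_m=(\phi_1(g_m),\dots,\phi_{q+r}(g_m))\in\Gamma$. The coordinates of $\tilde\gamma_m$ indexed by $N$ are exactly those of $\gamma_m$ and converge to the identity, while those indexed by $E$ lie in the compact set $\prod_{i\in E}K_i$. Thus $\{\tilde\gamma_m\}$ is relatively compact and, after passing to a subsequence, converges in $\PSL(2,\C)^q\times\PSL(2,\R)^r$. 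Since $\Gamma$ is discrete, a convergent sequence in $\Gamma$ is eventually constant; as the $N$-coordinates of that constant value are the limits of the $N$-coordinates of $\tilde\gamma_m$, namely the identity, we get $\gamma_m=\mathrm{id}$ for all large $m$ along the subsequence. Hence the identity is isolated in $\Gamma^{ne}$ and $\Gamma^{ne}$ is discrete.

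Finally, since $\mathcal{L}_\Gamma$ is independent of the basepoint, choose $x=(x_1,\dots,x_{q+r})$ whose coordinates with $i\in E$ are the fixed points $x_i$ above. Then for every $\gamma\in\Gamma$ and every $i\in E$ the $i$-th coordinate of $\gamma(x)$ equals $x_i$, so the orbit $\Gamma(x)$ is constant in the factors indexed by $E$ and, in the factors indexed by $N$, coincides with the orbit $\Gamma^{ne}(x^{ne})$, where $x^{ne}=(x_{i_1},\dots,x_{i_n})$. Write $Y$ for the subproduct of $(\bbH^3)^q\times(\bbH^2)^r$ formed by the factors indexed by $N$. The geodesic ray from $x$ through $\gamma_m(x)$ is constant in the $E$-factors, so it converges in the cone topology if and only if its projection to $Y$ does, i.e. if and only if $\gamma_m^{ne}(x^{ne})$ converges to a point of $\partial Y$. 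Adjoining to each geodesic ray of $Y$ the constant rays at the points $x_i$, $i\in E$, defines a canonical, $\Gamma^{ne}$-equivariant embedding of $\partial Y$ into $\partial((\bbH^3)^q\times(\bbH^2)^r)$, and under this embedding $\mathcal{L}_{\Gamma^{ne}}$ is carried precisely onto $\mathcal{L}_\Gamma$.

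The main obstacle is the uniform control of the discarded coordinates, which rests entirely on Proposition~\ref{P:NonelemElliptic}: once each elementary $p_i(\Gamma)$ is known to fix a point of $\bbH^3$ and hence to be relatively compact, both the eventual-constancy argument for discreteness and the ``escape to infinity only in the $N$-factors'' argument for the limit set reduce to the discreteness of $\Gamma$. The remaining point is purely topological, namely that adjoining the constant rays in the $E$-factors yields a homeomorphism of $\partial Y$ onto its image in the cone topology.
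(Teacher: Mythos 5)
Your proof is correct and follows essentially the same route as the paper's: both rest on Proposition~\ref{P:NonelemElliptic} to confine the elementary factors to compact point-stabilizers in $\bbH^3$, and both identify $\mathcal{L}_\Gamma$ with $\mathcal{L}_{\Gamma^{ne}}$ by observing that every limit geodesic is constant in those factors and then forgetting them. You are in fact more careful than the paper, which dismisses nonelementarity and discreteness of $\Gamma^{ne}$ as holding ``by definition''; since a projection of a discrete group need not be discrete in general, your compactness argument (lifting a sequence $\gamma_m\to\mathrm{id}$, using that the discarded coordinates stay in a compact set, and invoking closedness of the discrete group $\Gamma$) is exactly the justification that assertion needs.
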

\begin{proof}
$\Gamma^{ne}$ is nonelementary and discrete by definition.

By Proposition~\ref{P:NonelemElliptic} if $p_i(\Gamma)$ is not nonelementary then $p_i(\Gamma)$ is not a subgroup of $\PSL(2,\R)$ and it consists only of elliptic isometries with a common fixed point. Let $I$ be the set of all $i$ such that $p_i(\Gamma)$ is not nonelementary. Then for each representative geodesic $\gamma$ of each point in $\Lim_\Gamma$, the projection $\gamma_i:=p_i(\gamma)$ is constant for $i\in I$. The following mapping gives the identification of $\Lim_\Gamma$ and $\Lim_{\Gamma^{ne}}$:
$$ \gamma = (\gamma_1,\ldots,\gamma_{q+r})\mapsto (\gamma_{i_1},\ldots,\gamma_{i_n}).$$
\end{proof}

\begin{theorem}
\label{T:CharactLimitCone}
Let $\Delta$ be an irreducible arithmetic subgroup of $\PSL(2,\C)^q\times \PSL(2,\R)^r$ with $q+r\geq2$ and $\Gamma$ a finitely generated subgroup of $\Delta$ such that for at least one $j\in\{1,\ldots,q+r\}$, the projection $p_j(\Gamma)$ is nonelementary. Then the limit cone of $\Gamma$ consists of exactly one point if and only if $p_j(\Gamma)$ is contained in an arithmetic Fuchsian or Kleinian group.
\end{theorem}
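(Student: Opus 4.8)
The plan is to reduce the statement to the nonelementary case already settled by Theorem~\ref{T:MainCharactAlg}, using the auxiliary group $\Gamma^{ne}$ of Lemma~\ref{L:GammaNE} together with the dichotomy furnished by Lemma~\ref{L:AtLeastTwoNonelem}.

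First I would replace $\Gamma$ by $\Gamma\cap\Gamma(A,\mathcal{O})^*$, where $\Delta$ is commensurable with $\Gamma(A,\mathcal{O})^*$; this subgroup is finitely generated, has the same projections up to commensurability, and has the same limit cone, so the replacement is harmless. The key observation is that the limit cone of $\Gamma$ is carried entirely by the nonelementary directions. Indeed, by Proposition~\ref{P:NonelemElliptic} every projection $p_i(\Gamma)$ that fails to be nonelementary consists of elliptic isometries with a common fixed point, so for each $g\in\Gamma$ and each such index $i$ one has $\ell(\phi_i(g))=0$. Hence every translation direction $L(g)$ lies in the coordinate subspace where the elementary indices vanish; identifying this subspace with $\RP^{n-1}$, where $n$ is the number of nonelementary projections, the limit cone of $\Gamma$ is the image of the limit cone of $\Gamma^{ne}$ under the closed embedding $\RP^{n-1}\hookrightarrow\RP^{q+r-1}$. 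Consequently the limit cone of $\Gamma$ is a single point if and only if the limit cone of $\Gamma^{ne}$ is. Since $\Gamma^{ne}$ is nonelementary and discrete (Lemma~\ref{L:GammaNE}), Theorem~\ref{T:ConvexAlg} gives that its limit cone equals $\overline{P_{\Gamma^{ne}}}$ and is nonempty and convex; thus it is a single point exactly when $P_{\Gamma^{ne}}$ is.

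Next I would dispose of the degenerate case $n=1$: then $\RP^{n-1}$ is a point, so the limit cone is automatically a single point, and Lemma~\ref{L:AtLeastTwoNonelem} in contrapositive form forces $p_j(\Gamma)$ to be contained in an arithmetic Fuchsian or Kleinian group, so both sides of the equivalence hold. For $n\geq2$ I would exhibit $\Gamma^{ne}$ as a finitely generated nonelementary subgroup of an irreducible arithmetic group in $\PSL(2,\C)^{q'}\times\PSL(2,\R)^{r}$ with $q'+r=n\geq2$ and then invoke Theorem~\ref{T:MainCharactAlg}, which yields that $P_{\Gamma^{ne}}$ consists of exactly one point if and only if $p_j(\Gamma^{ne})=p_j(\Gamma)$ is contained in an arithmetic Fuchsian or Kleinian group. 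To identify the ambient arithmetic group I would pass to the invariant quaternion algebra $B:=A\,p_j(\Gamma)^{(2)}$ over $F:=\Q(\Tr(p_j(\Gamma)^{(2)}))$, which exists by Theorem~\ref{T:AlgebraInvariant} since $p_j(\Gamma)$ is finitely generated and nonelementary; by Lemma~\ref{L:IntegerTraces} its traces are algebraic integers, so $\mathcal{O}p_j(\Gamma)^{(2)}$ is an order in $B$ by Theorem~\ref{T:Order}.

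The crux of the argument, and the step I expect to be the main obstacle, is to show that the infinite places of $B$ at which the unit group is noncompact are exactly the nonelementary projections of $\Gamma$, so that $\Gamma(B,\mathcal{O}_B)^*$ lands in $\PSL(2,\C)^{q'}\times\PSL(2,\R)^{r}$ and $\Gamma^{ne}$ is commensurable to a subgroup of it. At an infinite place the unit group is relatively compact, equivalently the projection is elementary and consists of elliptic elements, precisely when $B$ is ramified there or the place restricts to a ramified place of $F$; at the remaining places the traces are unbounded and the projection is nonelementary. This is the phenomenon already visible in the example after Corollary~\ref{C:MainCharactAlgAlg}, where an \emph{unramified} complex place restricts to a ramified real place of the trace field and therefore contributes only elliptic isometries. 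The subtlety to be handled carefully is that arithmeticity of $p_j(\Gamma)$ must be read off the nonelementary places alone, via the boundedness condition of Theorem~\ref{T:TakeuchiArithmQuat} and Theorem~\ref{T:MacReid}, since an elementary unramified place may perfectly well carry a trace embedding that is neither the identity nor complex conjugation without affecting arithmeticity. Once $B$ is seen to be ramified or compact at every infinite place outside the nonelementary ones, $\Gamma(B,\mathcal{O}_B)^*$ is the sought irreducible arithmetic group, the hypothesis $q'+r\geq2$ is guaranteed by Lemma~\ref{L:AtLeastTwoNonelem}, and the reduction to Theorem~\ref{T:MainCharactAlg} is complete.
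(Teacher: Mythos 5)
The outer structure of your proposal is sound and coincides with the paper's proof: the paper likewise passes to $\Gamma^{ne}$, identifies the limit cone of $\Gamma$ with the limit cone of $\Gamma^{ne}$, which equals $\overline{P_{\Gamma^{ne}}}$ by Theorem~\ref{T:ConvexAlg}, disposes of the case $n=1$ by the contrapositive of Lemma~\ref{L:AtLeastTwoNonelem}, and for $n\geq 2$ invokes Theorem~\ref{T:MainCharactAlg}. Your observation that elementary projections contribute zero translation length, so that the limit cone lives in the coordinate subspace of nonelementary directions, is a correct (and slightly more explicit) version of the paper's identification.

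The gap sits exactly at your self-declared crux, and it is fatal as stated. The nonelementary projections of $\Gamma$ are indexed by embeddings $\phi_i$ of the field $K$ attached to $\Gamma(A,\mathcal{O})$, whereas the factors of $\Gamma(B,\mathcal{O}_B)^*$ are indexed by the infinite places of the smaller field $F=\Q(\Tr(p_j(\Gamma)^{(2)}))$ at which $B$ is unramified. The natural correspondence $\phi_i\mapsto\phi_i|_F$ is surjective onto the unramified places of $B$, but it is in general many-to-one, since distinct embeddings of $K$ may agree on the subfield $F$. So there is no bijection, and the count of factors need not match. The failure is not a borderline technicality: whenever $p_j(\Gamma)$ \emph{is} contained in an arithmetic Fuchsian or Kleinian group and $n\geq2$ --- one entire half of the equivalence you are proving --- the algebra $B$ is unramified at exactly one infinite place (this is the fact used in Lemma~\ref{L:ArithmAll}), so $\Gamma(B,\mathcal{O}_B)^*$ is an arithmetic Fuchsian or Kleinian group sitting in a \emph{single} factor; it is not an irreducible arithmetic subgroup of $\PSL(2,\C)^{q'}\times\PSL(2,\R)^{r}$ with $q'+r=n\geq 2$ and cannot serve as the ambient group for $\Gamma^{ne}$ in Theorem~\ref{T:MainCharactAlg}. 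The very example you cite after Corollary~\ref{C:MainCharactAlgAlg} exhibits the collapsing: there $K=\Q(\sqrt[4]{2})$ and $F=\Q(\sqrt{2})$, and the \emph{two} real places of $K$ both restrict to the \emph{one} unramified place of $F$; you registered only the other feature of that example (a complex place of $K$ over a ramified place of $F$) and missed this one.

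The repair is to collapse coordinates rather than match them. Components of an element of $\Gamma^{ne}$ lying over the same place of $F$ are conjugate up to complex conjugation, hence have equal translation lengths; therefore deleting duplicate coordinates (keeping one per unramified place of $F$) changes neither the property that the projective limit set is a single point nor the limit cone question, and the collapsed group does lie, up to conjugation and commensurability, in $\Gamma(B,\mathcal{O}_B)^*$. One then applies Theorem~\ref{T:MainCharactAlg} to the collapsed group when $m\geq 2$ ($m$ the number of unramified places of $B$), while $m=1$ forces $p_j(\Gamma)$ into an arithmetic Fuchsian or Kleinian group via Lemma~\ref{L:IntegerTraces} and Theorem~\ref{T:Order} and makes every translation direction equal to $(1:\cdots:1)$. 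Note that the paper itself avoids constructing any ambient group for $\Gamma^{ne}$: it applies Theorem~\ref{T:MainCharactAlg} directly, the underlying Lemmas~\ref{L:MorePoints}, \ref{L:1:ldots:1} and \ref{L:NotIDNotArithm} requiring only that the retained coordinate maps come from field embeddings and that the retained projections are all nonelementary.
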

\begin{proof}
The idea is to apply Theorem~\ref{T:MainCharactAlg} to $\Gamma^{ne}$. In order to do this we need that $n\geq 2$, because otherwise $\Gamma^{ne}$ is just a subgroup of $\PSL(2,\R)$ or $\PSL(2,\C)$.

If $n=1$, then the limit cone of $\Gamma$ contains only the line 
$$\left(\begin{bmatrix} 0 & 0 \\0 & 0\end{bmatrix}, \ldots, \begin{bmatrix} 0 & 0 \\0 & 0\end{bmatrix},\begin{bmatrix} t x_j & 0 \\0 & - t x_j\end{bmatrix},\begin{bmatrix} 0 & 0 \\0 & 0\end{bmatrix}, \ldots, \begin{bmatrix} 0 & 0 \\0 & 0\end{bmatrix}\right),\quad t \in \R.$$ 
But in this case, by Lemma~\ref{L:AtLeastTwoNonelem}, $p_j(\Gamma)$ is a subgroup of an arithmetic Fuchsian or Kleinian group.

If $n\geq 2$, the limit cone of $\Gamma$ is identified with the limit cone of $\Gamma^{ne}$, which coincides with $\overline{P_{\Gamma^{ne}}}$. By Theorem~\ref{T:MainCharactAlg}, then $\mathcal{L}_{\Gamma^{ne}}^{reg}$ is not empty and $P_{\Gamma^{ne}}$ consists of exactly one point if and only if $p_j(\Gamma)$ is contained in an arithmetic Fuchsian or Kleinian group for some $j\in\{1,\ldots,n\}$.
\end{proof}

\subsection{The structure of groups with an arithmetic projection and their limit set}
In Theorem~\ref{T:MainCharactAlg} we have seen that a finitely generated nonelementary subgroup $\Gamma$ of an irreducible arithmetic group in $\PSL(2,\C)^q\times\PSL(2,\R)^r$ has the smallest possible nonempty projective limit set only if $p_j(\Gamma)$ is a nonelementary subgroup of an arithmetic Fuchsian or Kleinian group for some $j\in{1,\ldots,q+r}$. In this section we determine the structure of $\Gamma$ and its limit set. Corollary~\ref{C:WidelyComm} finishes the proof of Theorem~B.

\begin{lemma}
\label{L:LimHomeom}
If $\Gamma_j:=p_j(\Gamma)$ is nonelementary and a subgroup of an arithmetic Fuchsian or Kleinian group, then $\Lim_{\Gamma_j}$ is homeomorphic to $\Lim_\Gamma$.
\end{lemma}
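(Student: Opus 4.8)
The plan is to prove that taking the $j$-th boundary coordinate defines a homeomorphism $p_j\colon\Lim_\Gamma\to\Lim_{\Gamma_j}$, with inverse assembled from the rigidity of the homomorphisms relating the factors. First I would fix the algebraic structure. Using Lemma~\ref{L:GammaNE} I may replace $\Gamma$ by $\Gamma^{ne}$, so that every projection $p_i(\Gamma)$ is nonelementary and the index $j$ is retained; this leaves $\Lim_\Gamma$ unchanged up to canonical homeomorphism. By Theorem~\ref{T:MainCharactAlg} the hypothesis forces $P_\Gamma=\{(1:\ldots:1)\}$, so that $\Lim_\Gamma^{reg}=F_\Gamma\times\{(1:\ldots:1)\}$ is canonically homeomorphic to $F_\Gamma$. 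Since all mixed elements of $\Gamma$ have only loxodromic and elliptic-of-infinite-order components, $\Gamma$ is the graph of isomorphisms over $\Gamma_j$: one has $p_i=\psi_i\circ p_j$ with $\psi_i\colon\Gamma_j\to p_i(\Gamma)$ and $\psi_j=\mathrm{id}$, and in particular $p_j\colon\Gamma\to\Gamma_j$ is a group isomorphism.

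Next I would promote each $\psi_i$ to a boundary homeomorphism. Because $P_\Gamma$ is a single point, the arguments behind Lemma~\ref{L:1:ldots:1} and Lemma~\ref{L:NotIDNotArithm} give that each $\phi_i$ restricted to $\Tr(\Gamma_j^{(2)})$ is the identity or the complex conjugation; hence $\psi_i$ preserves traces up to complex conjugation, so by the criterion of Dal'Bo and Kim (Theorem~\ref{T:DalBoZariski}) the graph of $\psi_i$ is not Zariski dense and $\psi_i$ extends to a continuous automorphism of the relevant $\PSL(2,\R)$ or $\PSL(2,\C)$. By Theorem~\ref{T:PSLAutomorphisms} this extension is conjugation by a $\GL$-element, possibly preceded by complex conjugation. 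Consequently $\psi_i$ is the restriction of a homeomorphism $\bar\psi_i$ of the boundary onto its image; $\bar\psi_i$ is $\Gamma_j$-equivariant and carries attractive fixed points to attractive fixed points. (When $\Gamma_j$ is Fuchsian and the $i$-th factor is a copy of $\PSL(2,\C)$, the target group has real traces, so $\bar\psi_i$ embeds $\partial\bbH^2$ as a round circle in $\partial\bbH^3$; the case of a Kleinian $\Gamma_j$ only occurs when $r=0$, so no mismatch of real and complex factors arises.)

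Finally I would verify that $p_j$ is a continuous bijection. It is well defined and lands in $\Lim_{\Gamma_j}$ precisely because $p_j\colon\Gamma\to\Gamma_j$ is injective: no nontrivial element has trivial $j$-component, so for $\xi=\lim_n g_n(x)$ the orbit $p_j(g_n)(x_j)$ cannot stay bounded and must converge to a point $\eta\in\Lim_{\Gamma_j}$. The $i$-th coordinate of $\xi$ is then forced to equal $\bar\psi_i(\eta)$ by equivariance, which simultaneously yields injectivity of $p_j$ and shows that $\Phi:=(\bar\psi_1,\ldots,\bar\psi_{q+r})$ inverts it. Surjectivity follows by density: for a loxodromic $g\in\Gamma$ the attractive fixed point is $\Phi$ applied to the attractive fixed point $g_j^{+}$ of $g_j$, and since attractive fixed points are dense in $\Lim_{\Gamma_j}$ and in $\Lim_\Gamma$ (Theorem~4.10 of \cite{gL06}), continuity and closedness give $\Phi(\Lim_{\Gamma_j})=\Lim_\Gamma$. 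A continuous bijection from the compact space $\Lim_\Gamma$ onto the Hausdorff space $\Lim_{\Gamma_j}$ is then a homeomorphism.

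The main obstacle is the boundary-extension step, where one must combine the Zariski-density criterion with the classification of automorphisms to produce the maps $\bar\psi_i$ and then check that they fit together consistently over all factors. A secondary point requiring care is that the $j$-th coordinate genuinely controls escape to the boundary: one must use the injectivity of $p_j$ on $\Gamma$ to guarantee that every limit point—including the parabolic, hence singular, ones—has a nonconstant $j$-projection and is recovered as $\Phi(\eta)$ for a unique $\eta\in\Lim_{\Gamma_j}$.
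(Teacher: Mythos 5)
Your overall strategy is the paper's: reduce to $\Gamma^{ne}$ by Lemma~\ref{L:GammaNE}, pin $P_\Gamma$ to the single point $(1{:}\ldots{:}1)$ by Theorem~\ref{T:MainCharactAlg}, upgrade the factor homomorphisms to boundary maps via Dal'Bo--Kim and Schreier--Van der Waerden, and finish by density of attractive fixed points (your $\Phi$ is exactly the paper's map $A$). However, two steps, as written, do not hold up.

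First, the inference ``$\psi_i$ preserves traces up to complex conjugation, so by the criterion of Dal'Bo and Kim the graph of $\psi_i$ is not Zariski dense and $\psi_i$ extends'' is a non sequitur. Theorem~\ref{T:DalBoZariski} is an equivalence between ``$\psi_i$ extends to a continuous automorphism'' and ``the graph is not Zariski dense''; it takes no trace hypothesis as input, so to use either direction you must first establish the other side, which is precisely what is at stake. The paper's bridge is Benoist's Theorem~\ref{T:BenoistLimitCone}: since $P_\Gamma$ is a single point, the projective limit set of each pair group $\Gamma_{ji}=\{(g,\psi_i(g))\mid g\in\Gamma_j\}$ is a single point, so $\Gamma_{ji}$ cannot be Zariski dense (Zariski density would force the limit cone to have nonempty interior); only then does Theorem~\ref{T:DalBoZariski} deliver the extension, and Theorem~\ref{T:PSLAutomorphisms} its form as a conjugation, possibly composed with complex conjugation. (Alternatively, trace preservation does place the graph inside the proper real subvariety $\tr^2(x)=\tr^2(y)$, or its complex-conjugate analogue, hence gives non-density directly---but that argument must be made explicitly; it is not Dal'Bo--Kim.) You have all the needed ingredients, but the step must be rerouted.

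Second, your injectivity argument for $p_j$ on $\Lim_\Gamma$ is incomplete: a point of $\partial\left((\bbH^3)^q\times(\bbH^2)^r\right)$ is \emph{not} determined by its factor projections. Regular points carry in addition a direction in $\RP^{q+r-1}_+$, and at a singular point some projections are interior points of the factors, so $p_j$ need not even be defined there as a boundary point. Thus ``the $i$-th coordinate of $\xi$ is forced to equal $\bar\psi_i(\eta)$'' does not by itself give injectivity or well-definedness. The missing step---which the paper proves before introducing the map $A$---is that $\Lim_\Gamma=\Lim_\Gamma^{reg}=F_\Gamma\times\{(1{:}\ldots{:}1)\}$: the attractive fixed points of loxodromic elements are dense in $\Lim_\Gamma$ (Theorem 5.12 in \cite{gL02}), they all lie in $F_\Gamma\times\{(1{:}\ldots{:}1)\}$, and this set is compact, hence closed in the geometric boundary, because $F_\Gamma$ is a closed subset of the compact torus $(\partial\bbH^3)^q\times(\partial\bbH^2)^r$ and the direction factor is a single point. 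Once this is known, every limit point is regular with direction $(1{:}\ldots{:}1)$, is determined by its Furstenberg coordinates, and your bijection argument goes through verbatim. Note also that this corrects your aside about ``parabolic, hence singular'' limit points: since $\Lim_\Gamma^{sing}=\emptyset$ here, the fixed point of a fully parabolic element (which moves all factors at asymptotically equal rates) is a \emph{regular} limit point of direction $(1{:}\ldots{:}1)$, not a singular one.
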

\begin{proof}
If $\Gamma_i:=p_i(\Gamma)$ is nonelementary only if $i=j$, then clearly $\Lim_{\Gamma_j}$ is homeomorphic to $\Lim_\Gamma$.

We assume that there is at least one more $i\neq j$ such that $\Gamma_i$ is nonelementary. Then by Lemma~\ref{L:GammaNE}, the group $\Gamma^{ne}$ is a nonelementary subgroup of $\PSL(2,\C)^{q'}\times\PSL(2,\R)^r$ with $q'\leq q$ and $q+r\geq 2$ and its limit set is identified with the limit set of $\Gamma$. For simplicity of the notation, we will assume that $\Gamma=\Gamma^{ne}$ and also that $j=1$. The group $\Gamma_1$ does not need necessarily to be a subgroup of $\PSL(2,\C)$ even if $q\neq0$.

By Theorem~\ref{T:MainCharactAlg}, the regular limit set $\mathcal{L}_\Gamma^{reg}$ is not empty and $P_\Gamma$ consists of exactly one point.
Since $\mathcal{L}_\Gamma^{reg}$ equals the product $F_\Gamma \times P_\Gamma$ (see \ref{T:LinkFP}), $\mathcal{L}_\Gamma^{reg}$ is homeomorphic to $F_\Gamma$ and so it is contained in the generalized torus $(\partial \bbH^3)^q\times(\partial \bbH^2)^r$.

Theorem 5.12 in \cite{gL02} says that if $\mathcal{L}_\Gamma^{reg}$ is not empty, then the attractive fixed points of the loxodromic isometries in $\Gamma$ are dense in $\mathcal{L}_\Gamma$. Hence $\mathcal{L}_\Gamma^{reg}$ is dense in $\mathcal{L}_\Gamma$ and so $\mathcal{L}_\Gamma^{reg} = \mathcal{L}_\Gamma$ because $\mathcal{L}_\Gamma^{reg}$ is contained in the compact (and hence closed) generalized torus $(\partial \bbH^3)^q\times(\partial \bbH^2)^r$.

We have $\Gamma = \left\{(g_1, \phi_2(g_1), \ldots, \phi_{q+r}(g_1)) \mid g_1 \in p_1(\Gamma)\right\}$ where $\phi_i$ is a surjective homomorphism between $p_1(\Gamma)$ and $p_i(\Gamma)$. We define the group $\Gamma_{1i} := \left\{(g_1, \phi_i(g_1)) \mid g_1 \in p_1(\Gamma)\right\}$.

If $p_1(\Gamma)$ is a subgroup of an arithmetic Fuchsian group, then for all $i=1,\ldots,q+r$, the group $\Gamma_i:=p_i(\Gamma)$ is also a subgroup of an arithmetic Fuchsian group and hence $\Tr(\Gamma_i)$ is a subset of $\R$. Then by Corollary~\ref{C:Zariski}, the Zariski closures over $\R$ of $\Gamma_i$ is (a conjugate of) $\PSL(2,\R)$. Since $P_\Gamma$ consists of exactly one point, $P_{\Gamma_{1i}}$ consists also of exactly one point for all $i = 1,\ldots,q+r$. Hence by Benoist's Theorem \ref{T:BenoistLimitCone}, the group $\Gamma_{1i}$ is not Zariski dense in (a conjugate of) $\PSL(2,\R) \times \PSL(2,\R)$ and then by the criterion of Dal'Bo and Kim (Theorem \ref{T:DalBoZariski}), for all $i = 1,\ldots,q+r$, the homomorphism $\phi_i$ can be extended to a continuous isomorphism $A_i$ between conjugates of $\PSL(2,\R)$, which, according to Theorem~\ref{T:PSLAutomorphisms}, is given by a conjugation with an element $A_i = \begin{bmatrix}a_i & b_i\\c_i & d_i\end{bmatrix} \in \GL(2,\R)$. Hence for all $g=(g_1,\ldots,g_{q+r})\in \Gamma$, $g_i= A_ig_1A_i^{-1}$.

If $\Gamma_1$ is a subgroup of an arithmetic Kleinian group but not a subgroup of an arithmetic Fuchsian group (this is possible only in the case $r=0$), then by Corollary~\ref{C:Zariski}, the Zariski closures over $\R$ of $\Gamma_i$ is $\PSL(2,\C)$. As above, for each $i=1,\ldots,q$, we can find $A_i = \begin{bmatrix}a_i & b_i\\c_i & d_i\end{bmatrix} \in \GL(2,\C)$ such that for all $g=(g_1,\ldots,g_q)\in \Gamma$, $g_i= A_ig_1A_i^{-1}$, or for all $g=(g_1,\ldots,g_q)\in \Gamma$, $g_i= A_i\overline{g_1}A_i^{-1}$, where $\overline{g_1}$ denotes the complex conjugation.

If $\xi$ is an attractive fixed point of an element $g_1 = p_1(g)$ in $\Gamma_1$ with $g =(g_1,\ldots,g_{q+r}) \in \Gamma$, then either $A_i(\xi):= \frac{a_i \xi + b_i}{c_i \xi + d_i}$ or $A_i(\xi):= \frac{a_i \bar{\xi} + b_i}{c_i \bar{\xi} + d_i}$ is the attractive fixed point of $g_i$ and vice versa. The maps $A_i$ are homeomorphisms of $\partial\bbH^3$ (and in the first case of $\partial\bbH^2$).

We consider the mapping $A:\Lim_{\Gamma_1} \rightarrow \Lim_{\Gamma}$, $z\mapsto(z, \widetilde{A_2}(z),\ldots,\widetilde{A_r}(z))\times (1:\ldots:1)$ where $\widetilde{A_i}(z):=\frac{a_i z + b_i}{c_i z + d_i}$ if $\sigma$ is the identity and $\widetilde{A_i}(z):=\frac{a_i \bar{z} + b_i}{c_i \bar{z} + d_i}$ if $\sigma$ is the complex conjugation, $i=1,\ldots,q+r$. This mapping is a homeomorphism on its image, i.e. $A: \Lim_{\Gamma_1} \rightarrow A(\Lim_{\Gamma_1})$ is a homeomorphism. Since $A$ is a bijection between the attractive fixed points of the loxodromic isometries in $\Gamma$ and the attractive fixed points of the loxodromic isometries in $\Gamma_1$, and since the attractive fixed points of the loxodromic isometries are dense in the corresponding limit set, $\mathcal{L}_\Gamma=A(\Lim_{\Gamma_1})$. Therefore $A: \Lim_{\Gamma_1} \rightarrow \Lim_{\Gamma}$ is a homeomorphism.
\end{proof}

The above proof is also the proof of the following corollary.

\begin{corollary}
\label{C:WidelyComm}
Let $\Gamma$ be a finitely generated nonelementary subgroup of an irreducible arithmetic group in $\PSL(2,\C)^q\times\PSL(2,\R)^r$ with $q+r\geq 2$. If $S:=p_j(\Gamma)$ is a subgroup of an arithmetic Fuchsian or Kleinian group for some $j\in{1,\ldots,q+r}$, then $\Gamma$ is a conjugate by an element in $\GL(2,\C)^q\times\GL(2,\R)^r$ of a group
$$ \Diag(S):=\{(\sigma_1(s),\ldots,\sigma_{q+r}(s))\mid s\in S\},$$
where, for $i=1,\ldots,q+r$, $\sigma_i$ denotes either the identity or the complex conjugation.
\end{corollary}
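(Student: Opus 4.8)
The plan is to exploit the rigidity forced on the projections of $\Gamma$ once its projective limit set is known to collapse to a single point. After relabeling we may take $j=1$ and write $S:=p_1(\Gamma)$. Since $\Gamma$ is nonelementary, each $p_i(\Gamma)$ is nonelementary, and because every mixed element of $\Gamma$ has only loxodromic or infinite-order elliptic components, no nontrivial element of $\Gamma$ can have trivial first coordinate. Hence $p_1|_\Gamma$ is injective and $\Gamma=\{(g,\phi_2(g),\ldots,\phi_{q+r}(g))\mid g\in S\}$, where $\phi_i:=p_i\circ(p_1|_\Gamma)^{-1}$ is a surjective homomorphism of $S$ onto $p_i(\Gamma)$. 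The target statement thus reduces to showing that each $\phi_i$ is the restriction of a continuous automorphism of the relevant $\PSL(2,\R)$ or $\PSL(2,\C)$, since by Theorem~\ref{T:PSLAutomorphisms} these automorphisms are exactly conjugations by $\GL(2,\R)$ or $\GL(2,\C)$ elements, possibly preceded by complex conjugation, which is precisely what membership in a conjugate of $\Diag(S)$ encodes.

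First I would note that the hypothesis together with Theorem~\ref{T:MainCharactAlg} gives that $P_\Gamma$ is a single point (indeed $(1:\ldots:1)$, which is consistent with the translation-length--preserving conjugation we are after). For each $i$ I would introduce the auxiliary pair group $\Gamma_{1i}:=\{(g,\phi_i(g))\mid g\in S\}$ inside $\PSL(2,\C)\times\PSL(2,\C)$, $\PSL(2,\C)\times\PSL(2,\R)$ or $\PSL(2,\R)\times\PSL(2,\R)$ as appropriate. Projecting the common translation direction of the loxodromic elements of $\Gamma$ onto the first and $i$-th coordinates shows that $P_{\Gamma_{1i}}$ is a single point as well. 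Recall also that $S$ and each $p_i(\Gamma)$, being nonelementary, are Zariski dense over $\R$ in their factors by Corollary~\ref{C:Zariski}.

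The decisive step is to convert ``$P_{\Gamma_{1i}}$ is one point'' into Zariski non-density of $\Gamma_{1i}$. Benoist's Theorem~\ref{T:BenoistLimitCone} states that a group Zariski dense over $\R$ has a limit cone with nonempty interior, hence a projective limit set with more than one point; contrapositively, $\Gamma_{1i}$ is not Zariski dense over $\R$. Now the three cases of the Dal'Bo--Kim criterion (Theorem~\ref{T:DalBoZariski}) come into play. Part (iii) says that a graph across one complex and one real factor is always Zariski dense, so the failure of Zariski density rules this out and forces the two factors to be of the same type; this is compatible with the trace-field information supplied by Lemma~\ref{L:ArithmAll}, and Corollary~\ref{C:Zariski} identifies the Zariski closures as conjugates of $\PSL(2,\R)$ in the Fuchsian case or $\PSL(2,\C)$ in the Kleinian case (the latter only when $r=0$). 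With the factors matched, parts (i) and (ii) then yield that each $\phi_i$ extends to a continuous automorphism.

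Finally I would feed these automorphisms into Theorem~\ref{T:PSLAutomorphisms}: each extension is conjugation by some $A_i\in\GL(2,\R)$, or in the complex case conjugation by $A_i\in\GL(2,\C)$ possibly composed with complex conjugation, so that $g_i=A_i\sigma_i(g)A_i^{-1}$ with $\sigma_i$ the identity or complex conjugation (and $A_1=\mathrm{Id}$, $\sigma_1=\mathrm{id}$). Setting $A:=(A_1,\ldots,A_{q+r})\in\GL(2,\C)^q\times\GL(2,\R)^r$ gives $\Gamma=A\,\Diag(S)\,A^{-1}$, which is the claim. The main obstacle is the middle paragraph: getting from a one-point projective limit set to algebraicity of each $\phi_i$ requires combining Benoist's nonempty-interior theorem with all three cases of the Dal'Bo--Kim dichotomy, and in the Kleinian setting one must carefully track the complex-conjugation alternative so that the resulting $\sigma_i$ is admissible.
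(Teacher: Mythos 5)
Your proposal is correct and follows essentially the same route as the paper's own proof (which is contained in the proof of Lemma~\ref{L:LimHomeom}): write $\Gamma$ as the graph of homomorphisms $\phi_i$ over $S=p_1(\Gamma)$, note that $P_{\Gamma_{1i}}$ is a single point, apply Benoist's Theorem~\ref{T:BenoistLimitCone} contrapositively to get that $\Gamma_{1i}$ is not Zariski dense, and then use the Dal'Bo--Kim criterion (Theorem~\ref{T:DalBoZariski}) together with Theorem~\ref{T:PSLAutomorphisms} to realize each $\phi_i$ as conjugation by some $A_i$, possibly composed with complex conjugation. Your extra use of part (iii) of Theorem~\ref{T:DalBoZariski} to rule out mixed real/complex pairs is only a cosmetic variant of the paper's appeal to Lemma~\ref{L:ArithmAll} and Corollary~\ref{C:Zariski}, which matches the factor types via the trace fields.
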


This corollary and Theorem~\ref{T:MainCharactAlg} prove Theorem~B from the introduction.

\subsection{Small limit sets}

\subsubsection{Subgroups of $\PSL(2,\C)^q\times\PSL(2,\R)^r$}
In this section we answer the question when the limit set of $\Gamma$ is topologically a circle or a subspace of a circle where $\Gamma$ is a finitely generated subgroup of an irreducible arithmetic group in $\PSL(2,\C)^q\times \PSL(2,\R)^r$ with $q+r\geq 2$. 

We say that a set $X$ is embedded homeomorphically in a circle if there exists a map $f: X \rightarrow S^1$ such that  $f: X \rightarrow f(X)$ is a homeomorphism.

\begin{theorem}
\label{T:TopolCharactAlgAlg}
Let $\Gamma$ be a finitely generated subgroup of an irreducible arithmetic group in $\PSL(2,\C)^q\times\PSL(2,\R)^r$ with $q + r \geq 2$ and $r\neq 0$ such that $p_{j}(\Gamma)$ is nonelementary for some $j\in\{1,\ldots,q+r\}$. Then $\mathcal{L}_\Gamma$ is embedded homeomorphically in a circle if and only if $p_{j}(\Gamma)$ is contained in an arithmetic Fuchsian group.
\end{theorem}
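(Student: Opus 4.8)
The plan is to reduce the topological question to the arithmetic dichotomy already established---whether the projective limit set is a single point---and then settle each case by a short topological argument. Throughout I would replace $\Gamma$ by the group $\Gamma^{ne}$ of Lemma~\ref{L:GammaNE}, which is nonelementary, discrete, and whose limit set is canonically identified with $\Lim_\Gamma$; since $r\neq 0$, Proposition~\ref{P:NonelemElliptic} guarantees that all $r$ real projections are nonelementary, so these factors survive in $\Gamma^{ne}$ and both Link's structure theorem (Theorem~\ref{T:LinkFP}) and the convexity of $P_{\Gamma^{ne}}$ (Theorem~\ref{T:ConvexAlg}) are available. I would also record at the outset the consequence of $r\neq 0$: by the remark following Lemma~\ref{L:ArithmAll}, the unique unramified place of the quaternion algebra of $p_j(\Gamma)^{(2)}$ must be real once a real projection is nonelementary, so for our $\Gamma$ the condition ``contained in an arithmetic Fuchsian or Kleinian group'' is equivalent to ``contained in an arithmetic Fuchsian group.''

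For the direction assuming $p_j(\Gamma)$ is contained in an arithmetic Fuchsian group the argument is immediate: then $\Gamma_j := p_j(\Gamma)$ is a Fuchsian group, so up to conjugation $\Gamma_j\leq\PSL(2,\R)$ and $\Lim_{\Gamma_j}\subseteq\partial\bbH^2\cong S^1$. By Lemma~\ref{L:LimHomeom}, $\Lim_\Gamma$ is homeomorphic to $\Lim_{\Gamma_j}$, hence is embedded homeomorphically in the circle.

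For the converse I would argue by contraposition. If $p_j(\Gamma)$ is not contained in an arithmetic Fuchsian group, then by the reduction above it is not contained in any arithmetic Fuchsian or Kleinian group, so by Theorem~\ref{T:CharactLimitCone} the limit cone of $\Gamma$ is not a single point; as the limit cone equals $\overline{P_{\Gamma^{ne}}}$, the set $P_{\Gamma^{ne}}$ contains at least two points, and being convex and path connected it contains a nondegenerate arc $I\cong[0,1]$. By Theorem~\ref{T:LinkFP}, $\Lim_{\Gamma^{ne}}^{reg}=F_{\Gamma^{ne}}\times P_{\Gamma^{ne}}$ contains $F_{\Gamma^{ne}}\times I$. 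The Furstenberg limit set $F_{\Gamma^{ne}}$ is a minimal closed $\Gamma^{ne}$-invariant set; it is infinite because $\Gamma^{ne}$ is nonelementary, hence, being compact, not all its points are isolated, and the non-isolated points form a closed invariant subset, which by minimality is all of $F_{\Gamma^{ne}}$. Thus $F_{\Gamma^{ne}}$ is perfect and therefore uncountable, and $\Lim_\Gamma$ contains the uncountable family of pairwise disjoint nondegenerate arcs $\{f\}\times I$, $f\in F_{\Gamma^{ne}}$. Any homeomorphic embedding $f_0\colon\Lim_\Gamma\to S^1$ would send these to pairwise disjoint arcs in $S^1$, each of which---being a homeomorphic image of $[0,1]$ with more than one point---contains a nonempty open subset of $S^1$; this would produce uncountably many pairwise disjoint nonempty open subsets of the separable space $S^1$, which is impossible.

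I expect the main obstacle to be this ``only if'' direction, specifically making the separability argument airtight: one must know that $F_{\Gamma^{ne}}$ is genuinely uncountable (handled by the minimal-implies-perfect observation) and that each arc $\{f\}\times I$ maps to a subset of $S^1$ with nonempty interior (which holds because a nondegenerate connected subset of $S^1$ contains an open arc). The remaining care is the bookkeeping of the reduction to $\Gamma^{ne}$ and the use of $r\neq 0$ to rule out the Kleinian alternative, both of which are routine given the cited results.
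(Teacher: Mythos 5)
Your proposal is correct, and its decisive ``only if'' direction runs along a genuinely different route from the paper's. The paper, after the same reduction to $\Gamma^{ne}$, gets two points in $P_{\Gamma^{ne}}$ from Lemma~\ref{L:AtLeastTwoNonelem} plus Theorem~\ref{T:MainCharactAlg} applied to $\Gamma^{ne}$, proves that $F_{\Gamma^{ne}}$ is infinite by an explicit Schottky construction (attractive fixed points of the conjugates $g^khg^{-k}$), extracts a single non-isolated point $\xi\in F_{\Gamma^{ne}}$, and then derives a contradiction from the product topology: an embedding would force the connected set $\{\xi\}\times I$ (with $I$ an open interval) to be open in $\mathcal{L}_\Gamma$, while every open set containing $\{\xi\}\times I$ must also contain some $\{\xi_U\}\times I$ with $\xi_U\neq\xi$. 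You instead (i) obtain the two points in $P_{\Gamma^{ne}}$ from Theorem~\ref{T:CharactLimitCone} applied to the original $\Gamma$, which conveniently sidesteps checking that $\Gamma^{ne}$ itself satisfies the hypotheses of Theorem~\ref{T:MainCharactAlg}; note that the identification of the limit cone with $\overline{P_{\Gamma^{ne}}}$ needs $n\geq 2$, but this is automatic since with $n=1$ the limit cone would be a single point; and (ii) replace the paper's local openness argument by a cardinality argument: minimality of $F_{\Gamma^{ne}}$ (Theorem~\ref{T:LinkFP}) forces its derived set, which is closed, invariant and nonempty, to be all of $F_{\Gamma^{ne}}$, so $F_{\Gamma^{ne}}$ is perfect and hence uncountable, and the uncountably many disjoint nondegenerate arcs $\{f\}\times I$ cannot embed in the separable space $S^1$. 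What each approach buys: the paper needs only infinitude of $F_{\Gamma^{ne}}$ and one accumulation point, whereas your minimality-implies-perfect observation gives uncountability for free and yields a countable-chain-condition endgame that would rule out embeddings into any second countable space in which nondegenerate connected subsets have nonempty interior. Two spots in your write-up are thinner than they should be, though both are easily repaired and match the paper's own level of implicitness. First, ``$F_{\Gamma^{ne}}$ is infinite because $\Gamma^{ne}$ is nonelementary'' deserves a sentence: a finite invariant subset of the Furstenberg boundary would project to a finite $p_i(\Gamma^{ne})$-invariant subset of $\partial\bbH^3$ or $\partial\bbH^2$ in each factor, contradicting nonelementarity of the projections. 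Second, your reduction ``not contained in an arithmetic Fuchsian group $\Rightarrow$ not contained in an arithmetic Fuchsian or Kleinian group when $r\neq 0$'' cites the remark after Lemma~\ref{L:ArithmAll}, which literally concerns only cofinite Kleinian projections; the statement you actually need is the one underlying Corollary~\ref{C:MainCharactAlgR} and Corollary~\ref{C:MainCharactAlgAlg} (a nonelementary real projection forces real traces, hence the single unramified place of $Ap_j(\Gamma)^{(2)}$ is real), and the paper's proof of this very theorem leans on the same reduction equally implicitly.
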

\begin{proof}
If $\Gamma_j:=p_j(\Gamma)$ is a subgroup of an arithmetic Fuchsian group, then $\Lim_\Gamma$ and $\Lim_{\Gamma_j}$ are homeomorphic (Lemma~\ref{L:LimHomeom}). Since $\Lim_{\Gamma_j}$ is a topological subspace of $S^1$, the limit set $\mathcal{L}_\Gamma$ is embedded homeomorphically in a circle.

Now let $p_j(\Gamma)$ be such that it is not contained in an arithmetic Fuchsian group. By Lemma~\ref{L:AtLeastTwoNonelem} and Lemma~\ref{L:GammaNE}, the group $\Gamma^{ne}$ is a nonelementary subgroup of $\PSL(2,\C)^{q'}\times\PSL(2,\R)^r$ with $q'\leq q$ and $q+r\geq 2$ and its limit set is identified with the limit set of $\Gamma$. For simplicity of the notation, we will assume that $\Gamma=\Gamma^{ne}$.

Then by Theorem~\ref{T:MainCharactAlg}, $P_\Gamma$ contains at least two different points and by Lemma~\ref{L:Connected} there is a path in $P_\Gamma$ between these points and thus $P_\Gamma$ contains an interval. Let $I$ be an open subinterval (contained in this interval.)

The next step is to show that $F_\Gamma$ is infinite. Since $\mathcal{L}_\Gamma$ is nonempty, there is at least one loxodromic element $\tilde{h}$ in $\Gamma$. By Lemma~\ref{L:Schottky}, starting from $\tilde{h}$ and $\tilde{h}$, we can find loxodromic isometries $g$ and $h$ in $\Gamma$ such that the groups generated by the corresponding components of $g$ and $h$ are Schottky with only loxodromic elements. The projections of the attractive fixed points of $g^khg^{-k}$, $k \in \N$, in the Furstenberg boundary give us infinitely many points in $F_\Gamma$.

Consequently, since $F_\Gamma$ is closed and lies in the generalized torus $(\partial \bbH^3)^q\times(\partial \bbH^2)^r$, it contains a point $\xi$ that is not isolated. This means that any neighborhood $U$ of $\xi$ in $F_\Gamma$ contains a point $\xi_U$ different from $\xi$.

Let us assume that there is a topological embedding $f:\mathcal{L}_\Gamma \rightarrow S^1$, i.e $f$ is a homeomorphism between $\mathcal{L}_\Gamma$ and $f(\mathcal{L}_\Gamma)$ with the subset topology. Since $\{\xi\}\times I \subset \mathcal{L}_\Gamma$ is connected, the image $f(\{\xi\}\times I)$ is also connected. Hence $f(\{\xi\}\times I)$ is an arc in $S^1$ and thus open in $S^1$ and so in $f(\mathcal{L}_\Gamma)$. Since $f$ is homeomorphism, the preimage of the open set $f(\{\xi\}\times I)$ is open in $\mathcal{L}_\Gamma$, i.e. $\{\xi\}\times I$ is open in $\mathcal{L}_\Gamma$. 

The topology of $((\partial \bbH^3)^q\times(\partial \bbH^2)^r)_{reg}$ is the product topology and $\mathcal{L}_\Gamma^{reg}\subseteq((\partial \bbH^3)^q\times(\partial \bbH^2)^r)_{reg}$ has the product subspace topology. In particular, each open set $V$ containing $\{\xi\}\times I$ contains also $\{\xi_U\}\times I$ where $U$ is a neighborhood of $\xi$ contained in the projection of $V$ in $F_\Gamma$. Hence, since any neighborhood $U$ of $\xi$ in $F_\Gamma$ contains a point $\xi_U$ different from $\xi$,  $\{\xi\}\times I$ is not open in $\mathcal{L}_\Gamma$. This is a contradiction.
\end{proof}

Theorem~\ref{T:TopolCharactAlgAlg} allows us to decide whether $p_j(\Gamma)$ is a subgroup of an arithmetic Fuchsian group or not. The next corollary distinguishes when $p_j(\Gamma)$ is a (cofinite) arithmetic Fuchsian group and when it is not.

\begin{corollary}
\label{C:TopolCharactAlgAlg}
Let $\Gamma$ be a finitely generated subgroup of an irreducible arithmetic group in $\PSL(2,\C)^q\times\PSL(2,\R)^r$ with $q + r \geq2$ and $r\neq 0$ such that $p_{j}(\Gamma)$ is nonelementary for some $j\in\{1,\ldots,q+r\}$. Then $\mathcal{L}_\Gamma$ is homeomorphic to a circle if and only if $p_{j}(\Gamma)$ is a cofinite arithmetic Fuchsian group.
\end{corollary}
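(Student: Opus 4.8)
The plan is to reduce the whole statement to the behaviour of the limit set of the single Fuchsian factor $\Gamma_j:=p_j(\Gamma)$, using the homeomorphism $\mathcal{L}_\Gamma\cong\mathcal{L}_{\Gamma_j}$ supplied by Lemma~\ref{L:LimHomeom}, and then to invoke the classical first-kind/second-kind dichotomy for Fuchsian groups. For the easy implication, suppose $\Gamma_j$ is a cofinite arithmetic Fuchsian group. Then $\Gamma_j$ is a lattice in $\PSL(2,\R)$, hence of the first kind, so $\mathcal{L}_{\Gamma_j}=\partial\bbH^2\cong S^1$. Since $\Gamma_j$ is nonelementary and contained in an arithmetic Fuchsian group, Lemma~\ref{L:LimHomeom} applies and yields $\mathcal{L}_\Gamma\cong\mathcal{L}_{\Gamma_j}\cong S^1$, as desired.

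For the converse I would argue as follows. Assume $\mathcal{L}_\Gamma$ is homeomorphic to a circle; in particular $\mathcal{L}_\Gamma$ embeds homeomorphically in a circle, so Theorem~\ref{T:TopolCharactAlgAlg} gives that $\Gamma_j$ is contained in an arithmetic Fuchsian group. In particular $\Gamma_j$ is a nonelementary Fuchsian group, and it is finitely generated because it is the image of the finitely generated group $\Gamma$ under $p_j$. Applying Lemma~\ref{L:LimHomeom} once more gives $\mathcal{L}_{\Gamma_j}\cong\mathcal{L}_\Gamma\cong S^1$. Now $\mathcal{L}_{\Gamma_j}$ is a closed subset of the boundary circle $\partial\bbH^2\cong S^1$, and the key observation is that a proper closed subset of a circle cannot be homeomorphic to a circle: the limit set of a second-kind Fuchsian group is a nowhere dense, totally disconnected (Cantor) set, whereas $S^1$ is connected. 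Hence $\mathcal{L}_{\Gamma_j}=\partial\bbH^2$, i.e. $\Gamma_j$ is of the first kind.

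The main step is then to upgrade ``first kind'' to ``cofinite''. By the structure theory of finitely generated Fuchsian groups (see e.g.\ Katok~\cite{sK92}), a finitely generated Fuchsian group whose limit set is the whole boundary circle has finite covolume. Thus $\Gamma_j$ is a cofinite Fuchsian group contained in an arithmetic Fuchsian group $\Lambda$; comparing covolumes shows that $\Gamma_j$ has finite index in $\Lambda$, and a finite-index subgroup of an arithmetic group is again arithmetic. Therefore $\Gamma_j$ is a cofinite arithmetic Fuchsian group, which completes the argument.

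I expect the only genuinely nontrivial ingredient beyond Theorem~\ref{T:TopolCharactAlgAlg} and Lemma~\ref{L:LimHomeom} to be this classical characterization of cofinite finitely generated Fuchsian groups through their limit set, together with the elementary topological remark that a connected proper closed subset of $S^1$ is an arc and hence not homeomorphic to $S^1$. Both of these I would quote rather than reprove, so the main obstacle is essentially bookkeeping: making sure that finite generation and nonelementarity pass correctly from $\Gamma$ to $\Gamma_j$ so that all the cited results are genuinely applicable.
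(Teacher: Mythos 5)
Your proposal is correct and follows essentially the same route as the paper: Theorem~\ref{T:TopolCharactAlgAlg} to force $p_j(\Gamma)$ into an arithmetic Fuchsian group, Lemma~\ref{L:LimHomeom} to transfer the circle homeomorphism to $\mathcal{L}_{p_j(\Gamma)}$, Katok's first-kind/nowhere-dense dichotomy plus the finitely-generated-first-kind-implies-cofinite theorem to conclude. Your only addition is spelling out the finite-index/commensurability argument showing that a cofinite subgroup of an arithmetic Fuchsian group is itself arithmetic, a step the paper leaves implicit.
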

\begin{proof}
If $\Gamma_j:=p_{j}(\Gamma)$ is not a subgroup of an arithmetic Fuchsian group, then by the previous theorem, $\Lim_\Gamma$ is not homeomorphic to a circle.

Now let $\Gamma_j$ be a subgroup of an arithmetic Fuchsian group. Then by Lemma~\ref{L:LimHomeom}, $\Lim_\Gamma$ is homeomorphic to $\Lim_{\Gamma_j}$.

If $\Lim_\Gamma$ be homeomorphic to a circle, then $\Lim_{\Gamma_j}$ is connected. Since $\Gamma_j$ is nonelementary, $\Lim_{\Gamma_j}$ contains more than two points and by Theorem 3.4.6 in \cite{sK92}, it is either the whole boundary $\partial\bbH^2$ of $\bbH^2$ or it is nowhere dense in $\partial\bbH^2$ and in particular not connected. Hence $\Lim_{\Gamma_j}$ is $\partial\bbH^2$. 

Combining Theorem 4.6.1 and Theorem 4.5.1 in \cite{sK92} we get that a finitely generated Fuchsian group of the first kind, i.e. whose limit set is $\partial\bbH^2$, has a fundamental region of finite hyperbolic area. Therefore $\Gamma_j$ is cofinite arithmetic Fuchsian group.

The converse is also true, namely, if $\Gamma_j$ is cofinite arithmetic Fuchsian group, then $\Lim_{\Gamma_j}$ and hence $\Lim_\Gamma$ are homeomorphic to $S^1$.
\end{proof}

\subsubsection{Subgroups of $\PSL(2,\C)^q$}
We consider a subgroup $\Gamma$ of an irreducible arithmetic group in $\PSL(2,\C)^q$. The question we answer is when $\Lim_\Gamma$ is a sphere. The next theorem is analogous to Corollary~\ref{C:TopolCharactAlgAlg} but it needs an additional structure on the geometric boundary $\partial(\bbH^3)^q$.

The geometric boundary $\partial(\bbH^3)^q$ is homeomorphic to the unit tangent sphere at a point in $(\bbH^3)^q$. This unit tangent sphere has a natural smooth structure induced by the Riemannian metric of $(\bbH^3)^q$ and this makes the unit tangent sphere diffeomorphic to the standard $(3q-1)$-sphere and defines a smooth structure on $\partial(\bbH^3)^q$.
 
\begin{theorem}
\label{T:TopolCharactC}
Let $\Gamma$ be a subgroup of an irreducible arithmetic group in $\PSL(2,\C)^q$ with $q \geq 2$ such that $p_{j}(\Gamma)$ is a cofinite Kleinian group for some $j\in\{1,\ldots,q\}$. Then $\mathcal{L}_\Gamma$ is the image of a differentiable embedding of the 2-sphere $S^2$ in $\partial(\bbH^3)^q$ if and only if $p_{j}(\Gamma)$ is an arithmetic Kleinian group.
\end{theorem}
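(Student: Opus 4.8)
The plan is to prove the two implications separately: the ``if'' direction by exhibiting an explicit smooth embedding, and the ``only if'' direction by contraposition via a Hausdorff--dimension obstruction, where the use of \emph{differentiability} (rather than mere topological embedding) is essential. Throughout write $\Gamma_j:=p_j(\Gamma)$. Since $\Gamma_j$ is a cofinite Kleinian group, it is nonelementary and of the first kind, so $\mathcal{L}_{\Gamma_j}=\partial\bbH^3\cong S^2$; this sphere is what will (or will not) appear as $\mathcal{L}_\Gamma$.

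For the ``if'' direction, assume $\Gamma_j$ is arithmetic. Then $\Gamma_j$ is a subgroup of an arithmetic Kleinian group and is nonelementary, so Lemma~\ref{L:LimHomeom} applies and produces a homeomorphism $A\colon\mathcal{L}_{\Gamma_j}\to\mathcal{L}_\Gamma$; moreover Theorem~\ref{T:MainCharactAlg} shows $P_\Gamma$ is a single point, so $\mathcal{L}_\Gamma=F_\Gamma$ sits in the Furstenberg torus $(\partial\bbH^3)^q$. The point I would emphasize is the explicit form of $A$ recorded in the proof of Lemma~\ref{L:LimHomeom}: it is $z\mapsto(\widetilde A_1(z),\ldots,\widetilde A_q(z))$, where each $\widetilde A_i$ is a M\"obius transformation or a complex conjugation followed by one (coming from Corollary~\ref{C:WidelyComm} and Theorem~\ref{T:PSLAutomorphisms}). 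Each $\widetilde A_i$ is a diffeomorphism of $\partial\bbH^3\cong S^2$, so $A$ is smooth; since the $j$-th component is a diffeomorphism, $A$ is an injective immersion, and an injective immersion of the compact manifold $S^2$ is a smooth embedding. Hence $\mathcal{L}_\Gamma=A(S^2)$ is the image of a differentiable embedding of $S^2$. (If $\Gamma$ is not itself nonelementary one first replaces it by $\Gamma^{ne}$ as in Lemma~\ref{L:GammaNE}, whose limit set is canonically $\mathcal{L}_\Gamma$; the elliptic factors contribute only constant, hence smooth, coordinates.)

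For the converse I would argue by contraposition: suppose $\Gamma_j$ is cofinite Kleinian but \emph{not} arithmetic, and show $\mathcal{L}_\Gamma$ cannot be a differentiably embedded sphere. First, $\Gamma_j$ is not contained in any arithmetic Fuchsian or Kleinian group: it cannot lie in a Fuchsian group, whose limit set is contained in a circle, since $\mathcal{L}_{\Gamma_j}=S^2$; and if it were contained in a cofinite arithmetic Kleinian group it would be of finite index there (both having finite covolume) and hence arithmetic itself. Passing to $\Gamma^{ne}\le\PSL(2,\C)^{q'}$ with $q'\ge 2$ by Lemma~\ref{L:AtLeastTwoNonelem} and Lemma~\ref{L:GammaNE} (so $\mathcal{L}_{\Gamma^{ne}}$ is identified with $\mathcal{L}_\Gamma$), Theorem~\ref{T:MainCharactAlg} gives that $P_\Gamma$ contains more than one point, and Lemma~\ref{L:Connected} shows $P_\Gamma$ is convex, so it contains a nondegenerate segment $I$.

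The heart of the converse is then a dimension count. By Theorem~\ref{T:LinkFP}, $\mathcal{L}_\Gamma^{reg}=F_\Gamma\times P_\Gamma$, so $F_\Gamma\times I\subseteq\mathcal{L}_\Gamma$ is compact. The projection of $F_\Gamma$ to its $j$-th factor is a closed, infinite, $\Gamma_j$-invariant subset of $\mathcal{L}_{\Gamma_j}$, hence equals $\mathcal{L}_{\Gamma_j}=S^2$ by minimality. Therefore the Lipschitz coordinate projection $\Pi$ that forgets all Furstenberg factors except the $j$-th maps $F_\Gamma\times I$ onto $S^2\times I$, a genuine smooth $3$-manifold with boundary. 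Since Lipschitz maps do not increase Hausdorff dimension, $\dim_H\mathcal{L}_\Gamma\ge\dim_H(F_\Gamma\times I)\ge\dim_H(S^2\times I)=3$, whereas the image of a differentiable embedding of $S^2$ is a smooth $2$-submanifold of $\partial(\bbH^3)^q$ and so has Hausdorff dimension $2$; these are incompatible, the desired contradiction. I expect the main obstacle to lie precisely here: establishing $p_j(F_\Gamma)=S^2$ from the minimality of the Furstenberg limit set, and recognizing that it is differentiability that is being contradicted, since a wild \emph{topological} $2$-sphere can perfectly well have Hausdorff dimension exceeding $2$.
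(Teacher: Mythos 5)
Your proposal is correct and follows essentially the same route as the paper's proof: the forward direction via the explicit M\"obius/anti-M\"obius diffeomorphism coming from Lemma~\ref{L:LimHomeom}, and the converse by passing to $\Gamma^{ne}$, invoking Theorem~\ref{T:MainCharactAlg} together with the product structure $\mathcal{L}^{reg}_{\Gamma^{ne}}=F_{\Gamma^{ne}}\times P_{\Gamma^{ne}}$ of Theorem~\ref{T:LinkFP}, showing that the $j$-th Furstenberg projection covers all of $S^2$, and then deriving a dimension contradiction. Your two variations --- deducing $p_j(F_\Gamma)=S^2$ from minimality of the limit set of a first-kind Kleinian group rather than the paper's accumulation-point argument, and replacing the paper's informal claim that the projection of the smooth $2$-manifold $\mathcal{L}^{reg}_{\Gamma^{ne}}$ ``is two dimensional'' by the precise statement that Lipschitz maps do not increase Hausdorff dimension --- are rigorous renderings of the same two steps, not a different method.
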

\begin{proof}
For simplicity we will denote $p_i(\Gamma)$ by $\Gamma_i$ for all $i = 1,\ldots,q$. Without loss of generality, we can assume that $p_{j}(\Gamma)$ is a cofinite Kleinian group for $j=1$.

In order to show the first implication let $\Gamma_1$ be an arithmetic Kleinian group. Then $\Lim_\Gamma$ and $\Lim_{\Gamma_1}$ are homeomorphic (Lemma~\ref{L:LimHomeom}). From the proof of Lemma~\ref{L:LimHomeom} it follows that $\Lim_\Gamma$ and $\Lim_{\Gamma_1}$ are diffeomorphic because $ \widetilde{A_i}$ are diffeomorphisms of the Riemann sphere for $i=1,\ldots,q$.

Thus $\Lim_\Gamma$ is the image of an embedding of the $S^2$ in $\partial(\bbH^3)^q$.

In order to show the second implication, we assume that $\Gamma_1$ is not arithmetic and the limit set of $\Gamma$ is the image of an embedding of the sphere $S^2$ in $\partial(\bbH^3)^q$. By Lemma~\ref{L:GammaNE}, $\Lim_\Gamma$ and $\Lim_{\Gamma^{ne}}$ are diffeomorphic. Here $\partial(\bbH^3)^{q'}$ is a submanifold of $\partial(\bbH^3)^q$.

The projective limit set $P_{\Gamma^{ne}}$ contains at least two different points (Theorem~\ref{T:MainCharactAlg}) and by Theorem~\ref{T:ConvexAlg}, it contains also a path joining them.

Theorem 4.10 in \cite{gL06} says that the set of attractive fixed points of loxodromic isometries in a nonelementary subgroup of $\PSL(2,\C)^n$ is dense in its limit set. It follows that $\Lim_{\Gamma^{ne}}^{reg}$ is dense in $\Lim_{\Gamma^{ne}}$. %and in particular $\Lim_{\Gamma^{ne}}^{sing}$ does not have any subset homeomorphic to a disc. 
Additionally, since $\partial(\bbH^3)^n_{reg}$ is open in $\partial(\bbH^3)^n$, $\Lim_{\Gamma^{ne}}^{reg}$ is open in $\Lim_{\Gamma^{ne}}$. Hence $\Lim_{\Gamma^{ne}}^{reg}$ is open and dense in $\Lim_{\Gamma^{ne}}$

The limit set $\Lim_{\Gamma_1}$ of $\Gamma_1$ is $\partial\bbH^3$, i.e. homeomorphic to $S^2$. To each point in $\Lim_{\Gamma_1}$ corresponds at least one point in $\Lim_{\Gamma^{ne}}$. We will show that each point in $\Lim_{\Gamma_1}$ is the projection in the first factor of a point in $\Lim_{\Gamma^{ne}}^{reg}$.

For each point in $\Lim_{\Gamma^{ne}}^{sing}$ there is a sequence in $\Lim_{\Gamma^{ne}}^{reg}$ that converges to it. Let $\xi$ be a point in $\Lim_{\Gamma^{ne}}^{sing}$ such that the projection of one of its representative geodesics in the first factor is not a constant. This means that the first factors of the regular elements converge to the first factor of $\xi$. The projections of the regular elements in the Furstenberg boundary  $(\partial\bbH^3)^q$ have an accumulation point in $(\partial\bbH^3)^q$ (because  $(\partial\bbH^3)^q$ is compact). Since $F_{\Gamma^{ne}}$ is closed, there is a point in $\Lim_{\Gamma^{ne}}^{reg}$ such that its projection in the first factor coincides with the first factor of $\xi$.

Hence the projection of $\Lim_{\Gamma^{ne}}^{reg}$ on the first factor of the regular boundary is $\Lim_{\Gamma_1}$, which is the 2-sphere. Since $\Lim_{\Gamma^{ne}}^{reg}=F_{\Gamma^{ne}}\times P_{\Gamma^{ne}}$ by Theorem~\ref{T:LinkFP}, the projection of $\Lim_{\Gamma^{ne}}^{reg}$ on the first factor times $P_{\Gamma^{ne}}$ contains a set homeomorphic to $\R^3$. On the other side $\Lim_{\Gamma^{ne}}^{reg}$ is a two dimensional smooth submanifold of $\partial(\bbH^3)_{reg}$ and therefore its projection on the first factor times $P_{\Gamma^{ne}}$ is two dimensional, which is impossible.

% Argumentation: The two dimensional manifold is a countable union of 2-dimensional "charts" that have Lebesgue measure 0. Hense the manifold itself also has Lebesgue measure 0. By projecting we do not increase the Lebesgue measure.

Thus if $\Gamma_1$ is not arithmetic, then the limit set of $\Gamma^{ne}$ is not the image of an embedding of the $S^2$ in $\partial(\bbH^3)^q$.
\end{proof}
\begin{rem}
The fact that the projection of $\Lim_{\Gamma^{ne}}^{reg}$ on the first factor times $P_{\Gamma^{ne}}$ contains a set homeomorphic to $\R^3$ while $\Lim_{\Gamma^{ne}}^{reg}$ is two dimensional is not a contradiction if $\Lim_{\Gamma^{ne}}^{reg}$ is just assumed to be homeomorphic to $S^2$.
\end{rem}

We can give a more precise answer to the question when the limit set is topologically a circle. An answer is given by the next theorem. We use the following definition. A \textit{quasi-Fuchsian} group is a subgroup of $\PSL(2,\C)$ whose limit set is homeomorphic to a circle.
\begin{theorem}
\label{T:TopolCharactCFuchs}
Let $\Gamma$ be a finitely generated subgroup of an irreducible arithmetic group in $\PSL(2,\C)^q$ with $q \geq 2$ such that $p_{j}(\Gamma)$ is nonelementary for some $j\in\{1,\ldots,q\}$. Then $\mathcal{L}_\Gamma$ is homeomorphic to a circle if and only if $p_{j}(\Gamma)$ is a cofinite arithmetic Fuchsian group or a quasi-Fuchsian subgroup of an arithmetic Kleinian group.
\end{theorem}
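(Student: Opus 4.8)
The plan is to reduce everything to the single factor $\Gamma_j:=p_j(\Gamma)$, whose limit set sits in $\partial\bbH^3\cong S^2$, and then invoke the classical dichotomy for subgroups of $\PSL(2,\C)$. The sufficiency direction is immediate from Lemma~\ref{L:LimHomeom}. If $\Gamma_j$ is a cofinite arithmetic Fuchsian group, it is nonelementary and contained in an arithmetic Fuchsian group, so $\Lim_\Gamma\cong\Lim_{\Gamma_j}$; being cofinite (of the first kind) its limit set is the full circle $\partial\bbH^2\cong S^1$. If $\Gamma_j$ is a quasi-Fuchsian subgroup of an arithmetic Kleinian group, then $\Gamma_j$ is nonelementary (its limit set, a circle, has more than two points) and contained in an arithmetic Kleinian group, so again $\Lim_\Gamma\cong\Lim_{\Gamma_j}\cong S^1$ by the definition of quasi-Fuchsian.

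For necessity, assume $\Lim_\Gamma\cong S^1$; in particular $\Lim_\Gamma$ is embedded homeomorphically in a circle. The first and main step is to show that the limit cone of $\Gamma$ is a single point. I would argue by contraposition through $\Gamma^{ne}$, which by Lemma~\ref{L:GammaNE} is a nonelementary discrete subgroup of some $\PSL(2,\C)^{q'}$ whose limit set is canonically identified with $\Lim_\Gamma$. If the limit cone were not a point, then $P_{\Gamma^{ne}}$ would contain two distinct points and hence, by Lemma~\ref{L:Connected}, an interval $I$; meanwhile the Schottky construction of Lemma~\ref{L:Schottky} produces infinitely many Furstenberg limit points, so the closed set $F_{\Gamma^{ne}}\subseteq(\partial\bbH^3)^{q'}$ has a non-isolated point $\xi$. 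Then $\{\xi\}\times I\subseteq\Lim_{\Gamma^{ne}}^{reg}$, and the product-topology argument in the proof of Theorem~\ref{T:TopolCharactAlgAlg} (which nowhere uses $r\neq 0$) shows that $\{\xi\}\times I$ fails to be open in $\Lim_{\Gamma^{ne}}$ while its image under any embedding into $S^1$ would have to be open, contradicting embeddability.

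Once the limit cone is known to be a point, Theorem~\ref{T:CharactLimitCone} gives that $\Gamma_j$ is contained in an arithmetic Fuchsian or Kleinian group, and Lemma~\ref{L:LimHomeom} then yields $\Lim_{\Gamma_j}\cong\Lim_\Gamma\cong S^1$. I would finish by splitting into two cases according to the containing group. If $\Gamma_j$ lies in an arithmetic Fuchsian group, then $\Lim_{\Gamma_j}\subseteq\partial\bbH^2$; since it is connected and $\Gamma_j$ is nonelementary, Theorem 3.4.6 in \cite{sK92} forces it to be all of $\partial\bbH^2$, so $\Gamma_j$ is of the first kind, hence cofinite by Theorems 4.5.1 and 4.6.1 in \cite{sK92}, hence of finite index in the cofinite arithmetic Fuchsian group containing it, hence itself a cofinite arithmetic Fuchsian group. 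If instead $\Gamma_j$ lies in an arithmetic Kleinian group but in no arithmetic Fuchsian group, then $\Lim_{\Gamma_j}\cong S^1$ is exactly the statement that $\Gamma_j$ is quasi-Fuchsian.

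I expect the main obstacle to be the first step of necessity, namely verifying that more than one point in the limit cone genuinely obstructs a homeomorphic embedding into a circle. This is precisely the topological mechanism already isolated for $r\neq 0$ in Theorem~\ref{T:TopolCharactAlgAlg}, so the real task is to check that its product-topology argument transfers verbatim to the present setting $r=0$, with the Furstenberg boundary $(\partial\bbH^3)^{q'}$ in place of $(\partial\bbH^3)^q\times(\partial\bbH^2)^r$; the remaining case analysis is routine given the cited Fuchsian and Kleinian facts.
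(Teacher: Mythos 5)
Your proposal is correct and takes essentially the same approach as the paper: sufficiency via Lemma~\ref{L:LimHomeom}, and necessity via the dichotomy between $p_j(\Gamma)$ being contained in an arithmetic Fuchsian or Kleinian group or not, with the latter case excluded by the same product-topology obstruction transferred from Theorem~\ref{T:TopolCharactAlgAlg} (which, as you note, does not use $r\neq 0$), and the former case settled by the same Fuchsian first-kind/cofinite argument and the definition of quasi-Fuchsian. The only cosmetic difference is that you route the dichotomy through the limit cone and Theorem~\ref{T:CharactLimitCone}, whereas the paper invokes Theorem~\ref{T:MainCharactAlg} on $\Gamma^{ne}$ directly; since Theorem~\ref{T:CharactLimitCone} is itself proved from those same ingredients, the two arguments coincide in substance.
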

\begin{proof}
Again for simplicity we will denote $p_i(\Gamma)$ by $\Gamma_i$ for all $i = 1,\ldots,q$. Without loss of generality, we can assume that $p_{j}(\Gamma)$ is nonelementary for $j=1$.

We will consider two main cases: when $\Gamma_1$ is a subgroup of an arithmetic Fuchsian or Kleinian group and when it is not.

Let $\Gamma_1$ be an arithmetic Fuchsian or Kleinian group, then $\Lim_\Gamma$ and $\Lim_{\Gamma_1}$ are homeomorphic (Lemma~\ref{L:LimHomeom}). In the next three paragraphs we consider the different possibilities for $\Gamma_1$.

If $\Gamma_1$ is a cofinite arithmetic Fuchsian group or a quasi-Fuchsian subgroup of an arithmetic Kleinian group, then $\Lim_{\Gamma_1}$ is topologically a circle.

If $\Gamma_1$ is a subgroup of an arithmetic Fuchsian group but is not cofinite, then by Theorem 4.6.1 and Theorem 4.5.1 in Katok's book \cite{sK92}, $\Gamma_1$ is not of the first kind and since it is nonelementary from Theorem 3.4.6 in \cite{sK92} follows that $\Lim_{\Gamma_1}$ is nowhere dense in the circle in $\partial\bbH^3$ that is left invariant by $\Gamma_1$ and in particular not connected. Thus in this case $\Lim_\Gamma$ is not homeomorphic to a circle.

Let $\Gamma_1$ be a subgroup of an arithmetic Kleinian group. If $\Gamma_1$ is (conjugated to) a Fuchsian group, then it is contained in an arithmetic Fuchsian group and we are in the previous case. If $\Gamma_1$ contains purely loxodromic elements, then $\Lim_{\Gamma_1}$ is homeomorphic to a circle if and only if $\Gamma_1$ is a quasi-Fuchsian group.% Hence if $\Gamma_1$ contains purely loxodromic elements, then $\Lim_{\Gamma}$ is homeomorphic to a circle if and only if $\Gamma_1$ is a quasi-Fuchsian group.

Now we come back to the second big case. Let $\Gamma_1$ be such that it is not contained in an arithmetic Fuchsian or Kleinian group. By Lemma~\ref{L:GammaNE}, $\Lim_\Gamma$ and $\Lim_{\Gamma^{ne}}$ are homeomorphic, $\Gamma^{ne}$ is a subgroup of $\PSL(2,\C)^n$ and by Lemma~\ref{L:AtLeastTwoNonelem} $n\geq 2$. 

Then according to Theorem \ref{T:MainCharactAlg}, $P_{\Gamma^{ne}}$ contains at least two different points and by Lemma~\ref{L:NonemptyAlg} there is a path in $P_{\Gamma^{ne}}$ between these points and thus $P_{\Gamma^{ne}}$ contains an interval. The rest of the proof in this case is analogous to the corresponding part in Theorem~\ref{T:TopolCharactAlgAlg} that shows that $\Lim_\Gamma$ is not homeomorphically embedded in a circle.
\end{proof}
\begin{rem}
From the proof it follows in particular that if $\Lim_\Gamma$ is not contained in a circle, then $p_j(\Gamma)$ is not a subgroup of an arithmetic Fuchsian or Kleinian group.
\end{rem}

\subsection{Totally geodesic embeddings}
\setcounter{theorem}{0}
The following theorem is a corollary of Theorem~\ref{T:TopolCharactAlgAlg} and Theorem~\ref{T:TopolCharactCFuchs}. A proof can be found in \cite{sG09}.

\begin{theorem}
\label{T:TotallyGeodesicAlg}
Let $\Gamma$ be a finitely generated subgroup of an irreducible arithmetic group in $\PSL(2,\C)^q\times\PSL(2,\R)^r$ with $q + r \geq 2$ such that $p_{j}(\Gamma)$ is nonelementary for some $j\in\{1,\ldots,q+r\}$. Then there is a totally geodesic embedding of $\bbH^2$ in $(\bbH^3)^q\times(\bbH^2)^r$ that is left invariant by the action of $\Gamma$ if and only if  $p_j(\Gamma)$ is a subgroup of an arithmetic Fuchsian group.
\end{theorem}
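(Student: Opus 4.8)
The plan is to derive both implications from the structural results already established, with the forward direction resting on Corollary~\ref{C:WidelyComm} and the converse on Theorem~\ref{T:TopolCharactAlgAlg} together with a rigidity argument in the case $r=0$.

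For the implication assuming $p_j(\Gamma)$ is contained in an arithmetic Fuchsian group: since such a group is Fuchsian, after conjugation I may take $S:=p_j(\Gamma)\subseteq\PSL(2,\R)$, so its entries are real and complex conjugation acts trivially on it. By Corollary~\ref{C:WidelyComm} there is $g_0\in\GL(2,\C)^q\times\GL(2,\R)^r$ with $\Gamma=g_0\,\Diag(S)\,g_0^{-1}$, and as the $\sigma_i$ are the identity or complex conjugation while $S$ is real, $\Diag(S)=\{(s,\ldots,s)\mid s\in S\}$ is the honest diagonal. I would then exhibit the invariant plane explicitly: letting $\beta\colon\bbH^2\hookrightarrow\bbH^3$ be the standard $\PSL(2,\R)$-equivariant totally geodesic inclusion, set
$$\iota\colon\bbH^2\to(\bbH^3)^q\times(\bbH^2)^r,\qquad \iota(z)=(\beta(z),\ldots,\beta(z),z,\ldots,z).$$
Because each component of $\iota\circ\gamma$ is a geodesic or a point whenever $\gamma$ is a geodesic of $\bbH^2$, the criterion for geodesics in the product from Section~2 shows $Y:=\iota(\bbH^2)$ is totally geodesic, and equivariance of $\beta$ gives $(s,\ldots,s)\,\iota(z)=\iota(sz)$, so $Y$ is $\Diag(S)$-invariant. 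Since $g_0$ acts componentwise by isometries of $\bbH^3$ and $\bbH^2$, the set $g_0\cdot Y$ is again totally geodesic and is invariant under $\Gamma$.

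For the converse, suppose $\Gamma$ leaves invariant a totally geodesic $Y\cong\bbH^2$. Choosing a basepoint $x\in Y$ gives $\Gamma(x)\subseteq Y$, hence $\mathcal{L}_\Gamma\subseteq\overline{Y}\cap\partial\big((\bbH^3)^q\times(\bbH^2)^r\big)=\partial Y\cong S^1$; thus $\mathcal{L}_\Gamma$ is embedded homeomorphically in a circle. If $r\neq 0$, Theorem~\ref{T:TopolCharactAlgAlg} immediately yields that $p_j(\Gamma)$ is contained in an arithmetic Fuchsian group. If $r=0$, I would first run the obstruction argument from the proof of Theorem~\ref{T:TopolCharactAlgAlg}: were $P_\Gamma$ to contain at least two points it would contain an interval by Lemma~\ref{L:Connected}, which together with the infinitude of $F_\Gamma$ forbids an embedding into a circle; hence $P_\Gamma$ is a single point, and Theorem~\ref{T:MainCharactAlg} gives that $p_j(\Gamma)$ is a subgroup of an arithmetic Fuchsian or Kleinian group, in particular discrete.

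The remaining and main obstacle is to upgrade \emph{Fuchsian or Kleinian} to \emph{Fuchsian} when $r=0$, i.e. to rule out the genuinely Kleinian (quasi-Fuchsian) case; this is precisely where the totally geodesic hypothesis carries more than a mere topological circle. Here I would use that $\Gamma_j:=p_j(\Gamma)$ preserves $p_j(Y)$, since $\gamma_j\,p_j(Y)=p_j(\gamma Y)=p_j(Y)$ for $\gamma\in\Gamma$, and that $p_j|_Y$ is totally geodesic. As $\Gamma_j$ is discrete, nonelementary and hence has infinite limit set, $p_j(Y)$ cannot be a point or a single geodesic (whose boundaries have at most two points); therefore $p_j(Y)$ is a totally geodesic copy of $\bbH^2$ in $\bbH^3$ with boundary a round circle $C_j\subseteq\partial\bbH^3$. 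Then $\Lim_{\Gamma_j}\subseteq C_j$ and $\Gamma_j$ preserves $C_j$, so $\Gamma_j$ lies in the stabilizer of a totally geodesic plane and therefore has $\Tr(\Gamma_j)\subseteq\R$. Combined with $\Gamma_j$ being a subgroup of an arithmetic Fuchsian or Kleinian group, the reality of the traces forces the defining quaternion algebra to be unramified at the (real) identity place, so $\Gamma_j$ is a subgroup of an arithmetic Fuchsian group, exactly as in Lemma~\ref{L:NotIDNotArithm} and its remark. I expect the delicate points to be verifying that the totally geodesic image $p_j(Y)$ is genuinely two-dimensional and that it is round-circle invariance, not just topological-circle invariance, that pins down the arithmetic Fuchsian conclusion.
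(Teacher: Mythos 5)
The paper does not actually prove this theorem in the text --- it only remarks that it is a corollary of Theorem~\ref{T:TopolCharactAlgAlg} and Theorem~\ref{T:TopolCharactCFuchs} and refers to \cite{sG09} --- so your proposal can only be measured against that indicated route, and in spirit it is the natural reconstruction: the diagonal structure of Corollary~\ref{C:WidelyComm} produces the invariant plane, and conversely an invariant plane traps $\Lim_\Gamma$ inside a circle so that the topological characterizations apply. Still, there are genuine gaps. In the forward direction you invoke Corollary~\ref{C:WidelyComm}, which requires $\Gamma$ itself to be nonelementary, whereas the theorem only assumes that $p_j(\Gamma)$ is nonelementary; when $q\neq 0$ these are not equivalent. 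The remark after Corollary~\ref{C:MainCharactAlgAlg} exhibits exactly this situation: an arithmetic Fuchsian group $S$ one of whose $\phi$-conjugates consists entirely of elliptic isometries, so that $\Gamma=S^*$ satisfies the hypotheses of the present theorem but not those of Corollary~\ref{C:WidelyComm}. Your construction must be supplemented there: pass to $\Gamma^{ne}$ (Lemma~\ref{L:GammaNE}), and in each elementary complex factor use Proposition~\ref{P:NonelemElliptic} to make the corresponding component of the embedding constant, equal to the common fixed point of that projection; the resulting graph-with-constant-components is still totally geodesic and $\Gamma$-invariant.

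The serious gap is the final step of your $r=0$ converse: ``$\Gamma_j$ lies in the stabilizer of a totally geodesic plane and therefore has $\Tr(\Gamma_j)\subseteq\R$'' is false. The stabilizer in $\PSL(2,\C)$ of a plane is a conjugate of $\mathrm{PGL}(2,\R)$, and its elements that reverse orientation on the plane (half-turns about geodesics of the plane, possibly composed with translation along them) have \emph{purely imaginary} trace; plane-invariance only yields $\Tr(\Gamma_j)\subseteq\R\cup i\R$. This is not a vacuous worry in the arithmetic setting: take $K=\Q(\sqrt{-2},\sqrt{-3})$, so $q=2$, $r=0$ and $\Delta=\PSL(2,\mathcal{O}_K)^*$, let $\phi_2$ fix $\sqrt{-3}$ and change the sign of $\sqrt{-2}$, and let $S$ be generated by $h=\left[\begin{smallmatrix}5+2\sqrt6&0\\0&5-2\sqrt6\end{smallmatrix}\right]$ and $g=\left[\begin{smallmatrix}\sqrt{-2}&\sqrt{-3}\\ \sqrt{-3}&\sqrt{-2}\end{smallmatrix}\right]$ (note $\sqrt6=-\sqrt{-2}\sqrt{-3}\in\mathcal{O}_K$ and $\det g=1$). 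With $B=\left[\begin{smallmatrix}0&-1\\1&0\end{smallmatrix}\right]$ one checks $\phi_2(h)=BhB^{-1}$ and $\phi_2(g)=-BgB^{-1}$, hence $\phi_2(s)=BsB^{-1}$ in $\PSL(2,\C)$ for every $s\in S$; consequently $\Gamma=S^*$ leaves invariant the totally geodesic plane $\{(x,Bx)\}$ over the standard $\bbH^2\subset\bbH^3$, while $S$ is nonelementary ($h$ and $g^2$ are hyperbolic without common fixed points) and $\tr(g)=2\sqrt{-2}\notin\R$, so $S$ lies in no conjugate of $\PSL(2,\R)$ and hence in no arithmetic Fuchsian group. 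So your implication cannot be correct as written: what plane-invariance really gives is that the index-at-most-two subgroup of $\Gamma_j$ acting orientation-preservingly on $p_j(Y)$ (equivalently $\Gamma_j^{(2)}$) has real traces and is contained in an arithmetic Fuchsian group, and closing the gap between that statement and the theorem's literal conclusion is precisely the missing content (the same $\mathrm{PGL}$-versus-$\PSL$ issue is glossed over at the end of the paper's own Lemma~\ref{L:NotIDNotArithm}, where the constructed $\widetilde S$ may contain such imaginary-trace elements). Finally, your trichotomy for $p_j(Y)$ --- point, contained in a geodesic, or a totally geodesic plane --- does need an argument (the components of a totally geodesic embedding are affine on geodesics; a nonconstant rank-one component would produce a nonconstant affine function on $\bbH^2$, which does not exist; in the rank-two case the image is the exponential of a tangent $2$-plane), but unlike the previous point this gap is fillable.
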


\end{document}